\newcommand{\sigmaop}[1]{\mathop{\mathpalette\@sigmaop{#1}}\slimits@}
\newcommand{\@sigmaop}[2]{%
  \vphantom{\sum}%
  \sbox\z@{$\m@th#1\sum$}%
  \dimen@=\ht\z@ \advance\dimen@\dp\z@
  \dimen\tw@=\wd\z@
  \ifx#1\displaystyle\dimen@=.9\dimen@\fi
  \ooalign{%
    \hidewidth
    $\vcenter{\hbox{$\m@th#1#2$\kern.3\dimen\tw@}%
     \ifx#1\scriptstyle\kern-.25ex\fi}$\hidewidth\cr
    $\vcenter{\hbox{%
      \resizebox{!}{\dimen@}{$\m@th\boxtimes$}%
    }\ifx#1\scriptstyle\kern-.25ex\fi}$\cr
  }%
}
\numberwithin{equation}{subsection}
\newtheorem{theorem}{Theorem}[subsection]
\newtheorem{lemma}[theorem]{Lemma}
\newtheorem{expect}[theorem]{Expectation}
\newtheorem{conjecture}[theorem]{Conjecture}
\newtheorem{corollary}[theorem]{Corollary}
\newtheorem{definition}[theorem]{Definition}
\newtheorem{question}[theorem]{Question}
\newtheorem{proposition}[theorem]{Proposition}
\newtheorem{assumption}[theorem]{Assumption}
\newtheorem*{thm1}{Theorem 1}
\newtheorem*{thm2}{Theorem 2}
\newtheorem*{thm3}{Theorem 3}
\newtheorem*{conj1}{Conjecture 1}
\theoremstyle{remark}
\newtheorem{rmk}[theorem]{Remark}
\newtheorem{exa}[theorem]{Example}
\newcommand{\GZip}{\mathop{\text{$G$-{\tt Zip}}}\nolimits}
\newcommand{\GpZip}{\mathop{\text{$G'$-{\tt Zip}}}\nolimits}
\newcommand{\GF}{\mathop{\text{$G$-{\tt ZipFlag}}}\nolimits}
\newcommand{\VB}{\mathfrak{VB}}
\newskip\procskipamount
\newskip\interskipamount
\newskip\refskipamount
\newcommand{\procskip}{\vskip\procskipamount}
\newcommand{\interskip}{\vskip\interskipamount}
\newcommand{\refskip}{\vskip\refskipamount}
\newcommand{\procbreak}{\par
   \ifdim\lastskip<\procskipamount\removelastskip
   \penalty-100
   \procskip\fi
   \noindent\ignorespaces}
\newcommand{\titlebreak}{\par%
\ifdim\lastskip<\interskipamount\removelastskip%
\penalty10000%
\interskip\fi%
\noindent}%
\newcommand{\interbreak}{\par%
\ifdim\lastskip<\interskipamount\removelastskip%
\penalty-100%
\interskip\fi%
\noindent\ignorespaces}%
\newcommand{\refbreak}{\par%
\ifdim\lastskip<\refskipamount\removelastskip%
\penalty-100%
\refskip\fi%
\noindent\ignorespaces}%
\newcounter{listcounter}
\newcounter{deflistcounter}
\newcounter{equivcounter}
\newskip{\itemsepamount}
\newskip{\topsepamount}
\newenvironment{assertionlist}{%
  \begin{list}
    {\upshape (\arabic{listcounter})}
    {\setlength{\leftmargin}{18pt}
     \setlength{\rightmargin}{0pt}
     \setlength{\itemindent}{0pt}
     \setlength{\labelsep}{5pt}
     \setlength{\labelwidth}{13pt}
     \setlength{\listparindent}{\parindent}
     \setlength{\parsep}{0pt}
     \setlength{\itemsep}{\itemsepamount}
     \setlength{\topsep}{\topsepamount}
     \usecounter{listcounter}}}
  {\end{list}}
\newenvironment{definitionlist}{%
  \begin{list}
    {\upshape (\alph{deflistcounter})}
    {\setlength{\leftmargin}{18pt}
     \setlength{\rightmargin}{0pt}
     \setlength{\itemindent}{0pt}
     \setlength{\labelsep}{5pt}
     \setlength{\labelwidth}{13pt}
     \setlength{\listparindent}{\parindent}
     \setlength{\parsep}{0pt}
     \setlength{\itemsep}{\itemsepamount}
     \setlength{\topsep}{\topsepamount}
     \usecounter{deflistcounter}}}
  {\end{list}}
\newenvironment{Alist}{%
  \begin{list}
    {\upshape (\Alph{deflistcounter})}
    {\setlength{\leftmargin}{18pt}
     \setlength{\rightmargin}{0pt}
     \setlength{\itemindent}{0pt}
     \setlength{\labelsep}{5pt}
     \setlength{\labelwidth}{13pt}
     \setlength{\listparindent}{\parindent}
     \setlength{\parsep}{0pt}
     \setlength{\itemsep}{\itemsepamount}
     \setlength{\topsep}{\topsepamount}
     \usecounter{deflistcounter}}}
  {\end{list}}
\newenvironment{equivlist}{%
  \begin{list}
    {\upshape (\roman{equivcounter})}
    {\setlength{\leftmargin}{18pt}
     \setlength{\rightmargin}{0pt}
     \setlength{\itemindent}{0pt}
     \setlength{\labelsep}{5pt}
     \setlength{\labelwidth}{13pt}
     \setlength{\listparindent}{\parindent}
     \setlength{\parsep}{0pt}
     \setlength{\itemsep}{\itemsepamount}
     \setlength{\topsep}{\topsepamount}
     \usecounter{equivcounter}}}
  {\end{list}}
\newenvironment{bulletlist}{%
  \begin{list}
    {\upshape \textbullet}
    {\setlength{\leftmargin}{18pt}
     \setlength{\rightmargin}{0pt}
     \setlength{\itemindent}{0pt}
     \setlength{\labelsep}{6pt}
     \setlength{\labelwidth}{12pt}
     \setlength{\listparindent}{\parindent}
     \setlength{\parsep}{0pt}
     \setlength{\itemsep}{\itemsepamount}
     \setlength{\topsep}{\topsepamount}}}
  {\end{list}}
\newcommand{\Acal}{{\mathcal A}}
\newcommand{\Bcal}{{\mathcal B}}
\newcommand{\Fcal}{{\mathcal F}}
\newcommand{\Gcal}{{\mathcal G}}
\newcommand{\Lcal}{{\mathcal L}}
\newcommand{\Ocal}{{\mathcal O}}
\newcommand{\Ucal}{{\mathcal U}}
\newcommand{\Vcal}{{\mathcal V}}
\newcommand{\Xcal}{{\mathcal X}}
\newcommand{\Zcal}{{\mathcal Z}}
\newcommand{\Sfr}{{\mathfrak S}}
\renewcommand{\AA}{\mathbb{A}}
\newcommand{\CC}{\mathbb{C}}
\newcommand{\EE}{\mathbb{E}}
\newcommand{\FF}{\mathbb{F}}
\newcommand{\GG}{\mathbb{G}}
\newcommand{\LL}{\mathbb{L}}
\newcommand{\NN}{\mathbb{N}}
\newcommand{\QQ}{\mathbb{Q}}
\newcommand{\RR}{\mathbb{R}}
\newcommand{\ZZ}{\mathbb{Z}}
\newcommand{\Lscr}{{\mathscr L}}
\newcommand{\Pscr}{{\mathscr P}}
\newcommand{\Sscr}{{\mathscr S}}
\DeclareMathOperator{\ad}{ad}
\DeclareMathOperator{\Gal}{Gal}
\DeclareMathOperator{\Hom}{Hom}
\DeclareMathOperator{\Span}{Span}
\DeclareMathOperator{\orb}{orb}
\DeclareMathOperator{\pr}{pr}
\DeclareMathOperator{\Rep}{Rep}
\DeclareMathOperator{\Sbt}{Sbt}
\DeclareMathOperator{\Sh}{Sh}
\DeclareMathOperator{\spec}{Spec}
\DeclareMathOperator{\Sch}{Sbt}
\DeclareMathOperator{\zip}{zip}
\DeclareMathOperator{\GS}{GS}
\DeclareMathOperator{\GL}{GL}
\DeclareMathOperator{\GSp}{GSp}
\DeclareMathOperator{\Sp}{Sp}
\DeclareMathOperator{\U}{U}
\DeclareMathOperator{\GU}{GU}
\newcommand{\shgx}{\Sh(\mathbf G, \mathbf X)}
\newcommand{\gx}{(\mathbf G, \mathbf X)}
\newcommand{\gofaf}{\mathbf G(\mathbf A_f)}
\newcommand{\id}{{\rm Id}}
\newcommand{\loccit}{{\em loc.\ cit. }}
\newcommand{\loccitn}{{\em loc.\ cit.}}
\newcommand{\diag}{{\rm diag}}
\newcommand{\reg}{{\rm reg}}
\renewcommand{\div}{{\rm div}}
\DeclareMathOperator{\hw}{hw}
\DeclareMathOperator{\flag}{flag}
\DeclareMathOperator{\Norm}{Norm}
\DeclareMathOperator{\Ind}{Ind}
\DeclareMathOperator{\Hasse}{Hasse}
\DeclareMathOperator{\Res}{Res}
\DeclareMathOperator{\Flag}{Flag}
\DeclareMathOperator{\high}{high}
\DeclareMathOperator{\low}{low}
\DeclareMathOperator{\lw}{lw}
\DeclareMathOperator{\Ha}{Ha}
\newcommand{\relmiddle}[1]{\mathrel{}\middle#1\mathrel{}}
\begin{document}

\author{Wushi Goldring and Jean-Stefan Koskivirta}

\title{Divisibility of mod $p$ automorphic forms and the cone conjecture for certain Shimura varieties of Hodge-type} 

\date{}

\maketitle

\begin{abstract}
For several Hodge-type Shimura varieties of good reduction in characteristic $p$, we show that the cone of weights of automorphic forms is encoded by the stack of $G$-zips of Pink--Wedhorn--Ziegler. This establishes several instances of a general conjecture formulated in previous papers by the authors. Furthermore, we prove in these cases that any mod $p$ automorphic form whose weight lies in a specific region of the weight space is divisible by a partial Hasse invariant. This generalizes to other Shimura varieties previous results of Diamond--Kassaei on Hilbert modular forms.
\end{abstract}

\section*{Introduction}
In a series of papers \cite{Goldring-Koskivirta-global-sections-compositio, Goldring-Koskivirta-Strata-Hasse,   Goldring-griffiths-bundle-group-theoretic}, the authors suggested that several geometric invariants attached to Shimura varieties should be expressible in terms of group theoretical objects. This paper pursues this philosophy and presents further evidence for a conjecture proposed in \cite{Goldring-Koskivirta-global-sections-compositio} regarding weights of mod $p$ automorphic forms. In particular, we generalize the results of Diamond--Kassaei in \cite{Diamond-Kassaei} (extended in \cite{arxiv-Diamond-Kassaei-cone-minimal}) regarding Hilbert--Blumenthal Shimura varieties, to more general Hodge-type Shimura varieties. We briefly review their results. Let $\mathbf{E}/\QQ$ be a totally real extension of degree $n\geq 2$, and let $X$ be the associated Hilbert--Blumenthal Shimura variety (for a fixed level) over $\CC$. For a tuple $\mathbf{k}=(k_1, \dots , k_n)\in \ZZ^n$, there is a automorphic line bundle $\omega^{\mathbf{k}}$. We call elements of $H^0(X,\omega^{\mathbf{k}})$ Hilbert automorphic forms of weight $\mathbf{k}$. We fix a place of good reduction $p$, and consider the geometric special fiber $X_{\overline{\FF}_p}$ of the integral model of $X$ constructed by Kottwitz. Diamond--Kassaei define the minimal cone $C_{\min}\subset \ZZ^n$. Their definition of $C_{\min}$ is derived from considerations regarding the minimal weights in Serre's Conjecture. They show the following:
\begin{assertionlist}
\item The weight of any nonzero Hilbert automorphic form lies in the cone $C_{\Hasse}\subset \ZZ^n$ spanned (over $\QQ_{\geq 0}$) by the weights of all partial Hasse invariants.
\item Any Hilbert automorphic form $f$ over $\overline{\FF}_p$ whose weight lies in the complement of $C_{\min}$ is divisible by (a specific) partial Hasse invariant.
\end{assertionlist}
The notion of partial Hasse invariant was introduced by Andreatta--Goren in \cite{Goren-partial-hasse}, \cite{Andreatta-Goren-book}. They are characterized by the fact that their vanishing locus is the closure of a codimension one Ekedahl--Oort stratum of $X_{\overline{\FF}_p}$. To generalize this result to other Hodge-type Shimura varieties, we need to consider vector-valued automorphic forms. Let $(\mathbf{G},\mathbf{X})$ be a Hodge-type Shimura datum (\cite{Deligne-Shimura-varieties}). In particular, $\mathbf{G}$ is a connected reductive group over $\QQ$. There is an attached Shimura variety $\shgx_{K}$ for any compact open $K\subset \mathbf{G}(\AA_f)$. For sufficiently small $K$, it is a smooth, quasi-projective scheme over a number field $\mathbf{E}$ (called the reflex field). Fix a Borel pair $(\mathbf{B},\mathbf{T})$ such that $\mathbf{B}\subset \mathbf{P}$, where $\mathbf{P}$ is the parabolic subgroup stabilizing the Hodge filtration. Let $\mathbf{L}$ be the unique Levi subgroup of $\mathbf{P}$ containing $\mathbf{T}$. For each $\lambda\in X^*(\mathbf{T})$, there is an automorphic vector bundle $\Vcal_I(\lambda)$ on $\shgx_K$, modeled on the representation $\Ind_{\mathbf{B}}^{\mathbf{P}}(\lambda)$, whose sections are called automorphic forms of weight $\lambda$ and level $K$. Assume that $\shgx_{K}$ has good reduction at a prime $p$, i.e that $K$ can be written $K=K_p K^p$, where $K_p\subset \mathbf{G}(\QQ_p)$ is hyperspecial and $K^p\subset \mathbf{G}(\AA_f^p)$ is compact open. In this case, Kisin (\cite{Kisin-Hodge-Type-Shimura}) and Vasiu (\cite{Vasiu-Preabelian-integral-canonical-models}) have constructed a canonical model $\Sscr_K$ over $\Ocal_{\mathbf{E}_v}$ for any place $v|p$ in $\mathbf{E}$. Write $S_K\colonequals \Sscr_K\otimes_{\Ocal_{\mathbf{E}_v}}\overline{\FF}_p$. Let also $G$ be the reductive $\FF_p$-group obtained as the special fiber of a reductive $\ZZ_p$-model of $\mathbf{G}_{\QQ_p}$ (it exists since $K_p$ is hyperspecial). For any $\lambda\in X^*(\mathbf{T})$, the automorphic vector bundle $\Vcal_I(\lambda)$ extends to $\Sscr_K$.

Aiming at generalizing Diamond--Kassaei's result (1) to other Hodge-type Shimura varieties, we ask the following: Given a field $F$ which is an $\Ocal_{\mathbf{E}_v}$-algebra, for which $\lambda\in X^*(T)$ is the space $H^0(\Sscr_K\otimes_{\Ocal_{\mathbf{E}_v}}\otimes F,\Vcal_I(\lambda))\neq 0$ nonzero ? In other words, we wish to understand the set
\begin{equation}\label{intro-CK}
C_K(F) = \{\lambda\in X^*(\mathbf{T}) \ \mid \ H^0(\Sscr_K\otimes_{\Ocal_{\mathbf{E}_v}}\otimes F,\Vcal_I(\lambda))\neq 0\}.
\end{equation}
This set is a subcone of $X^*(\mathbf{T})$ (i.e an additive submonoid containing $0$). It depends on the level $K$, but its saturated cone $\langle C_K(F)\rangle$ does not. Here, the saturated cone $\langle C\rangle$ of a cone $C\subset X^*(\mathbf{T})$ is the set of $\lambda\in X^*(\mathbf{T})$ such that some positive multiple of $\lambda$ lies in $C$. For $F=\CC$, the set $\langle C_K(\CC)\rangle$ is conjectured to coincide with the Griffiths-Schmid cone
\begin{equation}
C_{\GS}=\left\{ \lambda\in X^{*}(\mathbf{T}) \ \relmiddle| \
\parbox{6cm}{
$\langle \lambda, \alpha^\vee \rangle \geq 0 \ \textrm{ for }\alpha\in \Phi_{\mathbf{L},+}, \\
\langle \lambda, \alpha^\vee \rangle \leq 0 \ \textrm{ for }\alpha\in \Phi_+ \setminus \Phi_{\mathbf{L},+}$}
\right\}.
\end{equation}
Here, $\Phi_+$ is the set of positive $\mathbf{T}$-roots with resepct to the opposite Borel subgroup of $\mathbf{B}$, and $\Phi_{\mathbf{L},+}$ is the set of positive roots in $\mathbf{L}$. The inclusion $\langle C_K(\CC)\rangle \subset C_{\GS}$ is proved in \cite{Goldring-Koskivirta-GS-cone}. In this paper, we are interested in the case $F=\overline{\FF}_p$. Zhang has shown that there is a smooth map $\zeta\colon S_K\to \GZip^\mu$, where $\GZip^\mu$ is the stack of $G$-zips of Moonen--Wedhorn and Pink--Wedhorn--Ziegler (\cite{Moonen-Wedhorn-Discrete-Invariants, Pink-Wedhorn-Ziegler-zip-data,PinkWedhornZiegler-F-Zips-additional-structure}). The fibers of $\zeta$ are called the Ekedahl--Ort strata of $S_K$. Here $\mu\colon \GG_{\textrm{m},\overline{\FF}_p}\to G_{\overline{\FF}_p}$ is a cocharacter derived from the Shimura datum. The map $\zeta$ is also surjective by \cite[Corollary 3.5.3(1)]{Shen-Yu-Zhang-EKOR}. 

The vector bundle $\Vcal_I(\lambda)$ also exists on $\GZip^\mu$, and its pullback via $\zeta$ coincides with the automorphic vector bundle $\Vcal_I(\lambda)$ on the special fiber $S_K$. Hence, similarly to $C_K(\overline{\FF}_p)$, it is natural to define a cone $C_{\zip}$ as the set of $\lambda\in X^*(T)$ such that $H^0(\GZip^\mu,\Vcal_I(\lambda))\neq 0$. Since $\zeta\colon S_K\to \GZip^\mu$ is surjective, one has by pullback an obvious inclusion $C_{\zip}\subset C_K(\overline{\FF}_p)$. We conjectured in \cite{Goldring-Koskivirta-global-sections-compositio}:
\begin{conj1}\label{conj-intro}
One has an equality $\langle C_K(\overline{\FF}_p)\rangle = \langle C_{\zip}\rangle$.
\end{conj1}
We may also consider the pair $(S^{\Sigma}_K,\zeta^{\Sigma})$ where $\zeta^{\Sigma}$ is the extension of $\zeta$ to the toroidal compactification $S^{\Sigma}_{K}$ (see \S\ref{sec-tor}). In the setting of Hilbert--Blumenthal Shimura varieties, Diamond--Kassaei's result (1) says that the set $\langle C_K(\overline{\FF}_p)\rangle$ is generated by the weights of partial Hasse invariants. For a general connected reductive $\FF_p$-group $G$, partial Hasse invariants were defined in \cite{Goldring-Koskivirta-Strata-Hasse, Goldring-Koskivirta-global-sections-compositio}, and studied in detail in \cite{Imai-Koskivirta-partial-Hasse}. We denote by $C_{\Hasse}$ the subcone of $X^*(T)$ generated by their weights (see Definition \ref{definition-CHasse} for a precise definition). Hence, we could ask if $\langle C_{K}(\overline{\FF}_p)\rangle = \langle C_{\Hasse} \rangle$ holds more generally. However, we cannot expect such an equality in general. Indeed, denote by $(B,T)$ be a Borel pair of $G$ such that $\mu$ factors through $T$, and by $L\subset G_{\overline{\FF}_p}$ the Levi subgroup centralizing the cocharacter $\mu\colon \GG_{\textrm{m},\overline{\FF}_p}\to G_{\overline{\FF}_p}$. Let $\Delta_L$ denote the set of simple roots of $L$ (with respect to the opposite Borel of $B$). Let $W_L\colonequals W(L,T)$ be the Weyl group of $L$ and write $w_{0,L}\in W_L$ for its longest element. We showed in joint work with Imai (\cite[Theorem 4.3.1]{Goldring-Imai-Koskivirta-weights}) that the equality $\langle C_{\zip} \rangle = \langle C_{\Hasse} \rangle$ holds if and only if $L$ is defined over $\FF_p$ and the Frobenius acts on $\Delta_L$ by $-w_{0,L}$. Since we always have $C_{\Hasse}\subset C_{\zip}\subset C_K(\overline{\FF}_p)$, the cone $\langle C_K(\overline{\FF}_p)\rangle$ cannot coincide with $\langle C_{\Hasse} \rangle$ unless this condition is satisfied.

\bigskip

We now explain the results of the present paper. First of all, we prove Conjecture 1 for several Shimura varieties. More precisely, we consider any $\overline{\FF}_p$-scheme $S$ endowed with a map $\zeta\colon S\to \GZip^\mu$ satisfying certain regularity assumptions (see \S\ref{sec-cone-conj} for details). For example, the toroidal compactification $S^{\Sigma}_K$ endowed with the extension of Zhang's map $\zeta^\Sigma\colon S^{\Sigma}_K\to \GZip^\mu$ satisfies these assumptions. Note also that by the Koecher principle, the global sections of $\Vcal_I(\lambda)$ over $S_K$ and $S^{\Sigma}_K$ can be identified. For any such pair $(S,\zeta)$, we may define a subset $C_S\subset X^*(T)$ as the set of $\lambda\in X^*(T)$ such that $H^0(S,\Vcal_I(\lambda))\neq 0$, similarly to $C_K(\overline{\FF}_p)$. For two reductive $\FF_p$-groups $G$ and $G'$ endowed with cocharacters $\mu\colon \GG_{\textrm{m},\overline{\FF}_p}\to G_{\overline{\FF}_p}$ and $\mu'\colon \GG_{\textrm{m},\overline{\FF}_p}\to G'_{\overline{\FF}_p}$, let us say that $(G,\mu)$ and $(G',\mu')$ are equivalent if $(G^{\rm ad},\mu^{\rm ad})=(G'^{\rm ad},\mu'^{\rm ad})$.

\begin{thm1}[Theorems \ref{thm-conjSp6}, \ref{thm-conjGL31}, \ref{thmGL22-conj}, \ref{thm-U31-conj}] Suppose $(G,\mu)$ is one of the following (up to equivalence) :
\begin{definitionlist}
    \item $G=\GSp(6)_{\FF_p}$, $\mu$ is minuscule and $p\geq 5$,
    \item $G=\GL_{4,\FF_p}$ and $\mu$ is minucule,
    \item $G=\GU(3)_{\FF_p}$ and $\mu$ is minuscule,
    \item $G=\GU(4)_{\FF_p}$, $\mu$ is minuscule, and the type of $\mu$ is not $(2,2)$.
    \end{definitionlist}
Then we have $\langle C_{\zip}\rangle = \langle C_S \rangle$.
\end{thm1}
In the case when $G=\GSp(4)$ and $G=\GL_{3,\FF_p}$, the conjecture was already proved in \cite[Theorem 5.1.1]{Goldring-Koskivirta-global-sections-compositio}. Theorem 1(a) shows Conjecture 1 for the special fiber of Siegel-type modular varieties $\Acal_{3,\overline{\FF}_p}$. Theorem 1 (b)--(d) show Conjecture 1 for the special fiber of unitary Shimura varieties attached to $\GU(r,s)$ with $r+s\leq 4$ (at a split or inert prime), with one case missing: that of the reduction at an inert prime of a Shimura variety attached to $\GU(2,2)$ (for which we could not verify the conjecture). See \S\ref{sec-unit-Shim} for more details on unitary Shimura varieties. In particular, we obtain a vanishing result for the cohomology $H^0(S_K,\Vcal_I(\lambda))$: we deduce that this space is zero for all $\lambda$ in the complement of $\langle C_{\zip} \rangle$. The cone $\langle C_{\zip} \rangle$ is given explicitly in each case in \S \ref{sec-symp}, \S\ref{sec-GLn} and \S\ref{sec-unit}.

\bigskip

Our second main result concerns the generalization of Diamond-Kassaei's divisibility result (2) by partial Hasse invariants. In each case appearing in Theorem 1, and each possible type of cocharacter $\mu$, there is a similar divisibility result. To explain it, we first need to explain the notion of divisibility. Denote by $\GF^\mu$ the stack of zip flags, defined in \cite[Part 1, \S2]{Goldring-Koskivirta-Strata-Hasse}. It classifies $G$-zips endowed with a compatible $B$-torsor. Similarly, we can define the flag space $\Flag(S)$ of any $(S,\zeta)$ as above. When $S$ is a Siegel-type Shimura variety, the flag space $\Flag(S)$ was defined and studied by Ekedahl--Van der Geer in \cite{Ekedahl-Geer-EO}. In this case, it classifies principally polarized abelian varieties $(A,\chi)$ endowed with a full symplectic flag in $H^1_{\rm dR}(A)$. In general, there is a natural projection $\pi\colon \Flag(S)\to S$ and for each $\lambda\in X^*(T)$ a natural line bundle $\Vcal_{\flag}(\lambda)$ such that $\pi_{*}(\Vcal_{\flag}(\lambda))=\Vcal_I(\lambda)$. Hence, we may identify any $f\in H^0(S,\Vcal_I(\lambda))$ with a section $f_{\flag}\in H^0(\Flag(S),\Vcal_{\flag}(\lambda))$. Then, we say that a vector-valued section $f$ is divisible by another section $g$ if $f_{\flag}$ is divisible by $g_{\flag}$ (as sections of line bundles). Furthermore, $\GF^\mu$ and $\Flag(S)$ are naturally stratified. The codimension $1$ strata in $\GF^\mu$ are of the form $(\Fcal_{w_0 s_\alpha})_{\alpha\in \Delta}$, where $s_\alpha$ is the reflection attached to $\alpha$, $\Delta$ is the set of simple roots and $w_0\in W$ is the longest element of $W$. For each $\alpha\in \Delta$, there is a partial Hasse invariant $\Ha_\alpha\in H^0(\GF^\mu,\Vcal_{I}(\lambda_\alpha))$ for some $\lambda_\alpha\in X^*(T)$, whose vanishing locus is precisely $\overline{\Fcal}_{w_0s_\alpha}$. Such sections were studied in detail in \cite{Imai-Koskivirta-partial-Hasse} by Imai and the second-named author and were called "(flag) partial Hasse invariants". Generalizing (2), we show that for certain simple roots $\alpha\in \Delta$, an automorphic form whose weight is "close" to $\lambda_\alpha$ is automatically divisible by $\Ha_\alpha$. There are two restrictions on the roots $\alpha$ which admit such divisibility results:
\begin{Alist}
    \item The weight $\lambda_\alpha$ generates an extremal ray of the cone $\langle C_{\zip} \rangle$.
    \item $\lambda_{\alpha}$ lies in the complement of $C_{\GS}$.
\end{Alist}
For any pair $(G,\mu)$ that we have checked, there always exists roots $\alpha\in \Delta$ satisfiying the above conditions, although most roots do not satisfy them in general. We explain in \S\ref{sec-sympn3} the necessity for condition (A). We impose Condition (B) for more empirical reasons. Indeed, in the case of Hilbert--Blumenthal Shimura varieties, the result of Diamond--Kassaei gives a non-trivial divisibility result only when $\lambda_\alpha$ lies outside of $C_{\GS}=(\ZZ_{\leq 0})^n$ (note that we have a sign convention different from \cite{arxiv-Diamond-Kassaei-cone-minimal}). Similarly, in all examples considered in this paper, only roots $\alpha$ satisfying (A) and (B) admit divisibility results.

\begin{thm2}[Theorems \ref{thm-divSp6}, \ref{thm-divGL31}, \ref{thm-divGU21inert}, \ref{thm-U31-inert-div}]
We make the same assumptions as in Theorem 1. Let $\alpha\in \Delta$ be a root satisfying (A) and (B). Then there exists an (explicit) subcone $V_\alpha\subset X^*(T)$ which is a neighborhood of $\lambda_\alpha$ such that any section $f\in H^0(S,\Vcal_I(\lambda))$ of weight $\lambda\in V_\alpha$ is divisible by $\Ha_\alpha$.
\end{thm2}

The result also holds in the cases $\GSp(4)_{\FF_p}$ and $\GL_{3,\FF_p}$ considered in \cite{Goldring-Koskivirta-global-sections-compositio}, as explained in Theorems \ref{thm-Sp4-div} and  \ref{thm-GL21-div}. By "neighborhood of $\lambda_\alpha$", we mean specifically that the $\RR_{\geq0}$-cone $V_{\alpha,\RR_{\geq 0}}$ of linear combinations with nonnegative real coefficients is a neighborhood of $\lambda_\alpha$ in $X^*(T)\otimes \RR$. As an example, we give a sample result in the case $G=\Sp(6)$ and $\mu$ of Siegel-type (the case $G=\GSp(6)$ is completely similar). In this case, we identify $X^*(T)=\ZZ^3$ and write the simple roots as $\alpha_1=e_1-e_2$, $\alpha_2=e_2-e_3$, $\beta=2e_3$ where $(e_1,e_2,e_3)$ is the canonical basis of $\ZZ^3$. Let $(S,\zeta)$ be any pair satisfying Assumption \ref{assume} of \S\ref{sec-cone-conj}. The only simple root satisfying (A) and (B) is $\alpha_1$. The corresponding partial Hasse invariant $\Ha_{\alpha_1}$ has weight $\lambda_{\alpha_1}=(1,0,-p)$. The neighborhood $V_{\alpha_1}$ is given explicitly in the following theorem.

\begin{thm3}[Theorem \ref{thm-divSp6}]\label{thm-divSp6-intro}
Assume $p\geq 5$. Let $f\in H^0(\Flag(S),\Vcal_{\flag}(\lambda))$ and suppose that $\lambda=(a_1,a_2,a_3)\in \ZZ^3$ satisfies $p^2a_1+pa_2+a_3>0$. Then $f$ is divisible by the partial Hasse invariant $\Ha_{\alpha_1}$.
\end{thm3}

We do not know if this result holds for $p=2,3$. We illustrate such divisibility results for each pair $(G,\mu)$ in Figures 1, 2, 4, 5, 6.

\vspace{0.8cm}

\noindent
{\bf Acknowledgements.}
This work was supported by JSPS KAKENHI Grant Number 21K13765. W.G. thanks the Knut \& Alice Wallenberg Foundation for its support under grants KAW 2018.0356 and Wallenberg Academy Fellow KAW 2019.0256, and the Swedish Research Council for its support under grant \"AR-NT-2020-04924.

\section{Automorphic forms on $\GZip^\mu$}

In this section, we recall results from \cite{Koskivirta-automforms-GZip, Imai-Koskivirta-vector-bundles, Goldring-Imai-Koskivirta-weights} regarding automorphic forms on the stack of $G$-zips. This is a purely group-theoretical setting, but it gives intuition about what one can expect for usual automorphic forms in characteristic $p$.

\subsection{Notation}\label{subsec-notation}

Throughout the paper, $p$ is a prime number, $q$ is a power of $p$ and $\FF_q$ is the finite field with $q$ elements. We write $k=\overline{\FF}_q$ for an algebraic closure of $\FF_q$. The notation $G$ will always denote a connected reductive group over $\FF_q$. For a $k$-scheme $X$, we denote by $X^{(q)}$ its $q$-th power Frobenius twist and by $\varphi \colon X\to X^{(q)}$ its relative Frobenius morphism. Write $\sigma \in \Gal(k/\FF_q)$ for the $q$-power Frobenius. We will always write $(B,T)$ for a Borel pair of $G$ defined over $\FF_q$. We do not assume that $T$ is split over $\FF_q$. Let $B^+$ be the Borel subgroup of $G$  opposite to $B$ with respect to $T$ (i.e. the unique Borel subgroup $B^+$ of $G$ such that $B^+\cap B=T$). We use the following notations:
\begin{bulletlist}
\item As usual, $X^*(T)$ (resp. $X_*(T)$) denotes the group of characters (resp. cocharacters) of $T$. The group $\Gal(k/\FF_q)$ acts naturally on these groups. Let $W=W(G_k,T)$ be the Weyl group of $G_k$. Similarly, $\Gal(k/\FF_q)$ acts on $W$. Furthermore, the actions of $\Gal(k/\FF_q)$ and $W$ on $X^*(T)$ and $X_*(T)$ are compatible in a natural sense.
\item $\Phi\subset X^*(T)$: the set of $T$-roots of $G$.
\item $\Phi_+\subset \Phi$: the system of positive roots with respect to $B^+$ (i.e. $\alpha \in \Phi_+$ when the $\alpha$-root group $U_{\alpha}$ is contained in $B^+$). This convention may differ from other authors. We use it to match the conventions of previous publications \cite{Goldring-Koskivirta-Strata-Hasse}, \cite{Koskivirta-automforms-GZip}.
\item $\Delta\subset \Phi_+$: the set of simple roots. 
%We put $\Delta_-=-\Delta$.
\item For $\alpha \in \Phi$, let $s_\alpha \in W$ be the corresponding reflection. The system $(W,\{s_\alpha \mid \alpha \in \Delta\})$ is a Coxeter system. 
We write $\ell  \colon W\to \NN$ for the length function, and $\leq$ for the Bruhat order on $W$. Let $w_0$ denote the longest element of $W$.
\item For a subset $K\subset \Delta$, let $W_K$ denote the subgroup of $W$ generated by $\{s_\alpha \mid \alpha \in K\}$. Write $w_{0,K}$ for the longest element in $W_K$.
\item Let ${}^KW$ (resp. $W^K$) denote the subset of elements $w\in W$ which have minimal length in the coset $W_K w$ (resp. $wW_K$). Then ${}^K W$ (resp. $W^K$) is a set of representatives of $W_K\backslash W$ (resp. $W/W_K$). The map $g\mapsto g^{-1}$ induces a bijection ${}^K W\to W^K$. The longest element in the set ${}^K W$ (resp. $W^K$) is $w_{0,K} w_0$ (resp. $w_0 w_{0,K}$).
\item $X_{+}^*(T)$ denotes the set of dominant characters, i.e. characters $\lambda\in X^*(T)$ such that $\langle \lambda,\alpha^\vee \rangle \geq 0$ for all $\alpha \in \Delta$.
\item Let $P\subset G_k$ be a parabolic subgroup containing $B$ and let $L\subset P$ be the unique Levi subgroup of $P$ containing $T$. Then we define a subset $I_P\subset \Delta$ as the unique subset such that $W(L,T)=W_{I_P}$. It is the set of simple roots of $L$ with respect to $B\cap L$. For an arbitrary parabolic subgroup $P\subset G_k$ containing $T$, we define $I_P\subset \Delta$ as $I_P \colonequals I_{P'}$ where $P'$ is the unique conjugate of $P$ containing $B$.
\item For a parabolic $P\subset G_k$, write $\Delta^P \colonequals \Delta \setminus I_P$.
\item For a subset $I\subset \Delta$, let $X_{+,I}^*(T)$ denote the set of characters $\lambda\in X^*(T)$ such that $\langle \lambda,\alpha^\vee \rangle \geq 0$ for all $\alpha \in I$. We call them $I$-dominant characters. When $P=BL$ and $I=I_P$, we also call such characters $L$-dominant.
\end{bulletlist}

\subsection{$G$-zips and $G$-zip flags}

\subsubsection{Definitions}\label{sec-zip-def}

A zip datum is a tuple $(G,P,Q,L,M,\varphi)$, where $G$ is a connected reductive group over $\FF_q$, $P,Q$ are parabolic subgroups of $G_k$ ($k=\overline{\FF}_q$), with respective Levi subgroups $L\subset P$ and $M\subset Q$ satisfying $M=L^{(q)}$. Finally, $\varphi\colon L\to M$ is  the $q$th power Frobenius homomorphism. For applications to Shimura varieties, we always take $q=p$. If $G$ is a connected, reductive group over $\FF_q$ and $\mu\colon \GG_{\textrm{m},k}\to G_k$ is a cocharacter, we call $(G,\mu)$ a cocharacter datum over $\FF_q$. We can attach to $(G,\mu)$ a zip datum $\Zcal_\mu$ as explained in \cite[\S 2.2.2]{Imai-Koskivirta-vector-bundles}. We recall the construction. First, $\mu$ defines a pair of opposite parabolics $P_{\pm}(\mu)$, where  $P_+(\mu)(k)$ (resp. $P_-(\mu)(k)$) consists of the elements $g\in G(k)$ such that the map 
\begin{equation}\label{eq-Pmu}
\GG_{\mathrm{m},k} \to G_{k}; \  t\mapsto\mu(t)g\mu(t)^{-1} \quad (\textrm{resp. } t\mapsto\mu(t)^{-1}g\mu(t))    
\end{equation}
extends to a regular map $\AA_{k}^1\to G_{k}$. The centralizer of $\mu$ is $L(\mu)=P_+(\mu)\cap P_-(\mu)$. Then, define $P\colonequals P_-(\mu)$, $Q\colonequals (P_+(\mu))^{(q)}$, $L\colonequals L(\mu)$ and $M\colonequals L^{(q)}$. Let $\varphi\colon L\to M$ be the Frobenius homomorphism. The tuple $\Zcal_\mu=(G,P,L,Q,M,\varphi)$ is called the zip datum attached to $(G,\mu)$. We will only consider zip data arising in this way. 

For a zip datum $\Zcal=(G,P,Q,L,M,\varphi)$, the zip group of $\Zcal$, denoted by $E$, is the subgroup of $P\times Q$ defined by:
\begin{equation}\label{zipgroup}
E \colonequals \{(x,y)\in P\times Q \mid  \varphi(\theta^P_L(x))=\theta^Q_M(y)\}.
\end{equation}
Here, $\theta^P_L\colon P\to L$ is the map that sends $x\in P$ to the unique element $\overline{x}\in L$ such that $x=\overline{x}u$ with $u \in R_{\textrm{u}}(P)$, where $R_{\textrm{u}}(P)$ is the unipotent radical of $P$ (and similarly for $\theta^Q_M$). Moonen--Wedhorn (\cite{Moonen-Wedhorn-Discrete-Invariants}) and Pink--Wedhorn--Ziegler (\cite{PinkWedhornZiegler-F-Zips-additional-structure,Pink-Wedhorn-Ziegler-zip-data}) defined the stack of $G$-zips of type $\mu$, denoted by $\GZip^\mu$. It is the quotient stack:
\begin{equation}
    \GZip^\mu=\left[E\backslash G_k \right].
\end{equation}
where $E$ acts on $G$ by $(x,y)\cdot g\colonequals xgy^{-1}$ for all $(x,y)\in E$ and all $g\in G$.

\subsubsection{Parametrization of $E$-orbits}
Let $(G,\mu)$ be a cocharacter datum over $\FF_q$ and $\Zcal=\Zcal_\mu=(G,P,L,Q,M,\varphi)$ the attached zip datum. We assume that there exists a Borel pair $(B,T)$ such that $B,T$ are defined over $\FF_q$ and $B\subset P$. We can always change $\mu$ to a conjugate to ensure that such a Borel pair exists (see \cite[\S 2.2.3]{Imai-Koskivirta-vector-bundles}). Set $I\colonequals I_P$ and $J\colonequals I_Q$ and define \begin{equation}\label{defz}
    z\colonequals \sigma(w_{0,I})w_0=w_0 w_{0,J}.
\end{equation}
We give the parametrization of $E$-orbits in $G_k$ following \cite{Pink-Wedhorn-Ziegler-zip-data}. For $w\in W$, fix a representative $\dot{w}\in N_G(T)$, such that $(w_1w_2)^\cdot = \dot{w}_1\dot{w}_2$ whenever $\ell(w_1 w_2)=\ell(w_1)+\ell(w_2)$ (this is possible by choosing a Chevalley system, \cite[ XXIII, \S6]{SGA3}). For $w\in W$, define $G_w$ as the $E$-orbit of $\dot{w}\dot{z}^{-1}$. If no confusion occurs, we write $w$ instead of $\dot{w}$. For $w,w'\in {}^I W$, write $w'\preccurlyeq w$ if there exists $w_1\in W_I$ such that $w'\leq w_1 w \sigma(w_1)^{-1}$. This defines a partial order on ${}^I W$ (\cite[Corollary 6.3]{Pink-Wedhorn-Ziegler-zip-data}).

\begin{theorem}[{\cite[Theorem 7.5, Theorem 11.2]{Pink-Wedhorn-Ziegler-zip-data}}] \label{thm-E-orb-param}
We have two bijections:
\begin{align} \label{orbparam}
{}^I W &\longrightarrow \{ \textrm{$E$-orbits in $G_k$} \}, \quad \ w\mapsto G_w \\  
\label{dualorbparam} W^J &\longrightarrow \{ \textrm{$E$-orbits in $G_k$}\}, \quad \ w\mapsto G_w. 
\end{align}
For $w\in {}^I W$, one has $\dim(G_w)= \ell(w)+\dim(P)$ and the Zariski closure of $G_w$ is 
\begin{equation}\label{equ-closure-rel}
\overline{G}_w=\bigsqcup_{w'\in {}^IW,\  w'\preccurlyeq w} G_{w'}.
\end{equation}
There is a similar formula for $W^J$. 
\end{theorem}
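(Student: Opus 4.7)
The plan is to analyze the $E$-action on $G_k$ by combining the Bruhat decomposition with the Lang--Steinberg theorem, reducing the question to Weyl-group combinatorics. First I would observe that $E$ fits in a short exact sequence
\begin{equation*}
1 \to R_{\textrm{u}}(P) \times R_{\textrm{u}}(Q) \to E \to L \to 1,
\end{equation*}
where the surjection sends $(x,y) = (lu, \varphi(l)v)$ to $l$. The induced $R_{\textrm{u}}(P) \times R_{\textrm{u}}(Q)$-action on $G_k$ by left and right translation, combined with the Bruhat decomposition $G_k = \bigsqcup_{w \in W} B \dot{w} B$ --- refined to $G_k = \bigsqcup_{w \in {}^I W} P \dot{w} B$ since $B \subset P$ --- shows that every $E$-orbit meets at least one coset of the form $T\dot{w}\dot{z}^{-1}$ with $w \in {}^I W$. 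The shift by $\dot{z}^{-1}$ is precisely what aligns the decomposition with $Q$ on the right, since $z = w_0 w_{0,J}$ is the longest element of $W^J$.

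Next I would show that within $T\dot{w}\dot{z}^{-1}$ the residual $L$-action, which after conjugation by $\dot{w}\dot{z}^{-1}$ takes the form of a $\varphi$-twisted conjugation $l \cdot t = l\, t\, \varphi(l)^{-1}$ on a torus quotient, is transitive by the Lang--Steinberg theorem (applicable since $k = \overline{\FF}_q$). This identifies each $E$-orbit with a unique $w \in {}^I W$, giving the first bijection \eqref{orbparam}. The second bijection with $W^J$ follows either by symmetry, interchanging the roles of $P$ and $Q$, or by the explicit rule $w \mapsto z^{-1}w$ suitably restricted. The dimension formula $\dim G_w = \ell(w) + \dim P$ then follows from $\dim E = \dim L + 2 \dim R_{\textrm{u}}(P)$ together with a computation showing that the stabilizer of $\dot{w}\dot{z}^{-1}$ in $E$ has dimension $\dim R_{\textrm{u}}(P) - \ell(w)$, which one can verify by identifying the stabilizer with the fixed points of an affine endomorphism on a suitable subgroup of $B$.

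For the closure relation, the inclusion $\bigsqcup_{w' \preccurlyeq w} G_{w'} \subseteq \overline{G_w}$ would follow from two ingredients: first, the classical fact that $\overline{B\dot{u} B} \supseteq B \dot{w'} B$ whenever $w' \leq u$ in the Bruhat order; and second, the observation that for $w_1 \in W_I$ the pair $(\dot{w}_1, \varphi(\dot{w}_1))$ lies in $E$, so conjugating the representative by it transports $\dot{w}\dot{z}^{-1}$ to a representative of the orbit indexed by $w_1 w \sigma(w_1)^{-1}$. The reverse inclusion would then reduce to a dimension count once the irreducibility of each $G_w$ is established. The hard part will be showing that $\preccurlyeq$ defines a partial order at all --- antisymmetry and transitivity are both non-obvious --- and that the $W_I$-conjugation degenerations together with the Bruhat order really do exhaust every closure relation. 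This is a combinatorial statement whose proof requires a delicate induction on $\ell(w)$ and careful tracking of how the Bruhat order interacts with $\sigma$-twisted conjugation by $W_I$.
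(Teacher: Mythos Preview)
The paper does not prove this theorem; it is quoted as background from \cite[Theorem 7.5, Theorem 11.2]{Pink-Wedhorn-Ziegler-zip-data} without argument. So there is no ``paper's own proof'' to compare against.

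That said, your sketch is a faithful outline of the strategy in the cited reference. The decomposition of $E$ via the exact sequence, the reduction to Weyl-group cosets using the Bruhat decomposition, and the use of Lang--Steinberg to kill the residual torus ambiguity are exactly the ingredients Pink--Wedhorn--Ziegler use (their framework is slightly more general, working with arbitrary ``connected algebraic zip data'' and an isogeny in place of Frobenius, but the argument specializes to what you wrote). Your stabilizer-dimension count is correct. You are also right that the closure relation \eqref{equ-closure-rel} is the substantial part: in the reference this occupies several sections and proceeds by an induction reducing to smaller Levi subgroups rather than a direct induction on $\ell(w)$, so your last paragraph understates the work involved, but you have correctly flagged it as the crux.
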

For $w\in {}^I W\cup W^J$, we write $\Xcal_w\colonequals[E\backslash G_w]$. It is a smooth, locally closed substack of $\GZip^\mu$.

\subsubsection{The flag space}\label{sec-zip-flag}
The stack of $G$-zip flags introduced in \cite[Part 1, \S2]{Goldring-Koskivirta-Strata-Hasse} is the quotient stack
\begin{equation}
    \GF^\mu=\left[E'\backslash G_k \right]
\end{equation}
where $E'\colonequals E\cap (B\times G)$. There is a natural projection $\pi\colon \GF^\mu\to \GZip^\mu$ with fibers isomorphic to $P/B$. Define a group $\widehat{E}_B$ as follows. It is the subgroup $\widehat{E}_B\subset B\times {}^zB$ of pairs $(x,y)\in B\times {}^zB$ such that $\varphi(\theta^B_T(x))=\theta^{{}^zB}_T(y)$. Concretely, $\widehat{E}_B$ consists of elements $(tu,\varphi(t)u')$ with $t\in T$ and $(u,u')\in R_{\textrm{u}}(B)\times R_{\textrm{u}}({}^zB)$. Note that $\widehat{E}_B$ is the zip group of the zip datum $\Zcal^B\colonequals (G,B,{}^zB,T,T,\varphi)$. Clearly, one has inclusions
\begin{equation}
    E'\subset \widehat{E}_B \subset B\times {}^zB.
\end{equation}
Hence, we obtain natural projection maps $\Psi$ and $\gamma$ as follows:
\begin{equation}\label{mapGFcompo}
\xymatrix@1{
 \GF^\mu \ar[r]^-{\Psi} &  \left[ \widehat{E}_B \backslash G_k \right] \ar[r]^-{\gamma} & \left[(B\times{}^zB) \backslash G_k \right] .
}
\end{equation}
We call $\left[(B\times{}^zB) \backslash G_k \right]$ the untwisted Schubert stack. It is sometimes convenient to twist by the element $z$ by composing with the isomorphism $\ad(z)\colon \left[(B\times{}^zB) \backslash G_k \right]\to \left[(B\times B) \backslash G_k \right]$ induced by $G_k\to G_k$, $x\mapsto xz$. The composition $\ad(z)\circ \gamma \circ \Psi$ gives a smooth surjective map
\begin{equation}\label{psimap}
    \psi\colon \GF^\mu \to \Sbt\colonequals [(B\times B)\backslash G_k].
\end{equation}
We call $\Sbt$ the (twisted) Schubert stack. The underlying topological space of $\Sbt$ is isomorphic to $W$, endowed with the topology induced from the Bruhat--Chevalley order. For $w\in W$, define
\begin{equation}\label{Sbtw}
    \Sbt_w\colonequals \left[ (B\times B)\backslash BwB \right].
\end{equation}
The Zariski closure $\overline{BwB}$ is normal (\cite[Theorem 3]{Ramanan-Ramanathan-projective-normality}) and coincides with the union of $Bw'B$ for $w'\leq w$. Define the flag strata of $\GF^\mu$ as the fibers of the map $\psi$. More precisely, let $w\in W$ and set $F_w=BwBz^{-1}$. Then $F_w$ is the $B\times {}^zB$-orbit of $wz^{-1}$. We define
\begin{equation}\label{def-Cw}
    \Fcal_w\colonequals \left[E' \backslash F_w \right].
\end{equation} This is a smooth, locally closed substack of $\GF^\mu$. The Zariski closure $\overline{\Fcal}_w$ of $\Fcal_w$ is the union of $\Fcal_{w'}$ for $w'\leq w$.

\subsubsection{Automorphic vector bundles on $\GZip^\mu$}\label{sec-automVB-zip}
As explained in \cite[\S 2.4]{Imai-Koskivirta-vector-bundles}, we can attach to any $P$-representation $(V,\rho)$ a vector bundle $\Vcal(\rho)$ on $\GZip^\mu$, by the usual "associated sheaf construction" of \cite[\S5.8]{jantzen-representations}. We will only consider $P$-representations which are trivial on $R_{\textrm{u}}(P)$. For $\lambda \in X^*(T)$, let $(V_I(\lambda),\rho_{I,\lambda})$ be the $P$-representation $\Ind_{B}^P(\lambda)$. It has highest weight $\lambda$ and is trivial on $R_{\textrm{u}}(P)$. Concretely, $V_I(\lambda)$ consists of all regular maps $f\colon P\to \AA^1$ satisfying 
\begin{equation}\label{eqVI}
 f(xb)=\lambda(b)^{-1}f(x)   
\end{equation}
for all $x\in P$ and all $b\in B$. Denote by $\Vcal_I(\lambda)$ the vector bundle attached to $V_I(\lambda)$, and call it an automorphic vector bundle. Its global sections are given by
\begin{equation}\label{H0GZip}
 H^0(\GZip^\mu,\Vcal_I(\lambda))=\left\{f \colon G_k \to V_I(\lambda) \Bigg| \begin{array}{l}
  f(xgy^{-1})=\rho_{I,\lambda}(x) f(g), \\
  \forall (x,y)\in E, \ \forall g\in G_k    
 \end{array}   \right\}.    
\end{equation}
Similarly, we can define a line bundle $\Vcal_{\flag}(\lambda)$ on $\GF^\mu$ such that $\pi_*(\Vcal_{\flag}(\lambda))=\Vcal_I(\lambda)$, as in \cite[\S 2.5.2]{Goldring-Imai-Koskivirta-weights}. Hence, we have an identification
\begin{equation}\label{H0-ident}
    H^0(\GZip^\mu,\Vcal_I(\lambda)) =  H^0(\GF^\mu,\Vcal_{\flag}(\lambda)).
\end{equation}
Concretely, the right-hand side is the following space:
\begin{equation}\label{H0GF}
 H^0(\GF^\mu,\Vcal_{\flag}(\lambda))=\left\{f \colon G_k \to \AA^1 \Bigg| \begin{array}{l}
  f(xgy^{-1})=\lambda(x) f(g), \\
  \forall (x,y)\in E', \ \forall g\in G_k    
 \end{array}   \right\}.
\end{equation}
If $f:G_k\to V_I(\lambda)$ is as in \eqref{H0GZip}, then $g\mapsto f(g)[1]$ lies in the space \eqref{H0GF}, and this defines an isomorphism between these two spaces.

\subsection{Shimura varieties and $G$-zips} \label{subsec-Shimura}
\subsubsection{The map $\zeta$}
Let $\gx$ be a Shimura datum of Hodge-type \cite[2.1.1]{Deligne-Shimura-varieties}. In particular, $\mathbf{G}$ is a connected, reductive group over $\QQ$. Furthermore, $\mathbf{X}$ provides a well-defined $\mathbf{G}(\overline{\QQ})$-conjugacy class of cocharacters $\{\mu\}$ of $\mathbf{G}_{\overline{\QQ}}$. Write $\mathbf{E}=\mathbf{E}(\mathbf{G},\mathbf{X})$ for the reflex field of $\gx$ (i.e. the field of definition of $\{\mu\}$) and $\Ocal_\mathbf{E}$ for its ring of integers. 
Given an open compact subgroup $K \subset \gofaf$, write $\shgx_{K}$ for Deligne's canonical model at level $K$ over $\mathbf{E}$ (see \cite{Deligne-Shimura-varieties}). For $K\subset \mathbf{G}(\AA_f)$ small enough, $\shgx_K$ is a smooth, quasi-projective scheme over $\mathbf{E}$. Assume that $\gx$ is of Hodge-type. Fix a prime number $p$ of good reduction. In particular, $\mathbf{G}_{\QQ_p}$ is unramified, so there exists a reductive $\ZZ_p$-model $\Gcal$, such that $G\colonequals \Gcal\otimes_{\ZZ_p}\mathbb{F}_p$ is connected. For any place $v$ above $p$ in $\mathbf{E}$, Kisin (\cite{Kisin-Hodge-Type-Shimura}) and Vasiu (\cite{Vasiu-Preabelian-integral-canonical-models})  constructed a smooth canonical model $\Sscr_K$ over $\Ocal_{\mathbf{E}_v}$-schemes. Write $S_K\colonequals \Sscr_K\otimes_{\Ocal_{\mathbf{E}_v}} \overline{\FF}_p$. 

For $\mu \in \{\mu\}$, let $\mathbf{P}=\mathbf{P}_-(\mu)$ be the parabolic of $\mathbf{G}_\CC$ defined as in \S\ref{sec-zip-def}. As explained in \cite[\S 2.5]{Imai-Koskivirta-vector-bundles}, we can find $\mu\in \{\mu\}$ which extends to a cocharacter of $\Gcal_{\Ocal_{\mathbf{E}_v}}$. Write again $\mu$ for its special fiber. Then $(G,\mu)$ is a cocharacter datum, and yields a zip datum $(G,P,L,Q,M,\varphi)$ (we always take $q=p$ in the context of Shimura varieties). Zhang (\cite[4.1]{Zhang-EO-Hodge}) constructed a smooth morphism
\begin{equation}\label{zeta-Shimura}
\zeta \colon S_K\to \GZip^\mu. 
\end{equation}
This map is also surjective by \cite[Corollary 3.5.3(1)]{Shen-Yu-Zhang-EKOR}.

\subsubsection{Automorphic vector bundles}
Let $\Pscr$ be the unique parabolic of $\Gcal_{\Ocal_{\mathbf{E}_v}}$ which extends $\mathbf{P}$. Then, we have a commutative diagram of functors
$$\xymatrix{
\Rep_{\overline{\ZZ}_p}(\mathscr{P}) \ar[r]^{\Vcal} \ar[d] & \VB(\Sscr_{K}) \ar[d] \\
\Rep_{\overline{\FF}_p}(P) \ar[r]^{\Vcal} & \VB(S_K).
}$$
The vector bundles of this form on $\Sscr_K$ and $S_K$ are called \emph{automorphic vector bundles} in \cite[III. Remark 2.3]{Milne-ann-arbor}. In particular, let $(\mathbf{B},\mathbf{T})$ be a Borel pair such that $\mathbf{B}\subset \mathbf{P}$ and $\lambda\in X^*(\mathbf{T})$ an $\mathbf{L}$-dominant character. Let $\mathbf{V}_{\mathbf{L}}(\lambda)=H^0(\mathbf{P}/\mathbf{B},\Lcal_\lambda)$ denote the unique irreducible representation of $\mathbf{P}$ over $\overline{\QQ}_p$ of highest weight $\lambda$, where $\Lcal_\lambda$ is the line bundle attached to 
$\lambda$. It admits a natural model over $\overline{\ZZ}_p$, namely $\mathbf{V}_{\mathbf{L}}(\lambda)_{\overline{\ZZ}_p} \colonequals H^0(\mathscr{P}/\mathscr{B},\Lcal_\lambda)$, where $\mathscr{B}$ is the unique Borel of $\Gcal_{\Ocal_{\mathbf{E}_v}}$ extending $\mathbf{B}$. Its reduction modulo $p$ is the $P$-representation $V_I(\lambda)=H^0(P/B,\Lcal_\lambda)$ over $k=\overline{\FF}_p$ as in \S\ref{sec-automVB-zip}. We denote by $\Vcal_I(\lambda)$ the vector bundle on $\Sscr_K$ (resp. $S_K$) attached to $\mathbf{V}_{\mathbf{L}}(\lambda)_{\overline{\ZZ}_p}$ (resp. $V_I(\lambda)$).

\subsubsection{Toroidal compactification} \label{sec-tor}

By \cite[Theorem 1]{Madapusi-Hodge-Tor}, there is a sufficiently fine cone decomposition $\Sigma$ and a toroidal compactification $\Sscr_K^\Sigma$ of $\Sscr_K$ over $\Ocal_{\mathbf{E}_v}$. We fix such a toroidal compactification, and we denote by $S_K^\Sigma$ its special fiber. By \cite[Theorem 6.2.1]{Goldring-Koskivirta-Strata-Hasse}, the map $\zeta\colon S_K\to \GZip^\mu$ extends naturally to a map
\begin{equation}
    \zeta^\Sigma\colon S_K^{\Sigma}\to \GZip^\mu.
\end{equation}
Furthermore, by \cite[Theorem 1.2]{Andreatta-modp-period-maps}, the map $\zeta^\Sigma$ is smooth. Since $\zeta$ is surjective, $\zeta^\Sigma$ is also surjective. For $\lambda\in X^*(T)$, denote by $\Vcal^{\Sigma}_I(\lambda)$ the vector bundle $\zeta^{\Sigma,*}(\Vcal_I(\lambda))$. By construction, $\Vcal^{\Sigma}_I(\lambda)$ coincides with the canonical extension of $\Vcal_I(\lambda)$ to $S_K^{\Sigma}$. We have the following Koecher principle:

\begin{theorem}[{\cite[Theorem 2.5.11]{Lan-Stroh-stratifications-compactifications}}] \label{thm-koecher}
The natural map
\begin{equation}
    H^0(S^{\Sigma}_K,\Vcal^{\Sigma}_I(\lambda)) \to H^0(S_K,\Vcal_I(\lambda)) 
\end{equation}
is an isomorphism, except when $\dim(S_K)=1$ and $S^{\Sigma}_K \setminus S_K\neq \emptyset$.
\end{theorem}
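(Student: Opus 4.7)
The statement is the Koecher principle for automorphic vector bundles on a Hodge-type Shimura variety, and I would prove it by combining the algebraic Hartogs principle with a Fourier-expansion argument at the toroidal boundary. Injectivity of the restriction map is automatic: the sheaf $\Vcal^{\Sigma}_I(\lambda)$ is locally free on the smooth (hence normal) scheme $S^{\Sigma}_K$, and $S_K$ is dense, so any section vanishing on $S_K$ is zero. For surjectivity, fix $f \in H^0(S_K, \Vcal_I(\lambda))$ and try to extend $f$ across the boundary divisor $D := S^{\Sigma}_K \setminus S_K$. By Hartogs on the normal scheme $S^{\Sigma}_K$, it suffices to extend $f$ across each generic point of $D$, since extensions across subsets of codimension at least two come for free.

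The local analysis uses Madapusi's construction of $\Sscr^{\Sigma}_K$. Near each boundary stratum, attached to an admissible rational polyhedral cone $\sigma$, the formal completion of $\Sscr^{\Sigma}_K$ is étale-locally a quotient by an arithmetic group $\Gamma_\sigma$ of a torus embedding of a torus $E$ over a smaller (mixed) Shimura base. The canonical extension $\Vcal^{\Sigma}_I(\lambda)$ is equivariant under $E$ and $\Gamma_\sigma$, so $f$ admits a formal Fourier expansion
\begin{equation}
f \;=\; \sum_{\chi \in X^*(E)} a_\chi \, q^\chi,
\end{equation}
with coefficients $a_\chi$ in a vector bundle over the base. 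Regularity of the extension of $f$ across $D$ is equivalent to $a_\chi = 0$ whenever $\chi$ lies outside the dual cone $\sigma^\vee$. The key positivity input is that when $\dim S_K \geq 2$, the image of $\Gamma_\sigma$ in $\GL(X^*(E))$ contains elements acting with enough positivity to force any $\Gamma_\sigma$-invariant Fourier series to be supported inside $\sigma^\vee$: an orbit of any $\chi \notin \sigma^\vee$ accumulates at infinity in forbidden directions, which is incompatible with $f$ being a regular (not merely formal) section on $S_K$.

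The main obstacle is setting up this local picture uniformly across all boundary strata of a Hodge-type (and not merely PEL-type) Shimura variety. For PEL types one can use the classical Chai--Faltings framework directly, but the Hodge-type case requires Madapusi's mixed-Shimura boundary charts together with the compatibility of the canonical extension with the torus action, which is essentially the content of \cite[Thm.~2.5.11]{Lan-Stroh-stratifications-compactifications}. The one-dimensional exception with nonempty boundary corresponds to (quotients of) the modular curve: there $\Gamma_\sigma$ is finite, the positivity argument degenerates, and modular forms must indeed be checked for holomorphy at the cusps separately from their regularity on the open part.
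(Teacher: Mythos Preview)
The paper does not give its own proof of this theorem: it is stated with a citation to \cite[Theorem~2.5.11]{Lan-Stroh-stratifications-compactifications} and used as a black box, so there is no argument in the paper to compare your proposal against. Your sketch (injectivity by normality and density, surjectivity via Hartogs reduction to codimension-one boundary components, then a Fourier--Jacobi expansion at the toroidal boundary together with the $\Gamma_\sigma$-invariance forcing support in the dual cone) is indeed the standard strategy underlying the Koecher principle, and is essentially how Lan--Stroh proceed in the cited reference, relying on Madapusi's boundary charts in the Hodge-type case.
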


We will only consider Shimura varieties satisfying the condition $\dim(S_K)>1$ or $S^{\Sigma}_K \setminus S_K\neq \emptyset$. We are interested in the set of $\lambda\in X^*(T)$ such that $H^0(S_K,\Vcal_I(\lambda))\neq 0$. Equivalently, we may replace the pair $(S_K,\Vcal_I(\lambda))$ by the pair $(S^{\Sigma}_K,\Vcal^{\Sigma}_I(\lambda))$ by Theorem \ref{thm-koecher}. For each field $F$ which is a $\Ocal_{\mathbf{E}_v}$-algebra, define
\begin{equation}
    C_K(F)\colonequals \{ \lambda\in X^*(\mathbf{T}) \mid H^0(\Sscr_K\otimes_{\Ocal_{\mathbf{E}_v}} F,\Vcal_I(\lambda)) \neq 0 \}.
\end{equation}
If $F\subset F'$, one has $C_K(F)=C_K(F')$ by flat base change along the map $\spec(F')\to \spec(F)$. The main goal of this paper is to study $C_K(\overline{\FF}_p)$ for certain simple Shimura varieties of Hodge-type with good reduction at $p$. The cone $\langle C_K(\CC) \rangle$ is less mysterious, it coincides conjecturally with the Griffiths--Schmid cone (see \S\ref{subsec-subcones}). See also \cite{Goldring-Koskivirta-GS-cone} for details.

\subsection{The zip cone}

\subsubsection{Definition} \label{sec-zipcone-def}
For a cocharacter datum $(G,\mu)$ over $\FF_q$, we defined the zip cone of $(G,\mu)$ in \cite[\S 1.2]{Koskivirta-automforms-GZip} and \cite[\S3]{Goldring-Imai-Koskivirta-weights} as
\begin{equation}
    C_{\zip}\colonequals \{\lambda\in X^*(T) \mid H^0(\GZip^\mu, \Vcal_I(\lambda))\neq 0\}.
\end{equation}
This can be seen as a group-theoretical version of the set $C_K(\overline{\FF}_p)$ in the case of Shimura varieties. Since $V_I(\lambda)=0$ when $\lambda$ is not $I$-dominant, we clearly have $C_{\zip} \subset X_{+,I}^*(T)$. One can see that $C_{\zip}$ is an additive submonoid of $X^*(T)$ containing $0$ (\cite[Lemma 1.4.1]{Koskivirta-automforms-GZip}). An additive monoid containing $0$ will be called a cone. For a cone $C\subset X^*(T)$, define the saturated cone $\langle C \rangle$ as:

\begin{equation}
 \langle C \rangle \colonequals \{\lambda\in X^*(T) \mid \exists N\geq 1, N\lambda \in C\}.   
\end{equation}
We say that $C$ is saturated in $X^*(T)$ if $\langle C\rangle =C$. We denote by $C_{\RR_{\geq 0}}$ the subset of $X^*(T)_\RR$ consisting of all linear combinations of elements of $C$ with nonnegative real coefficients. We define $C_{\QQ_{\geq 0}}$ similarly. Note that $X^*(T)\cap C_{\QQ_{\geq 0}}=\langle C_{\zip} \rangle$.

\subsubsection{Motivation}
Before we explain further properties of $C_{\zip}$, we mention the main conjecture that motivates this article. We consider the special fiber $S_K$ of a Hodge-type Shimura variety, and its associated map $\zeta\colon S_K\to \GZip^\mu$. Since $\zeta$ is surjective, it is in particular dominant, which yields an injection
\begin{equation}
    H^0(\GZip^\mu,\Vcal_I(\lambda)) \subset H^0(S_K,\Vcal_I(\lambda)).
\end{equation}
Therefore, we obtain an inclusion $C_{\zip}\subset C_K(\overline{\FF}_p)$.

\begin{conjecture}\label{conj-Sk}
For any Hodge-type Shimura variety, one has:
\begin{equation}
    \langle C_K(\overline{\FF}_p) \rangle = \langle C_{\zip} \rangle.
\end{equation}
\end{conjecture}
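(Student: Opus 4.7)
The inclusion $\langle C_{\zip} \rangle \subseteq \langle C_K(\overline{\FF}_p) \rangle$ is free: surjectivity of $\zeta$ makes pullback injective on global sections, so any nonzero section on $\GZip^\mu$ remains nonzero on $S_K$. The plan for the reverse inclusion is to show that any $\lambda \notin \langle C_{\zip} \rangle$ forces $H^0(S_K^\Sigma, \Vcal^{\Sigma}_I(\lambda)) = 0$, by iteratively factoring out partial Hasse invariants until the residual weight either enters $\langle C_{\zip} \rangle$ or produces a contradiction. By the Koecher principle (Theorem \ref{thm-koecher}), we may work on the toroidal compactification throughout.

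Concretely, I would pass to the flag cover $\Flag(S_K^\Sigma)$: the identification \eqref{H0-ident} and its Shimura analogue replace any $f \in H^0(S_K^\Sigma, \Vcal^{\Sigma}_I(\lambda))$ by a scalar-valued section $f_{\flag}$ of a line bundle of weight $\lambda$. The Schubert stratification of the twisted Schubert stack $\Sbt$, pulled back along the composition of $\psi$ from \eqref{psimap} with the natural flag analogue of $\zeta^\Sigma$, refines the EO stratification, and its codimension-one closed strata are cut out by the flag partial Hasse invariants $\Ha_\alpha$ of \cite{Imai-Koskivirta-partial-Hasse}, of explicit weight $\lambda_\alpha$. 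The central technical claim is that, for $\lambda \notin \langle C_{\zip} \rangle$, there exists $\alpha \in \Delta$ for which $f_{\flag}$ must vanish along the divisor $\overline{\Fcal}_{w_0 s_\alpha}$ pulled back to the Shimura side, yielding a factorization $f_{\flag} = \Ha_\alpha \cdot g_{\flag}$ with $g_{\flag}$ of weight $\lambda - \lambda_\alpha$. This is precisely the content of the Theorem 2-type divisibility statements announced in the introduction, and should follow from a local calculation at a generic point of the divisor, using smoothness of $\zeta^\Sigma$ to reduce to a representation-theoretic statement on $\GF^\mu$ about the action of the stabilizer of a generic point in $\Sbt_{w_0 s_\alpha}$ on the fiber of $\Vcal_{\flag}(\lambda)$. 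An induction on an integral measure of the distance of $\lambda$ from $\langle C_{\zip} \rangle$ (for instance, a positive integer combination of the numerical violations of the defining inequalities) then closes the argument.

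The principal obstacle is that the partial Hasse invariants need not generate $\langle C_{\zip} \rangle$: by \cite[Theorem 4.3.1]{Goldring-Imai-Koskivirta-weights}, $\langle C_{\Hasse} \rangle = \langle C_{\zip} \rangle$ precisely when $L$ is defined over $\FF_p$ and Frobenius acts on $\Delta_L$ by $-w_{0,L}$. In the non-Hasse-generated cases, $\langle C_{\zip} \rangle$ has extremal rays missed by every $\Ha_\alpha$, and the induction above cannot reduce the weight across such rays. A uniform resolution would require producing, for each such missing extremal ray, a replacement section on $\GZip^\mu$ of that weight whose vanishing locus plays the role of a codimension-one EO closure, together with an analogue of Theorem 2 controlling a full neighborhood of that ray. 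Producing such sections amounts to a substantial piece of the computation of $H^0(\GZip^\mu, \Vcal_I(\lambda))$ at boundary weights, and is exactly what is verified case-by-case in Theorems 1 and 2 under Conditions (A) and (B); carrying this out uniformly in $(G,\mu)$ is, in my view, the principal difficulty separating the special cases proved in the paper from the general conjecture.
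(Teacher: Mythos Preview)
First, note that the statement is a \emph{Conjecture}: the paper does not prove it in general, only for the specific $(G,\mu)$ listed in Theorem~1. So there is no ``paper's own proof'' of the full statement to compare against; what can be compared is your strategy versus the paper's strategy for the cases it does handle.

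Your outline and the paper diverge at the crucial step. You assert that the vanishing of $f_{\flag}$ along a codimension-one flag stratum ``should follow from a local calculation at a generic point of the divisor, using smoothness of $\zeta^\Sigma$ to reduce to a representation-theoretic statement on $\GF^\mu$.'' This is not how the paper proceeds, and there is no such local/representation-theoretic argument available: on $\GF^\mu$ itself the closed stratum $\overline{\Fcal}_{w_0 s_\alpha}$ carries plenty of sections of weights outside $\langle C_{\zip}\rangle$ (it is a quotient of an affine variety), so smoothness of $\zeta^\Sigma$ cannot by itself force vanishing on the Shimura side. The vanishing results that feed into the divisibility theorems (Propositions \ref{prop-564}, \ref{prop-GL31-van}, \ref{propGL22}, \ref{prop-U31-inert-van}) are obtained instead by a \emph{bottom-up} induction on the length of flag strata: the base case is that length-one strata closures are pseudo-complete (Assumption~\ref{assume}(c)), so $C_{S,w}=C_{\Hasse,w}$ there; the inductive step uses a separating system $\EE$ and the intersection-sum cones $C^{+,\EE}_w$ of Definition~\ref{def-intersumcone} to propagate bounds upward via Theorem~\ref{thm-sep-syst}. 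Divisibility is then a \emph{consequence} of these cone bounds, not an input to them.

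Your final paragraph correctly isolates the non-Hasse-type obstruction, but your proposed fix (produce replacement sections on each missing extremal ray and prove divisibility neighborhoods for them) is again not what the paper does. The paper never needs divisibility by the non-Hasse extremal sections (e.g.\ $h_1,h_2$ in the $\Sp(6)$ case); it bounds $C_S$ from above by exhibiting a separating system $\EE$ with $\langle C^{+,\EE}_{w_0}\rangle \subset \langle C_{\zip}\rangle$, a purely combinatorial computation carried out case-by-case in \S5. Whether such an $\EE$ always exists is the open group-theoretic question the paper leaves (see the Question after Theorem~\ref{thm-sep-syst}).
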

Note that $C_K(\overline{\FF}_p)$ highly depends on the choice of the level $K$. Thus, we cannot expect the equality $C_K(\overline{\FF}_p)=C_{\zip}$ on the nose. However, the saturated cone $\langle C_K(\overline{\FF}_p) \rangle$ is independent of the level $K$ by \cite[Corollary 1.5.3]{Koskivirta-automforms-GZip}. For this reason, the above conjecture is not unreasonable. We explain a more general form of the conjecture in section \ref{sec-cone-conj}.

\subsubsection{First properties of $C_{\zip}$}
Define the set of anti-dominant regular characters of $L$ by
\begin{equation}\label{char-L-reg}
X^*_{-}(L)_{\reg} = \{\lambda\in X^*(L) \mid \langle \lambda,\alpha^\vee \rangle<0, \ \forall \alpha\in \Delta^P \}.
\end{equation}
Endow $X_{+,I}^*(T)_{\RR_{\geq 0}}$ with the subspace topology of $X^*(T)_\RR$. Then, $C_{\zip,\RR_{\geq 0}}$ is a neighborhood of $X_{-}^*(L)_{\reg}$ in $X_{+,I}^*(T)_{\RR_{\geq 0}}$ endowed with the real topology induced from $X^*(T)\otimes \RR$ (see \cite[Lemma 4.1.3]{Goldring-Imai-Koskivirta-weights}).

There is an interpretation of $C_{\zip}$ in terms of representation theory. Assume $P$ is defined over $\FF_q$ for simplicity. The Lang torsor morphism $\wp \colon T \to T$, $g\mapsto g\varphi(g)^{-1}$ induces an isomorphism 
\begin{equation}
\wp_* \colon X_*(T)_{\RR} \stackrel{\sim}{\longrightarrow} X_*(T)_{\RR};\ \delta \mapsto \wp \circ \delta  = \delta -q\sigma(\delta).  \label{equ-Plowstar}
\end{equation}
For $\alpha\in \Delta$, define a cocharacter $\delta_\alpha$ by $\delta_\alpha\colonequals \wp_{*}^{-1}(\alpha^\vee)$. For an $L$-representation $V$, define $V^{\Delta^P}_{\geq 0}$ as the direct sum of $T$-weight spaces $V_\nu$ (where $\nu\in X^*(T)$) for those $\nu$ such that $\langle \nu,\delta_\alpha \rangle \geq 0$ for all $\alpha\in \Delta^P$. For example, if $T$ is split over $\FF_q$, then 
$\delta_{\alpha}=-\alpha^\vee /(q-1)$, 
and $V_{\geq 0}^{\Delta^P}$ is simply the direct sum of the weight spaces $V_\nu$ for those $\nu\in X^*(T)$ satisfying $\langle \nu,\alpha^\vee \rangle \leq 0$ for all $\alpha \in \Delta^P$. By \cite[Corollary 3.4.3]{Imai-Koskivirta-vector-bundles}, one has
\begin{equation}\label{sectionsVlambda}
    H^0(\GZip^\mu,\Vcal_I(\lambda))=V_I(\lambda)^{L(\FF_q)}\cap V_I(\lambda)_{\geq 0}^{\Delta^P}
\end{equation}
where $V_I(\lambda)^{L(\FF_q)}$ is the $L(\FF_q)$-invariant subspace of $V_I(\lambda)$. There is also a description in the general case (when $P$ is not necessarily defined over $\FF_q$) involving the Brylinski--Kostant filtration of $V_I(\lambda)$ (see \cite[Theorem 3.4.1]{Imai-Koskivirta-vector-bundles}). Consequently, the cone $C_{\zip}$ is determined by the behaviour of the representation $V_I(\lambda)$ viewed simultaneously as a $L(\FF_q)$-representation and as a $T$-representation.

\subsubsection{Subcones of $C_{\zip}$}\label{subsec-subcones}

In general, it is difficult to determine $C_{\zip}$ or even $\langle C_{\zip} \rangle$. Therefore, it is useful to seek approximations of $C_{\zip}$ by subcones. We defined in \cite{Goldring-Imai-Koskivirta-weights} and \cite{Goldring-Koskivirta-GS-cone} several cones, that we represent in the diagram below.

\begin{equation}\label{conediag}
\xymatrix@1@M=5pt{
&\langle C_{\Hasse} \rangle \ar@{^{(}->}[rd] & & & \\
X^*_{-}(L) \ar@{^{(}->}[r]  \ar@{^{(}->}[rd]  & C_{\rm hw} \ar@{^{(}->}[r] &  \langle C_{\zip} \rangle \ar@{^{(}->}[r] & C^{+,I}_{\rm unip} \ar@{^{(}->}[r] &  X_{+,I}^*(T)\\
& C_{\GS} \ar@{^{(}->}[ru] \ar@{^{(}->}[r] & C_{\lw} \ar@{^{(}->}[u]_-{\textrm{if $P$ is defined over $\FF_{q^2}$}}   & &}
\end{equation}
All arrows of this diagram are inclusions. We briefly recall the definitions of these cones and their interpretation.

\paragraph{The Griffiths-Schmid cone $C_{\GS}$:} It is defined as the set of $\lambda\in X^*(T)$ satisfying
\begin{align}
\langle \lambda, \alpha^\vee \rangle &\geq 0 \ \textrm{ for }\alpha\in I, \\
\langle \lambda, \alpha^\vee \rangle &\leq 0 \ \textrm{ for }\alpha\in \Phi_+ \setminus \Phi_{L,+}. 
\end{align}
One sees easily that $\lambda\in C_{\GS}$ if and only if $-w_{0,I}\lambda$ is $G$-dominant. Clearly $C_{\GS}$ is a saturated subcone of $X^*(T)$ and contains $X_{-}^*(L)$. Asume that $(G,\mu)$ is attached to a Hodge-type Shimura variety with good reduction at $p$, as in \S\ref{subsec-Shimura}. In this case, $C_{\GS}$ has the following interpretation. Recall that we defined
\begin{equation}\label{eq-CKC}
C_K(\CC)\colonequals\{\lambda\in X^*(T) \mid H^0(\Sscr_K\otimes \CC,\Vcal_I(\lambda))\neq 0\}.
\end{equation}
Based on the results of \cite{Griffiths-Schmid-homogeneous-complex-manifolds}, it is expected (but not proved in full generality) that $\langle C_K(\CC) \rangle=C_{\GS}$ (the inclusion $\langle C_K(\CC) \rangle \subset C_{\GS}$ is proved in \cite[Theorem 1]{Goldring-Koskivirta-GS-cone}). In the context of Shimura varieties, it is easy to show by a reduction modulo $p$ argument that $C_K(\CC)\subset C_K(\overline{\FF}_p)$ (see \cite[Proposition 1.8.3]{Koskivirta-automforms-GZip}). Therefore, if Conjecture \ref{conj-Sk} is correct, one should expect an inclusion $C_{\GS}\subset \langle C_{\zip} \rangle$. This was indeed showed in \cite{Goldring-Imai-Koskivirta-weights}, which gives some evidence for Conjecture \ref{conj-Sk}:

\begin{theorem}[{\cite[Theorem 6.4.2]{Goldring-Imai-Koskivirta-weights}}] \label{CGScontained}
We have $C_{\GS}\subset \langle C_{\zip} \rangle$.
\end{theorem}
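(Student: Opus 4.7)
The plan is to use the explicit description of global sections on $\GZip^\mu$ given in \eqref{sectionsVlambda}. Fix $\lambda\in C_{\GS}$. I will show that for some $N\geq 1$, the intersection $V_I(N\lambda)^{L(\FF_q)}\cap V_I(N\lambda)^{\Delta^P}_{\geq 0}$ is nonzero, which gives $N\lambda\in C_{\zip}$ and hence $\lambda\in\langle C_{\zip}\rangle$. For clarity I assume $P$ is defined over $\FF_q$ and $T$ is split; the general case will require the Brylinski--Kostant filtration version of \cite[Theorem 3.4.1]{Imai-Koskivirta-vector-bundles}. Since $\lambda$ is $L$-dominant (the first defining condition of $C_{\GS}$), $V_I(\lambda)=H^0(P/B,\Lscr_\lambda)$ is a nonzero $L$-module with highest weight $\lambda$ and lowest weight $w_{0,I}\lambda$.

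The first key step — which I expect to be easy — is to check that \emph{every} weight of $V_I(N\lambda)$ automatically lies in the region $V_I(N\lambda)^{\Delta^P}_{\geq 0}$, so that the weight condition disappears. Indeed, any $T$-weight $\nu$ of $V_I(N\lambda)$ has the form $\nu=w_{0,I}(N\lambda)+\sum_{\alpha\in I}n_\alpha\alpha$ with integers $n_\alpha\geq 0$. In the split case $\delta_\beta=-\beta^\vee/(q-1)$, so $\langle\nu,\delta_\beta\rangle\geq 0$ is equivalent to $\langle\nu,\beta^\vee\rangle\leq 0$. For $\beta\in\Delta^P$ I compute
\begin{equation}
\langle\nu,\beta^\vee\rangle = N\langle w_{0,I}\lambda,\beta^\vee\rangle+\sum_{\alpha\in I}n_\alpha\langle\alpha,\beta^\vee\rangle.
\end{equation}
Both summands are $\leq 0$: the first because $\lambda\in C_{\GS}$ is equivalent to $-w_{0,I}\lambda$ being $G$-dominant (using that $w_{0,I}$ permutes $\Phi_+\setminus\Phi_{L,+}$ and takes $I$ to $-I$), and the second because Cartan integers of distinct simple roots $\alpha\in I$ and $\beta\in\Delta^P$ are nonpositive.

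The remaining problem is to produce a nonzero $L(\FF_q)$-invariant vector in $V_I(N\lambda)$ for some $N\geq 1$. The strategy is to take $N$ a sufficiently large multiple of $q-1$ so that $N\lambda$ is trivial on $T(\FF_q)$; then the lowest weight line $V_I(N\lambda)_{w_{0,I}(N\lambda)}$ is $T(\FF_q)$-stable, and averaging a generator over a set of coset representatives for $L(\FF_q)/(B^+\cap L)(\FF_q)$ produces an $L(\FF_q)$-invariant vector. One shows this average is nonzero by projecting to the lowest weight line and invoking the fact that distinct $(B^+\cap L)(\FF_q)$-cosets push the lowest weight vector into distinct weight spaces, so no cancellation occurs modulo $p$ for $N$ appropriately chosen (alternatively, invoke a dimension count: $\dim V_I(N\lambda)^{L(\FF_q)}\geq\dim V_I(N\lambda)/|L(\FF_q)|$ on average grows unboundedly with $N$).

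The main obstacle I anticipate is the nonsplit case, where the formula \eqref{sectionsVlambda} must be replaced by the Brylinski--Kostant filtration description. The weight-positivity step goes through essentially unchanged once one reinterprets $\delta_\alpha=\wp_*^{-1}(\alpha^\vee)$ correctly, but the construction of the $L(\FF_q)$-invariant is more delicate: one must carry out the averaging compatibly with the $\Gal(k/\FF_q)$-action on $L$, which may force $N$ to be divisible by a larger integer (e.g., involving the order of the Galois action on $\Delta$). A secondary subtlety is that Condition (A) of the paper's framework suggests the cone $\langle C_{\zip}\rangle$ may be strictly larger than $C_{\GS}$, so the proof need not produce optimal $N$ — only existence.
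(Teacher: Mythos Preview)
This theorem is not proved in the paper; it is cited from \cite{Goldring-Imai-Koskivirta-weights}, with the remark that when $L$ is defined over $\FF_q$ the result was already in \cite[Corollary 3.5.6]{Koskivirta-automforms-GZip}, while ``for general $P$ it requires much more work.'' Your sketch addresses only the easy case and hand-waves the general one.

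Your first step is correct and is exactly the right observation in the split $\FF_q$-case: writing each weight of $V_I(N\lambda)$ as $w_{0,I}(N\lambda)+\sum_{\alpha\in I}n_\alpha\alpha$ with $n_\alpha\geq 0$, both the $G$-dominance of $-w_{0,I}\lambda$ and the nonpositivity of off-diagonal Cartan integers force $\langle\nu,\beta^\vee\rangle\leq 0$ for every $\beta\in\Delta^P$, so indeed $V_I(N\lambda)_{\geq 0}^{\Delta^P}=V_I(N\lambda)$.

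Your second step has a genuine gap. Additive averaging over $L(\FF_q)$ in characteristic $p$ can vanish, and your ``distinct weight spaces'' justification is false. Take $L=\GL_2$, $\lambda=(1,0)$, $N=q-1$, and let $v_0=e_2^{q-1}$ be the lowest weight vector in $\Sym^{q-1}(k^2)$. With coset representatives $e$ and $su^+_x$ ($x\in\FF_q$) for $\GL_2(\FF_q)$ modulo the lower Borel one has $su^+_x\cdot v_0=(e_1+xe_2)^{q-1}$; using that $\sum_{x\in\FF_q}x^i$ equals $-1$ when $(q-1)\mid i>0$ and $0$ otherwise, one finds
\[
\textstyle\sum_g g\cdot v_0 \;=\; e_2^{q-1}+\sum_{x\in\FF_q}(e_1+xe_2)^{q-1}\;=\;e_2^{q-1}-e_2^{q-1}=0.
\]
The alternative ``dimension count'' $\dim V^G\geq\dim V/|G|$ is not a valid inequality in modular representation theory either.

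The repair --- and the approach actually used in the cited references, cf.\ the discussion of $\Norm_{L_\varphi}$ in \S\ref{subsec-subcones} --- is the \emph{multiplicative} norm: realizing $V_I(\lambda)$ inside $k[P]$, the product $\prod_{g\in L(\FF_q)}g\cdot f$ of nonzero regular functions on the irreducible variety $P$ is a nonzero $L(\FF_q)$-invariant element of $V_I(|L(\FF_q)|\lambda)$. Combined with your first step this settles the split $\FF_q$-case. For general $P$ your first step no longer controls the relevant Brylinski--Kostant conditions (indeed $C_{\GS}\not\subset C_{\hw}$ already for $\U(3)$ with $(r,s)=(2,1)$), and your gesture toward that description does not amount to a proof of that case.
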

When $L$ is defined over $\FF_q$, this inclusion was already showed in \cite[Corollary 3.5.6]{Koskivirta-automforms-GZip}. However, for general $P$ it requires much more work.

\paragraph{The Hasse cone $C_{\Hasse}$:}
The Hasse cone $C_{\Hasse}$ is related to the flag stratification on $\GF^{\mu}$ (section \ref{sec-zip-flag}). The flag strata of codimension one are $(\Fcal_{w_0 s_{\alpha}})_{\alpha\in \Delta}$. For each $\alpha\in \Delta$, there exists a partial Hasse invariant $\Ha_{\alpha}$ by \cite[Proposition 5.2.7]{Imai-Koskivirta-partial-Hasse}. By definition, this is a section of $\Vcal_{\flag}(\lambda_{\alpha})$ (for some $\lambda_{\alpha}\in X^*(T)$) over $\GF^{\mu}$ such that the vanishing locus of $\Ha_{\alpha}$ is the Zariski closure of $\Fcal_{w_0 s_{\alpha}}$. The cone $\langle C_{\Hasse} \rangle$ can be defined as the saturated cone generated by all the $(\lambda_{\alpha})_{\alpha\in \Delta}$ and by $X^*(G)$ (which corresponds to the torsion line bundles on $\GZip^{\mu}$). The cone $\langle C_{\Hasse} \rangle$ is independent of the choice of $\Ha_{\alpha}$. This definition is similar to the one used by Diamond--Kassaei in \cite{arxiv-Diamond-Kassaei-cone-minimal}. However, to avoid the slight ambiguity in the choice of partial Hasse invariants, we prefer to use the following, more precise definition.

\begin{definition}[{\cite[Definition 1.7.1]{Koskivirta-automforms-GZip}}] \label{definition-CHasse}
Define $C_{\Hasse}$ as the image of $X^*_+(T)$ by
\begin{equation}
h_{\Zcal}\colon X^*(T)\to X^*(T), \quad \lambda \mapsto \lambda-q w_{0,I} (\sigma^{-1} \lambda).
\end{equation}\label{equ-maph}
\end{definition}
As usual, we let $\langle C_{\Hasse} \rangle$ denote its saturated cone. The cone $C_{\Hasse}$ can be interpreted as the set of weights of nonzero automorphic forms on $\GF^{\mu}$ which arise by pullback from the stack $\Sbt$ via the map \eqref{psimap}. Using the identification \eqref{H0-ident}, we have immediately $C_{\Hasse}\subset C_{\zip}$. When $G$ is split over $\FF_q$, the saturated cone $\langle C_{\Hasse} \rangle$ has a simple form: By inverting the map $h_\Zcal$ of Definition \ref{definition-CHasse}, we see that $\langle C_{\Hasse} \rangle$ is the set of $\lambda\in X^*(T)$ such that $ \lambda + q w_{0,I}\lambda \in X_-^*(T)$.

\paragraph{The highest weight cone:} Let $L_0$ be the largest subgroup of $L$ defined over $\FF_q$, i.e. $L_0=\bigcap_{n\in \ZZ}\sigma^n(L)$, it is a Levi subgroup containing $T$. For $\alpha\in \Delta$, let $r_\alpha$ be the smallest integer $r\geq 1$ such that $\sigma^r(\alpha)=\alpha$. We define the cone $C_{\hw}$ as the set of $\lambda\in X_{+,I}^*(T)$ such that for all $\alpha \in \Delta^P$, one has
\begin{equation}\label{formula-norm}
\sum_{w\in W_{L_0}(\FF_q)} \sum_{i=0}^{r_\alpha-1} q^{i+\ell(w)} \ \langle w\lambda, \sigma^i(\alpha^\vee) \rangle\leq 0.
\end{equation}
We explain the interpretation of this cone. Denote by $L_{\varphi}\subset E$ the stabilizer of $1\in G_k$ with respect to the action of $E$ on $G_k$. The first projection $\pr_1\colon E\to P$ induces a closed immersion $L_{\varphi}\to P$. Furthermore, its image is contained in $L$. Identifying $L_{\varphi}$ with a subgroup of $L$, it can be written as $L_{\varphi}=L_0(\FF_q)\rtimes L_{\varphi}^{\circ}$, where the connected component $L_{\varphi}^{\circ}$ is a finite unipotent subgroup (\cite[Theorem 8.1]{Pink-Wedhorn-Ziegler-zip-data}). Let $m\geq 1$ such that $L_{\varphi}^{\circ}$ is annihilated by $\varphi^m$. For $f\in V_I(\lambda)$, we defined in \cite{Goldring-Imai-Koskivirta-weights} the $L_\varphi$-norm of $f$
\begin{equation}
    \Norm_{L_\varphi}(f) \in H^0(\Ucal_\mu, \Vcal_I(d\lambda))
\end{equation}
where $d=|L_0(\FF_q)|q^m$ and $\Ucal_\mu\subset \GZip^{\mu}$ is the open, $\mu$-ordinary stratum in $\GZip^{\mu}$. In particular, consider the case when $f=f_{\lambda, \high}$, where $f_{\lambda, \high}$ is the highest weight vector of $V_I(\lambda)$. Then, we showed (\cite{Goldring-Imai-Koskivirta-weights}) that $\Norm_{L_{\varphi}}(f_{\lambda,\high})$ extends from $\Ucal_\mu$ to $\GZip^\mu$ if and only if $\lambda\in C_{\hw}$. Thus, for each $\lambda\in C_{\hw}$, we obtain a nonzero automorphic form of weight $d\lambda$. In particular, we have $C_{\hw}\subset \frac{1}{d} C_{\zip}\subset \langle C_{\zip} \rangle$. We will see that the highest weight cone $C_{\hw}$ plays an important role in some cases, for example in the case of Siegel-type Shimura varieties associated to the group $\GSp(6)$.

\paragraph{The lowest weight cone}
Denote by $P_0$ the largest subgroup of $P$ defined over $\FF_q$. It is a parabolic subgroup with Levi subgroup $L_0$. Denote its type by $I_0\subset I$. For $\lambda\in X^*(T)$, write $\lambda_0\colonequals w_{0,I_0}w_{0,I}\lambda$. We define $C_{\rm lw}$ as the set of $\lambda\in X_{+,I}^*(T)$ such that for all $\alpha\in \Delta^{P_0}$,
\begin{equation}\label{formula-norm-low}
\sum_{w\in W_{L_0}(\FF_q)} \sum_{i=0}^{r_\alpha-1} q^{i+\ell(w)} \ \langle w \lambda_0, \sigma^i(\alpha^\vee) \rangle\leq 0
\end{equation}
where $r_\alpha$ is again an integer such that $\sigma^{r_\alpha}(\alpha)=\alpha$. Note that when $P$ is defined over $\FF_q$, we have $P_0=P$ and hence $C_{\rm lw}=C_{\hw}$. In general, we do not know if $C_{\rm lw}$ is contained in $\langle C_{\zip} \rangle$. However, we showed in \cite[Theorem 5.2.2]{Goldring-Imai-Koskivirta-weights} that under Condition 5.1.1 of \loccitn, one has $C_{\rm lw}\subset \langle C_{\zip} \rangle$. For example, this condition is satisfied when $P$ is defined over $\FF_{q^2}$. This will be the case for all cases considered in this paper. The terminology "lowest weight cone" stems from the fact that if $\lambda\in C_{\rm lw}$, then $\Norm_{L_\varphi}(f_{\lambda, \textrm{low}})$ extends to $\GZip^\mu$ (at least under the aforementioned condition), where $f_{\lambda, \textrm{low}}$ is the lowest weight vector of $V_I(\lambda)$. The lowest weight cone always satisfies $C_{\GS}\subset C_{\lw}$, contrary to the highest weight cone $C_{\hw}$, which does not always contain $C_{\GS}$.

\paragraph{The unipotent-invariance cone}
In \cite{Goldring-Koskivirta-GS-cone}, we determined an upper bound $C_{\rm unip}$ for the cone $C_{\zip}$. The definition of $C_{\rm unip}$ is not enlightening, so we only explain a slightly larger cone $C_{\orb}$ under the assumption that $G$ is split over $\FF_{q}$. Let $W_L=W(L,T)$ be the Weyl group of $L$. Note that $W_L$ acts naturally on the set $\Phi_+\setminus \Phi_{L,+}$. Let $\Ocal\subset \Phi_+\setminus \Phi_{L,+}$ be a $W_L$-orbit and let $S\subset \Ocal$ be any subset. Set
\begin{equation}\label{ineq-subset-intro}
\Gamma_{\Ocal,S}(\lambda)\colonequals \sum_{\alpha \in \Ocal\setminus S} \langle \lambda,\alpha^\vee \rangle \ + \ \frac{1}{q} \sum_{\substack{\alpha\in S}} \langle  \lambda, \alpha^\vee \rangle
\end{equation}
Then, the cone $C_{\orb}$ is defined by
\begin{equation}\label{L-min-cone}
C_{\rm orb} = \{ \lambda\in X^*(T) \ \mid \ \Gamma_{\Ocal,S}(\lambda)\leq 0 \ \textrm{for all orbits $\Ocal$ and all subsets} \ S\subset \Ocal \}.   
\end{equation}
In the diagram at the beginning of this section, the cone $C_{\rm unip}^{+,I}$ denotes the intersection $C_{\rm unip}\cap X_{+,I}^*(T)$. When $G$ is split over $\FF_{q}$, it is contained in $C^{+,I}_{\orb}\colonequals C_{\orb}\cap X_{+,I}^*(T)$. For example, in the case $G=\Sp(2n)_{\FF_q}$, this cone is explicitly given in section \ref{sec-symp}.

\subsubsection{Hasse-type cocharacter datum}\label{sec-hassetype}

We describe a family of cocharacter data $(G,\mu)$ where the (saturated) zip cone $\langle C_{\zip} \rangle$ is entirely determined.

\begin{theorem}[{\cite[Theorem 4.3.1]{Goldring-Imai-Koskivirta-weights}}] \label{thm-Hasse-type}
The following are equivalent:
\begin{equivlist}
\item One has $\langle C_{\Hasse} \rangle = \langle C_{\zip} \rangle$.
\item One has $C_{\GS}\subset \langle C_{\Hasse} \rangle$.
\item $L$ is defined over $\FF_q$ and $\sigma$ acts on $\Delta_L$ by $-w_{0,L}$.
\end{equivlist}
\end{theorem}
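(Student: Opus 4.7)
The plan is to establish the cyclic chain of implications $(i)\Rightarrow(ii)\Rightarrow(iii)\Rightarrow(i)$, which gives the claimed equivalence.

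The implication $(i)\Rightarrow(ii)$ is immediate from Theorem \ref{CGScontained}: since $C_{\GS}\subset \langle C_{\zip}\rangle$ unconditionally, the assumption $\langle C_{\Hasse}\rangle =\langle C_{\zip}\rangle$ forces $C_{\GS}\subset \langle C_{\Hasse}\rangle$.

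For $(iii)\Rightarrow(i)$, the inclusion $\langle C_{\Hasse}\rangle \subset \langle C_{\zip}\rangle$ is automatic, since partial Hasse invariants pulled back from the Schubert stack via $\psi$ in \eqref{psimap} give nonzero sections in the right weights by \eqref{H0-ident}. Hence I must produce the reverse inclusion at the saturated level. Under $(iii)$, the Frobenius $\sigma$ stabilizes $I$ and acts on it as $-w_{0,L}=-w_{0,I}$, so the linear map $w_{0,I}\sigma^{-1}$ is the identity on the $\QQ$-span of $I$ and permutes the remaining simple roots in $\Delta^P=\Delta\setminus I$ by $\sigma^{-1}$. From this I would show that $h_{\Zcal}(\lambda)=\lambda-qw_{0,I}\sigma^{-1}(\lambda)$ is a bijective operator on $X^{*}(T)_{\QQ}$ and, crucially, that its inverse carries $\langle C_{\zip}\rangle_\QQ$ into $X^{*}_{+}(T)_{\QQ_{\geq 0}}$. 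The verification rests on the description \eqref{sectionsVlambda} of sections on $\GZip^\mu$ together with the observation that, under $(iii)$, the $I$-dominance inequalities cutting out $\langle C_{\zip}\rangle$ in $X_{+,I}^{*}(T)$ transform exactly into the simple-root dominance inequalities for $h_{\Zcal}^{-1}(\lambda)$.

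The implication $(ii)\Rightarrow(iii)$ is the delicate step, which I would handle by contrapositive. Assume $(iii)$ fails; one of two sub-cases occurs: either $\sigma(I)\neq I$, so $L$ is not $\FF_q$-defined, or $L$ is $\FF_q$-defined but the permutation $\sigma|_{\Delta_L}$ differs from $-w_{0,L}$. In each case I aim to exhibit a character $\lambda=-w_{0,I}\nu\in C_{\GS}$ (with $\nu\in X^{*}_{+}(T)$ chosen so that its support detects the symmetry failure) for which the equation $N\lambda=\mu-qw_{0,I}\sigma^{-1}(\mu)$ admits no dominant solution $\mu\in X^{*}_{+}(T)$ for any $N\geq 1$. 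The analysis proceeds by projecting both sides onto the $\QQ$-span of the $\sigma$-orbit through the offending simple root and reading off that the inequalities imposed by dominance of $\mu$ on one side are inconsistent with those implied by $\lambda\in C_{\GS}$ on the other.

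The principal obstacle will be this last step: the combinatorial bookkeeping in the comparison of weight coefficients, and in particular choosing test characters delicate enough to distinguish the two sub-cases of the failure of $(iii)$, so that the containment $C_{\GS}\subset \langle C_{\Hasse}\rangle$ recovers not only the $\FF_q$-rationality of the parabolic $P$ but also the precise opposition-involution identity $\sigma|_{\Delta_L}=-w_{0,L}$.
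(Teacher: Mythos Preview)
The paper does not give a proof of this statement; it is quoted verbatim from \cite[Theorem 4.3.1]{Goldring-Imai-Koskivirta-weights}, so there is no argument in the present paper to compare your proposal against.

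That said, your proposal is a sketch rather than a proof. The implication $(i)\Rightarrow(ii)$ is correct and immediate from Theorem~\ref{CGScontained}. For $(iii)\Rightarrow(i)$, however, your key claim---that $h_{\Zcal}^{-1}$ carries $\langle C_{\zip}\rangle_{\QQ}$ into $X^{*}_{+}(T)_{\QQ_{\geq 0}}$---presupposes control of $\langle C_{\zip}\rangle$ that you do not yet have: the formula \eqref{sectionsVlambda} describes global sections, not the cone directly, and your sentence ``the $I$-dominance inequalities cutting out $\langle C_{\zip}\rangle$'' assumes a polyhedral description of $\langle C_{\zip}\rangle$ which is precisely what is at stake. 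To close this loop you would need an \emph{a priori} upper bound such as $\langle C_{\zip}\rangle\subset C^{+,I}_{\rm unip}$ from diagram \eqref{conediag}, and then check that under $(iii)$ this upper bound coincides with $\langle C_{\Hasse}\rangle$. For $(ii)\Rightarrow(iii)$ you yourself flag the argument as incomplete; producing the test characters and carrying out the projection onto $\sigma$-orbits is exactly where the work lies, and nothing in the present paper supplies it.
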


When Condition (iii) of Theorem \ref{thm-Hasse-type} is satisfied, we say that $(G,\mu)$ is of Hasse-type. We give a family of examples satisfying this condition. Let $J$ be the symmetric matrix of size $2n+1$ ($n\geq 1$) defined by
\begin{equation}
    J\colonequals\begin{pmatrix}
    &&1\\ &\iddots& \\ 1&&
    \end{pmatrix}.
\end{equation}
Let $G$ be the special orthogonal group over $\FF_q$ attached to $J$. Let $T$ be the maximal, diagonal torus of $G$, consisting of matrices of the form $t=\diag(t_1, \dots ,t_n,1,t_n^{-1}, \dots ,t_1^{-1})$. We identify $X^*(T)\simeq \ZZ^n$ such that $(a_1, \dots ,a_n)\in \ZZ^n$ corresponds to $t\mapsto t_1^{a_1} \dots t_n^{a_n}$. Define a cocharacter $\mu\colon \GG_{\textrm{m}}\to G$ by $z\mapsto \diag(z,1, \dots ,1,z^{-1})$. Then the zip datum attached to $(G,\mu)$ satisfies the conditions of Theorem \ref{thm-Hasse-type} (see \cite[\S7.2]{Goldring-Imai-Koskivirta-weights}). This example corresponds to Shimura varieties associated to spinor groups. Concretely, we have in this case:
\begin{equation}
    \langle C_{\zip} \rangle =\langle C_{\Hasse} \rangle = \{ (a_1,\dots ,a_n)\in X^*_{+,I}(T) \mid \   (q+1)a_1+(q-1)a_2\leq 0\} 
\end{equation}
Other examples of Hasse-type cocharacter data are: Siegel-type Shimura varieties attached to $\GSp(4)$, unitary Shimura varieties attached to $\GU(2,1)$ at a split prime, Hilbert--Blumenthal Shimura varieties.

\section{The cone conjecture}
In this section, we explain the main conjecture and the strategy of proof. We do not restrict ourselves to Shimura varieties, we consider instead more general schemes which admit a "nice" map to the stack of $G$-zips.

\subsection{Set-up}
\subsubsection{Stratification of $S$}
Recall that $k$ denotes an algebraic closure of $\FF_q$. Let $(G,\mu)$ be a cocharacter datum over $\FF_q$. Let $S$ be a $k$-scheme endowed with a smooth, surjective $k$-morphism $\zeta\colon S\to \GZip^\mu$. For $w\in {}^IW$ (or $w\in W^J$), we define 
 \begin{equation}\label{Sw-def}
     S_w\colonequals \zeta^{-1}(\Xcal_w).
 \end{equation}
It is a locally closed subset of $S$, and we endow $S_w$ with the reduced subscheme structure. Since $\zeta$ is smooth, the Zariski closure $\overline{S}_w$ coincides with $\zeta^{-1}(\overline{\Xcal}_w)$. We obtain a stratification on $S$ by smooth, locally closed subschemes. For $\lambda\in X^*(T)$, we denote again by $\Vcal_I(\lambda)$ the pullback via $\zeta$ of $\Vcal_I(\lambda)$ on $\GZip^\mu$.

\subsubsection{Flag space of $S$}\label{sec-flagspace}
Define the flag space of $S$ as the fiber product
$$\xymatrix@1@M=5pt{
\Flag(S)\ar[r]^-{\zeta_{\flag}} \ar[d]_{\pi_S} & \GF^\mu \ar[d]^{\pi} \\
S \ar[r]_-{\zeta} & \GZip^\mu.
}$$
For $w\in W$, define $\Flag(S)_w\colonequals \zeta_{\flag}^{-1}(\Fcal_w)$. Again, we obtain on $\Flag(S)$ a stratification by locally closed, smooth subschemes.
\begin{comment}
\begin{proposition} \ 
\begin{assertionlist}
\item For $w\in W$, $\pi_S(\Flag(S)_w)$ is a union of strata $S_{w'}$ for $w'\in {}^I W$.
\item For $w\in {}^IW\cup W^J$, one has $\pi_S(\Flag(S)_w)=S_w$, and the map $\pi_S\colon \Flag(S)_w\to S_w$ is finite etale. Furthermore, $\pi_S$ maps the Zariski closure $\overline{\Flag(S)}_w$ to $\overline{S}_w$.
\end{assertionlist}
\end{proposition}
\end{comment}
If $\ell(w)=n$, we call $\Flag(S)_w$ a stratum of length $n$. For $\lambda\in X^*(T)$, we denote again by $\Vcal_{\flag}(\lambda)$ the pullback of the line bundle $\Vcal_{\flag}(\lambda)$ via $\zeta_{\flag}$. Similarly to $\GZip^\mu$, we have the formula $\pi_{S,*}(\Vcal_{\flag}(\lambda))=\Vcal_I(\lambda)$. In particular, we have an identification
\begin{equation} \label{identif-lambda}
    H^0(S,\Vcal_I(\lambda))=H^0(\Flag(S),\Vcal_{\flag}(\lambda)).
\end{equation}

\subsection{The cone conjecture}\label{sec-cone-conj}
Let $S$ be a $k$-scheme endowed with a morphism $\zeta \colon S\to \GZip^\mu$. We make the following assumption:
\begin{assumption}\label{assume} \ 
\begin{definitionlist}
\item $\zeta$ is smooth.
\item The restriction of $\zeta$ to every connected component of $S$ is surjective.
\item For all $w\in W$ such that $\ell(w)=1$, $\overline{\Flag(S)}_w$ is 
pseudo-complete.
\end{definitionlist}
\end{assumption}
Recall that a $k$-scheme $X$ is called pseudo-complete if any section of $\Ocal_X(X)$ is Zariski locally constant on $X$. In particular, Assumption (c) is satisfied if $S$ is a proper $k$-scheme. We define
\begin{equation}\label{CS-def}
C_S\colonequals\{\lambda\in X^*(T) \mid H^0(S,\Vcal_I(\lambda))\neq 0\}.
\end{equation}
Since $\zeta$ is surjective, the pullback via $\zeta$ of a nonzero section of $\Vcal_I(\lambda)$ is again nonzero. Hence $H^0(\GZip^\mu,\Vcal_I(\lambda))\subset H^0(S,\Vcal_I(\lambda))$. In particular, we have $C_{\zip}\subset C_S$. By analogy to the case of Shimura varieties, we sometimes call elements of $H^0(S,\Vcal_I(\lambda))$ automorphic forms of weight $\lambda$ on $S$.

\begin{conjecture}\label{conj-S}
Under Assumption \ref{assume}, we have $\langle C_{S} \rangle=\langle C_{\zip} \rangle$.
\end{conjecture}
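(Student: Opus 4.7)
The inclusion $\langle C_{\zip}\rangle\subset\langle C_S\rangle$ is automatic: smoothness and surjectivity of $\zeta$ (Assumption \ref{assume}(a)--(b)) make $\zeta^*$ injective on global sections, so $C_{\zip}\subset C_S$. The content of the conjecture is therefore the reverse inclusion $\langle C_S\rangle\subset\langle C_{\zip}\rangle$.

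My plan would be to take a nonzero $f\in H^0(S,\Vcal_I(\lambda))$, transport it via \eqref{identif-lambda} to $f_{\flag}\in H^0(\Flag(S),\Vcal_{\flag}(\lambda))$, and exploit the flag stratification together with the pseudo-completeness of the length-one strata. For each $\alpha\in\Delta$, the codimension-one closed flag stratum $\overline{\Flag(S)}_{w_0 s_\alpha}$ is cut out by the pullback of the partial Hasse invariant $\Ha_\alpha$. Letting $n_\alpha\geq 0$ denote the order of vanishing of $f_{\flag}$ along this divisor, the section
\[
g := f_{\flag}\cdot\prod_{\alpha\in\Delta}\Ha_\alpha^{-n_\alpha}
\]
is a regular section of $\Vcal_{\flag}(\lambda-\sum_{\alpha}n_\alpha\lambda_\alpha)$ which does not vanish identically on any codimension-one closed flag stratum. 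Since $\lambda_\alpha\in C_{\Hasse}\subset C_{\zip}$ for every $\alpha\in\Delta$, it suffices to exhibit a positive multiple of $\wt(g)$ in $C_{\zip}$.

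The next step is to restrict $g$ to each length-one closed stratum $\overline{\Flag(S)}_{s_\alpha}$, which is pseudo-complete by Assumption \ref{assume}(c). Such a stratum is, étale-locally, an $\AA^1$-bundle over the minimal (length-zero) flag stratum, and the restriction of $\Vcal_{\flag}$ has fiberwise behaviour governed by the pairing with $\alpha^\vee$. Pseudo-completeness forbids nonconstant regular functions on each connected component, so a nowhere-vanishing restriction of $g$ forces sign conditions on pairings of the form $\langle N\wt(g),\alpha^\vee\rangle$—weighted by $L_0(\FF_q)$- and $\sigma$-orbits in the style of \eqref{formula-norm}—that, after saturation, should match the representation-theoretic inequalities characterizing $C_{\zip}$ via \eqref{sectionsVlambda}.

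The main obstacle I anticipate is precisely this matching step. Since $\langle C_{\zip}\rangle$ admits no uniform combinatorial description—by Theorem \ref{thm-Hasse-type} it equals $\langle C_{\Hasse}\rangle$ only for Hasse-type cocharacter data, and in general one needs $C_{\hw}$ or $C_{\lw}$ to reach extremal rays that Hasse invariants alone miss—a uniform proof would require systematically identifying the pseudo-completeness inequalities with the $L_\varphi$-norm constructions of \cite{Goldring-Imai-Koskivirta-weights}. I expect this general matching to be out of reach with present methods, which is why one is led to prove the conjecture case-by-case as in Theorem 1: for each specified $(G,\mu)$, one writes down explicit generators of $\langle C_{\zip}\rangle$ and rules out sections of weight outside this cone by direct arguments on the length-one strata, combining Hasse-invariant divisibility with the norm constructions when needed to cover all the extremal rays.
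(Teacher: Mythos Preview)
The statement is a conjecture, not a theorem; the paper does not prove it in general but only for the specific $(G,\mu)$ listed in Theorem~1. You correctly identify that the inclusion $\langle C_{\zip}\rangle\subset\langle C_S\rangle$ is automatic and that a uniform proof of the reverse inclusion is out of reach.

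However, your sketched strategy has a genuine gap even as a template for the case-by-case arguments. After stripping the global partial Hasse invariants $\Ha_\alpha$ to obtain a section $g$ not vanishing identically on any codimension-one flag stratum, you propose to restrict $g$ directly to the length-one closed strata $\overline{\Flag(S)}_{s_\alpha}$. But there is no reason this restriction should be nonzero: $g$ is controlled only in codimension one, while the length-one strata have codimension $\ell(w_0)-1$. The pseudo-completeness constraints you hope to extract are therefore inaccessible by this direct jump.

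The paper's method is recursive through \emph{all} intermediate lengths. It introduces separating systems $\EE=(\EE_w,\{\chi_\alpha\}_{\alpha\in\EE_w})_{w\in W}$ (Definition~\ref{def-sep-syst}) and intersection-sum cones $C^{+,\EE}_w$ (Definition~\ref{def-intersumcone}), defined inductively on $\ell(w)$. At each stratum $\overline{\Flag(S)}_w$ one strips off partial Hasse invariants $\Ha_{w,\chi_\alpha}$ \emph{of that stratum} to produce a section nonvanishing on the chosen lower neighbours $\{ws_\alpha\}_{\alpha\in\EE_w}$, then recurses (Lemma~\ref{lem-g0}, Theorem~\ref{thm-sep-syst}); the length-one base case is \cite[Proposition~3.2.1]{Goldring-Koskivirta-global-sections-compositio}. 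The case-by-case work in \S5 consists of choosing $\EE$ and verifying, by explicit linear inequalities at every intermediate $w$, that $\langle C^{+,\EE}_{w_0}\rangle\subset\langle C_{\zip}\rangle$. Finally, the $L_\varphi$-norm constructions do not enter the proof of $\langle C_S\rangle\subset\langle C_{\zip}\rangle$; they are used only in the other direction, to determine $\langle C_{\zip}\rangle$ by producing sections on $\GZip^\mu$.
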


In \cite[Conjecture 2.1.6]{Goldring-Koskivirta-global-sections-compositio}, we formulated Conjecture \ref{conj-S} with the additional assumption that the pair $(G,\mu)$ is of "connected Hodge-type". Furthermore, we stated that the conjecture does not hold in general without this assumption and gave in \cite[Proposition 5.1.3]{Goldring-Koskivirta-global-sections-compositio} an example of a map $S\to \GZip^\Zcal$ which does not satisfy the conjecture. However, the zip datum $\Zcal$ considered in this counter-example is not of cocharacter-type (i.e does not arise from a cocharacter $\mu\colon \GG_{\textrm{m},k}\to G_k$). Hence, we do not know whether "connected Hodge-type" is a necessary assumption in the setting considered here, so we removed this condition from Assumption \ref{assume}.

We now discuss the example of Shimura varieties. As we noted in \S \ref{sec-tor}, the map $\zeta^\Sigma\colon S^{\Sigma}_K \to \GZip^\mu$ is smooth and surjective. Moreover, \cite[Proposition 6.20]{Wedhorn-Ziegler-tautological} shows that any connected component $S^\circ\subset S^{\Sigma}_{K}$ intersects the unique zero-dimensional stratum. Since the map $\zeta^\Sigma \colon S^\circ \to \GZip^\mu$ is smooth, its image is open, hence surjective. Therefore, $\zeta^\Sigma$ satisfies Condition (b) of Assumption \ref{assume}. Furthermore, since $S_K^{\Sigma}$ is proper, Assumption \ref{assume}(c) is also satisfied. Hence Conjecture \ref{conj-S} applies to Shimura varieties of Hodge-type. In the case of $S=S^{\Sigma}_K$, the set $C_S$ coincides with $C_K(\overline{\FF}_p)$ by Theorem \ref{thm-koecher}. Therefore, Conjecture \ref{conj-S} is a generalization of Conjecture \ref{conj-Sk}.

\begin{rmk}\label{rmk-GeqP}
Assume that $G=P$. This is equivalent to $\mu\colon \GG_{\textrm{m},k}\to G_k$ being a central cocharacter. In this case, Conjecture \ref{conj-S} holds for any scheme $S$ endowed with a map $\zeta\colon S\to \GZip^{\mu}$ (without any assumption). Note that in this case ${}^IW=\{e\}$, so the underlying topological space of $\GZip^{\mu}$ is a single point, hence $\zeta$ is obviously surjective. Moreover, in this case we have $C_{\GS}=X_{+,I}^*(T)$. Since we always have $C_{\GS}\subset \langle C_{\zip}\rangle\subset \langle C_{S}\rangle \subset X_{+,I}^*(T)$, the result follows.
\end{rmk}

\subsection{Strategy}
\subsubsection{Hasse cones $C_{\Hasse,w}$}\label{sec-Hasse-cone}
\label{subsec-sum-inter}
For $w\in W$, denote by $E_w$ the set of positive roots $\alpha$ such that $ws_\alpha<w$ and $\ell(ws_\alpha)=\ell(w)-1$. We call $ws_\alpha$ (for $\alpha\in E_w$) a lower neighbor of $w$. We recall Chevalley's formula for the strata $\Sbt_w$ of $\Sbt$ defined in \eqref{Sbtw}. For $(\lambda,\nu)\in X^*(T)\times X^*(T)$, one attaches a line bundle $\Vcal_{\Sbt}(\lambda,\nu)$ on $\Sbt$ (in \cite[\S 2.2]{Goldring-Koskivirta-Strata-Hasse}, this line bundles was denoted by $\Lscr_{\Sbt}(\lambda,\nu)$). A section of $\Vcal_{\Sbt}(\lambda,\nu)$ over $\Sbt_w$ can be viewed as a regular map $f\colon BwB\to \AA^1$ satisfying $f(agb^{-1})=\lambda(a)\nu(b)f(g)$ for all $a,b\in B$ and all $g\in G$.

\begin{theorem}[{\cite[Theorem 2.2.1]{Goldring-Koskivirta-Strata-Hasse}}]\label{brion}
Let $w \in W$. One has the following:
\begin{assertionlist}
\item \label{brion1} $H^0\left(\Sch_w,\Vcal_{\Sch}(\lambda,\mu)\right)\neq 0\Longleftrightarrow \mu = -w^{-1} \lambda$.
\item \label{brion2} $\dim_k H^0\left(\Sch_w,\Vcal_{\Sch}(\lambda,-w^{-1} \lambda) \right)=1$.
\item \label{brion3} For any nonzero $f\in H^0\left(\Sch_w,\Vcal_{\Sch}(\lambda,-w^{-1} \lambda) \right)$, one has
\begin{equation}\label{briondiv}
\div(f)=-\sum_{\alpha \in E_w} \langle \lambda , w\alpha^\vee \rangle \overline{\Sbt}_{w s_\alpha}.
\end{equation}
\end{assertionlist}
\end{theorem}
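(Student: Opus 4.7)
The plan is to handle parts (i) and (ii) via an orbit-stabilizer argument on the Bruhat cell, and then to derive the divisor formula in (iii) by a rank-one transverse calculation at each codimension-one boundary component.

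For (i) and (ii): the action $(b_1, b_2)\cdot g = b_1 g b_2^{-1}$ of $B\times B$ on $BwB$ is transitive, and the stabilizer of $w$ is
\begin{equation}
\Stab(w) = \{(b,\, w^{-1}bw) \mid b \in B \cap wBw^{-1}\}.
\end{equation}
Hence $\Sbt_w$ is isomorphic to the classifying stack $B\Stab(w)$, and a section of $\Vcal_\Sbt(\lambda,\nu)$ over $\Sbt_w$ is the same datum as a $\Stab(w)$-invariant vector in the one-dimensional fiber indexed by $(\lambda,\nu)$. The relevant character $(b, w^{-1}bw) \mapsto \lambda(b)\nu(w^{-1}bw)$ is automatically trivial on the unipotent part of $\Stab(w)$, since $\lambda$ and $\nu$ are characters of $T$. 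Its restriction to $T \subset B \cap wBw^{-1}$ is $\lambda + w\nu$, so it is trivial iff $\nu = -w^{-1}\lambda$. This proves both assertions, with the one-dimensionality in (ii) coming from the fact that the fiber itself is one-dimensional.

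For (iii): the section $f$ from (ii) extends to a rational section of $\Vcal_\Sbt(\lambda, -w^{-1}\lambda)$ on the normal variety $\overline{\Sbt}_w$ (normality of $\overline{BwB}$ is cited just before the theorem). Its divisor is supported on the codimension-one boundary of $\Sbt_w$ in $\overline{\Sbt}_w$, which by the Bruhat closure relations is precisely $\bigcup_{\alpha \in E_w} \overline{\Sbt}_{ws_\alpha}$ (this being exactly the content of the definition of $E_w$). For each $\alpha \in E_w$, the multiplicity $\ord_{\overline{\Sbt}_{ws_\alpha}}(f)$ is computed by restriction to a transverse one-parameter slice through a smooth point of $\overline{BwB}$ on the open stratum $Bws_\alpha B$. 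A convenient slice is $\gamma\colon \AA^1 \to \overline{BwB}$ of the form $t \mapsto g_0 \cdot u_\alpha(t)$ for a suitable $g_0 \in Bws_\alpha B$ and a Chevalley parametrization $u_\alpha$ of the $\alpha$-root subgroup, chosen so that $\gamma(0)$ lies on $Bws_\alpha B$ while $\gamma(t) \in BwB$ for $t \neq 0$. The transformation rule of $f$ combined with the standard $\SL_2$-relation exchanging $u_\alpha$, $u_{-\alpha}$ and $s_\alpha$ (specifically, moving a factor of $s_\alpha$ across, which introduces a coroot pairing in the $T$-part) gives $f(\gamma(t)) = c \cdot t^{-\langle \lambda,\, w\alpha^\vee\rangle}$ near $t = 0$, yielding the coefficient in \eqref{briondiv}. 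Summing over $\alpha \in E_w$ produces the formula.

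The main obstacle is the divisor computation in (iii). Parts (i) and (ii) reduce to a clean invariant-theoretic exercise. In (iii) two ingredients require care: first, the closure statement confirming that only the divisors $\overline{\Sbt}_{ws_\alpha}$ with $\alpha \in E_w$ appear in codimension one; this is classical and follows from the Bruhat order together with $\dim\overline{\Sbt}_{w'} = \ell(w') - \dim B$. Second, the rank-one vanishing computation, which is sensitive to sign conventions (we are using $\Phi_+ = \{\alpha \mid U_\alpha \subset B^+\}$, opposite to the more common convention) and to the explicit choice of Chevalley generators. An alternative, arguably cleaner route is to pull $\Vcal_\Sbt(\lambda, -w^{-1}\lambda)$ back along a local section of the $B$-bundle $G \to B\backslash G$, identifying it with a Borel--Weil line bundle on $G/B$ whose divisor on the Schubert variety $\overline{X}_w$ is given by the classical Chevalley formula; one then only needs to match sign conventions to recover \eqref{briondiv}.
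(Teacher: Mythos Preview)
The paper does not prove this theorem: it is quoted verbatim from \cite[Theorem 2.2.1]{Goldring-Koskivirta-Strata-Hasse} (and ultimately goes back to the classical Chevalley formula for Schubert varieties), and the paper proceeds immediately to use it without any argument. So there is no ``paper's own proof'' to compare against.

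That said, your outline is essentially the standard argument and is correct in its structure. For (i) and (ii), the orbit--stabilizer reduction to checking when the character $\lambda + w\nu$ vanishes on $T$ is exactly the point; your computation of the stabilizer and the character is right (with the paper's convention $f(agb^{-1})=\lambda(a)\nu(b)f(g)$, invariance at the basepoint $w$ forces $\lambda(t)\nu(w^{-1}tw)=1$ for $t\in T$). For (iii), the transverse $\SL_2$ slice is the usual way to pin down the multiplicity, and your alternative of identifying the line bundle with a Borel--Weil line bundle on $G/B$ and invoking the classical Chevalley divisor formula on Schubert varieties is indeed the cleaner route (and is essentially how the cited reference proceeds). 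The one place to be careful, as you already flag, is the sign: the paper uses $\Phi_+$ for the roots of $B^+$ (opposite to $B$), so the coefficient $-\langle\lambda,w\alpha^\vee\rangle$ must be checked against whichever $\SL_2$ identities you use. Your sketch is adequate as a pointer to the argument, though the sentence ``$\gamma(t)=g_0\cdot u_\alpha(t)$'' would need to be made precise (with $g_0\in Bws_\alpha B$ chosen so that the slice is genuinely transverse and lands in $\overline{BwB}$) if you were to write this out in full.
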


For each $w\in W$ and $\lambda\in X^*(T)$, denote by $f_{w,\lambda}$ a nonzero element of the one-dimensional space $H^0(\Sch_w,\Vcal_{\Sch}(\lambda,-w^{-1} \lambda))$. Define $X_{+,w}^*(T)\subset X^*(T)$ as the subset of $\chi\in X^*(T)$ such that $\langle \chi,\alpha^\vee \rangle \geq 0$ for all $\alpha\in E_w$. For $\chi\in X_{+,w}^*(T)$, write $\lambda=-w\chi$. By Theorem \ref{brion}(2)-(3), we have
\begin{equation}
  f_{w,\lambda}\in H^0(\overline{\Sbt}_w,\Vcal_{\Sbt}(\lambda,-w^{-1}\lambda)).
\end{equation}
For $\lambda,\nu \in X^*(T)$, one has the formula
\begin{equation}
\psi^*(\Vcal_{\Sbt}(\lambda,\nu))=\Vcal_{\flag}(\lambda + q w_{0,I}w_0\sigma^{-1}(\nu))    
\end{equation}
by \cite[Lemma 3.1.1 (b)]{Goldring-Koskivirta-Strata-Hasse} (note that \loccit contains a typo; it should be $\sigma^{-1}$ instead of $\sigma$). In particular, the pullback $\psi^*(\Vcal_{\Sbt}(\lambda,-w^{-1}\lambda))$ coincides with $\Vcal_{\flag}(\lambda - q w_{0,I}w_0\sigma^{-1}(w^{-1}\lambda))$. Define a map 
\begin{equation}
    h_w\colon X^*(T)\to X^*(T), \quad \chi\mapsto -w\chi + q w_{0,I}w_0\sigma^{-1}(\chi).
\end{equation}
Define the Hasse cone $C_{\Hasse,w}$ by
\begin{equation} \label{CHassew-def}
    C_{\Hasse,w}\colonequals h_w(X_{+,w}^*(T)).
\end{equation}
Concretely, $C_{\Hasse,w}$ is the set of all possible weights $\lambda$ of nonzero sections over $\overline{\Fcal}_{w}$ of $\Vcal_{\flag}(\lambda)$ which arise by pullback from $\overline{\Sbt}_w$. For each $\chi\in X^*(T)$, define
\begin{equation}
    \Ha_{w,\chi} \colonequals \psi^*(f_{w,-w\chi}).
\end{equation}
Then $\Ha_{w,\chi}$ a section over the stratum $\Fcal_w$ of the line bundle $\Vcal_{\flag}(h_w(\chi))$. Furthermore, by the above discussion, $\Ha_{w,\chi}$ extends to $\overline{\Fcal}_w$ if and only if $\chi\in X_{+,w}^*(T)$. The multiplicity of $\div(\Ha_{w,\chi})$ along $\overline{\Fcal}_{ws_\alpha}$ is $\langle \chi,\alpha^\vee \rangle$.

\subsubsection{Global partial Hasse invariants}
Consider the case $w=w_0$ of the longest element of $W$. In this case, we have $E_w=\Delta$ and $X_{+,w}^*(T)=X_{+}^*(T)$. Since $X_{+}^*(T)$ is invariant by $-w_0$, the set $C_{\Hasse,w_0}$ coincides with the set $C_{\Hasse}$ of Definition \ref{definition-CHasse}. In the case $w=w_0$, we simply write for $\chi\in X^*(T)$:
\begin{equation}\label{Hachi-def}
\Ha_\chi \colonequals \Ha_{w_0,\chi}.    
\end{equation}
Let $\chi_\alpha$ be a character such that $\langle \chi_\alpha,\alpha^\vee \rangle >0$ and $\langle \chi_\alpha,\beta^\vee \rangle=0$ for all $\beta\in \Delta\setminus \{\alpha\}$. For any such $\chi_\alpha$, the section $\Ha_{\chi_\alpha}$ is a global section over $\GF^\mu$ whose vanishing locus is exactly the Zariski closure $\overline{\Fcal}_{w_0s_\alpha}$ of the codimension one stratum $\Fcal_{w_0s_\alpha}$. Such sections are studied in detail by Imai and the second-named author in \cite{Imai-Koskivirta-partial-Hasse}. Instead of $\Ha_{\chi_\alpha}$, we often simply write $\Ha_\alpha$. Note that $\chi_\alpha$ is well defined up to $X^*(G)$ and up to positive multiple. Hence, the weight of $\Ha_\alpha$, given by
\begin{equation} \label{lamalph-eq}
    \lambda_\alpha\colonequals h_{w_0}(\chi_\alpha)
\end{equation}
is also well defined up to the same ambiguity.

\begin{definition}\label{part-Hasse-glob}
We call $\Ha_\alpha$ a partial Hasse invariant for $\alpha\in \Delta$.
\end{definition}

Partial Hasse invariants seem to play an important role in the theory of mod $p$ automorphic forms. As an illustration, we explain the main result of Diamond--Kasaei in \cite{Diamond-Kassaei},  extended in \cite{arxiv-Diamond-Kassaei-cone-minimal}. The authors study Hilbert--Blumenthal Shimura varieties attached to $\mathbf{G}=\Res_{F/\QQ}(\GL_{2,F})$ (where $F/\QQ$ is a totally real extension of degree $d=[F:\QQ]$). They prove results about Hilbert automorphic forms in characteristic $p$. We give a short explanation of \cite[Corollary 5.4]{Diamond-Kassaei}. To simplify, let $p$ be a prime number unramified in $F$ (in \cite{arxiv-Diamond-Kassaei-cone-minimal}, $p$ is allowed to be ramified in $F$). Fix a small enough level $K^p\subset \mathbf{G}(\AA_f^p)$ outside $p$. Let $X$ be the Pappas--Rapoport integral model over $\ZZ_p$ of the associated Hilbert modular Shimura variety defined in \cite[\S 2]{Diamond-Kassaei} (denoted by $X^{\rm PR}$ in \cite{arxiv-Diamond-Kassaei-cone-minimal}). Since $p$ is assumed unramified, it is the same as the Deligne--Pappas model $X^{\rm DP}$ (see \cite[\S 3]{arxiv-Diamond-Kassaei-cone-minimal}). The scheme $X_{\overline{\FF}_p}$ is smooth of dimension $d$ over $\overline{\FF}_p$. It parametrizes tuples $(A,\lambda,\iota,\overline{\eta})$ of abelian schemes of dimension $d$ endowed with a principal polarization $\lambda$, an action $\iota$ of $\Ocal_{F}$ on $A$ and a $K^p$-level structure $\overline{\eta}$.

Let $\Sigma\colonequals \Hom(F,\overline{\QQ}_p)$ be the set of field embeddings $F\to \overline{\QQ}_p$. Write $(\mathbf{e}_\tau)_\tau$ for the canonical basis of $\ZZ^\Sigma$. Let $\sigma$ denote the action of Frobenius on $\Sigma$. For each $\tau\in \Sigma$, there is an associated line bundle $\omega_\tau$ on $X_{\overline{\FF}_p}$. For $\mathbf{k}=\sum_\tau k_\tau \mathbf{e}_\tau\in \ZZ^\Sigma$, let $\omega^\mathbf{k}\colonequals \bigotimes_{\tau\in \Sigma}\omega_{\tau}^{k_\tau}$. Elements of $H^0(X_{\overline{\FF}_p},\omega^\mathbf{k})$ are modulo $p$ Hilbert modular forms of weight $\mathbf{k}$. Andreatta--Goren (\cite{Andreatta-Goren-book}) constructed partial Hasse invariants $\Ha_\tau$ for each $\tau\in \Sigma$. The weight of $\Ha_\tau$ is given by
\begin{equation}
    \mathbf{h}_\tau \colonequals e_{\tau}-pe_{\sigma^{-1} \tau}.
\end{equation}
Note that the sign of $\mathbf{h}_\tau$ is different in \cite{Andreatta-Goren-book}, due to a different convention of positivity. The main property of $\Ha_\tau$ is that it cuts out a single Ekedahl--Oort stratum of $X_{\overline{\FF}_p}$. In this example, the Ekedahl--Oort stratification are given by the isomorphism class of the $p$-torsion $A[p]$ (with its additional structure given by $\lambda$ and $\iota$). There is a unique open stratum (on which $A$ is an ordinary abelian variety). The codimension 1 strata can be labeled $(S_\tau)_{\tau\in \Sigma}$ and the vanishing locus of $\Ha_\tau$ coincides with the Zariski closure of $S_\tau$ in $X_{\overline{\FF}_p}$. Diamond--Kassaei define the Hasse cone as the subset of $\QQ_{\geq 0}^\Sigma$ spanned over $\QQ_{\geq 0}$ by the weights $(\mathbf{h}_\tau)_{\tau}$. With the notation explained in section \ref{sec-zipcone-def}, this corresponds for our notation to $C_{\Hasse,\QQ_{\geq 0}}$. Diamond--Kassaei prove divisibility results by partial Hasse invariants:

\begin{theorem}[Diamond--Kassaei, {\cite[Theorem 5.1, Corollary 5.4]{Diamond-Kassaei}}] \label{thmDK} \ 
\begin{assertionlist}
    \item\label{thm-DK-item1} Let $f\in H^0(X_{\overline{\FF}_p},\omega^\mathbf{k})$  and assume that $pk_\tau > k_{\sigma^{-1}\tau}$. Then $f$ is divisible by $\Ha_\tau$.
    \item\label{thm-DK-item2} If $H^0(X_{\overline{\FF}_p},\omega^\mathbf{k})\neq 0$, then $\mathbf{k}\in \langle C_{\Hasse}\rangle$.
\end{assertionlist}
\end{theorem}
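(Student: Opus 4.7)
I would prove Part (1) by a local weight analysis along the divisor $\overline{S}_\tau$, and then deduce Part (2) by iterating Part (1) together with a linear-algebraic computation in the weight lattice.

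\emph{Part (1).} Since $\Ha_\tau$ is a section of $\omega^{\mathbf{h}_\tau}$ cutting out $\overline{S}_\tau$ as a smooth Cartier divisor with multiplicity one, divisibility of $f$ by $\Ha_\tau$ is equivalent to the vanishing $f|_{\overline{S}_\tau}=0$. The plan is to verify this vanishing at a generic point $x\in\overline{S}_\tau$, one avoiding the other codimension-one strata $\overline{S}_{\tau'}$ ($\tau'\neq\tau$); vanishing everywhere on $\overline{S}_\tau$ then follows by irreducibility. In the complete local ring $\widehat{\Ocal}_{X_{\overline{\FF}_p},x}$, choose a uniformizer $u_\tau$ cutting out $\overline{S}_\tau$. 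The Kodaira--Spencer isomorphism for Hilbert--Blumenthal families identifies the $T$-weight of $u_\tau$ (where $T = \Res_{\Ocal_F/\ZZ}\gm$) up to central twist with $\mathbf{h}_\tau$: indeed $u_\tau$ is essentially dual to the partial Verschiebung $V_\tau\colon\omega_\tau\to\omega_{\sigma^{-1}\tau}^{\otimes p}$, whose vanishing locus is precisely $\overline{S}_\tau$. Expanding $f$ as a power series in $u_\tau$ with $T$-equivariant coefficients and matching weights, a nonzero constant (order-zero) term would produce a nonzero section of $\omega^{\mathbf{k}}$ on the formal neighborhood of $x$ in $\overline{S}_\tau$; the constraint imposed by $V_\tau=0$ on the restricted family is then precisely $pk_\tau\leq k_{\sigma^{-1}\tau}$. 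Under the hypothesis $pk_\tau>k_{\sigma^{-1}\tau}$ this is violated, so the constant term vanishes, $u_\tau\mid f$, and hence $\Ha_\tau\mid f$.

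\emph{Part (2).} Set $n_\tau := \ord_{\overline{S}_\tau}(f)\geq 0$. Because $p$ is unramified in $F$, the divisors $\overline{S}_\tau$ are smooth and meet transversally, and each $\Ha_\tau$ vanishes simply along $\overline{S}_\tau$; hence the quotient $g := f\cdot\prod_\tau \Ha_\tau^{-n_\tau}$ is a well-defined nonzero section of $\omega^{\mathbf{k}'}$ with $\mathbf{k}' := \mathbf{k}-\sum_\tau n_\tau \mathbf{h}_\tau$, satisfying $\ord_{\overline{S}_\tau}(g)=0$ for every $\tau$. The contrapositive of Part (1) then yields $pk'_\tau\leq k'_{\sigma^{-1}\tau}$ for all $\tau$. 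Decomposing $\Sigma$ into $\sigma$-orbits, on an orbit of size $r$ the linear map $(n_\tau)\mapsto\sum_\tau n_\tau\mathbf{h}_\tau$ has determinant $1-p^r\neq 0$, and a direct inversion gives $m_\tau = (1-p^r)^{-1}\sum_{i=0}^{r-1} p^i k'_{\sigma^i\tau}$. Summing the scaled inequalities $p^{i+1}k'_{\sigma^i\tau}\leq p^i k'_{\sigma^{i-1}\tau}$ around the orbit and using $\sigma^r=\id$ on it yields $k'_\tau\leq 0$ for each $\tau$ in the orbit, which in turn forces the numerator to be nonpositive and hence $m_\tau\geq 0$. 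Thus $\mathbf{k}'\in\langle C_{\Hasse}\rangle$, and therefore $\mathbf{k} = \mathbf{k}' + \sum_\tau n_\tau\mathbf{h}_\tau \in \langle C_{\Hasse}\rangle$.

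\emph{Main obstacle.} The substantive work lies in Part (1): the explicit identification of the $T$-weight of the uniformizer $u_\tau$ at a generic point of $\overline{S}_\tau$, and the derivation of the sharp inequality $pk_\tau\leq k_{\sigma^{-1}\tau}$ from the partial-Verschiebung structure on the restricted family. Once this local deformation-theoretic input is secured, Part (2) becomes the routine monoid computation above; by contrast, generalizing this local step beyond the Hilbert--Blumenthal setting is where substantial new ideas (via the zip-stack and Schubert-stack formalism of this paper, and Theorem \ref{brion}) are needed.
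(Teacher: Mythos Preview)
This theorem is not proved in the paper: it is quoted from Diamond--Kassaei, and the paper only remarks that ``one sees easily that (2) is a direct consequence of (1)''. Your Part~(2) is exactly that easy deduction and is correct; it matches the paper's one-line observation. So the only substantive thing to assess is your sketch of Part~(1), which the paper does not attempt.

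There is a genuine gap in your Part~(1). The argument is framed as a purely local computation at a generic point $x\in\overline{S}_\tau$, but the conclusion you need---that $\omega^{\mathbf{k}}|_{\overline{S}_\tau}$ has no nonzero global section when $pk_\tau>k_{\sigma^{-1}\tau}$---is a global vanishing statement and cannot come from a formal neighbourhood alone: locally $\omega^{\mathbf{k}}$ is trivial and has plenty of sections for every $\mathbf{k}$. The device you invoke to get around this, namely expanding $f$ in $u_\tau$ ``with $T$-equivariant coefficients and matching weights'', has no content here: there is no action of $T=\Res_{\Ocal_F/\ZZ}\GG_{\mathrm m}$ on $X_{\overline{\FF}_p}$ or on the formal completion at $x$, so there is no weight grading on the power-series coefficients to match. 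Likewise, the sentence ``the constraint imposed by $V_\tau=0$ on the restricted family is then precisely $pk_\tau\le k_{\sigma^{-1}\tau}$'' is the whole theorem, not a step in its proof; vanishing of $V_\tau$ on $\overline{S}_\tau$ is a pointwise relation between the line bundles $\omega_\tau$ and $\omega_{\sigma^{-1}\tau}$, and by itself imposes no inequality on the exponents of a weight for which $\omega^{\mathbf{k}}|_{\overline{S}_\tau}$ can admit sections.

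What actually makes Diamond--Kassaei's argument work is global: on $\overline{S}_\tau$ the vanishing of $\Ha_\tau$ is accompanied by a nonzero partial Frobenius in the other direction, which produces an isomorphism of line bundles on $\overline{S}_\tau$ shifting the weight, and one then feeds this into an induction together with a positivity/properness input (Koecher, or a degree argument on the minimal compactification). Your ``main obstacle'' paragraph correctly identifies that this is where the real content lies, but the local $T$-weight heuristic you propose does not supply it.
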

Note that when $\tau = \sigma^{-1}\tau$, then by \eqref{thm-DK-item2} there does not exist any nonzero form with $pk_\tau > k_{\sigma^{-1}\tau}$, hence \eqref{thm-DK-item1} does not provide any useful information. The authors define a minimal cone $C_{\rm min}\subset C_{\Hasse,\QQ_{\geq 0}}$ as follows:
\begin{equation}
    C_{\rm min}=\{\mathbf{k}\in \QQ_{\geq 0}^\Sigma \ | \ pk_\tau \leq k_{\sigma^{-1}\tau}\}.
\end{equation}
Theorem \ref{thmDK}\eqref{thm-DK-item1} shows that any Hilbert modular form $f$ of weight $\mathbf{k}$ can be written as a product $f=f_{\min}H$, where $f_{\min}$ has weight $\mathbf{k}_{\min}\in C_{\min}$ and $H$ is a product of partial Hasse invariants. One sees easily that \eqref{thm-DK-item2} is a direct consequence of \eqref{thm-DK-item1}. In particular, \eqref{thm-DK-item2} says exactly that $C_{X}\subset \langle C_{\Hasse} \rangle$. Since $C_{\Hasse}\subset C_X$ always holds, one obtains $\langle C_{X}\rangle =\langle C_{\Hasse} \rangle$. As we already mentioned, we cannot expect $\langle C_{S}\rangle =\langle C_{\Hasse} \rangle$ for general Shimura varieties $S$. However, when $(G,\mu)$ is of Hasse-type (see \S\ref{sec-hassetype}), we do expect such an equality by Theorem \ref{thm-Hasse-type}.

\subsubsection{Separating systems}

We now consider arbitrary strata of $\GF^\mu$.
\begin{definition}\label{def-Q-sep}
Let $w\in W$. We say that $w$ admits a full separating system of partial Hasse invariants if for each $\alpha\in E_w$, there exists $\chi_\alpha\in X^*(T)$ such that
\begin{definitionlist}
\item $\langle \chi_{\alpha},\alpha^\vee \rangle >0$
\item $\langle \chi_{\alpha},\beta^\vee \rangle =0$ for all $\beta\in E_{w}\setminus \{\alpha\}$.
\end{definitionlist}
\end{definition}
In particular, in this case one has $\chi_\alpha\in X^*_{+,w}(T)$. Note that $w$ admits a full separating system of partial Hasse invariants if and only if the linear forms $\{\alpha^\vee\}_{\alpha\in E_w}$ are linearly independent over $\QQ$. Let $\chi_{\alpha}$ be a character satisfying the conditions (a) and (b) of Definition \ref{def-Q-sep}. By Theorem \ref{brion}(3), the section $f_{w,-w\chi_\alpha}$ over $\overline{\Sbt}_w$ vanishes exactly on the closed subscheme $\overline{\Sbt}_{ws_\alpha}$. Similarly, $\Ha_{w,\chi_\alpha}$ vanishes exactly on $\overline{\Fcal}_{ws_\alpha}$. This explains the terminology "full separating system of partial Hasse invariants" used in Definition \ref{def-Q-sep}.

There are examples where the cardinality of $E_w$ exceeds the rank of $X^*(T)$, which prevents $w$ from admitting such a separating system. We now define a more flexible notion of separating system. For each $w\in W$, let $\EE_w\subset E_w$ be a subset. Furthermore, for each $\alpha\in \EE_w$, let $\chi_{\alpha}\in X^*(T)$ be a character. For $w_1,w_2\in \EE_w$, say that $w_1$, $w_2$ are connected if $w_1$ and $w_2$ have a common lower neighbor (i.e if $E_{w_1}\cap E_{w_2}\neq \emptyset$). 

\begin{definition}\label{def-sep-syst}
We say that the family $\EE=(\EE_w, \{\chi_\alpha \}_{\alpha\in \EE_w})_{w\in W}$ is a separating system if the following condition holds: For each $w\in W$ and each $\alpha\in \EE_w$, one has
\begin{definitionlist}
\item $\langle \chi_{\alpha},\alpha^\vee \rangle >0$,
\item $\langle \chi_{\alpha},\beta^\vee \rangle =0$ for all $\beta\in \EE_{w}\setminus \{\alpha\}$,
\item for all $\beta\in E_w\setminus \EE_w$ which is connected to an element of $\EE_w$, we have $\langle \chi_{\alpha},\beta^\vee \rangle = 0$.
\end{definitionlist}
\end{definition}
Let $\EE$ be a separating system and $w\in W$. Let $\Gamma_{w}\colonequals\sum_{\alpha\in \EE_w}\ZZ_{\geq 0}\chi_{\alpha}$ be the cone generated by $(\chi_\alpha)_{\alpha\in \EE_w}$. Define the Hasse cone of $\EE$ at $w$ as
\begin{equation}
    C_{\Hasse,w}^{\EE}\colonequals h_w(\Gamma_w).
\end{equation}

\subsubsection{Intersection-sum cones}
Let $(S,\zeta)$ be a pair satisfying Assumption \ref{assume}(a)-(c). We explain a strategy to prove Conjecture \ref{conj-S} for $(S,\zeta)$. We consider the stratification $(\Flag(S)_w)_{w\in W}$ on the flag space of $S$. Let $\EE=(\EE_w, \{\chi_\alpha \}_{\alpha\in \EE_w})_{w\in W}$ be a separating system. Furthermore, write simply $f_{w,\alpha}$ for $f_{w,-w\chi_{\alpha}}$. Put $m_{w,\alpha}\colonequals \langle \chi_{\alpha},\alpha^\vee \rangle$. It is the multiplicity of $f_{w,\alpha}$ along $\overline{\Sbt}_{ws_\alpha}$. Recall that we have $\Ha_{w,\alpha}\colonequals \psi^*(f_{w,\alpha})$ and that it is a section of $\Vcal_{\flag}(h_w(\chi_\alpha))$ over $\Fcal_w$. Write $\lambda_{w,\alpha}\colonequals h_w(\chi_\alpha)$. By slight abuse of notation, we write again $\Ha_{w,\alpha}$ for the pullback of this section by $\zeta$, which is a section over $\Flag(S)_w$. Since $\psi$ and $\zeta$ are smooth, the multiplicity of $\Ha_{w,\alpha}$ along $\overline{\Flag(S)}_{ws_\alpha}$ does not change, i.e. it is $m_{w,\alpha}$. For a given $w\in W$, fix an integer $N\geq 1$ divisible by all the $m_{w,\alpha}$ for $\alpha\in \EE_w$, and let $N_{\alpha}$ be the integer such that $N=N_{\alpha} m_{w,\alpha}$.

\begin{lemma}\label{lem-g0}
Let $w\in W$ and $g\in H^0(\overline{\Flag(S)}_w,\Vcal_{\flag}(\lambda))$. For $\alpha\in \EE_w$, let $m_\alpha(g)\geq 0$ be the multiplicity of $\div(g)$ along $\overline{\Flag(S)}_{ws_\alpha}$. Define
\begin{equation}
    g_0\colonequals g^N \prod_{\alpha\in \EE_w}\Ha_{w,\alpha}^{-N_{\alpha} m_\alpha(g)}.
\end{equation}
Then the divisor $\div(g_0)$ has multiplicity $0$ along $\overline{\Flag(S)}_{ws_\alpha}$ for all $\alpha \in \EE_w$. Furthermore, for all $\alpha \in \EE_w$, the restriction of $g_0$ to $\overline{\Flag(S)}_{ws_\alpha}$ is a nonzero element in the space $H^0(\overline{\Flag(S)}_{ws_\alpha}, \Vcal_{\flag}(\lambda_0))$, where $\lambda_0\colonequals N\lambda-\sum_{\beta\in \EE_w} N_{\alpha} m_{\alpha}(g)\lambda_{w,\alpha}$.
\end{lemma}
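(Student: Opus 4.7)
The plan is to compute $\div(g_0)$ as a Weil divisor on $\overline{\Flag(S)}_w$, extract the multiplicity along each $\overline{\Flag(S)}_{ws_\alpha}$ for $\alpha\in\EE_w$, and then promote the a priori rational section $g_0$ to a genuine global section of $\Vcal_{\flag}(\lambda_0)$ over $\overline{\Flag(S)}_{ws_\alpha}$. The main ingredient is Theorem \ref{brion}(3): applied with $\lambda=-w\chi_\beta$, it gives $\div(f_{w,\beta})=\sum_{\gamma\in E_w}\langle\chi_\beta,\gamma^\vee\rangle\,\overline{\Sbt}_{ws_\gamma}$, and pulling back along the smooth morphism $\psi\circ\zeta_{\flag}\colon \Flag(S)\to\Sbt$ preserves these multiplicities. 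Thus $\div(\Ha_{w,\beta})$ has multiplicity $\langle\chi_\beta,\gamma^\vee\rangle$ along $\overline{\Flag(S)}_{ws_\gamma}$, and additivity of weights immediately gives $N\lambda-\sum_{\alpha\in\EE_w}N_\alpha m_\alpha(g)\lambda_{w,\alpha}=\lambda_0$ as the weight of $g_0$.

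For $\gamma=\alpha\in\EE_w$, conditions (a) and (b) of Definition \ref{def-sep-syst} collapse $\sum_{\beta\in\EE_w}N_\beta m_\beta(g)\langle\chi_\beta,\alpha^\vee\rangle$ to the single term $N_\alpha m_\alpha(g)m_{w,\alpha}=Nm_\alpha(g)$ (using $N=N_\alpha m_{w,\alpha}$), so the multiplicity of $\div(g_0)$ along $\overline{\Flag(S)}_{ws_\alpha}$ equals $Nm_\alpha(g)-Nm_\alpha(g)=0$. For $\gamma=\beta\in E_w\setminus\EE_w$ that is connected to some element of $\EE_w$, condition (c) forces $\langle\chi_{\beta'},\beta^\vee\rangle=0$ for every $\beta'\in\EE_w$, leaving the effective multiplicity $Nm_\beta(g)\geq 0$. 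The only prime divisors along which the multiplicity of $\div(g_0)$ could be negative are the $\overline{\Flag(S)}_{ws_\beta}$ for $\beta\in E_w\setminus\EE_w$ \emph{not} connected to any element of $\EE_w$; let $Z$ denote their union and put $U\colonequals \overline{\Flag(S)}_w\setminus Z$, so that $g_0$ is a genuine regular section on $U$.

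To promote $g_0|_{U\cap\overline{\Flag(S)}_{ws_\alpha}}$ to a section on all of $\overline{\Flag(S)}_{ws_\alpha}$, the key step is a codimension bound: a codimension-one component of $Z\cap\overline{\Flag(S)}_{ws_\alpha}$ would be of the form $\overline{\Flag(S)}_v$ with $\ell(v)=\ell(w)-2$, $v\leq ws_\alpha$, and $v\leq ws_\beta$ for some $\beta$ indexing $Z$; then $v$ would be a common lower neighbor of $ws_\alpha$ and $ws_\beta$, so $\alpha$ and $\beta$ would be connected, contradicting $\beta\in Z$. Hence $Z\cap\overline{\Flag(S)}_{ws_\alpha}$ has codimension $\geq 2$ in $\overline{\Flag(S)}_{ws_\alpha}$. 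Combining normality of the Schubert varieties $\overline{BwB}$ (Ramanan--Ramanathan, cited after \eqref{Sbtw}) with the smoothness of $\psi\circ\zeta_{\flag}$, the stratum closure $\overline{\Flag(S)}_{ws_\alpha}$ is normal; by Hartogs, the restriction of $g_0$ extends uniquely to a global section of $\Vcal_{\flag}(\lambda_0)$ on $\overline{\Flag(S)}_{ws_\alpha}$, and it is nonzero at the generic point because $\div(g_0)$ has multiplicity zero along that divisor. The main subtlety I anticipate is this last normality/Hartogs step; the multiplicity bookkeeping is otherwise dictated directly by Definition \ref{def-sep-syst}.
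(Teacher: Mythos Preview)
Your proof is correct and follows the same approach as the paper's proof, which is considerably terser: the paper simply asserts that ``by construction'' $g_0$ has no poles along $\overline{\Flag(S)}_{ws_\beta}$ for $\beta\in E_w$ connected to an element of $\EE_w$, that the restriction is regular outside codimension $\geq 2$, and invokes normality to extend. You have supplied the details the paper leaves implicit, in particular the connectedness argument explaining \emph{why} the possible pole locus $Z$ meets each $\overline{\Flag(S)}_{ws_\alpha}$ only in codimension $\geq 2$.
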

\begin{proof}
By construction, it is clear that $g_0$ does not have any poles along $\overline{\Flag(S)}_{ws_\beta}$ for all $\beta\in E_w$ which is connected to an element of $\EE_w$. Moreover, $g_0$ does not have a zero along $\overline{\Flag(S)}_{ws_\alpha}$ for all $\alpha \in \EE_w$. Therefore, the restriction of $g_0$ to $\overline{\Flag(S)}_{ws_\alpha}$ is regular on an open subset of codimension $\geq 2$ in $\overline{\Flag(S)}_{ws_\alpha}$. Since $\overline{\Flag(S)}_{ws_\alpha}$ is normal, the section extends to $\overline{\Flag(S)}_{ws_\alpha}$. The result follows.
\end{proof}

Define the cone $C_{S,w}$ by
\begin{equation}
    C_{S,w}\colonequals \{\lambda\in X^*(T) \mid H^0(\overline{\Flag(S)}_{w},\Vcal_{\flag}(\lambda))\neq 0 \}.
\end{equation}
For the longest element $w_0$, note that we have an equality $C_{S,w_0}=C_S$ (where $C_S$ was defined in equation \eqref{CS-def}). For $w\in W$, define the intersection-sum cone of $w$ (with respect to $\EE$) as follows:

\begin{definition}\label{def-intersumcone}
For $\ell(w)=1$, set $C^{+,\EE}_{w}\colonequals C_{\Hasse,w}$. For $\ell(w)\geq 2$, define inductively
\begin{equation}
 C^{+,\EE}_{w}\colonequals C^{\EE}_{\Hasse,w}+ \bigcap_{\alpha\in \EE_w} C^{+,\EE}_{ws_\alpha}.
\end{equation}
\end{definition}
In the case $\EE_w=\emptyset$, we define by convention $\bigcap_{\alpha\in \EE_w} C^{+,\EE}_{ws_\alpha}=X^*(T)$.
%\begin{rmk} The cone $C^{+,\EE}_{\Hasse,w}$ may differ from $C^{+,\EE}_{\Hasse,w}$, even when $\ell(w)=1$.\end{rmk}
\begin{theorem}\label{thm-sep-syst}
Let $\EE$ be a separating system. For each $w\in W$, we have
\begin{equation}
 C_{S,w} \subset \langle C^{+,\EE}_{w} \rangle.
\end{equation}
\end{theorem}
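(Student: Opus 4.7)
The plan is to prove the inclusion by induction on $\ell(w)$, with the inductive step resting on Lemma \ref{lem-g0} and elementary manipulations of saturated cones. Before starting, I record two formal facts about saturation that I will use repeatedly: for any subcones $A,B$ of $X^*(T)$ one has $\langle A\rangle+\langle B\rangle\subset \langle A+B\rangle$, and for any finite family $(A_i)$ of subcones one has $\bigcap_i\langle A_i\rangle=\langle \bigcap_i A_i\rangle$. The second identity follows by a common-multiple argument: if $N_i\lambda\in A_i$ for each $i$, then $N\lambda\in A_i$ for every $i$ with $N=\operatorname{lcm}(N_i)$, using that each $A_i$ is closed under multiplication by positive integers.

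For the inductive step $\ell(w)\geq 2$, let $g\in H^0(\overline{\Flag(S)}_w,\Vcal_{\flag}(\lambda))$ be nonzero, and write $m_\alpha(g)\geq 0$ for the multiplicity of $\div(g)$ along $\overline{\Flag(S)}_{ws_\alpha}$ for each $\alpha\in \EE_w$. Applying Lemma \ref{lem-g0} produces a section $g_0\in H^0(\overline{\Flag(S)}_w,\Vcal_{\flag}(\lambda_0))$ with
\[
\lambda_0=N\lambda-\sum_{\alpha\in \EE_w}N_\alpha m_\alpha(g)\,\lambda_{w,\alpha},
\]
whose restriction to each $\overline{\Flag(S)}_{ws_\alpha}$ is nonzero. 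Since $\ell(ws_\alpha)=\ell(w)-1$, the induction hypothesis gives $\lambda_0\in \langle C^{+,\EE}_{ws_\alpha}\rangle$ for every $\alpha\in \EE_w$, so by the intersection identity above,
\[
\lambda_0\in \bigcap_{\alpha\in \EE_w}\langle C^{+,\EE}_{ws_\alpha}\rangle=\Big\langle\,\bigcap_{\alpha\in \EE_w}C^{+,\EE}_{ws_\alpha}\Big\rangle.
\]
Each $N_\alpha m_\alpha(g)\geq 0$, so the sum $\sum_{\alpha\in \EE_w}N_\alpha m_\alpha(g)\,\lambda_{w,\alpha}$ lies in $C_{\Hasse,w}^{\EE}$ by definition. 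Rearranging yields $N\lambda\in C_{\Hasse,w}^{\EE}+\langle \bigcap_{\alpha\in \EE_w}C^{+,\EE}_{ws_\alpha}\rangle\subset \langle C^{+,\EE}_w\rangle$ by the sum-saturation inclusion, whence $\lambda\in \langle C^{+,\EE}_w\rangle$.

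For the base case $\ell(w)=1$, we have $w=s_\alpha$, $E_w=\{\alpha\}$, and the target is $\langle C_{\Hasse,w}\rangle$. The same manipulation with the single partial Hasse invariant $\Ha_{w,\alpha}$ produces, from a nonzero $g$ of weight $\lambda$ with boundary multiplicity $m=m_\alpha(g)\geq 0$, a global section $g_0=g^{m_{w,\alpha}}\Ha_{w,\alpha}^{-m}$ on the pseudo-complete scheme $\overline{\Flag(S)}_w$ (Assumption \ref{assume}(c)) whose weight is $m_{w,\alpha}\lambda-m\lambda_{w,\alpha}$, and which restricts non-trivially to the closed stratum $\overline{\Flag(S)}_e$. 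One then argues that pseudo-completeness, together with the fact that this residual section is non-vanishing on an entire boundary divisor of $\overline{\Flag(S)}_w$, forces its weight into a trivial subcone (modulo characters of $G$), giving $\lambda\in \langle C_{\Hasse,w}\rangle$.

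The main obstacle is precisely this base case. The inductive step is almost formal once Lemma \ref{lem-g0} is in hand, because $\overline{\Flag(S)}_{ws_\alpha}$ is itself the relevant normal closed subscheme and the inductive hypothesis applies directly. By contrast, at length one there is no further stratum to recur into: one must extract information about the Picard structure of $\overline{\Flag(S)}_{s_\alpha}$ from Assumption \ref{assume}(c) alone, showing that any global weighted section is, up to saturation, a power of $\Ha_{w,\alpha}$ tensored with a pullback from the zero-dimensional stratum. Controlling the possible weights appearing on this closed stratum is the delicate point where the hypothesis of pseudo-completeness is indispensable.
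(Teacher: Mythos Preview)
Your inductive step is correct and is exactly the paper's argument: apply Lemma~\ref{lem-g0}, restrict to each $\overline{\Flag(S)}_{ws_\alpha}$, and invoke the hypothesis on the lower neighbours in $\EE_w$. You are in fact more careful than the paper about saturation; your identity $\bigcap_i\langle A_i\rangle=\langle\bigcap_i A_i\rangle$ (valid because each $C^{+,\EE}_{ws_\alpha}$ is an additive submonoid) is precisely what justifies the paper's terse ``Hence $N\lambda\in C^{+,\EE}_w$'', which as written skips this step.

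For the base case $\ell(w)=1$ the paper does not argue inline either; it simply cites \cite[Proposition~3.2.1]{Goldring-Koskivirta-global-sections-compositio}. You correctly isolate the two ingredients (the single partial Hasse invariant on $\overline{\Fcal}_{s_\alpha}$ and pseudo-completeness), but the way you combine them does not close: dividing out to produce a $g_0$ with nonzero restriction to $\Flag(S)_e$ still leaves a section of a nontrivial line bundle, and Assumption~\ref{assume}(c) says nothing about such a section directly. The argument that works runs the other way. Since $E_{s_\alpha}=\{\alpha\}$, the saturated cone $\langle C_{\Hasse,s_\alpha}\rangle$ is the set of integral points of a half-space (the image under the $\QQ$-isomorphism $h_{s_\alpha}$ of $\langle\,\cdot\,,\alpha^\vee\rangle\ge 0$); so if $\lambda\notin\langle C_{\Hasse,s_\alpha}\rangle$ one can write $N\lambda=h_{s_\alpha}(\chi)$ with $\langle\chi,\alpha^\vee\rangle<0$, and then $\Ha_{s_\alpha,-\chi}$ is a section of weight $-N\lambda$ on $\overline{\Fcal}_{s_\alpha}$ vanishing along $\overline{\Fcal}_e$. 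Now $g^N\cdot\zeta_{\flag}^*(\Ha_{s_\alpha,-\chi})$ is a genuine \emph{regular function} on $\overline{\Flag(S)}_{s_\alpha}$ that vanishes on the divisor $\overline{\Flag(S)}_e$; pseudo-completeness, together with the fact (from Assumptions~\ref{assume}(a),(b)) that this divisor meets every connected component, forces it to vanish identically, contradicting $g\neq 0$. This is the step where your sketch becomes vague, and it is indeed where the real content lies.
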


\begin{proof}
We prove the result by induction on $\ell(w)$. When $\ell(w)=1$, the statement is \cite[Proposition 3.2.1]{Goldring-Koskivirta-global-sections-compositio}. Assume $\ell(w)\geq 2$. Let $g\in H^0(\overline{\Flag(S)}_w,\Vcal_{\flag}(\lambda))$ be a nonzero section. By Lemma \ref{lem-g0}, the character $N\lambda-\sum_{\beta\in \EE_w} N_\alpha m_{\alpha}(g)\lambda_{w,\alpha}$ lies in $\bigcap_{\alpha\in \EE_w} C_{S,ws_\alpha}$. By induction, we have $C_{S,ws_\alpha}\subset \langle C^{+,\EE}_{ws_\alpha}\rangle$. Hence $N\lambda\in C^{+,\EE}_{w}$, which implies $\lambda\in \langle C^{+,\EE}_{w}\rangle$.
\end{proof}

Assumption \ref{assume}(d) is only used in the proof of Theorem \ref{thm-sep-syst} for $\ell(w)=1$. This is why we do not make any assumption regarding pseudo-completeness of strata of higher length. Taking $w=w_0$ in Theorem \ref{thm-sep-syst}, we deduce that $ C_{S} \subset \langle C_{w_0}^{+,\EE}\rangle$. Hence,
\begin{equation}
    C_{S} \subset \bigcap_{\EE}\langle C_{w_0}^{+,\EE}\rangle,
\end{equation}
where the intersection is taken over all separating systems. Recall also that $\langle C_{\zip}\rangle \subset \langle C_S \rangle$. In particular, if we exhibit a separating system $\EE$ such that $\langle C_{w_0}^{+,\EE}\rangle=\langle C_{\zip}\rangle$, then we deduce that Conjecture \ref{conj-S} holds for all schemes $S$ satisfying Assumption \ref{assume}(a)-(c). Note that the cones $\langle C_{w_0}^{+,\EE}\rangle$ and $\langle C_{\zip}\rangle$ are entirely objects in the realm of group theory, a priori unrelated to the scheme $S$. This turns Conjecture \ref{conj-S} into a group theory problem.
\begin{question}
Does there always exist a separating system $\EE$ such that $\langle C_{w_0}^{+,\EE}\rangle=\langle C_{\zip}\rangle$?
\end{question}
We exhibit suitable separating systems $\EE$ for several pairs $(G,\mu)$ which arise from Shimura varieties.

\subsection{Divisibility}\label{sec-divis}
We retain all notations from the previous section. For $\lambda\in X^*(T)$ and $f\in H^0(S,\Vcal_I(\lambda))$, we denote by $f_{\flag}$ the corresponding section
\begin{equation}
    f_{\flag}\in H^0(\Flag(S),\Vcal_{\flag}(\lambda))
\end{equation}
via the identification \eqref{identif-lambda}. Since $\Vcal_{\flag}(\lambda)$ is a line bundle on $\Flag(S)$, we can make sense of divisibility of sections.

\begin{definition}\label{divis-def}
Let $f\in H^0(S,\Vcal_I(\lambda))$ and $f'\in H^0(S,\Vcal_I(\lambda'))$ where $\lambda,\lambda'\in X^*(T)$. We say that $f'$ is divisible by $f$ if $f'_{\flag}$ is divisible by $f_{\flag}$.
\end{definition}

For $\lambda\in X^*(T)$, we say that a subset $V\subset X^*(T)_{\RR}$ is a neighborhood cone of $\lambda$ if $V$ is a $\RR_{\geq 0}$-subcone of $X^*(T)_{\RR}$ which is a neighborhood of $\lambda$ for the real topology. By slight abuse of terminology, we also call $V\cap X^*(T)$ a neighborhood cone of $\lambda$ in $X^*(T)$. 

\begin{definition}\label{extremal-def}
Let $f\in H^0(\GZip^\mu,\Vcal_I(\lambda))$ be nonzero. We say that $f$ is an isolated form (for $S$) if there exists a neighborhood cone $V$ of $\lambda$ such that any form $H^0(S,\Vcal_I(\lambda'))$ with $\lambda'\in V$ is divisible by $f$. We call $V$ a neighborhood of divisibility.
\end{definition}
\begin{exa}[Hilbert modular varieties]
Consider the case of Hilbert modular varieties. Theorem \ref{thmDK} of Diamond--Kassaei shows that the partial Hasse invariants $\Ha_\tau$ are isolated sections, when the condition $pk_\tau > k_{\sigma^{-1}\tau}$ defines a neighborhood of $\mathbf{h}_\tau$, which is the case exactly when $\tau \neq \sigma^{-1}\tau$. 

\end{exa}

If $f\in H^0(\GZip^\mu,\Vcal_I(\lambda))$ is isolated, we expect its weight $\lambda$ to generate an extremal ray of the cone $\langle C_{\zip} \rangle$. This is indeed the case in all our examples. However, not all forms whose weight generates an extremal ray of the cone are isolated. In this paper, we only consider the case when $f$ is a partial Hasse invariant $\Ha_\alpha$ for some root $\alpha\in \Delta$. In this case, all the examples of this article show evidence for the following:

\begin{expect}\label{expect}
Let $\alpha\in \Delta$ be a simple root, $\Ha_\alpha$ a corresponding partial Hasse invariant, and $\lambda_\alpha$ the weight of $\Ha_\alpha$ (see \eqref{lamalph-eq}). Assume the following:
\begin{Alist}
    \item The weight $\lambda_\alpha$ generates an extremal ray of the cone $\langle C_{\zip} \rangle$.
    \item $\lambda_{\alpha}$ lies in the complement of $C_{\GS}$.
\end{Alist}
Then $\Ha_\alpha$ is an isolated section for any scheme $(S,\zeta)$ satisfying Assumption \ref{assume}.
\end{expect}
The necessity of condition (A) can be seen in the case of $\Sp(6)$, we discuss it at the end of \S \ref{sec-sympn3} (see equation \eqref{non-extr-phi}). The necessity for condition (B) can be seen in the case of Hilbert--Blumenthal Shimura varieties: As we noted above, the partial Hasse invariant $\Ha_\tau$ is isolated if and only if $\tau\neq \sigma^{-1}\tau$, which is indeed equivalent to $\Ha_\tau \notin C_{\GS}$. Similarly, in all other cases treated in this paper, Condition (B) is required.

More generally than the Hilbert--Blumenthal case, assume that $(G,\mu)$ is of Hasse-type (see \S \ref{sec-hassetype}). Then Condition (A) is always satisfied for any root $\alpha\in \Delta$, because $\langle C_{\Hasse} \rangle = \langle C_{\zip} \rangle$. As for Condition (B), it is satisfied for roots $\alpha\in \Delta\setminus I$ if and only if $\sigma^{-1}(\alpha)\neq \alpha$, similarly to the Hilbert--Blumenthal case. For roots $\alpha \in I$, Condition (B) is almost always satisfied, except in some trivial cases when $\alpha$ is a root of a direct factor of $G^{\ad}$ contained in $L^{\ad}$. Hence, if the above expectation is correct, one should have several divisibility results for orthogonal Shimura varieties of type $B_n$, namely one for each of the $n-1$ simple roots of the maximal Levi $L$.

\section{Symplectic groups}\label{sec-symp}

In this section, we consider a symplectic group $\Sp(2n)$, endowed with its usual Siegel-type zip datum (see below). Note that the Siegel-type Shimura variety $\Acal_n$ is associated to the reductive group $\GSp(2n)$ rather than $\Sp(2n)$. However, the stacks of $G$-zips for both groups are closely related, and all results of this section hold in both cases.

\subsection{Group theory}\label{zipcone-sp6}

We first give some notations for an arbitrary symplectic group. Let $(V_0,\psi)$ be a non-degenerate symplectic space over $\FF_q$ of dimension $2n$, for some integer $n\geq 1$. After choosing an appropriate basis $\Bcal$ for $V_0$, we assume that $\psi$ is given by the matrix
\begin{equation}
\begin{pmatrix}
& -J \\
J&\end{pmatrix}
\quad \textrm{ where } \quad
J\colonequals \begin{psmallmatrix}
&&1 \\
&\iddots& \\
1&&
\end{psmallmatrix}.
\end{equation} 
Define $G$ as follows:
\begin{equation}\label{group}
G(R) = \{f\in \GL_{\FF_{q}}(V_0\otimes_{\FF} R) \mid  \psi_R(f(x),f(y))=\psi_R(x,y), \ \forall x,y\in V_0\otimes_{\FF_q} R \}
\end{equation}
for all $\FF_q$-algebras $R$. Identify $V_0=\FF_q^{2n}$ via $\Bcal$ and view $G$ as a subgroup of $\GL_{2n,\FF_q}$. Fix the $\FF_q$-split maximal torus $T$ given by diagonal matrices in $G$, i.e.
\begin{equation}
T(R)\colonequals \{ \diag_{2n}(x_1,\ldots ,x_n,x^{-1}_n,\ldots,x^{-1}_1) \mid x_1, \ldots ,x_n\in R^\times \}.
\end{equation}
Define $B$ as the Borel subgroup of $G$ consisting of the lower-triangular matrices in $G$. For a tuple $(a_1,\dots,a_n)\in \ZZ^n$, define a character of $T$ by mapping $\diag_{2n}(x_1, \dots ,x_n,x^{-1}_n, \dots ,x^{-1}_1)$ to $x_1^{a_1} \cdots x_n^{a_n}$. From this, we obtain an identification $X^*(T) = \ZZ^n$. Denoting by $(e_1,\dots ,e_n)$ the standard basis of $\ZZ^n$, the $T$-roots of $G$ and the $B$-positive roots are respectively
\begin{align}
\Phi&\colonequals \{\pm e_i \pm e_j \mid 1\leq i \neq j \leq n\} \cup \{ \pm 2e_i \mid 1\leq i \leq n \}, \\
\Phi_+&\colonequals \{e_i \pm e_j \mid 1\leq i< j \leq n\} \cup \{ 2e_i \mid 1\leq i \leq n\} 
\end{align}
and the $B$-simple roots are $\Delta\colonequals \{\alpha_1,\dots , \alpha_{n-1},\beta\}$ where 
\begin{align*}
\alpha_i&\colonequals e_{i}-e_{i+1} \textrm{ for } i=1,...,n-1 ,\\ \beta&\colonequals 2e_n.
\end{align*}
The Weyl group $W\colonequals W(G,T)$ can be identified with the group of permutations $\sigma \in \Sfr_{2n}$ satisfying $\sigma(i)+\sigma(2n+1-i)=2n+1$ for all $1\leq i \leq 2n$. In particular, $\sigma$ is completely determined by the values $\sigma(i)$ for $1\leq i \leq n$. If $\sigma(i)=a_i$ for all $1\leq i \leq n$, we write 
\begin{equation}
\sigma=[a_1 \cdots a_n].    
\end{equation}
Define a cocharacter $\mu \colon \GG_{\mathrm{m},\FF_q}\to G$ by $z\mapsto \diag(zI_n,z^{-1}I_n)$. Write $\Zcal\colonequals (G,P,L,Q,M,\varphi)$ for the associated zip datum (since $\mu$ is defined over $\FF_q$, we have $M=L$). Concretely, if we denote by $(u_i)_{i=1}^{2n}$ the canonical basis of $k^{2n}$, then $P$ is the stabilizer of $V_{0,P}=\Span_k(u_{n+1},...,u_{2n})$ and $Q$ is the stabilizer of $V_{0,Q}=\Span_k(u_{1},...,u_{n})$. The intersection $L\colonequals P\cap Q$ is a common Levi subgroup and there is an isomorphism $\GL_{n,\FF_q}\to L$, $A\mapsto \delta(A)$, where:
\begin{equation}
\delta(A):=\left(\begin{matrix}\label{deltadef}
A& \\ & J {}^t\! A^{-1} J
\end{matrix} \right).
\end{equation}
Let $\{\chi_\alpha\}_{\alpha\in \Delta}$ be the set of fundamental weights. They lie in $X^*(T)$ and satisfy $\langle \chi_\alpha,\alpha^\vee \rangle=1$ and $\langle \chi_\alpha,\beta^\vee \rangle=0$ for all $\beta\in \Delta\setminus \{\alpha\}$. Denote by $\Ha_\alpha$ the partial Hasse invariant attached to $\chi_\alpha$ as in Definition \ref{part-Hasse-glob}. It is a section over $\GF^\mu$ of $\Vcal_{\flag}(\lambda_\alpha)$ for $\lambda_\alpha=h_{w_0}(\chi_{\alpha})$, with notation as in \S\ref{sec-Hasse-cone}. The vanishing locus of $\Ha_{\alpha}$ is $\overline{\Fcal}_{w_0s_\alpha}$ and the multiplicity of $\Ha_\alpha$ along $\overline{\Fcal}_{w_0s_\alpha}$ is $1$ by Theorem \ref{brion}(3). Concretely, $\lambda_\alpha$ is given as follows. For $\alpha_i$ ($1\leq i\leq n-1$),
\begin{equation}
    \lambda_{\alpha_i} = (\underbrace{1, \dots ,1}_{\textrm{$i$ times}},\underbrace{0, \dots ,0}_{\textrm{$n-i$ times}}) + (\underbrace{0, \dots , 0}_{\textrm{$n-i$ times}}, \underbrace{-q, \dots ,-q}_{\textrm{$i$ times}}),
\end{equation}
and $\lambda_{\beta}=(1-q,\dots , 1-q)$. Note that $\Ha_{\beta}$ is the classical ordinary Hasse invariant. The weights $\{\lambda_\alpha\}_{\alpha\in \Delta}$ generate the Hasse cone $C_{\Hasse}$. Other cones defined in \S\ref{subsec-subcones} are given as follows:
\begin{align*}
    X^*_{+,I}(T) & = \{ (a_1,\dots ,a_n)\in \ZZ^n \mid \  a_1\geq \dots \geq a_n\} \\
    X_{-}^*(L) &= \NN(-1,\dots ,-1) \\
    C_{\GS} &= \{ (a_1,\dots ,a_n)\in X^*_{+,I}(T) \mid \ a_1\leq 0 \} \\
    C_{\rm hw} &= \{ (a_1,\dots ,a_n)\in X^*_{+,I}(T) \mid \  \sum_{i=1}^n q^{n-i}a_i\leq 0 \} \\
    C^{+,I}_{\orb} &= \{ (k_1,\dots,k_n)\in \ZZ^n \ \mid \  \sum_{i=1}^j k_i + \frac{1}{q} \sum_{i=j+1}^{n} k_i \leq 0, \quad j=1,\dots, n-1. \}
\end{align*}
We are not able to determine $C_{\zip}$ or even $\langle C_{\zip}\rangle$ for general $n$.

\subsection{The case $n=2$}
In this case, Conjecture \ref{conj-S} was proved in \cite[Theorem 5.1.1]{Goldring-Koskivirta-global-sections-compositio} (note that in \cite{Goldring-Koskivirta-global-sections-compositio}, we consider the group $G=\Sp(4)_{\FF_p}$, but the same proof applies if we replace $p$ by $q$). We explain here that the proof of \loccit also yields a divisibility result by partial Hasse invariants. First, in this case one has
\begin{equation}
    \langle C_{\zip} \rangle = \langle C_{\Hasse} \rangle = C_{\hw} = \{(a_1,a_2)\in X_{+,I}^*(T) \mid qa_1+a_2 \leq 0\}.
\end{equation}
Let $(S,\zeta)$ be a pair satisfying Assumption \ref{assume}. There are two strata of codimension $1$ in $\Flag(S)$, namely $\Flag(S)_{w}$ for $w$ in the set
\begin{equation}
    \{w_0 s_{\alpha_1}, w_0 s_{\beta}\}=\{[34],[42]\}.
\end{equation}

\begin{proposition}\label{prop-Sp4-van} Assume that $\lambda=(a_1,a_2)\in \ZZ^2$ satisfies $a_1 > 0$ or $a_2>0$. Then 
\begin{equation}
   H^0(\overline{\Flag(S)}_{[34]},\Vcal_{\flag}(\lambda))=0.
\end{equation}
\end{proposition}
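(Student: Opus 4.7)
The plan is to apply Theorem~\ref{thm-sep-syst} to $w = [34]$, computing $\langle C^{+,\EE}_{[34]} \rangle$ for a suitable separating system $\EE$ and showing that it equals $\{(\lambda_1,\lambda_2) : \lambda_1 \le 0,\ \lambda_2 \le 0\}$, which is exactly the contrapositive of the stated vanishing.

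First I would enumerate the lower neighbors in the Bruhat order. A direct computation with the signed-permutation description of $W(\Sp(4))$ gives $E_{[34]} = \{\beta, 2e_1\}$ (with $[34]s_\beta = [31]$ and $[34]s_{2e_1} = [24]$), $E_{[31]} = \{\alpha_1, 2e_1\}$, and $E_{[24]} = \{\beta, e_1 + e_2\}$; the length-$1$ strata below are $[21]$ and $[13]$. In each of these cases the coroots $\{\alpha^\vee : \alpha \in E_w\}$ are linearly independent, so one can take $\EE_w := E_w$, obtaining a full separating system at every stratum in the sense of Definition~\ref{def-Q-sep}.

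Second, I would compute the generators of the Hasse cones $C^{\EE}_{\Hasse,w}$ using the formula $h_w(\chi) = -w\chi + q w_{0,I} w_0 \sigma^{-1}(\chi)$ together with characters $\chi_\alpha$ dual to the coroots in $E_w$. At the length-$1$ strata, the base case of Theorem~\ref{thm-sep-syst} (namely \cite[Proposition 3.2.1]{Goldring-Koskivirta-global-sections-compositio}) yields the half-planes $\langle C_{\Hasse,[21]} \rangle = \{\lambda_1 \ge \lambda_2\}$ and $\langle C_{\Hasse,[13]} \rangle = \{\lambda_2 \ge q\lambda_1\}$.

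Finally, I would assemble these inductively via Definition~\ref{def-intersumcone}. At length~$2$, the intersection $\langle C^{+,\EE}_{[21]} \rangle \cap \langle C^{+,\EE}_{[13]} \rangle$ is a wedge in the third quadrant with extremal rays $(-1,-1)$ and $(-1,-q)$; adding the length-$2$ Hasse contributions yields $\langle C^{+,\EE}_{[31]} \rangle = \{\lambda_2 \le 0,\ (q-1)\lambda_1 \ge (q+1)\lambda_2\}$ and $\langle C^{+,\EE}_{[24]} \rangle = \{\lambda_1 \le 0,\ (q+1)\lambda_1 + (q-1)\lambda_2 \le 0\}$. At length~$3$, their intersection is spanned by $(0,-1)$ and the interior ray $(-(q+1),-(q-1))$; adding the length-$3$ Hasse contribution $\NN(1-q,0) + \NN(0,1-q)$ (whose generators lie along the negative axes) absorbs the interior ray, since $(-(q+1),-(q-1)) = (q+1)\cdot(-1,0) + (q-1)\cdot(0,-1)$, and extends the cone to the full third quadrant $\{\lambda_1 \le 0,\ \lambda_2 \le 0\}$, as claimed.

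The principal difficulty is the iterated Minkowski-sum bookkeeping of the Hasse cones at each stratum. However, since all computations take place in $\RR^2$, the extremal rays of each intermediate cone can be enumerated by hand, and the whole argument reduces to a short sequence of $2\times 2$ linear computations, closely paralleling the proof of \cite[Theorem 5.1.1]{Goldring-Koskivirta-global-sections-compositio} which establishes the full cone conjecture for $\Sp(4)$.
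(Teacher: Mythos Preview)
Your proposal is correct and follows essentially the same route as the paper. The paper's own proof simply cites \cite[Theorem~5.1.1 and Figure~1]{Goldring-Koskivirta-global-sections-compositio}, which already establishes $C_{S,[34]}\subset\langle C_{\Hasse,[34]}\rangle=\{\lambda_1\le 0,\ \lambda_2\le 0\}$; what you have done is to re-derive that computation explicitly within the separating-system formalism of Theorem~\ref{thm-sep-syst}, which is precisely how the present paper repackages the method of \loccit. Your enumeration of $E_w$, the Hasse cones at length~$1$, and the successive Minkowski sums all check out, and the final identification $\langle C^{+,\EE}_{[34]}\rangle=\langle C_{\Hasse,[34]}\rangle$ agrees with the paper's cited result.
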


\begin{proof}
By the proof of \cite[Theorem 5.1.1]{Goldring-Koskivirta-global-sections-compositio}, one has $C_{S,[34]}\subset \langle C_{\Hasse,[34]} \rangle$ (note that the cone $\langle C_{\Hasse,w} \rangle$ is denoted by $C_{\Sbt,w}$ in \loccitn). By \cite[Figure 1]{Goldring-Koskivirta-global-sections-compositio}, this cone is the subset of $\ZZ^2$ defined by $a_1\leq 0$ and $a_2\leq 0$. The result follows.
\end{proof}

Recall that $\Ha_{\alpha}$ (for $\alpha \in \Delta$) denotes the partial Hasse invariant (with multiplicity $1$) with respect to $\alpha$. The weight of $\Ha_{\alpha_1}$ is $\lambda_{\alpha_1}= (1,-q)$ and the weight of $\Ha_{\beta}$ is $\lambda_{\beta}=(1-q,1-q)$.

\begin{theorem}\label{thm-Sp4-div}
Let $f\in H^0(S,\Vcal_I(\lambda))$ for $\lambda=(a_1,a_2)\in \ZZ^3$. If $a_1 > 0$, then $f$ is divisible by the partial Hasse invariant $\Ha_{\alpha_1}$.
\end{theorem}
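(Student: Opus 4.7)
The plan is to transfer the question to the flag space via the identification $H^0(S,\Vcal_I(\lambda))=H^0(\Flag(S),\Vcal_{\flag}(\lambda))$ of \eqref{identif-lambda}, and then use Proposition \ref{prop-Sp4-van} to force the corresponding section $f_{\flag}$ to vanish along the codimension-one flag stratum cut out by $\Ha_{\alpha_1}$.

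First I would observe that $\overline{\Fcal}_{w_0 s_{\alpha_1}}$ pulls back under $\zeta_{\flag}$ to $\overline{\Flag(S)}_{[34]}$, and that by \S\ref{sec-Hasse-cone} the partial Hasse invariant $\Ha_{\alpha_1}$, viewed as a section of $\Vcal_{\flag}(\lambda_{\alpha_1})$ on $\Flag(S)$, has vanishing divisor precisely $\overline{\Flag(S)}_{[34]}$ with multiplicity one. Next, consider the restriction
\begin{equation}
f_{\flag}\bigr|_{\overline{\Flag(S)}_{[34]}} \in H^0\bigl(\overline{\Flag(S)}_{[34]}, \Vcal_{\flag}(\lambda)\bigr).
\end{equation}
Since $\lambda=(a_1,a_2)$ with $a_1>0$, Proposition \ref{prop-Sp4-van} guarantees that this space vanishes, so $f_{\flag}$ itself vanishes along the prime divisor $\overline{\Flag(S)}_{[34]}$. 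Hence the divisor $\div(f_{\flag})$ has multiplicity at least $1$ along $\overline{\Flag(S)}_{[34]}$.

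The final step is to divide. The rational section $f_{\flag}/\Ha_{\alpha_1}$ of $\Vcal_{\flag}(\lambda-\lambda_{\alpha_1})$ has no pole along any prime divisor: the only place a pole could appear is $\overline{\Flag(S)}_{[34]}$, where $\Ha_{\alpha_1}$ vanishes to order exactly $1$ and $f_{\flag}$ vanishes to order at least $1$. Thus $f_{\flag}/\Ha_{\alpha_1}$ is regular on an open subset whose complement has codimension $\geq 2$ in the smooth (hence normal) scheme $\Flag(S)$, and so extends to a global section $g\in H^0(\Flag(S),\Vcal_{\flag}(\lambda-\lambda_{\alpha_1}))$ by Hartogs. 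Transferring $g$ back to $H^0(S,\Vcal_I(\lambda-\lambda_{\alpha_1}))$ via \eqref{identif-lambda} yields $f=\Ha_{\alpha_1}\cdot g$ in the sense of Definition \ref{divis-def}.

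The only potentially delicate point is verifying that the set-theoretic vanishing of $f_{\flag}$ on $\overline{\Flag(S)}_{[34]}$ suffices to conclude divisibility by $\Ha_{\alpha_1}$; this is where the multiplicity-one property of $\Ha_{\alpha_1}$ (Theorem \ref{brion}(3)) combined with normality of $\Flag(S)$ does all the work, and no separating-system induction as in Theorem \ref{thm-sep-syst} is needed since we only pass through a single codimension-one stratum.
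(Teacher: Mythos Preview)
Your proof is correct and follows essentially the same approach as the paper's own proof: use Proposition~\ref{prop-Sp4-van} to conclude that $f_{\flag}$ vanishes on $\overline{\Flag(S)}_{[34]}=\overline{\Flag(S)}_{w_0 s_{\alpha_1}}$, then divide by $\Ha_{\alpha_1}$ using that it cuts out this stratum with multiplicity one. The paper states this more tersely, whereas you have spelled out the normality/Hartogs step explicitly.
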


\begin{proof}
Assume that $a_1>0$. By Proposition \ref{prop-Sp4-van}, the restriction of $f$ to the stratum $\overline{\Flag(S)}_{[34]} = \overline{\Flag(S)}_{w_0s_{\alpha_1}}$ is zero. Since $\Ha_{\alpha_1}$ cuts out $\overline{\Flag(S)}_{w_0 s_{\alpha_1}}$ with multiplicity one, we deduce that $f$ is divisible by $\Ha_{\alpha_1}$.
\end{proof}

We could also state a similar result when $a_2>0$, but there exist no nonzero global sections of weight $(a_1,a_2)$ with $a_2>0$ (see Figure \ref{fig:Sp4} below), hence this result would be empty. We illustrate Theorem \ref{thm-GL21-div} graphically. The area colored in grey in the figure below corresponds to the subset of $\langle C_{\zip} \rangle$ where $a_1>0$. By Theorem \ref{thm-GL21-div}, any section whose weight lies in this area is divisible by $\Ha_{\alpha_{1}}$.

\begin{figure}[H]
    \centering
    \includegraphics[width=13cm]{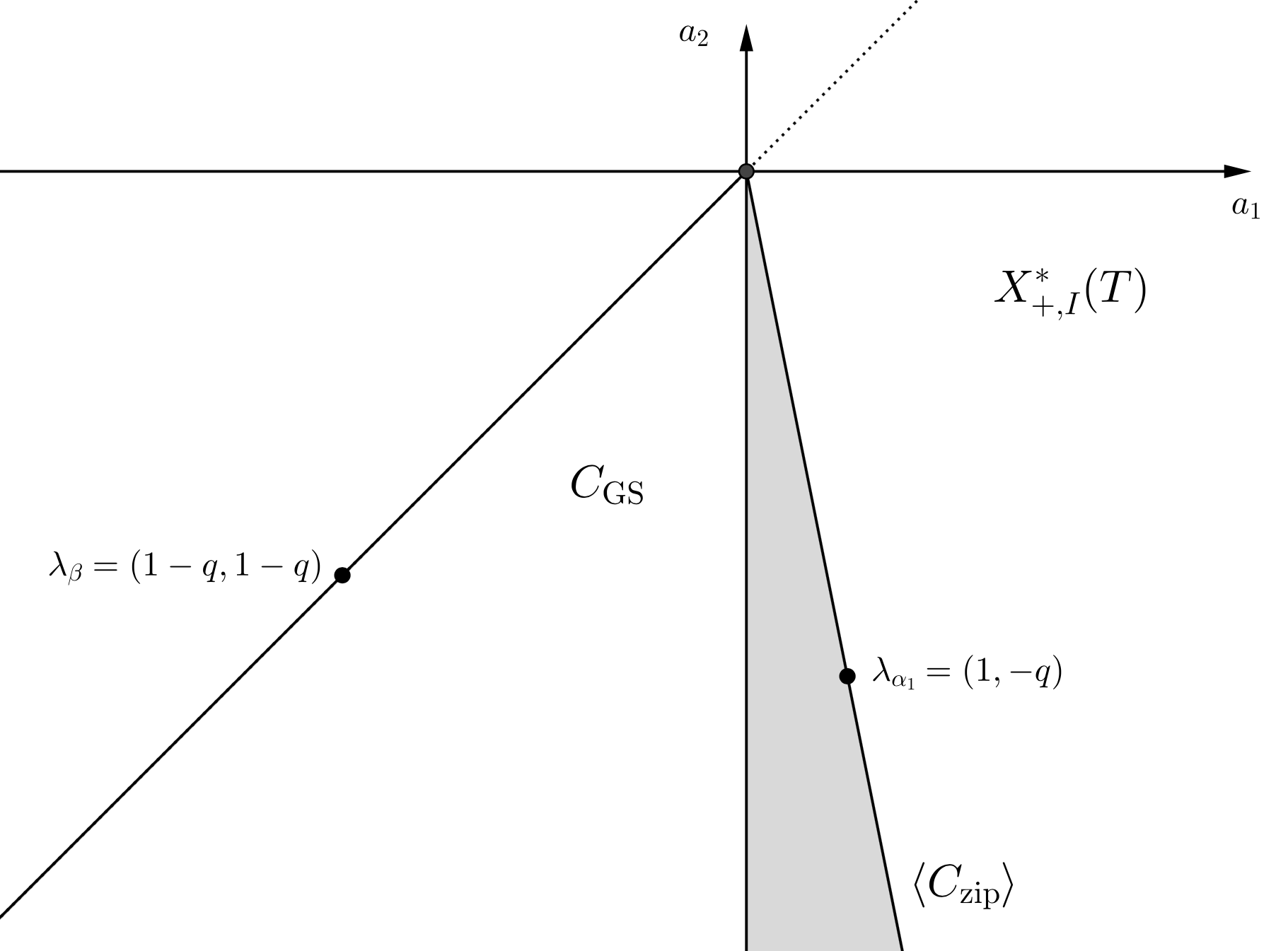}
    \caption{The case of $\Sp(4)_{\FF_q}$}
    \label{fig:Sp4}
\end{figure}

This shows that $\Ha_{\alpha_1}$ is an isolated form (Definition \ref{extremal-def}) and $a_1>0$ defines a neighborhood of divisibility.

\subsection{The case $n=3$} \label{sec-sympn3}

We now assume that $n=3$. We recall previous results from \cite[\S 5.5]{Koskivirta-automforms-GZip}.

\begin{proposition}[{\cite[\S 5.5]{Koskivirta-automforms-GZip}}]\label{prop-Sp6}
We have
$$ \langle C_{\zip} \rangle = \{ (a_1,a_2,a_3)\in X^*_{+,I}(T) \mid \ q^2 a_1+a_2+qa_3\leq 0 \ \textnormal{ and } \  qa_1+q^2 a_2+a_3\leq 0 \}.$$
\end{proposition}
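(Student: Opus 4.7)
The plan is to establish the two inclusions separately; this is proved in \cite[\S5.5]{Koskivirta-automforms-GZip}, and I indicate the strategy.

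For the inclusion ``$\supseteq$'', the key observation is that the right-hand cone equals the $\QQ_{\geq 0}$-span of $C_{\hw}$ and the partial Hasse-invariant weight $\lambda_{\alpha_1}=(1,0,-q)$. Indeed, given $\lambda=(a_1,a_2,a_3)\in X_{+,I}^*(T)$ satisfying the two inequalities, set
\begin{equation*}
k := \max\!\left(0,\ \tfrac{q^2 a_1+q a_2+a_3}{q(q-1)}\right),\qquad \lambda_0 := \lambda - k\lambda_{\alpha_1}=(a_1-k,\,a_2,\,a_3+kq).
\end{equation*}
A direct computation shows that inequality $q^2a_1+a_2+qa_3\leq 0$ is equivalent to $a_2-(a_3+kq)\geq 0$ when $k$ realises the critical value, and $qa_1+q^2a_2+a_3\leq 0$ is equivalent to $(a_1-k)-a_2\geq 0$; hence $\lambda_0\in X_{+,I}^*(T)$ and $\lambda_0\in C_{\hw}$ by choice of $k$. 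Since $\lambda_{\alpha_1}\in C_{\Hasse}\subset C_{\zip}$ and $C_{\hw}\subset\langle C_{\zip}\rangle$ (via the $L_\varphi$-norm of the highest weight vector, as recalled in \S\ref{subsec-subcones}), we deduce $\lambda\in\langle C_{\zip}\rangle$.

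For the reverse inclusion ``$\subseteq$'', use the description \eqref{sectionsVlambda}: since $L\cong\GL_{3,\FF_q}$ is split, any $\lambda\in C_{\zip}$ arises from a nonzero vector $f\in V_I(\lambda)^{L(\FF_q)}\cap V_I(\lambda)^{\Delta^P}_{\geq 0}$, i.e.\ a $\GL_3(\FF_q)$-invariant vector in the irreducible algebraic $\GL_3$-module $V_I(\lambda)$ whose $T$-weight support lies in $\{\nu=(n_1,n_2,n_3):n_3\leq 0\}$. One then analyses the $\GL_3(\FF_q)$-invariants of $V_I(\lambda)$ using Brauer--Steinberg-type decompositions and Dickson invariants, and shows that the weights $\nu$ which can appear in such an invariant vector are constrained by precisely two independent linear conditions, which when combined with the positivity requirement $n_3\leq 0$ on each weight component translate into the two inequalities in the statement.

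The main obstacle is the upper bound: unlike the lower bound, which reduces to the combinatorial identification of the cone's extremal rays with $\lambda_{\alpha_1}$, $\lambda_\beta$, $(q{+}1,-q^2,-q^2)$ and $(1,1,-q^2-q)$ and applies standard constructions, the upper bound genuinely requires representation-theoretic input about $\GL_3(\FF_q)$-invariants in $V_I(\lambda)$. The two inequalities are not captured by the coarser bounds $C_{\zip}\subset C^{+,I}_{\orb}\subset C^{+,I}_{\rm unip}$; they reflect the finer interplay between the Frobenius-twisted $W_L$-orbit of the coroot $\beta^\vee=e_3$ and the $L(\FF_q)$-invariance condition, and the saturated inequality structure must be checked by hand on sufficiently many Weyl translates of candidate invariant vectors.
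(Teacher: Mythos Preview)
The paper does not actually prove this proposition; it is stated with a citation to \cite[\S5.5]{Koskivirta-automforms-GZip} and no further argument is given beyond the observation (made just after the statement) that $\langle C_{\zip}\rangle$ is spanned over $\QQ_{\geq 0}$ by $C_{\hw}$ and $\lambda_{\alpha_1}=(1,0,-q)$. Your treatment of the inclusion ``$\supseteq$'' is correct and is precisely a verification of this spanning claim: your choice of $k$ makes $\lambda_0$ land on the boundary $q^2a_1+qa_2+a_3=0$ of $C_{\hw}$, and the two hypotheses $q^2a_1+a_2+qa_3\leq 0$ and $qa_1+q^2a_2+a_3\leq 0$ are exactly what guarantee $a_2\geq a_3+kq$ and $a_1-k\geq a_2$ respectively, so $\lambda_0\in X_{+,I}^*(T)$. (Your phrasing ``equivalent to'' should be ``implied by'', and one should note that $k$ need not be integral, but this is harmless since we work in the saturated cone.)

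Your ``$\subseteq$'' direction, however, is not a proof but a description of where a proof might be found. Saying that one ``analyses the $\GL_3(\FF_q)$-invariants'' and that ``the saturated inequality structure must be checked by hand'' is a programme, not an argument: you have not produced the two linear constraints from any concrete property of invariant vectors, nor explained why exactly these two (and no weaker) inequalities emerge. Since the paper also does not supply this direction and simply cites the reference, you are in the same position as the paper here; but if you intend this as a self-contained proof, the upper bound remains a genuine gap.
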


The Hasse cone $C_{\Hasse}$ is generated by the three weights $\lambda_{\alpha_1}=(1,0,-q)$, $\lambda_{\alpha_2}=(1,1-q,-q)$ and $\lambda_{\beta}=(1-q,1-q,1-q)$. As in the case of $\Sp(4)$, we will focus our attention on $\lambda_{\alpha_1}$. On the other hand, the cone $C_{\hw}$ is defined inside $X_{+,I}^*(T)$ by the equation $q^2a_1+qa_2+a_3\leq 0$. Contrary to the case $n=2$, the cone $C_{\hw}$ is strictly contained in $\langle C_{\zip} \rangle$. Note that $\lambda_{\alpha_1}$ lies in the complement of $C_{\hw}$. The cone $C_{\hw}$ is generated (over $\QQ_{\geq 0}$) by the three weights $\lambda_{\beta}=(1-q,1-q,1-q)$, $\eta_1=(1,1,-(q^2+q))$ and $\eta_2=(q+1,-q^2,-q^2)$. The last two are the weights of the forms $h_1=\Norm_{L_{\varphi}}(f_{\eta_1,\high})$ and $h_2=\Norm_{L_\varphi}(f_{\eta_2,\high})$, where the notation $\Norm_{L_{\varphi}}$ was explained in \S\ref{subsec-subcones}, and where $f_{\eta,\high}$ denotes the highest weight vector of the $L$-representation $V_I(\eta)$.

It is helpful to visualize the different cones on a diagram. We represent a two-dimensional generic "slice" of the three-dimensional subcones of $\ZZ^3$. Therefore, a line passing through the origin appears as a point. In Figure \ref{fig:sp6} below, the two enclosing half-lines correspond respectively to the hyperplanes $a_1=a_2$ and $a_2=a_3$, which form the boundary of $X_{+,I}^*(T)$. The cones $C_{\GS}\subset C_{\rm hw}\subset \langle C_{\zip}\rangle$ are represented on the figure. We colored in grey the complement of $C_{\hw}$ inside $\langle C_{\zip} \rangle$. We explain the significance of this subset below the figure

\begin{figure}[H]
    \centering
    \includegraphics[width=13cm]{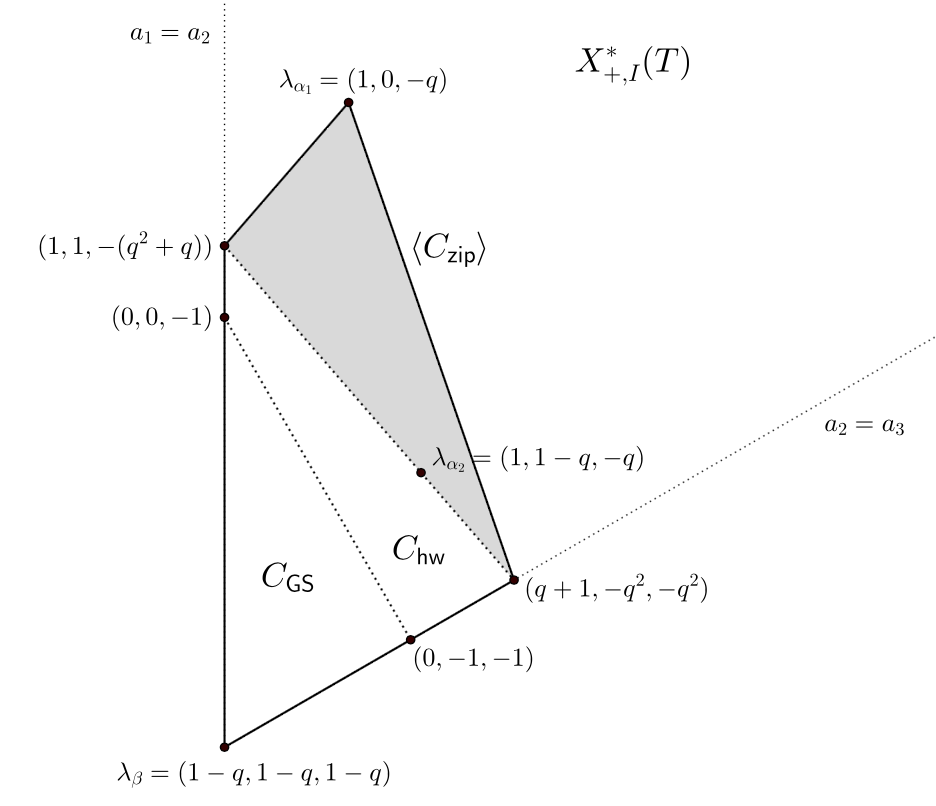}
    \caption{The case of $\Sp(6)_{\FF_q}$}
    \label{fig:sp6}
\end{figure}

One sees immediately on the figure that there are four extremal rays, generated by the weights $\lambda_{\alpha_1}=(1,0,-q), \lambda_\beta=(1-q,1-q,1-q), \eta_1$ and $\eta_2$. In particular, $\langle C_{\zip} \rangle$ is spanned (over $\QQ_{\geq 0}$) by $C_{\hw}$ and $\lambda_{\alpha_1}=(1,0,-q)$. This observation will be crucial in the proof of our main theorem.

We briefly explain the importance of the area colored in grey, foreshadowing the main result of this section (Theorem \ref{thm-divSp6}). By definition, the grey area is the set of $\lambda=(a_1,a_2,a_3)\in \langle C_{\zip} \rangle$ such that $q^2a_1+qa_2+a_3>0$. We will prove that if $(S,\zeta)$ is a scheme satisfying Assumption \ref{assume}, and $f$ is a section over $S$ whose weight $\lambda$ lies in the grey area, then $f$ is divisible by the partial Hasse invariant $\Ha_{\alpha_1}$ of weight $(1,0,-q)$, in the sense of Definition \ref{divis-def}. This applies for example to the Siegel-type Shimura variety $\Acal_3$ (as mentioned earlier, we need to change the group to $\GSp(6)$, but this change does not affect the result). As explained in Expectation \ref{expect}, the general philosophy seems to be that forms whose weight lies "far away" from the cone $C_{\GS}$ tend to be divisible by appropriate partial Hasse invariants.

In the case of $\Sp(6)$, this divisibility result only holds for $\Ha_{\alpha_1}$. We cannot expect a similar divisibility result for the partial Hasse invariant $\Ha_{\alpha_2}$ of weight $\lambda_{\alpha_2}=(1,1-q,-q)$, because the weight $(1,1-q,-q)$ lies in the interior of the cone $\langle C_{\zip} \rangle$. Indeed, since the weights of $\Ha_{\alpha_1}$, $\Ha_{\beta}$, $h_1$ and $h_2$ generate the cone $C_{\zip}$ over $\QQ_{\geq 0}$, we can choose integers $a,b,c,d\geq 0$ appropriately so that the weight of the section 
\begin{equation} \label{non-extr-phi}
    f=\Ha_{\alpha_1}^a \Ha_{\beta}^b h_1^c h_2^d
\end{equation}
is a positive multiple of $\lambda_{\alpha_2}=(1,1-q,-q)$. However, $f$ is not divisible by $\Ha_{\alpha_2}$, which shows that $\Ha_{\alpha_2}$ is not an isolated section in the sense of Definition \ref{extremal-def}. This shows why Condition (A) of Expectation \ref{expect} is necessary. On the other hand, other candidates for isolated sections are the forms $h_1$ and $h_2$, since their weights $\eta_1$ and $\eta_2$ generate extremal rays. Their may exist neighborhoods $V(\eta_1)$ and $V(\eta_2)$ where we have divisibility by these sections. It would be interesting to investigate such divisibility results by more general sections, beyond the case of partial Hasse invariants. One can show that $h_1$ and $h_2$ are indeed isolated for $\GZip^\mu$. However, we do not know whether they stay isolated for any scheme $S\to \GZip^\mu$ satisfying Assumption \ref{assume}, in particular for $S=\Acal_3$.

\subsection{Main theorem}\label{sec-mainSp6}
We continue to assume $n=3$, i.e $G=\Sp(6)_{\FF_q}$. We consider a pair $(S,\zeta)$ satisfying Assumption \ref{assume}, and we let $(\Flag(S),\zeta_{\flag})$ be the flag space of $(S,\zeta)$, as defined in \S\ref{sec-flagspace}. By the parametrization $w\mapsto \Flag(S)_w$, there are three codimension one strata, corresponding to the elements 
\begin{equation}
    \{w_0 s_{\alpha_1}, w_0 s_{\alpha_2}, w_0 s_{\beta}\}=\{[564],[645],[653]\}.
\end{equation}
We show a result regarding the codimension one stratum $\Flag(S)_{[564]}$, where $[564]=w_0s_{\alpha_1}$. Recall also that $\overline{\Flag(S)}_{[564]}$ is the vanishing locus of the partial Hasse invariant $\Ha_{\alpha_1}$ (pulled back to $\Flag(S)$ via $\zeta_{\flag}$).

\begin{proposition}\label{prop-564}
Assume $q\geq 5$. Let $\lambda=(a_1,a_2,a_3)$ such that $q^2a_1+qa_2+a_3>0$. Then one has $ H^0(\overline{\Flag(S)}_{[564]}, \Vcal_{\flag}(\lambda))=0$.
\end{proposition}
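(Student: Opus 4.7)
The plan is to apply Theorem~\ref{thm-sep-syst} with a carefully chosen separating system $\EE$ and show that the resulting intersection-sum cone at $w = [564]$ is contained in the half-space
\[ H \colonequals \{ (a_1, a_2, a_3) \in \ZZ^3 \mid q^2 a_1 + q a_2 + a_3 \leq 0 \}. \]
Since $H$ is already saturated in $X^*(T) = \ZZ^3$, this would give $C_{S, [564]} \subset \langle C_{[564]}^{+, \EE} \rangle \subset H$, which is exactly the proposition. Equivalently, any nonzero section over $\overline{\Flag(S)}_{[564]}$ must have weight in $C_{\hw}$, so the hypothesis $q^2 a_1 + q a_2 + a_3 > 0$ forces the space to vanish.

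To construct such an $\EE$, one first enumerates the Bruhat interval $[e, [564]]$ in the Weyl group of type $C_3$, recording for each $w$ the set $E_w$ of lower neighbors together with the connectedness relation defined above Definition~\ref{def-sep-syst}. At each $w$, one selects $\EE_w \subset E_w$ and characters $\chi_{w,\alpha} \in \ZZ^3$ satisfying conditions (a)--(c) of Definition~\ref{def-sep-syst}. The induction of Definition~\ref{def-intersumcone} then produces $C_w^{+, \EE}$ from the Hasse contribution $h_w(\Gamma_w)$ together with the intersection $\bigcap_{\alpha \in \EE_w} C_{ws_\alpha}^{+, \EE}$. The base of the induction, at length one, is \cite[Proposition 3.2.1]{Goldring-Koskivirta-global-sections-compositio}, which delivers $C_{S, s_\gamma} \subset \langle C_{\Hasse, s_\gamma} \rangle$ for each simple reflection $s_\gamma$. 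From there one propagates upward through the poset, at each step using Lemma~\ref{lem-g0} to rewrite a hypothetical section as a product of partial Hasse invariants times a section defined on the lower neighbors.

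The main obstacle is the combinatorial optimization: the $\chi_{w,\alpha}$ must be coordinated so that, after propagating through the full Bruhat interval, the accumulated cone at $[564]$ collapses to the single inequality cutting out $H$. The bound $H = C_{\hw} \cap X_{+,I}^*(T)$ is sharp: the norm forms $h_1, h_2$ of weights $\eta_1, \eta_2 \in C_{\hw}$ and the partial Hasse invariant $\Ha_\beta$ of weight $\lambda_\beta \in C_{\hw}$ discussed in \S\ref{zipcone-sp6} show that $H$ is the smallest half-space containing the boundary of $\langle C_{\zip} \rangle$ opposite to the grey area of Figure~\ref{fig:sp6}. The assumption $q \geq 5$ is expected to enter when controlling the integer signs in the linear combinations $\sum N_\alpha m_\alpha(g) \, h_w(\chi_{w,\alpha})$ that appear in Lemma~\ref{lem-g0}; for $q \in \{2, 3\}$ several of these coefficients lose their required positivity, which is likely why the proposition is stated only for $q \geq 5$.
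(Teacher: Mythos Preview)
Your outline matches the paper's approach exactly: one constructs a separating system $\EE$ on the Bruhat interval below $[564]$, applies Theorem~\ref{thm-sep-syst}, and shows $\langle C_{[564]}^{+,\EE}\rangle\subset C_{\hw}$ (this is precisely Lemma~\ref{lem-564}). So the strategy is correct.

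However, what you have written is a description of the method, not a proof. The entire content of the argument is the explicit construction of $\EE$ and the verification of the cone inclusions, and you have not supplied either. Concretely, the paper specifies for each of roughly twenty elements $w$ below $[564]$ the subset $\EE_w\subset E_w$, the characters $\chi_\alpha$, the images $h_w(\chi_\alpha)$, and then exhibits nonnegative coefficients expressing each target inequality as a positive combination of the inductively known ones (see \S\ref{Sp6-proof}). None of this is automatic: at several nodes the full set $E_w$ has too many elements for a naive separating system, so one must choose $\EE_w$ strictly smaller and verify the connectedness condition~(c) of Definition~\ref{def-sep-syst}; and the final step at $w=[564]$ requires the rational coefficient
\[
u=\frac{q^7 - 2q^6 - 9q^5 - 4q^4 - 7q^3 - 3q^2 - q + 1}{(q-1)(q^3 + 2q^2 + 1)(q^5 + 4q^4 + 2q^3 + 5q^2 + 4q + 2)},
\]
whose numerator is negative for $q\in\{2,3,4\}$ and first becomes nonnegative at $q=5$. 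Your remark that ``$q\geq 5$ is expected to enter when controlling the integer signs'' is correct in spirit but identifies neither the step nor the obstruction.

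In short: right plan, but the proof is the computation, and the computation is absent.
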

To prove this result, we implement the strategy explained in \S\ref{subsec-sum-inter}. We will exhibit a suitable separating system $\EE=(\EE_w,\{\chi_\alpha\}_{\alpha\in \EE_w})_{w\in W}$. Only certain strata $w\in W$ will be relevant in the proof. To simplify the notation, we write $C_w^{+}$ for $C_w^{\EE,+}$ (since $\EE$ will be fixed once and for all). For starter, we show in the diagram below the relevant strata that will appear in the proof.

\bigskip

\begin{figure}[H]

\hspace{-1cm} $\xymatrix@R=10pt{
&&&*++[F]{[145]}\ar@{-}[r]&*++[F]{[154]}\ar@{-}[rd]&&&& \\
&   *++[F]{[132]}\ar@{-}[r]\ar@{-}[rd] &*++[F]{[135]} \ar@{-}[r]\ar@{-}[rd]\ar@{-}[ru]&*++[F]{[153]}\ar@{-}[rd]\ar@{-}[ru]&*++[F]{[246]}\ar@{-}[r]&*++[F]{[264]}\ar@{-}[rd]&*++[F]{[541]}\ar@{-}[r]&*++[F]{[546]}\ar@{-}[rd]&\\
  *++[F]{[123]}\ar@{-}[r]\ar@{-}[ru]\ar@{-}[rd] & *++[F]{[124]}\ar@{-}[rd]\ar@{-}[r]\ar@{-}[ru] &*++[F]{[142]}\ar@{-}[ru]&*++[F]{[236]}\ar@{-}[ru]&*++[F]{[263]}\ar@{-}[rd]&*++[F]{[531]}\ar@{-}[ru]&*++[F]{[365]}\ar@{-}[r]&*++[F]{[465]}\ar@{-}[r]&*++[F]{[564]}\\
&   *++[F]{[213]}\ar@{-}[r] &*++[F]{[214]} \ar@{-}[r]\ar@{-}[ru]\ar@{-}[rd]&*++[F]{[412]}\ar@{-}[r]&*++[F]{[421]}\ar@{-}[ru]&*++[F]{[362]}\ar@{-}[ru]&&& \\
&&&*++[F]{[315]}\ar@{-}[r]&*++[F]{[326]}\ar@{-}[ru]&&&& \\
}$
    \caption{The strata appearing in the proof for $G=\Sp(6)$}
    \label{fig:sepsyst}
\end{figure}
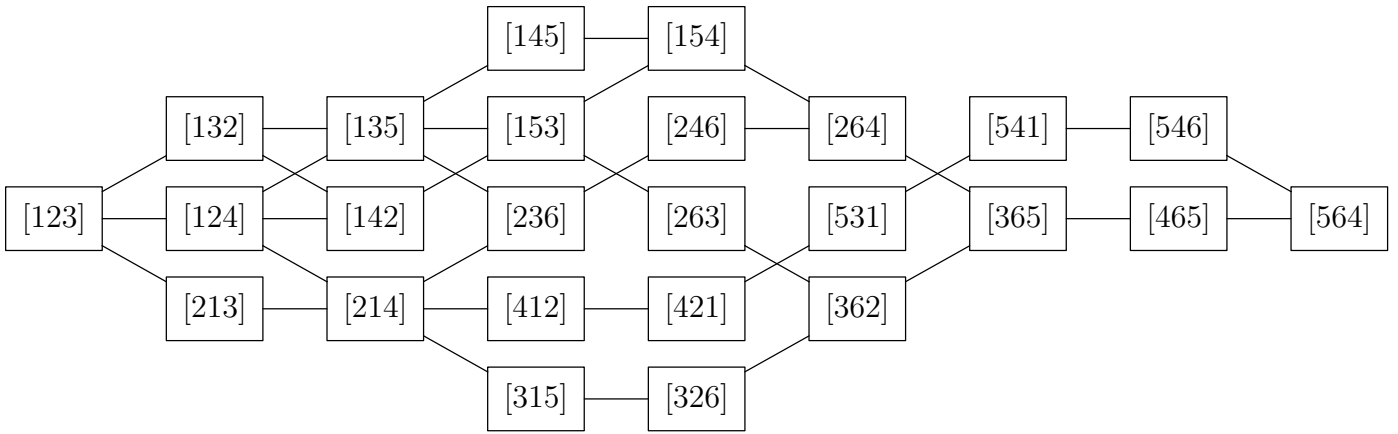
For a stratum $w$ appearing in the diagram above, we will define a subset $\EE_w\subset E_w$ and characters $\{\chi_\alpha\}_{\alpha\in \EE_w}$ satisfying Definition \ref{def-sep-syst}. We will denote by $\LL_w$ the set $\{ws_\alpha\}_{\alpha\in \EE_w}$ of lower neighbors of $w$ corresponding to $\EE_w$. When $w'\in \LL_w$, we have joined by a segment the strata $w$ and $w'$ in the above diagram (note that $\EE_w$ may by strictly smaller than $E_w$). For strata not appearing in the diagram, we set $\EE_w=\emptyset$. In the case $G=\Sp(6)_{\FF_q}$, there are strata which do not admit a full separating system of partial Hasse invariants (Definition \ref{def-Q-sep}). However, all strata in the above diagram do admit such a system. We prove Proposition \ref{prop-564} in \S\ref{Sp6-proof}. Let us here only make the strategy explicit and explain that it suffices to show the Lemma below:

\begin{lemma}\label{lem-564}
For $q\geq 5$, one has $C_{[564]}^{+}\subset C_{\hw}$.
\end{lemma}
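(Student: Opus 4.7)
The plan is to apply Definition \ref{def-intersumcone} and Theorem \ref{thm-sep-syst} inductively along the Bruhat subdiagram shown in Figure \ref{fig:sepsyst}, ending at $w=[564]=w_0 s_{\alpha_1}$. Explicitly, I will (i) exhibit a separating system $\EE$ supported on the strata of that diagram, (ii) compute the intersection--sum cones $C_w^{+}=C_w^{+,\EE}$ recursively along the diagram, and (iii) verify that the resulting polyhedral cone $C_{[564]}^{+}\subset X^{*}(T)=\ZZ^{3}$ satisfies the single inequality $q^{2}a_{1}+q a_{2}+a_{3}\leq 0$ on $X^{*}_{+,I}(T)$ which defines $C_{\hw}$, provided $q\geq 5$.

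For step (i), for each $w$ appearing in Figure \ref{fig:sepsyst}, we let $\EE_w\subset E_w$ consist of those $\alpha$ for which $ws_\alpha$ is joined to $w$ by a segment in the diagram, and set $\EE_w=\emptyset$ for all other $w\in W$. For each such $\alpha\in\EE_w$, one then produces a character $\chi_\alpha\in X^{*}(T)$ meeting the pairing conditions (a)--(c) of Definition \ref{def-sep-syst}. The sets $E_w$ of lower neighbors are read off from the signed-permutation description of $W(\Sp(6),T)$, and the existence of a suitable $\chi_\alpha$ reduces to the linear independence over $\QQ$ of the coroots $\{\alpha^{\vee}\}_{\alpha\in\EE_w}$ together with the vanishing of $\langle\chi_\alpha,\beta^{\vee}\rangle$ for the roots $\beta\in E_w\setminus\EE_w$ connected to $\EE_w$; both are checked stratum by stratum.

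For step (ii), starting from the strata of length one on the left of the diagram and propagating rightward toward $[564]$, we compute
\begin{equation}
C_w^{+}=C_{\Hasse,w}^{\EE}+\bigcap_{\alpha\in\EE_w}C_{ws_\alpha}^{+},
\end{equation}
where the generators $h_w(\chi_\alpha)=-w\chi_\alpha+q w_{0,I}w_{0}\sigma^{-1}(\chi_\alpha)$ of $C_{\Hasse,w}^{\EE}$ come from the explicit formula of \S\ref{sec-Hasse-cone}. Each $C_w^{+}$ is a rational polyhedral cone in $\ZZ^{3}$ cut out by finitely many inequalities with polynomial-in-$q$ coefficients, and the computation reduces to elementary convex geometry. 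The main bookkeeping point is that several intermediate strata do not admit a full separating system in the sense of Definition \ref{def-Q-sep} and so must be handled via the more flexible Definition \ref{def-sep-syst}; this is exactly why the diagram is designed with the specific subsets $\EE_w$ indicated.

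The main obstacle, and the place where the hypothesis $q\geq 5$ enters, is step (iii). The cone $C_{[564]}^{+}$ is the Minkowski sum of the one-dimensional cone $C_{\Hasse,[564]}^{\EE}$ with the intersection $C_{[546]}^{+}\cap C_{[465]}^{+}$, and after computing its finitely many extremal rays one must verify the inequality $q^{2}a_{1}+q a_{2}+a_{3}\leq 0$ on each of them when intersected with $X^{*}_{+,I}(T)$. For most rays the inequality is forced already by $I$-dominance, but one or two extremal generators reduce the verification to an estimate of the form $A(q)\leq 0$ for an explicit polynomial $A(q)\in\ZZ[q]$ whose leading behavior is negative; this estimate holds precisely when $q\geq 5$, while it fails for $q=2,3$. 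Once this last inequality is checked, the desired inclusion $C_{[564]}^{+}\subset C_{\hw}$ follows.
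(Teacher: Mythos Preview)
Your outline matches the paper's strategy exactly: a recursive computation of the cones $C_w^{+}$ along the subdiagram of Figure~\ref{fig:sepsyst}, ending at $[564]$. However, what you have written is a description of what a proof would contain, not a proof. The entire content of Lemma~\ref{lem-564} is the explicit chain of polyhedral inequalities and their verification; without those, nothing has been shown. The paper carries this out by, for each $w$ in the diagram, listing $E_w$, $\EE_w$, the characters $\chi_\alpha$, the generators $h_w(\chi_\alpha)$, and an explicit ``upper bound'' half-space (or pair of half-spaces) for $C_w^{+}$, and then exhibits non-negative coefficients expressing each new inequality as a combination of the previous ones. The place where $q\geq 5$ enters is precisely at $w=[564]$: one needs the rational number
\[
u=\frac{q^7 - 2q^6 - 9q^5 - 4q^4 - 7q^3 - 3q^2 - q + 1}{(q-1)(q^3 + 2q^2 + 1)(q^5 + 4q^4 + 2q^3 + 5q^2 + 4q + 2)}
\]
to satisfy $0\leq u\leq \frac{1}{q-1}$, which holds for $q\geq 5$ but fails already for $q=4$.

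Two statements in your sketch are incorrect and would mislead an implementation. First, you say that ``several intermediate strata do not admit a full separating system in the sense of Definition~\ref{def-Q-sep}''; in fact every $w$ in Figure~\ref{fig:sepsyst} does admit one, and the paper uses the stronger conditions $\langle\chi_\alpha,\beta^\vee\rangle=0$ for all $\beta\in E_w\setminus\{\alpha\}$ throughout. The reason for taking proper subsets $\EE_w\subsetneq E_w$ is purely economy: one avoids following Bruhat branches that are irrelevant for bounding $C_{[564]}^{+}$. Second, you describe $C_{\Hasse,[564]}^{\EE}$ as one-dimensional; it is two-dimensional, generated by $h_{[564]}(1,0,0)=(0,1,-q)$ and $h_{[564]}(0,1,0)=(1,-q,0)$, with $\EE_{[564]}=\{e_1-e_3,e_2-e_3\}$ and $\LL_{[564]}=\{[465],[546]\}$. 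Finally, $I$-dominance plays no role in the verification: the paper shows directly that $C_{[564]}^{+}$ lies in the half-space $q^2a_1+qa_2+a_3\leq 0$, not merely after intersecting with $X^*_{+,I}(T)$.
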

Indeed, assume that Lemma \ref{lem-564} holds. Then, we deduce from Theorem \ref{thm-sep-syst} that $C_{S,[564]}\subset \langle C_{[564]}^{+}\rangle \subset C_{\hw}$. Since $C_{\hw}$ is precisely the set of $(a_1,a_2,a_3)\in X_{+,I}^*(T)$ such that $q^2a_1+qa_2+a_3\leq 0$, we deduce Proposition \ref{prop-564}. The proof of Lemma \ref{lem-564} is entirely computational, and is based on the recursive determination of the cones $C_w^{+}$ for all $w$ appearing in the above diagram, starting at elements of length $1$ and ending at the element $w=[564]$. More precisely, it is sufficient to give a suitable upper bound for each $C_{w}^+$ rather than determining it explicitly. This is the strategy we implement in \S\ref{Sp6-proof}.

We derive some immediate consequences of Proposition \ref{prop-564}. Let $\Ha_{\alpha_1}$ be a partial Hasse invariant for $\alpha_1$ on $\GF^\mu$ which has multiplicity one along $\overline{\Fcal}_{w_0s_{\alpha_1}}$. Since $\zeta_{\flag}$ is smooth, the pull back of $\Ha_{\alpha_1}$ via $\zeta_{\flag}\colon \Flag(S)\to \GF^\mu$ has also multiplicity $1$ along $\overline{\Flag(S)}_{w_0s_{\alpha_1}}$.

\begin{theorem}\label{thm-divSp6}
Assume $q\geq 5$. Let $f\in H^0(\Flag(S),\Vcal_{\flag}(\lambda))$ and assume that $\lambda=(a_1,a_2,a_3)\in \ZZ^3$ satisfies $q^2a_1+qa_2+a_3>0$. Then $f$ is divisible by the partial Hasse invariant $\Ha_{\alpha_1}$.
\end{theorem}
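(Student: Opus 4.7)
The plan is to derive Theorem \ref{thm-divSp6} as a direct consequence of Proposition \ref{prop-564}. The hypothesis $q^2 a_1 + q a_2 + a_3 > 0$ with $q \geq 5$ is exactly the one appearing there, and its conclusion $H^0(\overline{\Flag(S)}_{[564]}, \Vcal_{\flag}(\lambda)) = 0$ forces the restriction of $f$ to the closed substack $\overline{\Flag(S)}_{[564]} = \overline{\Flag(S)}_{w_0 s_{\alpha_1}}$ to vanish identically.

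Next, I would invoke the property of $\Ha_{\alpha_1}$ recalled in the text just before the theorem: its pullback to $\Flag(S)$ via $\zeta_{\flag}$ is a section of the line bundle $\Vcal_{\flag}(\lambda_{\alpha_1})$ whose divisor of zeros equals $\overline{\Flag(S)}_{w_0 s_{\alpha_1}}$ with multiplicity one, and which vanishes along no other prime divisor. Consequently the rational section $f / \Ha_{\alpha_1}$ of $\Vcal_{\flag}(\lambda - \lambda_{\alpha_1})$ satisfies $\div(f / \Ha_{\alpha_1}) = \div(f) - \overline{\Flag(S)}_{w_0 s_{\alpha_1}}$, which is effective thanks to the vanishing of $f$ along that divisor just established. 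Since $\Flag(S)$ is smooth (hence locally factorial), the rational section extends uniquely to a regular global section $g \in H^0(\Flag(S), \Vcal_{\flag}(\lambda - \lambda_{\alpha_1}))$ satisfying $f = \Ha_{\alpha_1} \cdot g$, which is precisely the required divisibility.

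Thus the entire difficulty of Theorem \ref{thm-divSp6} is already absorbed into Proposition \ref{prop-564}. The main obstacle, which I would not attempt to redo here, is the verification of Lemma \ref{lem-564}: this requires constructing the separating system $\EE$ depicted in Figure \ref{fig:sepsyst} and bounding the intersection-sum cone $C_{[564]}^{+}$ by $C_{\hw}$ via Definition \ref{def-intersumcone} and Theorem \ref{thm-sep-syst}, proceeding recursively through the strata by increasing length; only at this last, combinatorial step does the assumption $q \geq 5$ actually get used.
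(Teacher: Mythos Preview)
Your proposal is correct and follows essentially the same approach as the paper: invoke Proposition \ref{prop-564} to force $f$ to vanish on $\overline{\Flag(S)}_{w_0 s_{\alpha_1}}$, then use that $\Ha_{\alpha_1}$ cuts out this divisor with multiplicity one together with smoothness of $\Flag(S)$ to conclude divisibility. Your write-up simply spells out the divisor-theoretic step in slightly more detail than the paper does, and you correctly locate the real work (and the need for $q\geq 5$) in Lemma \ref{lem-564}.
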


\begin{proof}
By Proposition \ref{prop-564}, the restriction of $f$ to the stratum $\overline{\Flag(S)}_{[564]} = \overline{\Flag(S)}_{w_0s_{\alpha_1}}$ is zero. Since the partial Hasse invariant $\Ha_{\alpha_1}$ cuts out $\overline{\Flag(S)}_{w_0s_{\alpha_1}}$ with multiplicity $1$ and $S$ is smooth, we deduce that $f$ is divisible by $\Ha_{\alpha_1}$.
\end{proof}

\begin{theorem}\label{thm-conjSp6}
Assume $q\geq 5$. Conjecture \ref{conj-S} holds in the case $G=\Sp(6)_{\FF_q}$ and $\mu\colon \GG_{\textrm{m},\FF_q}\to G$ defined as in \S\ref{zipcone-sp6}.
\end{theorem}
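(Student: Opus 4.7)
The plan is to combine the divisibility result of Theorem \ref{thm-divSp6} with the explicit description of $\langle C_{\zip}\rangle$ in Proposition \ref{prop-Sp6}, via a simple induction that peels off copies of the partial Hasse invariant $\Ha_{\alpha_1}$ until the residual weight lands in the highest-weight cone $C_{\hw}$.

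First I would record the easy inclusion $\langle C_{\zip}\rangle \subset \langle C_{S}\rangle$: by Assumption \ref{assume}(a)--(b), the map $\zeta\colon S\to \GZip^{\mu}$ is smooth and surjective, so pullback gives an injection $H^{0}(\GZip^{\mu},\Vcal_{I}(\lambda))\hookrightarrow H^{0}(S,\Vcal_{I}(\lambda))$, hence $C_{\zip}\subset C_{S}$. For the non-trivial inclusion, let $\lambda=(a_{1},a_{2},a_{3})\in C_{S}$ and choose a nonzero section $f\in H^{0}(S,\Vcal_{I}(\lambda))=H^{0}(\Flag(S),\Vcal_{\flag}(\lambda))$ via the identification \eqref{identif-lambda}. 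Introduce the quantity
\begin{equation}
N(\lambda)\colonequals q^{2}a_{1}+qa_{2}+a_{3},
\end{equation}
which is exactly the linear form whose non-positivity defines $C_{\hw}$ (see \S\ref{zipcone-sp6}). I will show $\lambda\in \langle C_{\zip}\rangle$ by induction on $\max(N(\lambda),0)$.

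If $N(\lambda)\leq 0$, then $\lambda\in C_{\hw}$ and the inclusion $C_{\hw}\subset \langle C_{\zip}\rangle$ from diagram \eqref{conediag} finishes the case. Otherwise $N(\lambda)>0$, and Theorem \ref{thm-divSp6} (which requires $q\geq 5$) implies that $f_{\flag}$ is divisible by $\Ha_{\alpha_{1}}$; write $f_{\flag}=\Ha_{\alpha_{1}}\cdot g$ with $g\in H^{0}(\Flag(S),\Vcal_{\flag}(\lambda-\lambda_{\alpha_{1}}))$. Since $f\neq 0$ and $\Ha_{\alpha_{1}}$ is a section of a line bundle, $g\neq 0$; via \eqref{identif-lambda} it corresponds to a nonzero element of $H^{0}(S,\Vcal_{I}(\lambda-\lambda_{\alpha_{1}}))$. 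As $\Vcal_{I}(\mu)=0$ for $\mu$ not $I$-dominant, the weight $\lambda-\lambda_{\alpha_{1}}$ is automatically $I$-dominant and lies in $C_{S}$. A direct computation using $\lambda_{\alpha_{1}}=(1,0,-q)$ gives
\begin{equation}
N(\lambda-\lambda_{\alpha_{1}})=q^{2}(a_{1}-1)+qa_{2}+(a_{3}+q)=N(\lambda)-q(q-1),
\end{equation}
which is a strict decrease since $q\geq 5$. By the inductive hypothesis, $\lambda-\lambda_{\alpha_{1}}\in\langle C_{\zip}\rangle$. On the other hand $\lambda_{\alpha_{1}}\in C_{\Hasse}\subset C_{\zip}\subset\langle C_{\zip}\rangle$, and $\langle C_{\zip}\rangle$ is closed under addition (being the saturation of an additive submonoid), so $\lambda=(\lambda-\lambda_{\alpha_{1}})+\lambda_{\alpha_{1}}\in\langle C_{\zip}\rangle$, completing the induction.

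The main technical obstacle has already been absorbed into the earlier results: Proposition \ref{prop-Sp6} supplies the explicit shape of $\langle C_{\zip}\rangle$, and Theorem \ref{thm-divSp6}, whose proof rests on the delicate construction of the separating system of Figure \ref{fig:sepsyst} and the recursive bounds of Lemma \ref{lem-564}, provides the divisibility that drives the induction. Geometrically, the argument works because $\lambda_{\alpha_{1}}$ generates an extremal ray of $\langle C_{\zip}\rangle$ and points away from $C_{\hw}$, so iterated subtraction of $\lambda_{\alpha_{1}}$ traverses exactly the grey region in Figure \ref{fig:sp6} and lands in $C_{\hw}$ after a bounded number of steps. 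The only point requiring care is making sure the intermediate weights remain $I$-dominant, but this is forced automatically by the non-vanishing of the iterated quotients.
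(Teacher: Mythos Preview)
Your proof is correct and rests on the same hard input as the paper's, namely Lemma \ref{lem-564} (which you invoke indirectly through Theorem \ref{thm-divSp6}). The paper argues more directly via the cone bound $C^{+}_{w_0}\subset C_{\Hasse}+C^{+}_{[564]}\subset C_{\Hasse}+C_{\hw}\subset \langle C_{\zip}\rangle$ together with Theorem \ref{thm-sep-syst}, whereas you unpack this same bound as an iterated division by $\Ha_{\alpha_1}$; the two presentations are equivalent, and indeed your style mirrors the paper's own proof of Theorem \ref{thm-U31-conj}.
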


\begin{proof}
By Lemma \ref{lem-564}, we have inclusions: 
\begin{equation}
C^{+}_{w_0}\subset C_{\Hasse}+C^{+}_{[564]}\subset C_{\Hasse}+C_{\hw}\subset \langle C_{\zip}\rangle.    
\end{equation}
By Theorem \ref{thm-sep-syst}, we have $ C_S \subset \langle C^{+}_{w_0} \rangle$, hence $C_S \subset  \langle C_{\zip}\rangle$, and therefore also $\langle C_S \rangle \subset \langle C_{\zip}\rangle$. Since the converse inclusion is always satisfied, the result follows.
\end{proof}

\section{Groups of type $A_n$}
We prove conjecture \ref{conj-S} for several Shimura varieties attached to unitary groups $\mathbf{G}:=\GU(r,s)$ associated with a totally imaginary quadratic field $\mathbf{E}/\QQ$, and where $n\colonequals r+s\leq 4$. We also obtain divisibility results.

\subsection{Unitary Shimura varieties} \label{sec-unit-Shim}
We consider Shimura varieties attached to unitary groups. Let $\mathbf{E}/\QQ$ be a totally imaginary quadratic extension, and $(\mathbf{V},\psi)$ be a hermitian space over $\mathbf{E}$. We assume that there is a basis $\Bcal$ in which $\psi$ is given by the matrix:
\begin{equation}
    \begin{pmatrix}
    &&1\\&\iddots&\\1&&
    \end{pmatrix}
\end{equation}
Let $\mathbf{G}=\GU(\mathbf{V},\psi)$ be the general unitary group of $(\mathbf{V},\psi)$. Furthermore, assume that $\psi_\RR$ has signature $(r,s)$ where $r,s$ are nonnegative integers such that $r+s=n$. We let $\Lambda\subset \mathbf{V}\otimes_{\QQ} \QQ_p$ be the $\Ocal_{\mathbf{E}}$-invariant $\ZZ_p$-lattice generated by the elements of $\Bcal$. This yields a reductive $\ZZ_p$-model $\Gcal_{\ZZ_p}=\GU(\Lambda,\psi)$ of $\mathbf{G}_{\QQ_p}$. In particular, the group $K_p=\Gcal_{\ZZ_p}(\ZZ_p)$ is hyperspecial.

By \cite{Kottwitz-points-shimura-varieties}, for each open compact subgroup $K^p\subset \mathbf{G}(\AA^p_f)$, there is a PEL-type Shimura variety $\Sscr_{K}$ over $\Ocal_{\mathbf{E}_v}$ attached to this group, where $K=K^pK_p$. We are interested in the special fiber $S_K=\Sscr_K\otimes_{\Ocal_{\mathbf{E}_v}} k$. Write $G$ for the special fiber of $\Gcal_{\ZZ_p}$. We have a map $\zeta\colon S_K\to \GZip^{\mu}$, where $\mu$ is naturally attached to the Shimura datum. We are naturally led to consider separately the following two cases:
\begin{enumerate}[(1)]
    \item If $p$ is split in $\mathbf{E}$, then $G$ is isomorphic to $\GL_{n,\FF_p}\times \GG_{\textrm{m},\FF_p}$. For simplicity, we will instead work with the group $G=\GL_n$.
    \item If $p$ is inert in $\mathbf{E}$, then $G$ is a general unitary group $\GU(n)$ over $\FF_p$. For simplicity, we will instead work with the group $G=\U(n)$.
\end{enumerate}

\subsection{The case $G=\GL_{n,\FF_q}$} \label{sec-GLn}

\subsubsection{Group theory}\label{sec-cones-GLn}

Set $G=\GL_{n,\FF_q}$ (as usual, we take $q=p$ in the context of Shimura varieties). Define a cocharacter $\mu \colon \GG_{\mathrm{m},k}\to G_k$ by $\mu(x)=\diag(xI_r,I_s)$ with $r+s=n$. Write $\Zcal_\mu=(G,P,L,Q,M,\varphi)$ for the attached zip datum. If $(u_1,\dots ,u_n)$ denotes the canonical basis of $k^n$, then $P$ is the stabilizer of $V_P\colonequals \Span_k(u_{r+1},\dots , u_n)$ and $Q$ is the stabilizer of $V_Q\colonequals \Span_k(u_{1},\dots , u_r)$. Let $B$ denote the lower-triangular Borel and $T$ the diagonal torus. The Levi subgroup $L=P\cap Q$ is isomorphic to $\GL_{r,\FF_q}\times \GL_{s,\FF_q}$. Identify $X^*(T)=\ZZ^n$ such that $(a_1,\dots ,a_n)\in \ZZ^n$ corresponds to the character $\diag(x_1,\dots ,x_n)\mapsto \prod_{i=1}^n x_i^{a_i}$. The simple roots with respect to $B$ are $\{\alpha_i\}_{1\leq i \leq n-1}$ where
\begin{equation}
    \alpha_i= e_i-e_{i+1}
\end{equation}
and $(e_i)_{1\leq i \leq n}$ denotes the canonical basis of $\ZZ^n$.
%The element $z=w_{0,I} w_0$ defined in \eqref{defz} is given by the matrix
%\begin{equation}
%    z=\left( 
%    \begin{matrix}
%    0 & I_r \\ I_s & 0    
%    \end{matrix} \right).
%\end{equation}
For general $(r,s)$, we do not know a description of $C_{\zip}$ or even $\langle C_{\zip} \rangle$. However, one can easily compute the cones $C_{\Hasse}$ and $C_{\hw}$ as first approximations (see \S \ref{subsec-subcones}). First, the cones $X_{+,I}^*(T)$ and $C_{\GS}$ are
\begin{align*}
     X_{+,I}^*(T) &= \{(a_1,\dots,a_n)\in \ZZ^n \mid a_1\geq \dots \geq a_r \ \textrm{and} \ a_{r+1}\geq \dots \geq a_n \} \\
     C_{\GS} &= \{(a_1,\dots,a_n)\in X_{+,I}^*(T) \mid a_1\leq a_n \}.
\end{align*}
Next, we determine $C_{\Hasse}$. Write $\det\colon \GL_n\to \GG_{\textrm{m}}$ for the determinant. We may view it as a section in $H^0(\GZip^\mu,\Vcal_I(\lambda_{\det}))$ with $\lambda_{\det}=(1-q,\dots ,1-q)\in \ZZ^n$, which is everywhere non-vanishing. For each $\alpha\in \Delta$, let $\chi_\alpha$ be a fundamental weight of $\alpha$ (it is well-defined up to $\ZZ (1,\dots,1)$). Write $\Ha_{\alpha}\colonequals \Ha_{\chi_\alpha}$ for the attached partial Hasse invariant. The section $\Ha_{\alpha}$ vanishes exactly on the codimension $1$ stratum $\overline{\Fcal}_{w_0s_\alpha}$ and its divisor has multiplicity one. Denote by $\lambda_{\alpha}=h_{w_0}(\chi_\alpha)$ the weight of $\Ha_\alpha$. The weights $\{\lambda_{\alpha_d}\}_{1\leq d \leq n-1}$ were calculated in \cite[\S 8.3]{Imai-Koskivirta-partial-Hasse}. Up to $\ZZ \lambda_{\det}$, they are as follows. For $1\leq d\leq s$, we have
\begin{equation}
    \lambda_{\alpha_d}  = (\underbrace{1, \dots ,1}_{\textrm{$n-d$ times}},\underbrace{0, \dots ,0}_{\textrm{$d$ times}}) + (\underbrace{-q, \dots , -q}_{\textrm{$r$ times}}, \underbrace{0, \dots ,0}_{\textrm{$d$ times}}, \underbrace{-q, \dots ,-q}_{\textrm{$s-d$ times}}). 
\end{equation}
Similarly, for $s<d\leq n-1$, we have
\begin{equation}
    \lambda_{\alpha_d} = (\underbrace{1, \dots ,1}_{\textrm{$n-d$ times}},\underbrace{0, \dots ,0}_{\textrm{$d$ times}}) + (\underbrace{0, \dots , 0}_{\textrm{$d-s$ times}}, \underbrace{-q, \dots ,-q}_{\textrm{$n-d$ times}}, \underbrace{0, \dots ,0}_{\textrm{$s$ times}}). 
\end{equation}
The cone $C_{\Hasse}$ is the cone generated by the weights $\{\lambda_{\alpha_d}\}_{1\leq d \leq n-1}$ together with $\ZZ\lambda_{\det}$.

Next, we explicit the highest weight cone $C_{\rm hw}$. Since $G$ is $\FF_q$-split, we may use \cite[\S 3.6]{Koskivirta-automforms-GZip}. Define $\alpha\colonequals \alpha_r=e_r-e_{r+1}$. Note that we have $\Delta^P=\{\alpha\}$. Let $L_\alpha\subset L$ be the centralizer of $\alpha^\vee$ in $L$, and let $I_\alpha\subset I$ be the set of simple roots in $L_\alpha$. Then, by \loccitn, $C_{\hw}$ is the set of $\lambda\in X_{+,I}^*(T)$ such that $\sum_{w\in {}^{I_\alpha}W_I} q^{\ell(w)}\langle w\lambda, \alpha^\vee \rangle \leq 0 $. Here, ${}^{I_\alpha}W_I$ is the set of permutations $(\sigma,\sigma')\in \Sfr_r\times \Sfr_s$ such that
\begin{equation}
\sigma(1) > \dots > \sigma(r-1) \ \textrm{and} \ \sigma'(2) > \dots > \sigma'(s).
\end{equation}
Hence, $(\sigma,\sigma')$ is entirely determined by $i=\sigma(r)$ and $j=\sigma'(1)$, and $i,j$ can take any value such that $1\leq i \leq r$ and $1\leq j \leq s$. We deduce that $C_{\hw}$ is:
\begin{equation}\label{hw-GL}
C_{\hw}=\left\{(a_1,\dots ,a_n)\in X_{+,I}^*(T) \Bigg| \  \sum_{i=1}^r\sum_{j=1}^s q^{r+j-i-1} (a_i-a_{r+j}) \leq 0 \right\}.
\end{equation}

\subsubsection{Correspondence between automorphic forms}\label{sec-corresp}
In this section, we continue to assume that $G=\GL_{n,\FF_q}$, but we take $r=n-1$ and $s=1$. We explain a correspondence between automorphic forms on the stack of $G$-zips and automorphic forms on the stack of $G'$-zips, where
\begin{equation}
G'\colonequals \Sp(2(n-1))_{\FF_q}.   
\end{equation}
Endow $G'$ with the usual Siegel-type cocharacter $\mu'$ as explained in \S\ref{zipcone-sp6}. Let $\Zcal'=(G',P',L',Q',M',\varphi)$ by the zip datum attached to $\mu'$ (since $\mu'$ is defined over $\FF_q$, we have $M'=L'$). Let $B'$ be the lower-triangular Borel subgroup of $G'$ and $T'$ the diagonal torus. Let $\delta'\colon \GL_{n-1}\to L'$ be the isomorphism defined in \eqref{deltadef}. For $\lambda=(\lambda_1,\dots ,\lambda_n)$, write $\lambda'=(\lambda_1,\dots ,\lambda_{n-1})$. Hence, the representation $V_I(\lambda)$ of $L\simeq \GL_{n-1}\times \GG_{\textrm{m}}$ decomposes as
\begin{equation}
    V_I(\lambda)=V_{I'}(\lambda')\boxtimes \chi_{\lambda_n}
\end{equation}
where $I'$ denotes the simple roots of $L'\simeq \GL_{n-1}$, and $\chi_{\lambda_n}$ is the character $\GG_{\textrm{m}}\to \GG_{\textrm{m}}$, $x\mapsto x^{\lambda_n}$. By \eqref{sectionsVlambda}, we have
\begin{align*}
    H^0(\GZip^{\mu},\Vcal_I(\lambda)) & = V_{I}(\lambda)^{L(\FF_q)} \cap \bigoplus_{\substack{\eta\in \ZZ^{n-1} \\ \eta_{n-1}\leq \lambda_{n}}} V_{I'}(\lambda')_{\eta} \\
    H^0(\GpZip^{\mu'},\Vcal_{I'}(\lambda')) & = V_{I'}(\lambda')^{L'(\FF_q)} \cap \bigoplus_{\substack{\eta\in \ZZ^{n-1} \\ \eta_{n-1}\leq 0}} V_{I'}(\lambda')_{\eta}
\end{align*}
where $\eta=(\eta_1,\dots , \eta_{n-1})\in \ZZ^{n-1}$. If $q-1$ does not divide $\lambda_n$, we have $V_{I}(\lambda)^{L(\FF_q)}=0$ since $\chi_{\lambda_n}$ does not have $\GG_{\textrm{m}}(\FF_q)$-invariants. If $q-1$ divides $\lambda_n$, we can identify $V_{I}(\lambda)^{L(\FF_q)}=V_{I'}(\lambda)^{L'(\FF_q)}$. In particular, if $\lambda_n=0$ we have an identification $H^0(\GZip^{\mu},\Vcal_I(\lambda)) = H^0(\GpZip^{\mu'},\Vcal_{I'}(\lambda'))$. Recall that we view the determinant function as an element of $H^0(\GZip^\mu,\Vcal_I(\lambda_{\det}))$ with $\lambda_{\det}=(1-q,\dots ,1-q)\in \ZZ^n$. By twisting with powers of $\det$, we obtain immediately:

\begin{proposition} \label{prop-corresp} \ 
Let $\lambda=(\lambda_1, \dots , \lambda_n)\in \ZZ^n$ and assume that $\lambda_n=(q-1)m$ for some $m\in \ZZ$. Define $\overline{\lambda}=(\lambda_1-\lambda_n, \dots , \lambda_{n-1}-\lambda_n)\in \ZZ^{n-1}$. Then, there is an identification
\begin{equation}
   H^0(\GZip^{\mu},\Vcal_I(\lambda)) = H^0(\GpZip^{\mu'},\Vcal_{I'}(\overline{\lambda})). 
\end{equation}
\end{proposition}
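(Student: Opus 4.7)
The plan is to reduce the statement to the already-observed case $\lambda_n = 0$ by twisting with an appropriate power of the determinant. Recall from the discussion preceding the proposition that when the last coordinate of the weight vanishes, the formulas
\[
H^0(\GZip^{\mu},\Vcal_I(\lambda)) = V_{I}(\lambda)^{L(\FF_q)} \cap \bigoplus_{\eta_{n-1}\leq 0} V_{I'}(\lambda')_{\eta}
\]
and the analogous one for $\GpZip^{\mu'}$ coincide (using that $V_{I}(\lambda)^{L(\FF_q)} = V_{I'}(\lambda')^{L'(\FF_q)}$ when $(q-1)\mid \lambda_n$, which is automatic in the case $\lambda_n=0$). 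Thus the identification $H^0(\GZip^\mu,\Vcal_I(\lambda)) = H^0(\GpZip^{\mu'},\Vcal_{I'}(\lambda'))$ for $\lambda_n=0$ is free.

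The key step is to propagate this to arbitrary $\lambda_n = (q-1)m$ using the fact that the determinant gives a nowhere-vanishing section $\det \in H^0(\GZip^\mu,\Vcal_I(\lambda_{\det}))$ with $\lambda_{\det} = (1-q,\dots,1-q)$. Since $\det$ is invertible on $\GZip^\mu$, multiplication by $\det^m$ provides an isomorphism
\[
\cdot \, \det^m \colon H^0(\GZip^\mu,\Vcal_I(\lambda)) \xrightarrow{\ \sim\ } H^0(\GZip^\mu,\Vcal_I(\lambda + m\lambda_{\det})).
\]
A direct computation shows that $\lambda + m\lambda_{\det}$ has last coordinate $(q-1)m + m(1-q) = 0$ and, in positions $1,\dots,n-1$, the entries $\lambda_i + m(1-q) = \lambda_i - \lambda_n$. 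Hence $\lambda + m\lambda_{\det}$ agrees with $\overline{\lambda}$ in the first $n-1$ coordinates and vanishes in the last.

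Applying the $\lambda_n=0$ case of the preceding discussion to the weight $\lambda + m\lambda_{\det}$ yields
\[
H^0(\GZip^\mu,\Vcal_I(\lambda + m\lambda_{\det})) = H^0(\GpZip^{\mu'},\Vcal_{I'}(\overline{\lambda})),
\]
and composing with the isomorphism above gives the desired identification. There is no real obstacle here: the only nontrivial input is the invertibility of the determinant section, which ensures the twist is an honest isomorphism rather than merely an injection. One should also verify briefly that the induced map is compatible with the transport-of-structure from $L$-invariants to $L'$-invariants; this is automatic because multiplication by $\det^m$ on $V_I(\lambda)$ amounts, after the decomposition $L \simeq \GL_{n-1}\times \GG_{\textrm{m}}$, to twisting the second factor by the character $x\mapsto x^{m(1-q)}$ while leaving the $\GL_{n-1}$-representation and its $T$-weight spaces unchanged.
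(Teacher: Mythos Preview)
Your proof is correct and follows exactly the paper's approach: establish the case $\lambda_n=0$ from the explicit description of global sections, then reduce the general case to it by multiplying by $\det^m$, using that $\det$ is an invertible section of $\Vcal_I(\lambda_{\det})$ with $\lambda_{\det}=(1-q,\dots,1-q)$. Your final remark is slightly imprecise (twisting by $\det^m$ also twists the $\GL_{n-1}$-factor by $\det_{\GL_{n-1}}^m$, not just the $\GG_{\textrm{m}}$-factor), but this is inessential to the argument, which already stands on the line-bundle isomorphism alone.
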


\begin{corollary}\label{coro-zipcone}
Let $C'_{\zip}$ and $C_{\zip}$ be the zip cones of $(G',\mu')$ and $(G,\mu)$ respectively. Then, we have:
\begin{align}
    C_{\zip} & =(C'_{\zip} \times \{0\})+\ZZ\lambda_{\det}\\
    \langle C_{\zip} \rangle & =(\langle C'_{\zip} \rangle \times \{0\})+\ZZ(1, \dots , 1).
\end{align}
\end{corollary}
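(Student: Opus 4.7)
The plan is to deduce both equalities directly from Proposition \ref{prop-corresp}, using in addition the fact that the determinant $\det\colon G\to \GG_{\textrm{m}}$ provides a nowhere-vanishing section of $\Vcal_I(\lambda_{\det})$ on $\GZip^\mu$. In particular, both $\pm\lambda_{\det}$ belong to $C_{\zip}$, so $\ZZ\lambda_{\det}\subset C_{\zip}$.

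For the first equality I would argue by double inclusion. The inclusion $(C'_{\zip}\times\{0\}) + \ZZ\lambda_{\det}\subset C_{\zip}$ is immediate from Proposition \ref{prop-corresp} applied in the special case $\lambda_n=0$ (where $\overline{\lambda}$ is just the truncation of $\lambda$ to the first $n-1$ coordinates), together with closure of $C_{\zip}$ under addition. For the opposite inclusion I would start with $\lambda\in C_{\zip}$; the description of $H^0(\GZip^\mu,\Vcal_I(\lambda))$ recalled in \S\ref{sec-corresp} forces $\chi_{\lambda_n}$ to admit nonzero $\GG_{\textrm{m}}(\FF_q)$-invariants, hence $(q-1)\mid\lambda_n$. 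Writing $\lambda_n=(q-1)m$, Proposition \ref{prop-corresp} identifies $\overline{\lambda}=(\lambda_1-\lambda_n,\dots,\lambda_{n-1}-\lambda_n)$ with an element of $C'_{\zip}$, and the elementary decomposition $\lambda=(\overline{\lambda},0) + \lambda_n(1,\dots,1)=(\overline{\lambda},0)-m\lambda_{\det}$ closes the argument.

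For the saturated version I would simply apply $\langle\cdot\rangle$ to the first equality. Since $\lambda_{\det}=(1-q)(1,\dots,1)$ is a nonzero rational multiple of $(1,\dots,1)$, the saturation of $\ZZ\lambda_{\det}$ is $\ZZ(1,\dots,1)$, and clearly $\langle C'_{\zip}\times\{0\}\rangle = \langle C'_{\zip}\rangle\times\{0\}$. The only formal point to verify is that $\langle\cdot\rangle$ distributes over this particular sum; this holds because the two summands sit in the transverse rational subspaces $\{\lambda_n=0\}$ and $\QQ(1,\dots,1)$, so that any $\lambda$ with an integer multiple in $(C'_{\zip}\times\{0\})+\ZZ\lambda_{\det}$ decomposes uniquely along these subspaces, each component landing in the respective saturation; the converse inclusion is equally transparent.

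No step presents a real difficulty here: the entire argument amounts to bookkeeping on top of the correspondence of Proposition \ref{prop-corresp}.
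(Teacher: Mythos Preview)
Your argument is correct and follows the same approach as the paper: the first equality is deduced from Proposition \ref{prop-corresp} together with the invertibility of $\det$, and the second by checking that $(\langle C'_{\zip}\rangle\times\{0\})+\ZZ(1,\dots,1)$ is saturated, which you verify via the direct sum decomposition $\ZZ^n=(\ZZ^{n-1}\times\{0\})\oplus\ZZ(1,\dots,1)$. The paper states both steps more tersely (calling them ``immediate'' and ``easy''), but your expanded version is precisely what underlies those claims.
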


\begin{proof}
The first equality follows immediately from Proposition \ref{prop-corresp}. To show the second equality, it suffices to check that $(\langle C'_{\zip} \rangle \times \{0\})+\ZZ(1, \dots , 1)$ is a saturated subcone, which is easy.
\end{proof}

We do not know if we can expect a similar, or some form of correspondence between usual mod $p$ automorphic forms on Shimura varieties for the groups $\GU(n,1)$ and $\GSp(2(n-1))$.

\subsubsection{The case $(r,s)=(2,1)$} 

In this case, Conjecture \ref{conj-S} was proved in \cite[Theorem 5.1.1]{Goldring-Koskivirta-global-sections-compositio}. We explain here that the proof of \loccit also yields a divisibility result. First, in this case one has
\begin{equation}
    \langle C_{\zip} \rangle = \langle C_{\Hasse} \rangle = C_{\hw} = \{(a_1,a_2,a_3)\in X_{+,I}^*(T) \mid q(a_1-a_3)+(a_2-a_3) \leq 0\}
\end{equation}
where $X_{+,I}^*(T)$ is given by the condition $a_1\geq a_2$. The Griffiths--Schmid cone $C_{\GS}$ is defined inside $X_{+,I}^*(T)$ by the inequality $a_1-a_3\leq 0$. Let $(S,\zeta)$ satisfy Assumption \ref{assume}. There are two strata of codimension $1$ in $\Flag(S)$, namely $\Flag(S)_{w}$ for $w$ in the set
\begin{equation}
    \{w_0 s_{\alpha_1}, w_0 s_{\alpha_2}\}=\{[231],[312]\}.
\end{equation}

\begin{proposition}\label{propGL21} Assume that $\lambda=(a_1,a_2,a_3)\in \ZZ^3$ satisfies $a_1-a_3 > 0$. Then $H^0(\overline{\Flag}(S)_{[312]},\Vcal_{\flag}(\lambda))=0$.
\end{proposition}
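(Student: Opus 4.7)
The plan is to apply the separating system strategy of Theorem \ref{thm-sep-syst} and show that the resulting upper bound on $C_{S,[312]}$ lies in the half-space $\{a_1 \leq a_3\}$; the proposition will then follow immediately, since by definition $\lambda \in C_{S,[312]}$ whenever $H^0(\overline{\Flag(S)}_{[312]}, \Vcal_{\flag}(\lambda)) \neq 0$.

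First I would identify the lower neighbors of $[312]$ in the Bruhat order: one checks that $[312]s_{\alpha_1} = [132]$ and $[312]s_{e_1-e_3} = [213]$ are both of length one, so $E_{[312]} = \{\alpha_1,\, e_1 - e_3\}$. The two coroots $\alpha_1^\vee$ and $(e_1 - e_3)^\vee$ are linearly independent, so $[312]$ admits a full separating system in the sense of Definition \ref{def-Q-sep}. I would then take $\chi_{\alpha_1} = (1,0,1)$ and $\chi_{e_1-e_3} = (1,1,0)$, which are fundamental weights relative to $E_{[312]}$. At the length-one strata $[132]$ and $[213]$, take $\EE_w = E_w = \{\alpha_2\}$ resp.\ $\{\alpha_1\}$, with fundamental weights $(0,1,0)$ and $(1,0,0)$. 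This defines a separating system $\EE$ as in Definition \ref{def-sep-syst}.

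The next step is the cone computation. Since $G=\GL_{3,\FF_q}$ is split, $\sigma$ acts trivially on $X^*(T)$ and on $W$, and a short calculation shows that the twisting element $z = w_{0,I}w_0$ equals $[312]$. The map $h_{[312]}$ of \S\ref{sec-Hasse-cone} therefore reduces to $h_{[312]}(\chi) = (q-1)\,z\chi$, and applied to the two chosen characters yields
\begin{equation*}
C^{\EE}_{\Hasse,[312]} \;=\; \NN\,(0,\,q-1,\,q-1) \;+\; \NN\,(q-1,\,0,\,q-1).
\end{equation*}
A direct computation at the length-one strata (where $C^{+}_w = C^{\EE}_{\Hasse,w}$) gives $C^{+}_{[132]} = \NN(q,0,-1)$ and $C^{+}_{[213]} = \NN(0,-1,q)$; since the first ray has zero second coordinate and the second has zero first coordinate, their intersection reduces to $\{0\}$. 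Hence, by Definition \ref{def-intersumcone}, $C^{+}_{[312]} = C^{\EE}_{\Hasse,[312]}$, whose saturation is
\begin{equation*}
\langle C^{+}_{[312]}\rangle \;=\; \{\,(s,\, t,\, s+t) \in \ZZ^3 \ \mid\ s,\, t \geq 0\,\} \;\subset\; \{a_1 \leq a_3\}.
\end{equation*}
Combining with Theorem \ref{thm-sep-syst} gives $C_{S,[312]} \subset \langle C^+_{[312]}\rangle \subset \{a_1 \leq a_3\}$, so $a_1 - a_3 > 0$ forces $H^0(\overline{\Flag(S)}_{[312]}, \Vcal_{\flag}(\lambda)) = 0$.

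The crucial point is the triviality of the intersection $C^{+}_{[132]} \cap C^{+}_{[213]}$: with a less carefully chosen separating system one might end up with a nontrivial intersection, which would enlarge $\langle C^+_{[312]}\rangle$ and spoil the bound. Choosing fundamental weights adapted to the unique lower neighbor at each length-one stratum is what forces this intersection to reduce to $\{0\}$, and is the technical heart of the argument.
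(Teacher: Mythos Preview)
Your overall strategy is exactly the paper's, but there is a genuine error in how you read Definition~\ref{def-intersumcone}. For length-one elements the paper sets $C^{+,\EE}_{w} \colonequals C_{\Hasse,w} = h_w(X^*_{+,w}(T))$, the \emph{full} Hasse cone, not $C^{\EE}_{\Hasse,w} = h_w(\Gamma_w)$. Since $X^*_{+,w}(T)$ is a half-space when $|E_w|=1$, each $C^+_{[132]}$ and $C^+_{[213]}$ is a half-space, not a ray. Concretely one finds
\[
\langle C^+_{[132]}\rangle = \{-qa_1 + (q+1)a_2 - a_3 \leq 0\}, \qquad
\langle C^+_{[213]}\rangle = \{(q+1)a_1 - a_2 - qa_3 \leq 0\},
\]
so the intersection is a genuine three-dimensional cone, not $\{0\}$. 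Your assertion that this intersection is trivial is therefore false, and with it the description $\langle C^+_{[312]}\rangle = \{(s,t,s+t):s,t\geq 0\}$. Worse, with your reading Theorem~\ref{thm-sep-syst} would be claiming $C_{S,w}$ lies in a single ray for $\ell(w)=1$, which is plainly impossible since $C_{\Hasse,w}\subset C_{S,w}$.

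The argument is easily salvaged. One checks that
\[
a_1 - a_3 \;=\; \tfrac{1}{q^2+q+1}\bigl(-qa_1 + (q+1)a_2 - a_3\bigr) \;+\; \tfrac{q+1}{q^2+q+1}\bigl((q+1)a_1 - a_2 - qa_3\bigr),
\]
so the intersection of the two half-spaces is contained in $\{a_1 \leq a_3\}$. Your two generators $(0,q-1,q-1)$ and $(q-1,0,q-1)$ of $C^{\EE}_{\Hasse,[312]}$ also satisfy $a_1 \leq a_3$, hence $\langle C^+_{[312]}\rangle \subset \{a_1 \leq a_3\}$ and the proposition follows from Theorem~\ref{thm-sep-syst} as you intended. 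So the ``crucial point'' is not triviality of the intersection, but rather that the (nontrivial) intersection of the two length-one half-spaces still lies in the target half-space.
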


\begin{proof}
The proof is similar to Proposition \ref{prop-Sp4-van} and relies on \cite[Figure 1]{Goldring-Koskivirta-global-sections-compositio} (replacing $p$ by $q$).
\end{proof}

Recall that $\Ha_{\alpha}$ denotes the partial Hasse invariant (with multiplicity $1$) with respect to $\alpha\in \Delta$. The weight of $\Ha_{\alpha_1}$ is $\lambda_{\alpha_1}\colonequals (1-q,1-q,0)$ and the weight of $\Ha_{\alpha_{2}}$ is $\lambda_{\alpha_2}\colonequals (1,-q,0)$. Similarly to Theorem \ref{thm-Sp4-div}, we deduce:

\begin{theorem}\label{thm-GL21-div}
Let $f\in H^0(S,\Vcal_I(\lambda))$ for $\lambda=(a_1,a_2,a_3)\in \ZZ^3$.  If $a_1-a_3 > 0$, then $f$ is divisible by $\Ha_{\alpha_2}$.
\end{theorem}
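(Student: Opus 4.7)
The plan is to mimic the proof of Theorem \ref{thm-Sp4-div} verbatim, using Proposition \ref{propGL21} as the vanishing input and the fact that $\Ha_{\alpha_2}$ cuts out the closed stratum $\overline{\Flag(S)}_{w_0 s_{\alpha_2}}=\overline{\Flag(S)}_{[312]}$ with multiplicity one. The only point to verify is that $[312]$ indeed corresponds to $w_0 s_{\alpha_2}$, which is already recorded in the parametrization $\{w_0 s_{\alpha_1}, w_0 s_{\alpha_2}\}=\{[231],[312]\}$ used in Proposition \ref{propGL21}.

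Concretely, starting from $f\in H^0(S,\Vcal_I(\lambda))$ with $a_1-a_3>0$, I pass to the corresponding section $f_{\flag}\in H^0(\Flag(S),\Vcal_{\flag}(\lambda))$ under the identification \eqref{identif-lambda}. Proposition \ref{propGL21} asserts that $H^0(\overline{\Flag(S)}_{[312]},\Vcal_{\flag}(\lambda))=0$, so the restriction of $f_{\flag}$ to $\overline{\Flag(S)}_{[312]}=\overline{\Flag(S)}_{w_0 s_{\alpha_2}}$ vanishes. In other words, $f_{\flag}$ vanishes identically along the codimension one closed substack $\overline{\Flag(S)}_{w_0 s_{\alpha_2}}$.

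Now, by construction in \S\ref{sec-Hasse-cone} (together with Theorem \ref{brion}(3) and the smoothness of $\psi$ and $\zeta$), the pullback of $\Ha_{\alpha_2}$ along $\zeta_{\flag}\colon \Flag(S)\to \GF^\mu$ is a section of $\Vcal_{\flag}(\lambda_{\alpha_2})$ whose divisor on $\Flag(S)$ equals $\overline{\Flag(S)}_{w_0 s_{\alpha_2}}$ with multiplicity one. Since $\Flag(S)$ is smooth (the map $\zeta_{\flag}$ is smooth and $S$ is smooth by Assumption \ref{assume}), the local ring along the generic point of $\overline{\Flag(S)}_{w_0 s_{\alpha_2}}$ is a DVR with uniformizer $\Ha_{\alpha_2}$, and $f_{\flag}$ has positive valuation there. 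As $\Vcal_{\flag}(\lambda)$ is a line bundle, the quotient $f_{\flag}/\Ha_{\alpha_2}$ is regular at that generic point, and since the vanishing locus of $\Ha_{\alpha_2}$ is set-theoretically irreducible of pure codimension one, this regularity extends across all of $\Flag(S)$ by normality. Hence $f_{\flag}$ is divisible by $\Ha_{\alpha_2}$ as sections of line bundles on $\Flag(S)$, which by Definition \ref{divis-def} means exactly that $f$ is divisible by $\Ha_{\alpha_2}$.

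No step here is technically delicate: the whole content has already been packaged into Proposition \ref{propGL21} (the vanishing of global sections of weight $\lambda$ on the closed stratum) and into the normality/smoothness of $\Flag(S)$. The analogous argument for Theorem \ref{thm-Sp4-div} (and more generally Theorem \ref{thm-divSp6}) provides the template, so the proof reduces to one short paragraph invoking Proposition \ref{propGL21} and the multiplicity-one property of $\Ha_{\alpha_2}$.
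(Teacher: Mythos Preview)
Your proof is correct and follows exactly the same approach as the paper: the paper simply writes ``Similarly to Theorem \ref{thm-Sp4-div}, we deduce'' before stating the result, so the intended argument is precisely the one you wrote---use Proposition \ref{propGL21} to see that $f_{\flag}$ vanishes on $\overline{\Flag(S)}_{[312]}=\overline{\Flag(S)}_{w_0 s_{\alpha_2}}$, then divide by $\Ha_{\alpha_2}$ since it cuts out this stratum with multiplicity one. Your additional explication of the DVR/normality step is more detailed than the paper bothers with, but it is the correct justification.
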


This result shows again the analogy between the cases $\Sp(4)$ and $\GL_3$ (for $r=2$, $s=1$). Proposition \ref{prop-corresp} is a correspondence between sections on the corresponding stacks of $G$-zips. Theorems \ref{thm-GL21-div} and \ref{thm-Sp4-div} suggest that this correspondence may extend in some way to mod $p$ automorphic forms, since the divisibility results in each case are completely similar: The relevant partial Hasse invariants and the neighborhoods of divisibility for $\Sp(4)$ and $\GL(3)$ correspond via the map $\lambda \mapsto \overline{\lambda}$.

\subsubsection{The case $(r,s)=(3,1)$}\label{sec-GL31}

We now take $G=\GL_{4,\FF_q}$ and $r=3$, $s=1$. By \S\ref{sec-corresp}, we expect similarities with the case of $\Sp(6)_{\FF_q}$. In particular, we know by Corollary \ref{coro-zipcone} that $C_{\zip}$ is generated by $C'_{\zip}\times \{0\}$ and $\lambda_{\det}$, where $C'_{\zip}$ is the zip cone of $\Sp(6)_{\FF_q}$ (endowed with the usual Siegel-type cocharacter, see \S\ref{zipcone-sp6}). For $\lambda=(a_1,\dots ,a_n)\in \ZZ^n$, write again
\begin{equation}
    \overline{\lambda}=(a_1-a_n, a_2-a_n, \dots , a_{n-1}-a_n).
\end{equation}
Similarly, for a subset $X\subset \ZZ^n$, define
\begin{equation}\label{Xbar}
\overline{X}\colonequals\{\overline{\lambda} \mid \lambda\in X\} \subset \ZZ^{n-1}.    
\end{equation}
With this notation, Corollary \ref{coro-zipcone} shows that $\overline{C}_{\zip}=C'_{\zip}$. We will show that analogues of Theorems \ref{thm-divSp6} and \ref{thm-conjSp6} hold also for the pair $(G,\mu)$ considered in this section. We refer to \S\ref{proof-GL31} for the proofs of all results mentioned below. Furthermore, all results can be visualized graphically on Figure \ref{fig:sp6}, since the case $G=\Sp(6)_{\FF_q}$ is entirely similar. Therefore, we do not reproduce this figure here.

Let $(S,\zeta)$ be a pair satisfying Assumption \ref{assume}. First, note that there are three codimension one strata in $\Flag(S)$, given by the elements
\begin{equation}
\{w_0s_{\alpha_1}, w_0s_{\alpha_2}, w_0s_{\alpha_3}\}=\{[3421],[4231],[4312]\}.    
\end{equation}
We show a result similar to Proposition \ref{prop-564} for the codimension one stratum $\Flag(S)_{[4312]}$, specifically:

\begin{proposition}\label{prop-GL31-van}
Assume that $\lambda=(a_1,a_2,a_3,a_4)\in \ZZ^4$ satisfies 
\begin{equation}
    q^2(a_1-a_4)+q(a_2-a_4)+(a_3-a_4)>0.
\end{equation}
Then one has $H^0(\overline{\Flag(S)}_{[4312]},\Vcal_{\flag}(\lambda))=0$.
\end{proposition}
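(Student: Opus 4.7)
The inequality $q^{2}(a_{1}-a_{4})+q(a_{2}-a_{4})+(a_{3}-a_{4})>0$ is exactly the negation of the defining inequality of $C_{\hw}$ as computed from \eqref{hw-GL} for $r=3$, $s=1$; indeed, evaluating the double sum at $r=3,s=1$ yields $\sum_{i=1}^{3}q^{3-i}(a_{i}-a_{4})$. Consequently $C_{\hw}$ is saturated and the proposition is equivalent to the inclusion
\begin{equation}
C_{S,[4312]}\subset C_{\hw}.
\end{equation}
By Theorem \ref{thm-sep-syst}, it suffices to exhibit a separating system $\EE=(\EE_{w},\{\chi_{\alpha}\}_{\alpha\in\EE_{w}})_{w\in W}$ on $W\simeq\Sfr_{4}$ such that the intersection-sum cone satisfies $\langle C^{+,\EE}_{[4312]}\rangle\subset C_{\hw}$. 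Note that $[4312]=w_{0}s_{\alpha_{3}}$ is a codimension one stratum of $\Flag(S)$.

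The plan is to imitate closely the strategy carried out in \S\ref{sec-mainSp6} for the symplectic group $G'=\Sp(6)_{\FF_{q}}$. The group-theoretic correspondence of \S\ref{sec-corresp} and Corollary \ref{coro-zipcone} show that the cone structure of $(G,\mu)$ mirrors that of $(G',\mu')$ under $\lambda\mapsto\overline{\lambda}$, and under this identification $[4312]\in W$ plays the role of $[564]\in W'$. I would therefore construct a diagram of Weyl elements analogous to Figure \ref{fig:sepsyst}, starting from elements of length one and ascending to $[4312]$ along the Bruhat order, and for each $w$ appearing there specify $\EE_{w}\subset E_{w}$ together with characters $\{\chi_{\alpha}\}_{\alpha\in\EE_{w}}\subset X^{*}(T)=\ZZ^{4}$ satisfying the three conditions of Definition \ref{def-sep-syst}. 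For strata admitting a full separating system of partial Hasse invariants (Definition \ref{def-Q-sep}) one takes $\EE_{w}=E_{w}$; otherwise one selects a minimal compatible subset of lower neighbors.

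With $\EE$ in hand, I would compute $C^{+,\EE}_{w}$ recursively via Definition \ref{def-intersumcone}, starting from $\ell(w)=1$ where $C^{+,\EE}_{w}=C_{\Hasse,w}$ and using $C^{+,\EE}_{w}=C^{\EE}_{\Hasse,w}+\bigcap_{\alpha\in\EE_{w}}C^{+,\EE}_{ws_{\alpha}}$ at each step. As in the symplectic case, only sharp upper bounds are needed, not exact descriptions of each cone. Culminating the recursion at $w=[4312]$ yields $C^{+,\EE}_{[4312]}\subset C_{\hw}$, which combined with Theorem \ref{thm-sep-syst} gives the desired vanishing.

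The main obstacle is the combinatorial bookkeeping: one must track the weights $h_{w}(\chi_{\alpha})=-w\chi_{\alpha}+qw_{0,I}w_{0}\sigma^{-1}(\chi_{\alpha})$ along the graph of strata and verify at each node that the Hasse-cone contribution plus the intersection of the inductively-bounded contributions from lower neighbors stays inside $C_{\hw}$. I expect the inequalities to close up for all $q$ here, unlike in the $\Sp(6)$ case where the sharper condition $q\geq 5$ was imposed: the factor structure $L\simeq\GL_{3}\times\GG_{\mathrm m}$ is defined over $\FF_{q}$ with $L=L_{0}$ and the rank-one central $\GG_{\mathrm m}$ factor provides one additional linear degree of freedom (the twist by $\det$ of Proposition \ref{prop-corresp}) that is not present on the $\Sp(6)$ side. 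As a sanity check, the symplectic separating system of Figure \ref{fig:sepsyst} can be pulled back through the correspondence of \S\ref{sec-corresp} to guide the choice of characters $\chi_{\alpha}$, even though the correspondence itself is valid only at the level of $\GZip^{\mu}$ and not on $S$.
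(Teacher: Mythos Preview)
Your proposal is correct and follows essentially the same approach as the paper: reduce to $\ZZ^{3}$ via $\lambda\mapsto\overline{\lambda}$, construct a separating system, and propagate upper bounds for $C^{+,\EE}_{w}$ along a chain of Bruhat elements up to $[4312]$, concluding via Theorem \ref{thm-sep-syst}. The paper's diagram (Figure \ref{fig:GU31}) is in fact considerably shorter than the $\Sp(6)$ one---only the chain $[1234]\to\{[1243],[1324],[2134]\}\to\{[2143],[3124]\}\to[4123]\to[4132]\to[4312]$ is needed---so the computation is lighter than you might expect from pulling back Figure \ref{fig:sepsyst}; and indeed the coefficients close up for all $q$ without the $q\geq 5$ hypothesis.
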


We obtain a divisibility result with respect to the partial Hasse invariant $\Ha_{\alpha_3}$ which cuts out the codimension one stratum $\Flag(S)_{[4312]}=\Flag(S)_{w_0s_{\alpha_3}}$. The weight of $\Ha_{\alpha_3}$ is $(1,0,-q,0)$. Via the map $\lambda\mapsto \overline{\lambda}$, this weight maps to $(1,0,-q)$, the weight of the partial Hasse invariant appearing in Theorem \ref{thm-divSp6} in the case $\Sp(6)$. This illustrates again the surprising analogy between these two groups. The only difference is that we had to suppose $q\geq 5$ in Proposition \ref{prop-564} and Theorems \ref{thm-divSp6}, \ref{thm-conjSp6}, whereas here this assumption is superfluous.

\begin{theorem}\label{thm-divGL31}
Let $f\in H^0(\Flag(S),\Vcal_{\flag}(\lambda))$ and assume that $\lambda=(a_1,a_2,a_3,a_4)\in \ZZ^4$ satisfies $q^2(a_1-a_4)+q(a_2-a_4)+(a_3-a_4)>0$. Then $f$ is divisible by the partial Hasse invariant $\Ha_{\alpha_3}$.
\end{theorem}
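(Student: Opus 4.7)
The plan is to derive Theorem \ref{thm-divGL31} as an immediate consequence of Proposition \ref{prop-GL31-van}, following verbatim the template used in Theorem \ref{thm-divSp6}. The first observation I need is that the element $w_0 s_{\alpha_3}\in W = \Sfr_4$, written in one-line notation, equals $[4312]$: indeed $w_0=[4321]$ and $s_{\alpha_3}$ transposes the last two entries. Hence the codimension one stratum $\Flag(S)_{[4312]}$ appearing in Proposition \ref{prop-GL31-van} is precisely $\Flag(S)_{w_0 s_{\alpha_3}}$, i.e.\ (the pullback of) the vanishing locus of the partial Hasse invariant $\Ha_{\alpha_3}$.

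Given $f \in H^0(\Flag(S), \Vcal_{\flag}(\lambda))$ with $\lambda = (a_1,a_2,a_3,a_4)$ satisfying $q^2(a_1-a_4)+q(a_2-a_4)+(a_3-a_4)>0$, the restriction $f|_{\overline{\Flag(S)}_{[4312]}}$ lies in $H^0(\overline{\Flag(S)}_{[4312]}, \Vcal_{\flag}(\lambda))$, which is zero by Proposition \ref{prop-GL31-van}. Thus $f$ vanishes identically along the reduced prime divisor $\overline{\Flag(S)}_{w_0 s_{\alpha_3}}$.

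To conclude divisibility I would invoke that $\Ha_{\alpha_3}$ cuts out $\overline{\Fcal}_{w_0 s_{\alpha_3}}\subset \GF^\mu$ with multiplicity one, by Theorem \ref{brion}(3) applied to the fundamental weight $\chi_{\alpha_3}$ (for which $\langle \chi_{\alpha_3},\alpha_3^\vee\rangle = 1$). Since $\zeta_{\flag}\colon \Flag(S)\to \GF^\mu$ is smooth (as the base change of the smooth map $\zeta$), the pullback $\zeta_{\flag}^*(\Ha_{\alpha_3})$ still has multiplicity one along the irreducible divisor $\overline{\Flag(S)}_{w_0 s_{\alpha_3}}$. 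As $\Flag(S)$ is smooth — being smooth over the smooth stack $\GF^\mu$ — it is locally factorial, so the vanishing of $f$ along this reduced divisor forces $\Ha_{\alpha_3}\mid f$ as sections of line bundles on $\Flag(S)$, which is the required divisibility.

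There is no serious obstacle in this step: all of the difficulty has been absorbed into Proposition \ref{prop-GL31-van}. In turn, that proposition is the real work, and it would be proved by exhibiting an appropriate separating system $\EE$ and inductively bounding the intersection-sum cone $C^{+,\EE}_{[4312]}$ by $C_{\hw}$ via the machinery of Theorem \ref{thm-sep-syst}, in direct analogy with Lemma \ref{lem-564} in the $\Sp(6)$ case — which, via Proposition \ref{prop-corresp} and Corollary \ref{coro-zipcone}, reflects essentially the same cone-theoretic computation.
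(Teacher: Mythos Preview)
Your proof is correct and follows exactly the same approach as the paper: apply Proposition \ref{prop-GL31-van} to conclude that $f$ vanishes along $\overline{\Flag(S)}_{[4312]} = \overline{\Flag(S)}_{w_0 s_{\alpha_3}}$, then use that $\Ha_{\alpha_3}$ cuts out this divisor with multiplicity one (preserved under the smooth pullback $\zeta_{\flag}$) to deduce divisibility. The paper's proof is slightly terser but identical in substance.
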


\begin{proof}
By Proposition \ref{prop-GL31-van}, the restriction of $f$ to the stratum $\overline{\Flag(S)}_{[4312]} = \overline{\Flag(S)}_{w_0s_{\alpha_3}}$ is zero. Since the partial Hasse invariant $\Ha_{\alpha_3}$ cuts out $\overline{\Flag(S)}_{w_0s_{\alpha_3}}$ with multiplicity one, we deduce that $f$ is divisible by $\Ha_{\alpha_3}$.
\end{proof}

\begin{theorem}\label{thm-conjGL31}
We have $\langle C_{S} \rangle = \langle C_{\zip} \rangle$. Hence, Conjecture \ref{conj-S} holds in the case $G=\GL_{4,\FF_q}$, $r=3$, $s=1$.
\end{theorem}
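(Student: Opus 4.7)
The plan is to mimic closely the strategy used for Theorem \ref{thm-conjSp6}, exploiting the parallel between this case and $\Sp(6)_{\FF_q}$ that is visible from Corollary \ref{coro-zipcone} and the divisibility result of Theorem \ref{thm-divGL31}. The inclusion $\langle C_{\zip}\rangle \subset \langle C_S\rangle$ is automatic from the surjectivity of $\zeta$ (Assumption \ref{assume}(b)), so the work is to prove the reverse inclusion $\langle C_S\rangle \subset \langle C_{\zip}\rangle$.

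I would proceed as follows. First, construct a separating system $\EE = (\EE_w, \{\chi_\alpha\}_{\alpha\in \EE_w})_{w \in W}$ for $W = \Sfr_4$ (the Weyl group of $G=\GL_{4,\FF_q}$) designed so that $\EE_{w_0} = \{\alpha_3\}$, hence
\begin{equation}
C^{+,\EE}_{w_0} \;=\; \ZZ_{\geq 0}\lambda_{\alpha_3} \;+\; C^{+,\EE}_{w_0 s_{\alpha_3}} \;\subset\; C_{\Hasse} \;+\; C^{+,\EE}_{[4312]}.
\end{equation}
This parallels the use of $\EE_{w_0} = \{\alpha_1\}$ for $\Sp(6)$ with the root $\alpha_1$ there playing the same outer-wedge role as $\alpha_3$ does here (consistent with the dictionary $\lambda \mapsto \overline{\lambda}$ of \S\ref{sec-corresp}).

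The critical input is the $\GL_4$ analog of Lemma \ref{lem-564}, namely the inclusion
\begin{equation}
C^{+,\EE}_{[4312]} \;\subset\; C_{\hw}.
\end{equation}
This is in fact already the substance of Proposition \ref{prop-GL31-van}: once established at the group-theoretic level by the same recursive upper-bound computation of intersection-sum cones, it immediately gives that proposition via Theorem \ref{thm-sep-syst}. Concretely, I would list the strata $w \in W$ appearing in a diagram analogous to Figure \ref{fig:sepsyst}, starting from length-$1$ elements (where $C^{+,\EE}_w = C_{\Hasse,w}$ by definition), and inductively bound $C^{+,\EE}_w$ using Definition \ref{def-intersumcone}, reading off coordinates in $\ZZ^4$ via the identification $X^*(T) = \ZZ^4$ of \S\ref{sec-cones-GLn}. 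The explicit description of $C_{\hw}$ in \eqref{hw-GL} (which for $(r,s)=(3,1)$ reads $q^2(a_1-a_4)+q(a_2-a_4)+(a_3-a_4)\leq 0$) is exactly the target inequality.

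Given this, the endgame is identical to the proof of Theorem \ref{thm-conjSp6}: Theorem \ref{thm-sep-syst} gives $C_S \subset \langle C^{+,\EE}_{w_0}\rangle$, and combining the two inclusions above yields
\begin{equation}
C^{+,\EE}_{w_0} \;\subset\; C_{\Hasse} + C_{\hw} \;\subset\; \langle C_{\zip}\rangle,
\end{equation}
whence $\langle C_S\rangle \subset \langle C_{\zip}\rangle$. The main obstacle is the bookkeeping for the recursive bound on $C^{+,\EE}_{[4312]}$: although the Weyl group is smaller than that of $\Sp(6)$ and no restriction on $q$ should be needed (as advertised in Theorem \ref{thm-divGL31} and \ref{thm-conjGL31}), one must verify carefully at each node of the diagram that the chosen $\EE_w$ satisfies conditions (a)--(c) of Definition \ref{def-sep-syst}, since in type $A_3$ several intermediate strata have $|E_w|$ exceeding the rank of $X^*(T)/X^*(G)$, so full separating systems in the sense of Definition \ref{def-Q-sep} are unavailable at those strata and the more flexible Definition \ref{def-sep-syst} is essential.
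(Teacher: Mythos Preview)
Your proposal is correct and follows essentially the same approach as the paper: one builds a separating system terminating at $[4312]=w_0s_{\alpha_3}$, proves the $\GL_4$ analog of Lemma \ref{lem-564} (this is Proposition \ref{prop-GL31-van}, carried out in \S\ref{proof-GL31}), and concludes via $C^{+}_{w_0}\subset C_{\Hasse}+C_{\hw}\subset\langle C_{\zip}\rangle$ together with Theorem \ref{thm-sep-syst}. One minor inaccuracy: in type $A_3$ no element has $|E_w|$ exceeding the rank (which is $3$), and in fact every stratum appearing in the paper's proof admits a full separating system in the sense of Definition \ref{def-Q-sep}, so the flexibility of Definition \ref{def-sep-syst} is merely a convenience here rather than a necessity.
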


\begin{proof}
The set of $\lambda=(a_1,a_2,a_3,a_4)\in X_{+,I}^*(T)$ satisfying $q^2(a_1-a_4)+q(a_2-a_4)+(a_3-a_4)\leq0$ coincides with $C_{\hw}$. Hence, we have $C^{+}_{w_0}\subset C_{\Hasse}+C_{\hw}\subset \langle C_{\zip}\rangle$. By Theorem \ref{thm-sep-syst}, we have $ C_S \subset \langle C^{+}_{w_0} \rangle \subset  \langle C_{\zip}\rangle$, hence also $\langle C_S \rangle \subset \langle C_{\zip}\rangle$. Since the converse inclusion is always satisfied, the result follows.
\end{proof}

\subsubsection{The case $(r,s)=(2,2)$}\label{sec-GL22}
In this section, we continue to assume $G=\GL_{4,\FF_q}$ but we take $r=s=2$. In this case, $(G,\mu)$ is of Hasse-type (see \S\ref{sec-hassetype}). By Theorem \ref{thm-Hasse-type}, we deduce that $\langle C_{\zip}\rangle = \langle C_{\Hasse} \rangle$. One checks easily that this cone is given by:
\begin{equation}
    \langle C_{\Hasse}\rangle=\{(a_1,a_2,a_3,a_4)\in X_{+,I}^*(T) \mid q(a_1-a_4)+(a_2-a_3)\leq 0\}.
\end{equation}
On the other hand, using \eqref{hw-GL} we find that the cone $C_{\hw}$ is given by the inequality $q(a_1-a_3)+q^2(a_1-a_4)+(a_2-a_3)+q(a_2-a_4)\leq 0$, which can be rearranged as $q(a_1-a_4)+(a_2-a_3)\leq 0$. Hence, we obtain
\begin{equation}\label{GL22-cones}
    C_{\hw}=\langle C_{\Hasse}\rangle=\langle C_{\zip} \rangle.
\end{equation}
Let $(S,\zeta)$ be a pair satisfying Assumption \ref{assume}. We show in \S \ref{proof-GL22}:
\begin{theorem}\label{thmGL22-conj}
We have $\langle C_{S} \rangle = \langle C_{\zip} \rangle$. Hence, Conjecture \ref{conj-S} holds in the case $G=\GL_{4,\FF_q}$, $r=2$, $s=2$. In particular, if $\lambda=(a_1,a_2,a_3,a_4)\in \ZZ^4$ satisfies $q(a_1-a_4)+(a_2-a_3)> 0$ then $H^0(S,\Vcal_I(\lambda))=0$.
\end{theorem}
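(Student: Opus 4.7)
The plan is to establish Conjecture \ref{conj-S} in this case by exhibiting an appropriate separating system $\EE = (\EE_w, \{\chi_\alpha\}_{\alpha \in \EE_w})_{w \in W}$ on $W = \Sfr_4$ for which $\langle C^{+,\EE}_{w_0}\rangle \subseteq \langle C_{\zip}\rangle$, and then invoking Theorem \ref{thm-sep-syst} at $w = w_0$. Since the inclusion $\langle C_{\zip}\rangle \subseteq \langle C_S\rangle$ is automatic from the surjectivity of $\zeta$, and since the vanishing assertion in the theorem is just the contrapositive of $\lambda \in C_S \Rightarrow \lambda \in \langle C_{\zip}\rangle$, only this one direction requires work.

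The key simplification compared to the $\Sp(6)$ and $\GL_4$ with $(r,s)=(3,1)$ cases is that $(G,\mu)$ is of Hasse type, so by \eqref{GL22-cones} one has the chain of equalities $\langle C_{\zip}\rangle = \langle C_{\Hasse}\rangle = C_{\hw}$, and this common cone is cut out inside $X^*_{+,I}(T)$ by the single linear inequality $q(a_1-a_4)+(a_2-a_3) \leq 0$. Hence, instead of tracking two inequalities as in \S\ref{sec-mainSp6}, the recursive determination of $C^+_w$ need only be controlled via this one inequality.

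Concretely, I would follow the blueprint of \S\ref{sec-mainSp6} and \S\ref{sec-GL31}: for each $w \in \Sfr_4$ appearing in a suitable Bruhat subdiagram emanating downward from $w_0 = [4321]$, verify that the coroots $\{\alpha^\vee : \alpha \in E_w\}$ are linearly independent so that $w$ admits a full separating system of partial Hasse invariants (Definition \ref{def-Q-sep}); this is routine in type $A_3$. Then take $\EE_w = E_w$, with $\chi_\alpha \in X^*(T)$ normalized by $\langle \chi_\alpha, \alpha^\vee\rangle = 1$ and $\langle \chi_\alpha, \beta^\vee\rangle = 0$ for $\beta \in E_w \setminus \{\alpha\}$. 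Starting from $\ell(w) = 1$, where $C^+_w = C_{\Hasse,w}$ is computed directly from Theorem \ref{brion}, I would proceed by induction on $\ell(w)$, at each step taking the Minkowski sum
\begin{equation}
C^+_w = C^\EE_{\Hasse,w} + \bigcap_{\alpha \in \EE_w} C^+_{ws_\alpha}
\end{equation}
and verifying that the right-hand side remains contained in the half-space $\{\lambda : q(a_1-a_4)+(a_2-a_3) \leq 0\}$ modulo $\ZZ \lambda_{\det}$. The terminal check at $w_0$ is that $\langle C^+_{w_0}\rangle \subseteq C_{\Hasse} + C_{\hw} \subseteq \langle C_{\zip}\rangle$.

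The main obstacle is the finite but somewhat tedious combinatorial bookkeeping across all 24 elements of $\Sfr_4$, specifically the verification at each Bruhat node that the characters $h_w(\chi_\alpha) = -w\chi_\alpha + q w_{0,I} w_0 \sigma^{-1}(\chi_\alpha)$ contribute nonpositively to the single governing inequality after Minkowski-summing with the inductive bounds from the lower neighbors. Because $\sigma$ acts trivially on $X^*(T)$ here (the group is $\FF_q$-split) and because the target inequality is preserved under the natural symmetries of the Hasse-type structure, I expect this verification to go through uniformly; the mild subtlety will be to ensure the separating system is chosen consistently so that at no stratum along the induction the cone $C^+_w$ picks up a weight violating the single inequality defining $C_{\hw}$. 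Once established, Theorem \ref{thm-sep-syst} yields $C_S \subseteq \langle C^+_{w_0}\rangle \subseteq \langle C_{\zip}\rangle$, completing the proof.
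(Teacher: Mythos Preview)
Your overall strategy is correct and matches the paper's: exhibit a separating system $\EE$ and invoke Theorem~\ref{thm-sep-syst} at $w_0$. However, your proposed implementation has a genuine gap.

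You propose to verify, at each step of the induction on $\ell(w)$, that $C^+_w$ lies in the single half-space $q(a_1-a_4)+(a_2-a_3)\leq 0$. This already fails at the base case $\ell(w)=1$: for instance $\overline{C}^+_{[1243]}=\overline{C}_{\Hasse,[1243]}$ is the half-space $-qa_1+qa_2-a_3\leq 0$, which contains the point $(1,0,0)$, violating the target inequality. The Hasse cones $C_{\Hasse,w}$ for small $w$ are simply not contained in $\langle C_{\zip}\rangle$, so the single defining inequality of $C_{\hw}$ cannot serve as a uniform induction hypothesis. The paper deals with this by tracking, at each relevant $w$, a \emph{different} collection of auxiliary linear inequalities (sometimes two or three, e.g.\ three at $w=[2413]$), carefully chosen so that they can be combined with nonnegative coefficients to produce the inequalities needed at the next level, and so that only at $w_0$ do they finally collapse to the target inequality $qa_1+a_2-a_3\leq 0$.

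A second, smaller discrepancy: you propose $\EE_w=E_w$ throughout. The paper instead takes proper subsets $\EE_w\subsetneq E_w$ at most strata of length $\geq 3$; for example at $w_0=[4321]$ it uses $\EE_{w_0}=\{\alpha_1,\alpha_3\}$, omitting $\alpha_2$, so that the induction runs over a hand-picked sub-diagram of the Bruhat graph that avoids $[4231]$ altogether. Taking full $E_w$ may still succeed, but you would then have to control extra lower neighbors the paper never touches, and there is no guarantee the resulting intersection-sum cones stay small enough.
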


We also have divisibility results. As we noted at the end of \S \ref{sec-divis}, when $(G,\mu)$ is of Hasse-type, we expect divisibility results for all roots $\alpha\in I$ (except in trivial cases), and roots of $\Delta\setminus I$ which are not fixed by the Galois action. Hence, in the case $G=\GL_{4,\FF_q}$, $(r,s)=(2,2)$, we expect a result for the two roots in $I$, namely $[3421]=ws_{\alpha_1}$ and $[4312]=w_0s_{\alpha_3}$, with notation as in \S\ref{sec-cones-GLn}. We indeed have such divisibility theorems. We first state the vanishing result on these codimension one strata:

\begin{proposition}\label{propGL22} Let $\lambda=(a_1,a_2,a_3,a_4)\in \ZZ^4$.
\begin{assertionlist}
    \item \label{propGL22-1} Assume that $\lambda$ satisfies either $(a_1-a_4)+\epsilon(q)(a_2-a_4) > 0$ where $\epsilon(q)\colonequals \frac{q^2+2q+1}{q^3+2q^2+1}$, or $a_2-a_4>0$. Then $H^0(\overline{\Flag}(S)_{[3421]},\Vcal_{\flag}(\lambda))=0$.
    \item \label{propGL22-2} Assume that $\lambda$ satisfies $a_1-a_3 > 0$. Then $H^0(\overline{\Flag}(S)_{[4312]},\Vcal_{\flag}(\lambda))=0$.
\end{assertionlist}
\end{proposition}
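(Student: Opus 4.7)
}

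Both parts fit the template of Section 2.3. Via Theorem \ref{thm-sep-syst}, each vanishing statement reduces to a purely group-theoretic containment in $X^*(T)=\ZZ^4$: it suffices to construct a separating system $\EE$ (in the sense of Definition \ref{def-sep-syst}) supported on the Bruhat interval below the target element $w$, and to check that the resulting intersection-sum cone $\langle C^{+,\EE}_w\rangle$ is contained in the indicated half-space. The recursion of Definition \ref{def-intersumcone} is driven from below: at length-one strata one has $C^{+,\EE}_w = C_{\Hasse,w}$, explicitly determined by Theorem \ref{brion}, and one climbs the Bruhat order on $W=\Sfr_4$ via $C^{+,\EE}_w = C^{\EE}_{\Hasse,w}+\bigcap_{\alpha\in \EE_w} C^{+,\EE}_{ws_\alpha}$.

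For part (b), the target is $w=[4312]=w_0 s_{\alpha_3}$, and the target half-space is $\{a_1-a_3 \leq 0\}$, which is the ray spanned by the normalized weight $\lambda_{\alpha_3}=(1,-q,0,0)$ from \S\ref{sec-cones-GLn}. I would mimic the argument for $(r,s)=(3,1)$ carried out in \S\ref{proof-GL31} (itself patterned on \S\ref{sec-mainSp6}), constructing a sub-diagram of the Bruhat graph below $[4312]$ and choosing $\EE_w$ and $\chi_\alpha$ at each node so that the computed cone at each step is contained in a single half-space $\{c_1 a_1+c_2 a_2+c_3 a_3+c_4 a_4 \leq 0\}$ (with $\sum c_i=0$), and so that the sum-intersection step preserves the relevant slope. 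The expected output at $w=[4312]$ is exactly $a_1-a_3\leq 0$.

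For part (a), the target is $w=[3421]=w_0 s_{\alpha_1}$, and the recursion should now produce a cone with two bounding faces. One face, $a_2-a_4\leq 0$, should already be visible on a length-one ancestor: among the Hasse cones $C_{\Hasse,w'}$ for $\ell(w')=1$, some are supported in this half-space, and the intersection step transports the condition upward without deformation. The second face, $(a_1-a_4)+\epsilon(q)(a_2-a_4)\leq 0$ with $\epsilon(q)=(q+1)^2/(q^3+2q^2+1)$, should emerge as the slope of a Minkowski sum of a Hasse ray with an intersection of lower-neighbor cones built across a chain of length four or five: symbolically tracking the bound $c_1(a_1-a_4)+c_2(a_2-a_4)+c_3(a_3-a_4)\leq 0$ through the recursion, the ratio $c_2/c_1$ accumulates as a rational function in $q$, and at the terminal stratum it must coincide with $\epsilon(q)$. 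As a sanity check on the shape, one can verify compatibility with $\langle C_\zip\rangle=\{q(a_1-a_4)+(a_2-a_3)\leq 0\}$ from \eqref{GL22-cones}: the two-faced bound on $C^{+,\EE}_{[3421]}$ must be contained in $\langle C_\zip\rangle$, and the ray $\RR_{\geq 0}\cdot\lambda_{\alpha_1}$ must lie on its boundary.

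The principal obstacle is combinatorial bookkeeping: with $|\Sfr_4|=24$ and chains of length up to five, the recursive computation involves many intermediate cones, and the separating system must be chosen judiciously at each node to avoid producing a strictly larger bound than needed. A subtle point specific to part (a) is the non-integer coefficient $\epsilon(q)$: recovering its exact value demands tracking slopes (not just signs) of intermediate cone faces through the recursion, which constrains the choice of $\chi_\alpha$ at several intermediate strata. Once these systems are fixed and the inductive table is filled in, the verification reduces to routine linear algebra in $\ZZ^4$.
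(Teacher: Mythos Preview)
Your plan is the paper's approach: \S\ref{proof-GL22} constructs an explicit separating system on a sub-diagram of the Bruhat graph (Figure~\ref{fig:GU22}), computes the three length-one Hasse cones, and then recursively tabulates upper bounds for $\overline{C}^+_w$ up through length five, exhibiting at each step nonnegative coefficients $A^{(j)}_{w,\alpha,i}$ witnessing the required linear inequalities. The output at $[3421]$ is exactly the two faces $a_2\leq 0$ and $a_1+\epsilon(q)a_2\leq 0$, and at $[4312]$ it is $a_1-a_3\leq 0$ (after the reduction $a_4=0$).

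Two corrections to your sketch. First, your expectation that the face $a_2-a_4\leq 0$ is ``already visible on a length-one ancestor'' and merely transported upward is wrong: none of the three length-one Hasse cones in \S\ref{proof-GL22} lies in $\{a_2\leq 0\}$; this bound only appears at $[3421]$ itself, produced by combining the length-four bounds at $[2431]$ and $[3241]$ (specifically via $A^{(1)}_{[3421],e_1-e_3,2}=A^{(1)}_{[3421],e_2-e_3,1}=\frac{1}{(q+1)^2}$). Second, your remark that $\{a_1-a_3\leq 0\}$ is ``the ray spanned by $\lambda_{\alpha_3}$'' is garbled: $\lambda_{\alpha_3}=(1,-q,0,0)$ lies \emph{outside} that half-space, which is precisely the point (its weight lies in the divisibility region for $\Ha_{\alpha_3}$). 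More substantively, the entire content of the proof is the explicit choice of $\EE_w$, $\chi_\alpha$, and the coefficients $A^{(j)}_{w,\alpha,i}$ at roughly fifteen strata; without these tables you have a correct strategy but not a proof.
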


For each $\alpha\in \Delta$, let again $\Ha_{\alpha}$ denote a partial Hasse invariant which cuts out the stratum $\Fcal_{w_0s_{\alpha}}$ with multiplicity $1$. We deduce as in previously mentioned cases:
\begin{theorem}\label{thm-GL22}
Let $f\in H^0(S,\Vcal_I(\lambda))$ for $\lambda=(a_1,a_2,a_3,a_4)\in \ZZ^4$.
\begin{assertionlist}
    \item \label{thmGL22-1} If $(a_1-a_4)+\epsilon(q)(a_2-a_4) > 0$, then $f$ is divisible by $\Ha_{\alpha_1}$.
    \item \label{thmGL22-2} If $a_1-a_3 > 0$, then $f$ is divisible by $\Ha_{\alpha_3}$.
\end{assertionlist}
\end{theorem}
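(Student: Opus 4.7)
The plan is to deduce Theorem \ref{thm-GL22} from Proposition \ref{propGL22} by exactly the same template used for Theorems \ref{thm-Sp4-div}, \ref{thm-GL21-div}, \ref{thm-divSp6} and \ref{thm-divGL31}. Given $f \in H^0(S,\Vcal_I(\lambda))$, I pass to the flag space via the identification \eqref{identif-lambda}, obtaining a section $f_{\flag} \in H^0(\Flag(S),\Vcal_{\flag}(\lambda))$. The goal is then to show $f_{\flag}$ is divisible (in the line bundle sense) by $\Ha_{\alpha_1}$, respectively $\Ha_{\alpha_3}$, which by Definition \ref{divis-def} is exactly what the theorem asserts.

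For part \eqref{thmGL22-1}, the hypothesis $(a_1-a_4)+\epsilon(q)(a_2-a_4) > 0$ is precisely one of the two conditions in Proposition \ref{propGL22}\eqref{propGL22-1}, so the restriction of $f_{\flag}$ to the closed substack $\overline{\Flag(S)}_{[3421]} = \overline{\Flag(S)}_{w_0 s_{\alpha_1}}$ vanishes. By Definition \ref{part-Hasse-glob} together with the discussion in \S\ref{sec-Hasse-cone}, the partial Hasse invariant $\Ha_{\alpha_1}$ on $\GF^\mu$ has divisor $\overline{\Fcal}_{w_0 s_{\alpha_1}}$ with multiplicity one, and this multiplicity is preserved under the smooth pullback $\zeta_{\flag}\colon \Flag(S)\to \GF^\mu$. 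Since $\Flag(S)$ is smooth (the smoothness of $\zeta$, inherited by $\zeta_{\flag}$, implies this), any section vanishing on the reduced effective Cartier divisor cut out by $\Ha_{\alpha_1}$ is divisible by $\Ha_{\alpha_1}$ in $\Vcal_{\flag}$.

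For part \eqref{thmGL22-2}, the argument is identical, using Proposition \ref{propGL22}\eqref{propGL22-2} (which applies since $a_1 - a_3 > 0$) to obtain vanishing on $\overline{\Flag(S)}_{[4312]} = \overline{\Flag(S)}_{w_0 s_{\alpha_3}}$, and then dividing by $\Ha_{\alpha_3}$, which cuts out this stratum with multiplicity one.

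The actual substance of the theorem is entirely packaged into Proposition \ref{propGL22}, and the deduction above is formal. The main obstacle is therefore not in the present argument but in Proposition \ref{propGL22} itself, particularly the appearance of the non-integral coefficient $\epsilon(q)=(q^2+2q+1)/(q^3+2q^2+1)$ in part \eqref{propGL22-1}; establishing this sharp bound will require running the intersection-sum-cone machinery of Theorem \ref{thm-sep-syst} with a carefully chosen separating system $\EE$ for the stratum $[3421]$, analogous to the diagram in Figure \ref{fig:sepsyst} for $\Sp(6)$, and then verifying that the resulting cone $\langle C^{+,\EE}_{[3421]}\rangle$ is contained in the half-space $(a_1-a_4)+\epsilon(q)(a_2-a_4)\leq 0$.
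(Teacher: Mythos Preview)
Your proposal is correct and matches the paper's approach exactly: the paper states Theorem \ref{thm-GL22} immediately after Proposition \ref{propGL22} with only the remark ``We deduce as in previously mentioned cases,'' referring to the same template (vanishing on the codimension-one stratum, then division by the multiplicity-one partial Hasse invariant) that you have spelled out. Your added remarks on where the real content lies (the intersection-sum-cone computation behind Proposition \ref{propGL22}, carried out in \S\ref{proof-GL22}) are also accurate.
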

For the stratum parametrized by $w_0s_{\alpha_1}=[3421]$, we could use Proposition \ref{propGL22}\eqref{propGL22-1} to deduce that if $a_2-a_4>0$, then $f$ is divisible by $\Ha_{\alpha_1}$. However, one can show that this result is already contained in Theorem \ref{thm-GL22}\eqref{thmGL22-1}. We illustrate Theorem \ref{thm-GL22} in the figure below. Since $\langle C_{\zip} \rangle$ is four-dimensional, we reduce the dimension as follows: First, we map all cones to $\ZZ^3$ by the map $\lambda\mapsto \overline{\lambda}$. Then, we represent a generic slice of these subcones of $\ZZ^3$. Applying $\lambda\mapsto \overline{\lambda}$, the weight of $\Ha_{\alpha_1}$ becomes $\overline{\lambda}_{\alpha_1}=(1,1,q+1)$ and the weight of $\Ha_{\alpha_3}$ becomes $\overline{\lambda}_{\alpha_3}=(1,-q,0)$. Lastly, the weight of the third partial Hasse invariant $\Ha_{\alpha_2}$ lies on the half-line $X_{+}^*(L)$. It maps to $\overline{\lambda}_{\alpha_2}=(1-q,1-q,0)$ and $X_{+}^*(L)$ maps to $\NN(-1,-1,0)$. The weights $\{\lambda_{\alpha_1},\lambda_{\alpha_2},\lambda_{\alpha_3},\lambda_{\det}\}$  generate $\langle C_S \rangle=\langle C_{\zip} \rangle=\langle C_{\Hasse} \rangle=C_{\hw}$. Denote by $V_1$ and $V_3$ the subsets of $\ZZ^3$ defined by
\begin{align*}
    V_1 & \colonequals \{ (a_1,a_2,a_3)\in \ZZ^3  \mid a_1+\epsilon(q)a_2 > 0\} \\
    V_3 & \colonequals \{ (a_1,a_2,a_3)\in \ZZ^3  \mid a_1-a_3 > 0\}.
\end{align*}
Then $V_1$ is a cone neighborhood of $\overline{\lambda}_{\alpha_1}$ and $V_3$ is a cone neighborhood of $\overline{\lambda}_{\alpha_3}$. Theorem \ref{thm-GL22} asserts that if (the image by $\lambda\mapsto \overline{\lambda}$ of) the weight of $f$ lies in $V_i$, then $f$ is divisible by $\Ha_{\alpha_{i}}$ for $i=1,3$. Since $\epsilon(q)$ is very close to $\frac{1}{q}$, the weight $\overline{\lambda}_{\alpha_3}$ lies "just outside" of $V_1$.

\begin{figure}[H]
    \centering
    \includegraphics[width=14cm]{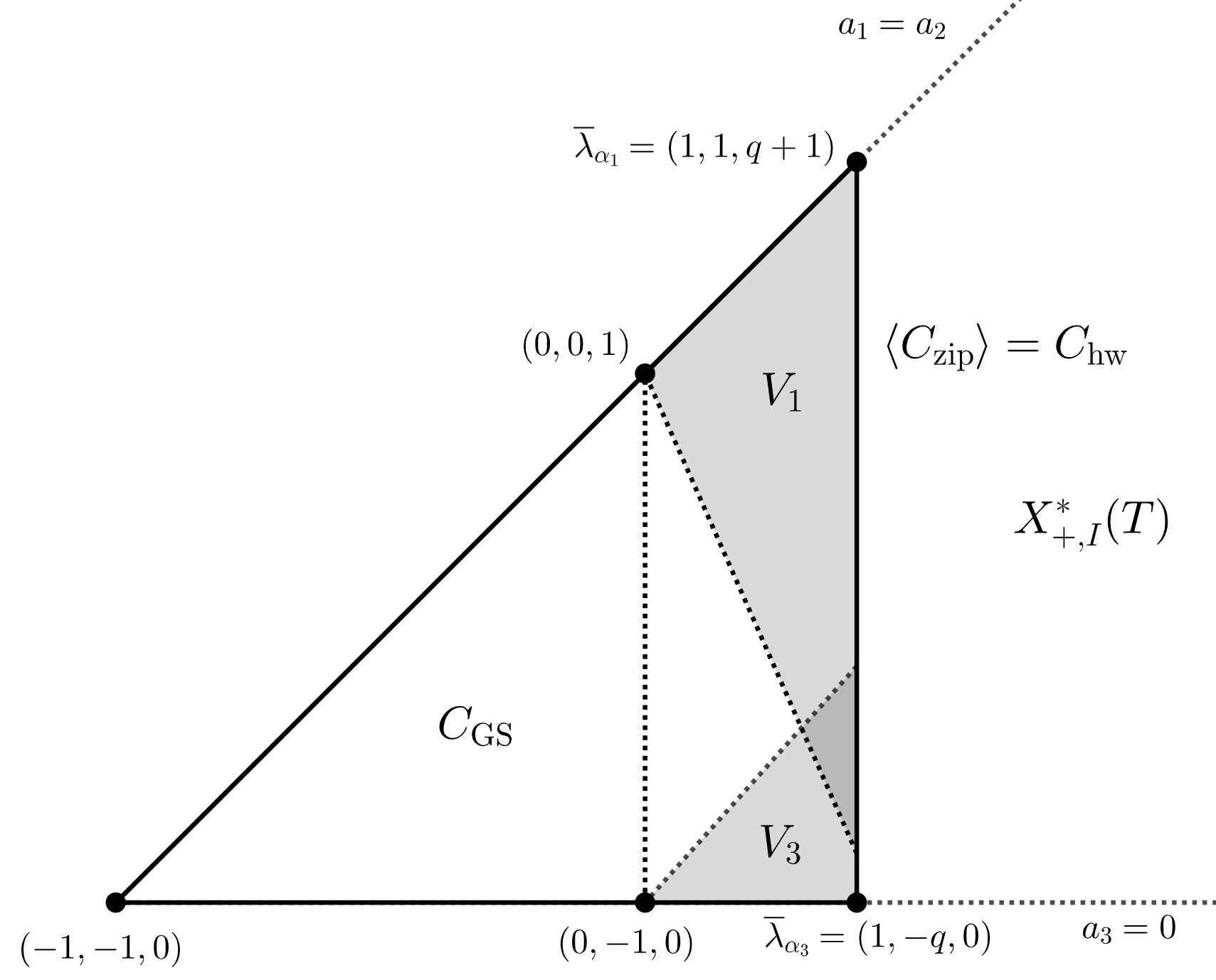}
    \caption{The case of $\GL_{4,\FF_q}$ for $r=s=2$}
    \label{fig:GL22}
\end{figure}

\subsubsection{The remaining cases}\label{sec-remainGL}
In the case $(r,s)=(1,3)$, the cocharacter $\mu$ is conjugated to the one for $(r,s)=(3,1)$. Therefore, the stacks of $G$-zips attached to both cocharacters are isomorphic. Therefore, all results from \S\ref{sec-GL31} transpose to the case $(r,s)=(1,3)$. Finally, for $(r,s)=(4,0)$ or $(0,4)$ we have $P=G$. Hence, Conjecture \ref{conj-S} follows from Remark \ref{rmk-GeqP}.

\subsection{Unitary groups} \label{sec-unit}

\subsubsection{Group theory}\label{sec-cones-Un}

Let $(V,\psi)$ be an $n$-dimensional $\FF_{q^2}$-vector space endowed with a non-degenerate hermitian form $\psi \colon V\times V\to \FF_{q^2}$ (for Shimura varieties, take $q=p$). We assume that there is a basis $\Bcal$ of $V$ where $\psi$ is given by the matrix 
\[
 J= \begin{pmatrix}
&&1\\& \iddots &\\1&&\end{pmatrix}. 
\]
Let $G=\U(V,\psi)$ be the associated unitary group. There is an isomorphism $G_{\FF_{q^2}}\simeq \GL(V_0)$. It is induced by the $\FF_{q^2}$-algebra isomorphism $\FF_{q^2}\otimes_{\FF_q} R\to R\times R$, $a\otimes x\mapsto (ax,\sigma(a)x)$ (where $\Gal(\FF_{q^2}/\FF_q)=\{\id, \sigma\}$). The action of $\sigma$ on the set $\GL_n(k)$ is given by $\sigma\cdot A = J \sigma({}^t \!A)^{-1}J$. Let $T$ denote the maximal diagonal torus and $B$ the lower-triangular Borel subgroup of $G_k$. By our choice of the basis $\Bcal$, the groups $B$ and $T$ are defined over $\FF_q$. Identify $X^*(T)=\ZZ^n$ as in \S\ref{sec-cones-GLn} and retain the notation for the simple roots $\{\alpha_i\}_{1\leq i \leq n-1}$.

Choose non-negative integers $(r,s)$ such that $n=r+s$ and define $\mu  \colon  \GG_{\mathrm{m},k}\to G_{k}$ by $x\mapsto \diag(xI_r,I_s)$ via the identification $G_{k}\simeq \GL_{n,k}$. Let $\Zcal_{\mu}=(G,P,L,Q,M,\varphi)$ be the associated zip datum. Note that $P$ is not defined over $\FF_q$ unless $r=s$. We may also identify $L=\GL_{r}\times \GL_{s}$. One has $\Delta^P=\{\alpha\}$ with $\alpha=e_r-e_{r+1}$. The determinant function $\det  \colon  G_k\to \GG_{\mathrm{m}}$ is a section of weight $\lambda_{\det}=(q+1, \dots , q+1)$. Again, we have partial Hasse invariants $\{\Ha_{\alpha}\}_{\alpha\in \Delta}$ with multiplicity $1$. For $1\leq d\leq r$, we have (up to $\ZZ \lambda_{\det}$)
\begin{equation}
    \lambda_{\alpha_d}  = (\underbrace{1, \dots ,1}_{\textrm{$n-d$ times}},\underbrace{0, \dots ,0}_{\textrm{$d$ times}}) + (\underbrace{q, \dots , q}_{\textrm{$r-d$ times}}, \underbrace{0, \dots ,0}_{\textrm{$d$ times}}, \underbrace{q, \dots ,q}_{\textrm{$s$ times}}). 
\end{equation}
Similarly, for $r<d\leq n-1$, we have
\begin{equation}
    \lambda_{\alpha_d} = (\underbrace{1, \dots ,1}_{\textrm{$n-d$ times}},\underbrace{0, \dots ,0}_{\textrm{$d$ times}}) + (\underbrace{0, \dots , 0}_{\textrm{$r$ times}}, \underbrace{q, \dots ,q}_{\textrm{$n-d$ times}}, \underbrace{0, \dots ,0}_{\textrm{$d-r$ times}}). 
\end{equation}
The cone $C_{\Hasse}$ is the cone generated by the weights $\{\lambda_{\alpha_d}\}_{1\leq d \leq n-1}$ together with $\ZZ\lambda_{\det}$. The Griffiths--Schmid cone $C_{\GS}$ is independant of the Galois action, therefore is the same as in the case of $\GL_{n,\FF_q}$. Contrary to the case $\GL_{n,\FF_q}$, the highest weight cone $C_{\hw}$ and lowest-weight cone $C_{\rm lw}$ (see \S\ref{subsec-subcones}) do not coincide (because $P$ is not defined over $\FF_q$). By \cite[Corollary 5.2.5]{Goldring-Imai-Koskivirta-weights}, one always has $C_{\GS}\subset C_{\rm lw}$. However, in the case of $\U(n)$, the cone $C_{\hw}$ is much smaller than $C_{\rm lw}$.

\subsubsection{The case $(r,s)=(2,1)$}
In this case, $\langle C_{\zip} \rangle$ was determined in \cite[Corollary 6.3.3]{Imai-Koskivirta-vector-bundles}. Moreover, by \cite[Proposition 7.1.1]{Goldring-Imai-Koskivirta-weights}, we have
\begin{align*}
\langle C_{\zip} \rangle  =C_{\lw}&= \{(a_1,a_2,a_3)\in X_{+,I}^*(T) \mid (q-1)a_1+a_2-qa_3\leq 0\}\\
C_{\hw} & = \{(a_1,a_2,a_3)\in X_{+,I}^*(T) \mid q a_1-(q-1)a_2-a_3\leq 0\}.
\end{align*}
One sees from these equations that the cone $C_{\lw}$ shrinks and tends towards $C_{\GS}$ as $q$ goes to infinity. On the other hand, $C_{\hw}$ tends towards $X_{-}^*(L)$. Let $(S,\zeta)$ be a pair satisfying Assumption \ref{assume}. As in the case of $\GL_3$, there are two codimension one strata in $\Flag(S)$, parametrized by
\begin{equation}
    \{w_0 s_{\alpha_1}, w_0 s_{\alpha_2}\}=\{[231],[312]\}.
\end{equation}
We have the following vanishing result on the stratum $\overline{\Flag(S)}_{[231]}$. We refer to \S\ref{sec-U21-proof} for the proof.

\begin{proposition}\label{propU21inert}
Assume that $\lambda=(a_1,a_2,a_3)\in \ZZ^4$ satisfies $a_1-a_3>0$. Then
$H^0(\overline{\Flag(S)}_{[231]},\Vcal_{\flag}(\lambda))=0$.
\end{proposition}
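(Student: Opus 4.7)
The plan is to apply the separating system strategy of Section 3 to $w = [231] = s_{\alpha_1} s_{\alpha_2}$, which has length $2$ in $W = \Sfr_3$. A direct check gives $[231] \cdot s_{\alpha_2} = s_{\alpha_1}$ and $[231] \cdot s_{\alpha_1 + \alpha_2} = s_{\alpha_2}$, hence $E_{[231]} = \{\alpha_2,\ \alpha_1 + \alpha_2\}$. Since the coroots $\alpha_2^\vee = e_2^\vee - e_3^\vee$ and $(\alpha_1+\alpha_2)^\vee = e_1^\vee - e_3^\vee$ are $\QQ$-linearly independent, $[231]$ admits a full separating system in the sense of Definition \ref{def-Q-sep}; take $\chi_{\alpha_2} = e_2$ and $\chi_{\alpha_1+\alpha_2} = e_1$. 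By Theorem \ref{thm-sep-syst} it is enough to prove the inclusion $\langle C^{+,\EE}_{[231]}\rangle \subset \{(a_1,a_2,a_3)\in\ZZ^3 : a_1 - a_3 \leq 0\}$.

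For this, the first computation is the map $h_w$. Because $p$ is inert, the Galois action on $X^*(T) = \ZZ^3$ is $\sigma = -w_0$, so $w_0\sigma^{-1} = -\mathrm{id}$, and with $w_{0,I} = s_{\alpha_1}$ the formula $h_w(\chi) = -w\chi + q\,w_{0,I} w_0 \sigma^{-1}(\chi)$ reduces to $h_w(\chi) = -w\chi - q s_{\alpha_1}\chi$. Substituting $w = s_{\alpha_1}, s_{\alpha_2}, [231]$ yields
\begin{equation*}
h_{s_{\alpha_1}}(\chi) = -(q+1)(a_2, a_1, a_3), \qquad h_{s_{\alpha_2}}(\chi) = (-a_1 - qa_2,\ -a_3 - qa_1,\ -a_2 - qa_3),
\end{equation*}
together with $h_{[231]}(\chi) = (-a_3 - qa_2,\ -(q+1)a_1,\ -a_2 - qa_3)$. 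Evaluating the last at $e_1, e_2$ produces the two partial Hasse invariants at $[231]$, of weights $(0,-(q+1),0)$ and $(-q,0,-1)$ respectively, each with multiplicity one along the corresponding lower stratum. Inverting the $\QQ$-linear map $h_{s_{\alpha_2}}$ (whose determinant is $1+q^3$) and imposing the appropriate $\alpha_i$-dominance on $\chi$, the length-one saturated cones come out to
\begin{equation*}
\langle C^{+,\EE}_{s_{\alpha_1}}\rangle = \{b \in X^*(T) : b_1 \geq b_2\}, \qquad \langle C^{+,\EE}_{s_{\alpha_2}}\rangle = \{b \in X^*(T) : -qb_1 + b_2 + (q-1)b_3 \geq 0\}.
\end{equation*}

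For the final step, given a hypothetical nonzero $g \in H^0(\overline{\Flag(S)}_{[231]}, \Vcal_{\flag}(\lambda))$ of weight $\lambda = (a_1,a_2,a_3)$, with orders of vanishing $m_1$, $m_2 \geq 0$ along $\overline{\Flag(S)}_{s_{\alpha_2}}$ and $\overline{\Flag(S)}_{s_{\alpha_1}}$ respectively, Lemma \ref{lem-g0} produces a section $g_0$ of weight $\lambda' = (a_1 + qm_2,\ a_2 + (q+1)m_1,\ a_3 + m_2)$ which restricts nontrivially to both substrata. Imposing $\lambda' \in \langle C^{+,\EE}_{s_{\alpha_1}}\rangle \cap \langle C^{+,\EE}_{s_{\alpha_2}}\rangle$ and adding the two resulting inequalities yields $(q-1)(a_3 - a_1) \geq m_2(q-1)^2$, whence $a_3 - a_1 \geq m_2(q-1) \geq 0$ since $q \geq 2$, contradicting the hypothesis $a_1 > a_3$. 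The main obstacle is essentially organizational: keeping straight the non-split Galois action, the Weyl-group conventions and the sign patterns in the various $h_w$'s. No delicate estimate is required, and, in contrast with the $\Sp(6)$ case, the argument works uniformly in $q$.
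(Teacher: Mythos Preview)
Your argument is correct and follows the same separating-system strategy as the paper. The only cosmetic difference is that the paper passes to the two-variable $\overline{\lambda}$ coordinates before writing down the length-one cones and then adds the two inequalities with coefficients $\tfrac{1}{q-1}$, whereas you work directly in $\ZZ^3$ and unwind Lemma~\ref{lem-g0} explicitly with the vanishing orders $m_1,m_2$; after adding your two inequalities the $m_1$-terms cancel and one recovers exactly the paper's bound $a_1-a_3\leq 0$.
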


The stratum $\overline{\Flag(S)}_{[231]}$ is cut out by the partial Hasse invariant $\Ha_{\alpha_1}$ which has weight $(q+1,1,q)$. As in previous cases, we obtain immediately:

\begin{theorem}\label{thm-divGU21inert}
Let $f\in H^0(\Flag(S),\Vcal_{\flag}(\lambda))$ and assume that $\lambda=(a_1,a_2,a_3)\in \ZZ^4$ satisfies $a_1-a_3>0$. Then $f$ is divisible by the partial Hasse invariant $\Ha_{\alpha_1}$.
\end{theorem}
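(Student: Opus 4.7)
The plan is to deduce Theorem \ref{thm-divGU21inert} directly from Proposition \ref{propU21inert}, exactly along the lines of the proofs of Theorems \ref{thm-Sp4-div}, \ref{thm-GL21-div}, \ref{thm-divGL31}, and \ref{thm-divSp6} already presented in the paper. Once the vanishing statement on the relevant codimension-one stratum is in hand, the divisibility is a purely formal consequence.

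Concretely, given $f\in H^0(\Flag(S),\Vcal_{\flag}(\lambda))$ with $\lambda=(a_1,a_2,a_3)$ satisfying $a_1-a_3>0$, I would first invoke Proposition \ref{propU21inert} to conclude that the restriction of $f$ to $\overline{\Flag(S)}_{[231]}=\overline{\Flag(S)}_{w_0s_{\alpha_1}}$ is zero, so $f$ vanishes identically on this codimension-one closed substack. By the general theory recalled in \S\ref{subsec-sum-inter} and Definition \ref{part-Hasse-glob}, the pullback of $\Ha_{\alpha_1}$ to $\Flag(S)$ along the smooth map $\zeta_{\flag}$ is a section of $\Vcal_{\flag}(\lambda_{\alpha_1})$ whose zero divisor is exactly $\overline{\Flag(S)}_{w_0s_{\alpha_1}}$ with multiplicity one (smoothness of $\zeta_{\flag}$ preserves multiplicity). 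Since $\zeta$ is smooth and $\GZip^\mu$ is smooth, the scheme $\Flag(S)$ is smooth, so $\Ha_{\alpha_1}$ cuts out a reduced Cartier divisor; the quotient $f/\Ha_{\alpha_1}$ is therefore regular away from a codimension-two locus and extends to a global section of $\Vcal_{\flag}(\lambda-\lambda_{\alpha_1})$, which gives the desired divisibility.

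All the real content of the result is thus located in Proposition \ref{propU21inert}, not in Theorem \ref{thm-divGU21inert} itself. The proof of that proposition, deferred to \S\ref{sec-U21-proof}, will follow the strategy of \S\ref{subsec-sum-inter}: one chooses a separating system $\EE=(\EE_w,\{\chi_\alpha\}_{\alpha\in\EE_w})_{w\in W}$ and uses Theorem \ref{thm-sep-syst} to bound $C_{S,[231]}\subset\langle C^{+,\EE}_{[231]}\rangle$ by recursion on $\ell(w)$ along a chain of lower neighbors of $[231]$ in the Bruhat order. The target estimate is that $\langle C^{+,\EE}_{[231]}\rangle$ lies in the half-space $\{(a_1,a_2,a_3)\in X^*(T)\mid a_1-a_3\le 0\}$. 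The main technical task will be to exhibit the correct separating system so that, at each step, the Hasse contribution $C^\EE_{\Hasse,w}$ (computed via Chevalley's formula, Theorem \ref{brion}) combines with the inductive bounds at shorter strata to reproduce this inequality; the combinatorics are analogous to but substantially lighter than those needed for $\Sp(6)$ in Lemma \ref{lem-564}, since here $W\cong\Sfr_3$ is much smaller and every relevant $w$ admits a full separating system in the sense of Definition \ref{def-Q-sep}.
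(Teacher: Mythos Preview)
Your proposal is correct and follows exactly the same approach as the paper, which simply says ``As in previous cases, we obtain immediately'' before stating the theorem: the vanishing of Proposition~\ref{propU21inert} on $\overline{\Flag(S)}_{[231]}=\overline{\Flag(S)}_{w_0s_{\alpha_1}}$ combined with the fact that $\Ha_{\alpha_1}$ cuts out this stratum with multiplicity one yields the divisibility. Your summary of how Proposition~\ref{propU21inert} is proved in \S\ref{sec-U21-proof} is also accurate.
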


We also deduce:

\begin{theorem}\label{thm-U21inert}
We have $\langle C_S \rangle=\langle C_{\zip}\rangle$, thus Conjecture \ref{conj-S} holds in the case $G=\U(3)_{\FF_q}$, $r=2$ and $s=1$.
\end{theorem}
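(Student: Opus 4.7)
The plan is to follow the template of Theorems \ref{thm-conjSp6} and \ref{thm-conjGL31}. The containment $\langle C_{\zip}\rangle \subset \langle C_S\rangle$ is automatic from the surjectivity of $\zeta$ (Assumption \ref{assume}(b)), so the content is the reverse inclusion, which by Theorem \ref{thm-sep-syst} applied at $w=w_0$ reduces to exhibiting a separating system $\EE$ satisfying $\langle C^{+,\EE}_{w_0}\rangle \subset \langle C_{\zip}\rangle = C_{\lw}$.

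I choose $\EE$ with $\EE_{w_0}=\{\alpha_1\}$ and $\chi_{\alpha_1}$ the corresponding fundamental weight; condition (c) of Definition \ref{def-sep-syst} is satisfied because fundamental weights are orthogonal to all other simple coroots, in particular to $\alpha_2^\vee$. At each shorter stratum $w$ I take a full separating system: this is possible because $|W|=6$ is small, and a direct check shows that each relevant $E_w$ consists of at most two elements whose coroots (drawn from $\{\alpha_1^\vee,\alpha_2^\vee,(\alpha_1+\alpha_2)^\vee\}$) are linearly independent. With these choices, Definition \ref{def-intersumcone} yields
\[
C^{+,\EE}_{w_0} = C^{\EE}_{\Hasse,w_0} + C^{+,\EE}_{[231]} \subset C_{\Hasse} + C^{+,\EE}_{[231]}.
\]
Since $C_{\Hasse}\subset \langle C_{\zip}\rangle$ always, it suffices to establish the key combinatorial bound $C^{+,\EE}_{[231]}\subset C_{\lw}$.

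This bound is the main obstacle and is proved by a computation in the spirit of Lemma \ref{lem-564}: one recursively determines the cones $C^{+,\EE}_w$ using Chevalley's formula (Theorem \ref{brion}) for $w$ ranging over the length-one lower neighbors of $[231]$, namely $[213]$ and $[132]$, and then at $[231]$ itself, and checks that every generator obtained satisfies the defining inequality $(q-1)a_1+a_2-qa_3\leq 0$ of $C_{\lw}$. The non-trivial ingredient compared with the split--prime case of \S\ref{sec-GL31} is the Frobenius twist built into the map $h_w$ of \S\ref{sec-Hasse-cone}: it is precisely this twist that forces the ambient cone to be the lowest-weight cone $C_{\lw}$ rather than the highest-weight cone $C_{\hw}$ used in \S\ref{sec-mainSp6}, reflecting that $P$ is defined over $\FF_{q^2}$ but not $\FF_q$. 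Once the bound $C^{+,\EE}_{[231]}\subset C_{\lw}$ is verified, combining with $C_{\Hasse}\subset\langle C_{\zip}\rangle$ and applying Theorem \ref{thm-sep-syst} at $w_0$ yields $C_S=C_{S,w_0}\subset \langle C^{+,\EE}_{w_0}\rangle \subset \langle C_{\zip}\rangle$, and saturating concludes. Note that Proposition \ref{propU21inert} is recovered in passing as the special case of Theorem \ref{thm-sep-syst} applied at $w=[231]$.
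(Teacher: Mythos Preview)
Your approach is correct and follows the paper's template, but there is one imprecision and one point of genuine divergence from the paper's computation worth noting.

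\textbf{The imprecision.} You write ``$C^{+,\EE}_{[231]}\subset C_{\lw}$'', but $C_{\lw}$ by definition lies in $X^*_{+,I}(T)$, and the generator $h_{[231]}(\chi_{e_2-e_3})=(1-q,1,0)$ of $C^{\EE}_{\Hasse,[231]}$ is \emph{not} $I$-dominant. What is true (and what your verification actually shows) is that $C^{+,\EE}_{[231]}$ lies in the \emph{half-space} $H=\{(q-1)(a_1-a_3)+(a_2-a_3)\leq 0\}$. Since $\overline{\lambda}_{\alpha_1}=(1,1-q)$ lies on the boundary of $H$, one gets $C^{+,\EE}_{w_0}\subset H$ directly. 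The concluding step is then not ``$\langle C^{+,\EE}_{w_0}\rangle\subset\langle C_{\zip}\rangle$'' literally (this is false for the same reason), but rather: $C_S\subset\langle C^{+,\EE}_{w_0}\rangle\subset H$ and $C_S\subset X^*_{+,I}(T)$ automatically, hence $C_S\subset H\cap X^*_{+,I}(T)=C_{\lw}=\langle C_{\zip}\rangle$. The invocation of ``$C_{\Hasse}\subset\langle C_{\zip}\rangle$'' is unnecessary once you work with the half-space.

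\textbf{Comparison with the paper.} The paper's tables in \S\ref{sec-U21-proof} bound $\overline{C}^+_{[231]}$ by $a_1\leq 0$ (i.e.\ $a_1-a_3\leq 0$), not by the $C_{\lw}$ inequality. That bound does \emph{not} contain $\overline{\lambda}_{\alpha_1}=(1,1-q)$, so the paper cannot conclude $C^+_{w_0}$ lies in any useful half-space from the separating system alone; its one-line proof of Theorem~\ref{thm-U21inert} implicitly relies on the divisibility-plus-$I$-dominance maneuver spelled out in the proof of Theorem~\ref{thm-U31-conj} (write $f=\Ha_{\alpha_1}^m g$, use Proposition~\ref{propU21inert} to get $b_1-b_3\leq 0$ for $\lambda_g$, then combine with $b_2\leq b_1$). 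Your choice of the $C_{\lw}$ half-space as the target bound is slightly cleaner in that it absorbs $\lambda_{\alpha_1}$ directly and avoids this extra step.
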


\begin{proof}
By Theorem \ref{thm-sep-syst}, we have $ C_S \subset \langle C^{+}_{w_0} \rangle \subset  \langle C_{\zip}\rangle$, hence also $\langle C_S \rangle \subset \langle C_{\zip}\rangle$.
\end{proof}
Besides the partial Hasse invariant $\Ha_{\alpha_1}$ and $\Ha_{\alpha_2}$, we also have the $\mu$-ordinary Hasse invariant, that we denote abusively by $\Ha_{\mu}$ (although it is not a section of the form $\Ha_{\chi}$ for some character $\chi$). It was first constructed in  \cite{Goldring-Nicole-mu-Hasse} and \cite{Koskivirta-Wedhorn-Hasse}. By definition, $\Ha_\mu$ is a section over $\GZip^\mu$ of $\Vcal_I(\lambda_\mu)$ for some $\lambda_\mu\in X^*(L)$ whose non-vanishing locus is the unique open stratum $\Ucal_\mu$ in $\GZip^\mu$. Note that since $\lambda_\mu\in X^*(L)$, the vector bundle $\Vcal_I(\lambda_\mu)$ is a line bundle on $\GZip^{\mu}$. For a cocharacter datum $(G,\mu)$ with $\mu$ defined over $\FF_q$ (as in the case of $\GL_3$), the pull-back of $\Ha_\mu$ via the map $\GF^\mu\to \GZip^\mu$ is a partial Hasse invariant or a product thereof, by \cite[Lemma 5.2.8]{Imai-Koskivirta-partial-Hasse}. In contrast, in cases like $G=\U(3)$ when $\mu$ is not defined over $\FF_q$, $\Ha_\mu$ is completely unrelated to partial Hasse invariants. See \cite[Lemma 6.3.1]{Imai-Koskivirta-vector-bundles} for more details on $\Ha_\mu$ and an explicit formula.

We illustrate Theorem \ref{thm-divGU21inert} below. We represent the images of the cones by the map $\lambda\mapsto \overline{\lambda}$, as in \S\ref{sec-GL31}. By this map, the weights of the partial Hasse invariants become $\overline{\lambda}_{\alpha_1}=(1,1-q)$ and $\overline{\lambda}_{\alpha_2}=(1-q,-q)$. The weight of $\Ha_{\mu}$ becomes $\overline{\lambda}_\mu=(1-q^2,1-q^2)$. By Theorem \ref{thm-divGU21inert}, any form whose weight lies in the grey area is divisible by $\Ha_{\alpha_1}$.

\begin{figure}[H]
    \centering
    \includegraphics[width=15cm]{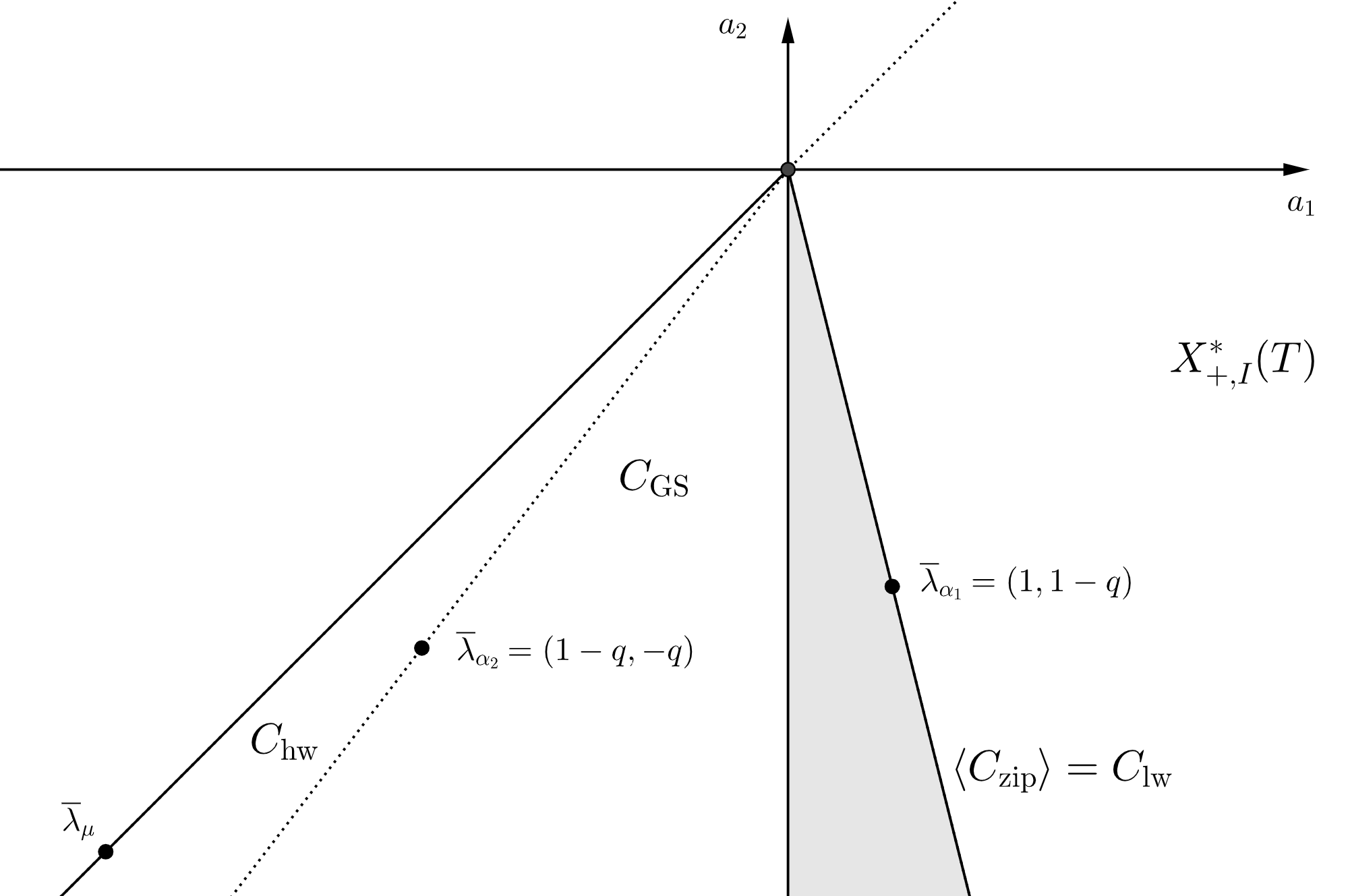}
    \caption{The case of $\U(3)_{\FF_q}$ for $r=2$, $s=1$}
    \label{fig:U21diagram}
\end{figure}

\subsubsection{The case $(r,s)=(3,1)$}
We now take $r=3$, $s=1$ for $G=\U(4)_{\FF_q}$. We start by determining the lowest weight cone $C_{\lw}$ and the highest weight cone $C_{\hw}$.

\begin{lemma}\label{lemma31inert}
One has
\begin{align*}
    C_{\lw}=\{(a_1,a_2,a_3,a_4)\in X_{+,I}^*(T) \mid (q-1)(a_1-a_4)+(a_3-a_4)\leq 0 \}\\
    C_{\hw}=\{(a_1,a_2,a_3,a_4)\in X_{+,I}^*(T) \mid (q-1)(a_1-a_3)+(a_3-a_4)\leq 0 \}\\
\end{align*}
\end{lemma}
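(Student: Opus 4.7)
The proof of both equalities is a direct application of the defining formulas \eqref{formula-norm} and \eqref{formula-norm-low} for $C_{\hw}$ and $C_{\lw}$ to the cocharacter datum $(G,\mu)$ at hand. My first step is to assemble the relevant combinatorial data: for $G = \U(4)_{\FF_q}$ with $(r,s)=(3,1)$ the Galois action on $X^*(T) = \ZZ^4$ is $(a_1,a_2,a_3,a_4) \mapsto -(a_4,a_3,a_2,a_1)$, and therefore $\sigma(\alpha_i) = \alpha_{4-i}$. In particular $\alpha_2$ is $\sigma$-fixed, while $\alpha_1$ and $\alpha_3$ form a single $\sigma$-orbit. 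It follows that $I = \{\alpha_1,\alpha_2\}$, $\Delta^P = \{\alpha_3\}$, $I_0 = \{\alpha_2\}$, $\Delta^{P_0} = \{\alpha_1,\alpha_3\}$, $W_{L_0}(\FF_q) = \{1, s_{\alpha_2}\}$, and $r_{\alpha_1} = r_{\alpha_3} = 2$.

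For $C_{\hw}$ only $\alpha = \alpha_3 \in \Delta^P$ contributes, so \eqref{formula-norm} becomes a sum of the four terms indexed by $(w,i) \in W_{L_0}(\FF_q) \times \{0,1\}$. Using $s_{\alpha_2}\lambda = (a_1, a_3, a_2, a_4)$ and $\sigma(\alpha_3^\vee) = \alpha_1^\vee$, one evaluates the four pairings, the $a_2$-dependent contributions cancel pairwise, and after factoring out the positive integer $1+q$ the displayed inequality for $C_{\hw}$ emerges.

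For $C_{\lw}$ one first computes $\lambda_0 = w_{0,I_0} w_{0,I} \lambda$: $w_{0,I}$ reverses the first three entries to produce $(a_3, a_2, a_1, a_4)$, and then $w_{0,I_0} = s_{\alpha_2}$ swaps the second and third entries, yielding $\lambda_0 = (a_3, a_1, a_2, a_4)$. Applying \eqref{formula-norm-low} to each of $\alpha_1, \alpha_3 \in \Delta^{P_0}$ and running the analogous four-term expansion produces two inequalities; the one coming from $\alpha_1$ simplifies (again after factoring out $1+q$) to the stated $(q-1)(a_1-a_4) + (a_3-a_4) \leq 0$. A short elementary argument — splitting on whether $a_4 \geq a_3$ or $a_4 < a_3$ and using the $I$-dominance $a_1 \geq a_2 \geq a_3$ — shows that the $\alpha_1$-condition forces the $\alpha_3$-condition, so this single inequality cuts out $C_{\lw}$.

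The only obstacle is careful bookkeeping of signs and cancellations in the four-term expansions; there is no conceptual difficulty beyond the combinatorics just described.
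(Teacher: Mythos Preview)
Your proof is correct and follows essentially the same route as the paper: both compute $\lambda_0=(a_3,a_1,a_2,a_4)$, apply \eqref{formula-norm-low} to each $\alpha\in\Delta^{P_0}=\{\alpha_1,\alpha_3\}$ to obtain two inequalities, and then show that on $X^*_{+,I}(T)$ the $\alpha_1$-inequality forces the $\alpha_3$-inequality. The only difference is stylistic: the paper exhibits an explicit non-negative linear combination expressing the second inequality in terms of the first together with $a_2-a_1\leq 0$ and $a_3-a_2\leq 0$, whereas you argue by a short case split on the sign of $a_3-a_4$; both arguments are elementary and valid.
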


\begin{proof}
We only prove the result for $C_{\lw}$, the case of $C_{\hw}$ is similar (but easier). We use equation \eqref{formula-norm-low}. Denote by $P_0$ the largest subgroup of $P$ defined over $\FF_q$. It is the parabolic subgroup of type $I_0=\{e_2-e_3\}$. Hence $\Delta^{P_0}=\{e_1-e_2,e_3-e_4\}$. For all $\alpha\in \Delta^{P_0}$, we may take $r_\alpha=2$. For $\lambda=(a_1,a_2,a_3,a_4)\in \ZZ^4$, we have $\lambda_0=w_{0,I_0}w_{0,I}\lambda=(a_3,a_1,a_2,a_4)$. Then, $C_{\rm lw}$ is the set of $\lambda=(a_1,a_2,a_3,a_4)\in X_{+,I}^*(T)$ satisfying
\begin{equation}\label{formula-norm-low-U31}
\begin{cases}
(q-1)(a_1-a_4)+(a_3-a_4)\leq 0  \\
-(q-1)(a_2-a_4) +q(a_3-a_4)\leq 0.
\end{cases}
\end{equation}
Hence it suffices to show that if $\lambda=(a_1,a_2,a_3,a_4)\in X_{+,I}^*(T)$ satifies the first of the above inequalities, then it also satisfies the second one. This follows from:
\begin{equation}
-(q-1)(a_2-a_4) +q(a_3-a_4) = \frac{1}{q} ((q-1)(a_1-a_4)+(a_3-a_4))+ \frac{q-1}{q}(a_2-a_1) + \frac{q^2-1}{q}(a_3-a_2).
\end{equation}
\end{proof}

As in the case $G=\U(3)$, $(r,s)=(2,1)$, one sees easily that $C_{\hw}$ is very small, and converges towards $X_{-}^*(L)$ as $q$ goes to infinity. Let $(S,\zeta)$ be a pair satisfying Assumption \ref{assume}. There are three codimension $1$ strata in $\Flag(S)$, corresponding to the elements
\begin{equation}
    \{w_0s_{\alpha_1}, w_0s_{\alpha_2}, w_0s_{\alpha_3}\}=\{[3421],[4231],[4312]\}. 
\end{equation}
We show a vanishing result on the stratum $\overline{\Flag(S)}_{[3421]}$:

\begin{proposition}\label{prop-U31-inert-van}
Let $\lambda=(a_1,a_3,a_4,a_4)\in \ZZ^4$ such that $(q-1)(a_1-a_4)+(a_2-a_4)> 0$. Then $H^0(\overline{\Flag(S)}_{[3421]},\Vcal_{\flag}(\lambda))=0$.
\end{proposition}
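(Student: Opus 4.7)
The plan is to apply the separating-system strategy of Theorem \ref{thm-sep-syst}, following the templates of Proposition \ref{prop-564} ($\Sp(6)$) and Proposition \ref{prop-GL31-van} ($(\GL_4,(3,1))$). The target element $[3421] = w_0 s_{\alpha_1}$ has length $\ell(w_0) - 1 = 5$ in $W = S_4$, so one must build up intersection-sum cones $C^{+,\EE}_w$ recursively along the Bruhat lattice, starting at length-one elements and climbing up to $[3421]$.

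Concretely, I would enumerate all $w \leq [3421]$ in the Bruhat order together with their lower-neighbor sets $E_w$ (producing the analog of Figure \ref{fig:sepsyst}), and for each such $w$ choose a subset $\EE_w \subset E_w$ and characters $\{\chi_\alpha\}_{\alpha \in \EE_w} \subset X^*(T)$ satisfying Definition \ref{def-sep-syst}. Writing $\phi(\lambda) \colonequals (q-1)(a_1-a_4) + (a_2-a_4)$ for the linear form appearing in the statement, the key auxiliary lemma to establish is
\begin{equation}
C^{+,\EE}_{[3421]} \subset \{\lambda \in X^*(T) \mid \phi(\lambda) \leq 0\}.
\end{equation}
Note this cone is strictly contained in $C_{\lw}$ (Lemma \ref{lemma31inert}), since dominance forces $a_2 \geq a_3$; this strengthening reflects the fact that restriction to the codimension-one stratum $\overline{\Flag(S)}_{[3421]}$ admits fewer sections than the whole flag space. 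Once the inclusion is proven, the proposition follows immediately from Theorem \ref{thm-sep-syst}: one has $C_{S,[3421]} \subset \langle C^{+,\EE}_{[3421]} \rangle$, and the linear condition $\phi \leq 0$ is preserved under saturation.

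The main technical obstacle, relative to the earlier $\Sp(6)$ and $\GL_4$ analyses, is that $P$ is not defined over $\FF_q$ (since $r \neq s$), so the Frobenius $\sigma$ acts nontrivially on $\Delta$ by the diagram involution: via the identification $\U(4)_k \simeq \GL_{4,k}$ of \S \ref{sec-cones-Un}, one has $\sigma(\alpha_i) = \alpha_{4-i}$. This twist enters the formula $h_w(\chi) = -w\chi + q w_{0,I} w_0 \sigma^{-1}(\chi)$ defining each Hasse-cone generator and couples the coordinates $a_i$ in a way that has no counterpart in the split cases. Consequently, one cannot transfer the $(\GL_4,(3,1))$ computation wholesale; rather, the diagram of strata and the recursive evaluations of $C^{+,\EE}_w = C^{\EE}_{\Hasse,w} + \bigcap_{\alpha \in \EE_w} C^{+,\EE}_{ws_\alpha}$ must be redone with the twisted $h_w$, verifying at each step that the newly introduced generators continue to satisfy $\phi \leq 0$. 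After this bookkeeping is set up, the verification reduces to checking a finite collection of explicit linear inequalities in $(a_1,\dots,a_4)$, in the spirit of the computations underlying Lemma \ref{lem-564}.
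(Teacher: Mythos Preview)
Your proposal is correct and takes essentially the same approach as the paper: the proof in \S\ref{sec-U31-proof} carries out exactly the separating-system recursion you describe, climbing through the chain $[1234]\to\{[1243],[1324],[2134]\}\to\{[1342],[2314]\}\to[2341]\to[3241]\to[3421]$ and verifying at the top that $\overline{C}^+_{[3421]}$ is contained in $\{(q-1)a_1+a_2\leq 0\}$. Your identification of the key technical wrinkle---the nontrivial Galois action $\sigma(\alpha_i)=\alpha_{4-i}$ entering $h_w$---is also on point; what remains is only the explicit tabulation of the $\chi_\alpha$, the generators $h_w(\chi_\alpha)$, and the nonnegative coefficients witnessing each inequality, which the paper records in tables.
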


Note that the expression appearing in Proposition \ref{prop-U31-inert-van} differs slightly from the one appearing in $C_{\lw}$ (Lemma \ref{lemma31inert}), namely $a_3$ is replaced by $a_2$. We prove Proposition \ref{prop-U31-inert-van} in \S \ref{sec-U31-proof}. The stratum $\overline{\Flag(S)}_{[3421]}$ is cut out (with multiplicity one) by the partial Hasse invariant $\Ha_{\alpha_1}$, which has weight $(q+1,q+1,1,q)$. We deduce immediately:

\begin{theorem}\label{thm-U31-inert-div}
Let $f\in H^0(S,\Vcal_I(\lambda))$ with $\lambda=(a_1,a_3,a_4,a_4)\in \ZZ^4$. If $(q-1)(a_1-a_4)+(a_2-a_4)> 0$, then $f$ is divisible by $\Ha_{\alpha_1}$.
\end{theorem}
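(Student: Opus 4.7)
The plan is to deduce this divisibility theorem as a direct corollary of the vanishing result Proposition \ref{prop-U31-inert-van}, following exactly the same pattern of reasoning used in the proofs of Theorems \ref{thm-Sp4-div}, \ref{thm-GL21-div}, \ref{thm-divSp6}, \ref{thm-divGL31}, and \ref{thm-divGU21inert}. Concretely: via the identification $H^0(S,\Vcal_I(\lambda)) = H^0(\Flag(S),\Vcal_{\flag}(\lambda))$ of \eqref{identif-lambda}, translate $f$ into the section $f_{\flag}\in H^0(\Flag(S),\Vcal_{\flag}(\lambda))$. The partial Hasse invariant $\Ha_{\alpha_1}$ (now pulled back via $\zeta_{\flag}$) is a section of $\Vcal_{\flag}(\lambda_{\alpha_1})$ whose divisor is precisely $\overline{\Flag(S)}_{w_0 s_{\alpha_1}}$ with multiplicity one, where $w_0 s_{\alpha_1} = [3421]$.

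First I would invoke Proposition \ref{prop-U31-inert-van}: the hypothesis $(q-1)(a_1-a_4)+(a_2-a_4)>0$ is exactly what is needed to conclude
\[
H^0(\overline{\Flag(S)}_{[3421]},\Vcal_{\flag}(\lambda)) = 0,
\]
so the restriction $f_{\flag}|_{\overline{\Flag(S)}_{[3421]}}$ vanishes identically. Next I would use that $\Ha_{\alpha_1}$ cuts out $\overline{\Flag(S)}_{w_0 s_{\alpha_1}}$ as an irreducible Cartier divisor with multiplicity one (this uses smoothness of $\zeta_{\flag}$, which preserves the multiplicity of the divisor computed on $\GF^\mu$ via Theorem \ref{brion}(3)). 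Since $\Vcal_{\flag}(\lambda)$ is a line bundle, the vanishing of $f_{\flag}$ along this reduced divisor forces $f_{\flag}$ to be divisible by $\Ha_{\alpha_1}$ in $H^0(\Flag(S),\Vcal_{\flag}(\lambda-\lambda_{\alpha_1}))$. Applying Definition \ref{divis-def}, this is exactly the statement that $f$ is divisible by $\Ha_{\alpha_1}$.

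The main obstacle in this theorem is not the deduction itself, which is a routine application of the "vanishing-implies-divisibility" principle for line bundles on smooth schemes. Rather, all of the nontrivial content is in the preceding Proposition \ref{prop-U31-inert-van}, whose proof (deferred to \S\ref{sec-U31-proof}) is the one that requires constructing a separating system $\EE$ and carrying out the inductive computation of the intersection-sum cones $C_w^{+,\EE}$ via Theorem \ref{thm-sep-syst}. Once that proposition is in hand, as we are allowed to assume, the divisibility conclusion is immediate, and so the proof of Theorem \ref{thm-U31-inert-div} will consist essentially of a single paragraph combining Proposition \ref{prop-U31-inert-van} with the multiplicity-one property of $\Ha_{\alpha_1}$ along $\overline{\Flag(S)}_{w_0 s_{\alpha_1}}$.
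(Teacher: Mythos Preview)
Your proposal is correct and matches the paper's approach exactly: the paper introduces Theorem \ref{thm-U31-inert-div} with ``We deduce immediately'' and gives no separate proof, relying implicitly on the same vanishing-implies-divisibility pattern (Proposition \ref{prop-U31-inert-van} plus the multiplicity-one property of $\Ha_{\alpha_1}$ along $\overline{\Flag(S)}_{w_0 s_{\alpha_1}}$) used in Theorems \ref{thm-Sp4-div}, \ref{thm-divSp6}, \ref{thm-divGL31}, and \ref{thm-divGU21inert}.
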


Finally, we prove Conjecture \ref{conj-S} in the case $G=\U(4)_{\FF_q}$, $r=3$ and $s=1$. More specifically, we show the following result.

\begin{theorem}\label{thm-U31-conj}
Let $(S,\zeta)$ satisfy Assumption \ref{assume}. We have 
\begin{equation}
    \langle C_{S} \rangle = \langle C_{\zip} \rangle=C_{\rm lw}=\{(a_1,a_2,a_3,a_4)\in X_{+,I}^*(T) \mid (q-1)(a_1-a_4)+(a_3-a_4)\leq 0 \}.
\end{equation}
\end{theorem}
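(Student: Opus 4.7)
The plan is to establish the three asserted equalities through the chain
$$C_{\lw}\subseteq \langle C_{\zip}\rangle \subseteq \langle C_S\rangle \subseteq C_{\lw},$$
with the explicit description of $C_{\lw}$ supplied by Lemma \ref{lemma31inert}. The first inclusion follows from \cite[Theorem 5.2.2]{Goldring-Imai-Koskivirta-weights}, whose Condition 5.1.1 is satisfied here because the parabolic $P$ is defined over $\FF_{q^2}$. The second inclusion holds unconditionally from the surjectivity of $\zeta$. All the content therefore resides in the third inclusion $\langle C_S\rangle \subseteq C_{\lw}$.

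To prove this last inclusion I would mimic the strategy of Theorem \ref{thm-conjSp6}: choose a separating system $\EE$ whose top-level data is $\EE_{w_0}=\{\alpha_1\}$, so that by Definition \ref{def-intersumcone},
$$C^{+,\EE}_{w_0} = C_{\Hasse,w_0} + C^{+,\EE}_{[3421]},$$
since $w_0 s_{\alpha_1}=[3421]$. The crucial input is the stronger statement $\langle C^{+,\EE}_{[3421]}\rangle \subseteq C_{\lw}$, whose establishment is the substance of the proof of Proposition \ref{prop-U31-inert-van} in \S\ref{sec-U31-proof} (the set-theoretic bound appearing in the statement of that Proposition is an immediate consequence). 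Granting this, the inclusion $C_{\Hasse}\subseteq C_{\lw}$ is checked directly by evaluating the defining inequality $(q-1)(a_1-a_4)+(a_3-a_4)\leq 0$ on each generator $\lambda_{\alpha_i}$ ($i=1,2,3$) and on $\lambda_{\det}$ from \S\ref{sec-cones-Un}; one finds the respective values $0,-1,-(q-1)^2-q,0$. Since $C_{\lw}$ is saturated and closed under sums, it follows that $\langle C^{+,\EE}_{w_0}\rangle \subseteq C_{\lw}$. Applying Theorem \ref{thm-sep-syst} at $w=w_0$ yields $C_S = C_{S,w_0}\subseteq \langle C^{+,\EE}_{w_0}\rangle \subseteq C_{\lw}$, and passing to saturations completes the argument.

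The principal obstacle lies in the recursive estimate $\langle C^{+,\EE}_{[3421]}\rangle \subseteq C_{\lw}$. I would prove it by descending through the Bruhat interval below $[3421]$: at each stratum $w$ one chooses a subset $\EE_w\subseteq E_w$ together with characters $\chi_\alpha$ satisfying Definition \ref{def-sep-syst}, and propagates the bounds upward via Lemma \ref{lem-g0} combined with Theorem \ref{brion}. Since certain intermediate $w$ have $|E_w|$ exceeding the rank of $X^*(T)$, full separating systems are unavailable and one must work with the more flexible notion; the base case at length one invokes Assumption \ref{assume}(c). The delicate bookkeeping is to verify at each inductive step that the Hasse-cone summand $C^{\EE}_{\Hasse,w}$ added in passing from the lower neighbors to $w$ supplies exactly the $I$-dominance-type positivity needed to keep the inherited cone inside the half-space cut out by $(q-1)(a_1-a_4)+(a_3-a_4)\leq 0$, rather than merely the coarser half-space $(q-1)(a_1-a_4)+(a_2-a_4)\leq 0$ that one reads off from the statement of Proposition \ref{prop-U31-inert-van}.
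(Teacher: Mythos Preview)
Your chain $C_{\lw}\subseteq \langle C_{\zip}\rangle \subseteq \langle C_S\rangle \subseteq C_{\lw}$ is correct in outline, but the crucial third inclusion does not follow from your argument. You claim that \S\ref{sec-U31-proof} establishes $\langle C^{+,\EE}_{[3421]}\rangle \subseteq C_{\lw}$; it does not. The recursion there terminates with the single bound $(q-1)(a_1-a_4)+(a_2-a_4)\leq 0$, which is exactly Proposition~\ref{prop-U31-inert-van} and is strictly weaker than the $C_{\lw}$ half-space in which $a_2$ is replaced by $a_3$. Indeed the inclusion you want is simply false: the Hasse-cone generator $h_{[3421]}((0,1,0,0))$, written $(1,1-q,1)$ in $\overline\lambda$-coordinates, lies in $C^{+,\EE}_{[3421]}$ by construction but has $(q-1)\cdot 1 + 1 = q > 0$; it is not $I$-dominant either. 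No separating-system bookkeeping can repair this, since the cones $C^{+,\EE}_w$ have no reason to lie inside $X^*_{+,I}(T)$.

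The paper's proof avoids this by keeping track of the section, not just the cone. One writes $f=\Ha_{\alpha_1}^{m}g$ with $g$ nonvanishing on $\overline{\Flag(S)}_{[3421]}$. The observation you are missing is that $g$ is still a \emph{global} section on $\Flag(S)$, so its weight $\lambda_g=(b_1,b_2,b_3,b_4)$ is automatically $I$-dominant. Proposition~\ref{prop-U31-inert-van} gives $(q-1)(b_1-b_4)+(b_2-b_4)\leq 0$; combined with $b_2\geq b_3$ this yields $(q-1)(b_1-b_4)+(b_3-b_4)\leq 0$, i.e.\ $\lambda_g\in C_{\lw}$. Since $\lambda_{\alpha_1}\in C_{\lw}$ (your check), the sum $\lambda=m\lambda_{\alpha_1}+\lambda_g$ lies in $C_{\lw}$ as well. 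The extra input---$I$-dominance of the intermediate weight, coming from global regularity of $g$---is not captured by Theorem~\ref{thm-sep-syst}, whose recursion only controls weights of sections over the strata closures $\overline{\Flag(S)}_{ws_\alpha}$.
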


\begin{proof}
We already proved the lase equality. Since $C_{\rm lw}\subset \langle C_{\zip} \rangle \subset \langle C_{S} \rangle$, it suffices to show that $C_S$ also satisfies $(q-1)(a_1-a_4)+(a_3-a_4)\leq 0$. Let $f\in H^0(\Flag(S),\Vcal_{\flag}(\lambda))$ be a nonzero section. Since the partial Hasse invariant $\Ha_{\alpha_1}$ has multiplicity one, we can write $f=\Ha_{\alpha_1}^m g$ for some $m\geq 0$ and for some section $g$ which is not identically zero on the codimension one stratum $\overline{\Flag(S)}_{w_0s_{\alpha_1}}$. Write $\lambda_g$ for the weight of $g$ (hence $\lambda=m\lambda_{\alpha_1}+\lambda_g$). First, we note that $\lambda_{\alpha_1} \in C_{\rm lw}$. Indeed, $\lambda_{\alpha_1} = (q+1,q+1,1,q)$ satisfies $(q-1)(a_1-a_4)+(a_3-a_4)\leq 0$, hence $\lambda_{\alpha_1} \in C_{\rm lw}$. It remains to show that $\lambda_g=(b_1,b_2,b_3,b_4)$ lies in $C_{\rm lw}$. By Proposition \ref{prop-U31-inert-van}, we have $(q-1)(b_1-b_4)+(b_2-b_4)\leq 0$. Since $g$ is a nonzero global section over $\Flag(S)$, we also have $\lambda_g\in X_{+,I}^*(T)$, hence $b_1\geq b_2\geq b_3$. In particular $(q-1)(b_1-b_4)+(b_3-b_4)\leq (q-1)(b_1-b_4)+(b_2-b_4)\leq 0$, which terminates the proof.
\end{proof}

As in the case $G=\GL_{4,\FF_q}$, we represent a generic slice of the image of the cones by the map $\lambda\mapsto \overline{\lambda}$. By this map, the weight of the three partial Hasse invariants become $\overline{\lambda}_{\alpha_{1}}=(1,1,1-q)$, $\overline{\lambda}_{\alpha_{2}}=(1,1-q,-q)$ and
$\overline{\lambda}_{\alpha_{3}}=(1-q,-q,-q)$. The area colored in grey is the image by $\lambda \mapsto \overline{\lambda}$ of the set
\begin{equation}
    V\colonequals \{(a_1,a_2,a_2,a_4)\in \langle C_{\zip}\rangle \mid (q-1)(a_1-a_4)+(a_2-a_4)>0 \}.
\end{equation}
Any form whose weight lies in the grey area is divisible by $\Ha_{\alpha_1}$. There is another potential candidate for a divisibility theorem, namely the form $\Norm(f_{\eta,\low})$ with $\eta=(1,1-q,1-q)$, whose weight generates an extremal ray of $\langle C_{\GS} \rangle$ (see Figure \ref{fig:U31inert-cones} below). As in the case of $\Sp(6)$, it would be interesting to investigate divisibility by such forms, beyond the case of partial Hasse invariants.

\begin{figure}[H]
    \centering
    \includegraphics[width=14cm]{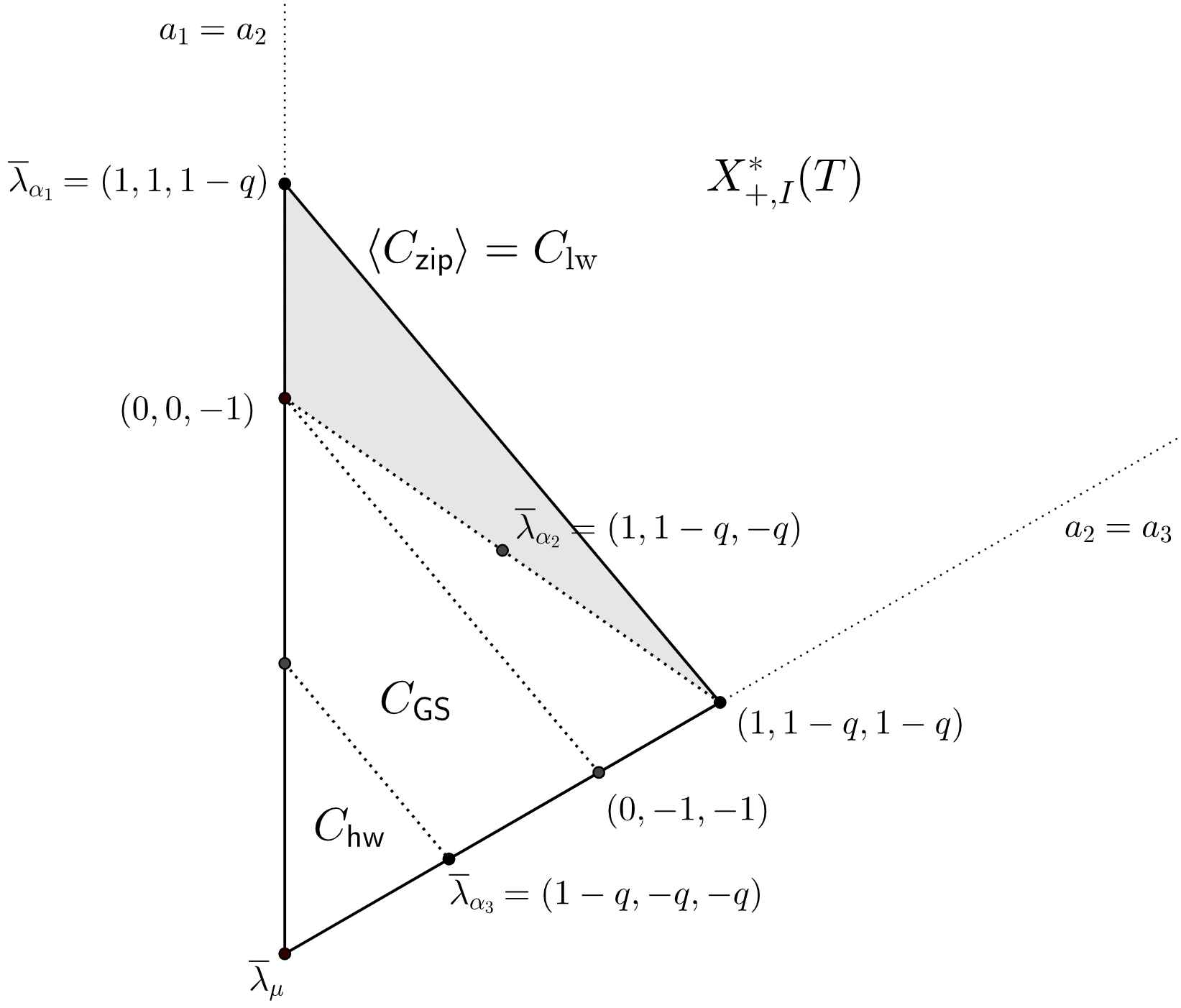}
    \caption{The case of $\U(4)_{\FF_q}$ for $r=3$, $s=1$}
    \label{fig:U31inert-cones}
\end{figure}

\subsubsection{The remaining cases}
As we noted in \S\ref{sec-remainGL}, the case $(r,s)=(1,3)$ is completely similar to the case $(r,s)=(3,1)$. Also, the cases $(r,s)=(4,0)$ and $(0,4)$ follow from Remark \ref{rmk-GeqP}. It remains to consider the case $(r,s)=(2,2)$, which seems to be the most difficult one. We were not able to determine the cone $\langle C_{\zip} \rangle$, let alone $\langle C_S \rangle$, and we could not prove Conjecture \ref{conj-S}. We only have a partial vanishing result for this case, we mention it below without proof. Assume now that $(r,s)=(2,2)$ and let $(S,\zeta)$ be a scheme satisfying Assumption \ref{assume}.

\begin{proposition}
Let $q>2$. Suppose that $\lambda=(a_1,a_2,a_3,a_4)\in \ZZ^4$ satisfies $q(a_1-a_4)+(a_2-a_3)>0$. Then
$H^0(S,\Vcal_{I}(\lambda))=0$.
\end{proposition}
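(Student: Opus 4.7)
The plan is to invoke the separating-system machinery of Theorem \ref{thm-sep-syst} and choose $\EE = (\EE_w, \{\chi_\alpha\}_{\alpha \in \EE_w})_{w \in W}$ in such a way that
\[
\langle C_{w_0}^{+,\EE}\rangle \;\subset\; \Hcal \colonequals \{\lambda = (a_1,a_2,a_3,a_4)\in X^*(T)\mid q(a_1-a_4)+(a_2-a_3)\leq 0\}.
\]
Combined with $C_S = C_{S,w_0} \subset \langle C_{w_0}^{+,\EE}\rangle$, this yields the proposition: any nonzero global section of $\Vcal_I(\lambda)$ forces $\lambda\in \Hcal$.

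As a first step, I would check directly from the formulas in \S\ref{sec-cones-Un} that all four relevant partial-Hasse weights lie in $\Hcal$: $\lambda_{\alpha_1}=(q+1,1,q+1,q)$ and $\lambda_{\alpha_3}=(1,0,q,0)$ lie on its boundary, while $\lambda_{\alpha_2}=(1,1,q,q)$ and $\lambda_{\det}=(q+1,q+1,q+1,q+1)$ lie strictly inside. Hence $C_{\Hasse}\subset \Hcal$, which handles the base case $\ell(w)=1$ of the recursion in Definition \ref{def-intersumcone}, since for those $w$ one has $C_w^{+,\EE}=C_{\Hasse,w}$ and one verifies, using $h_w(\chi)=-w\chi+qw_{0,I}w_0\sigma^{-1}(\chi)$, that every image lies in $\Hcal$.

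Next, I would work up the Bruhat order on $W=S_4$ stratum by stratum. For each intermediate $w$, I select $\EE_w\subset E_w$ (maximal subset for which the coroots $\{\alpha^\vee\}_{\alpha\in\EE_w}$ are linearly independent and satisfy Definition \ref{def-sep-syst}) together with characters $\chi_\alpha$ whose images $h_w(\chi_\alpha)$ lie in $\Hcal$. By induction on $\ell(w_0)-\ell(w)$, combined with the recursive formula $C_w^{+,\EE}=C_{\Hasse,w}^{\EE}+\bigcap_{\alpha\in\EE_w} C_{ws_\alpha}^{+,\EE}$, we obtain that $\langle C_w^{+,\EE}\rangle\subset \Hcal$; the crucial point is that at each step the intersection of sub-cones already contained in $\Hcal$ remains in $\Hcal$ (as $\Hcal$ is a half-space), and adding $C_{\Hasse,w}^{\EE}\subset \Hcal$ preserves this. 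The hypothesis $q>2$ is expected to enter at exactly the step analogous to Lemma \ref{lem-564}, where, in order to absorb a multiplicity-$1$ Hasse contribution into a larger $\Hcal$-bounded combination coming from a lower-neighbor stratum, an inequality of the form $q>c$ with $c\in\{1,2\}$ must hold.

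The main obstacle will be the combinatorial bookkeeping caused by the nontrivial Galois action: since $\sigma$ fixes $\alpha_2$ but swaps $\alpha_1\leftrightarrow\alpha_3$, the Levi $L=\GL_2\times\GL_2$ is defined over $\FF_q$ but $W_L(\FF_q)=\{1,\,s_{\alpha_1}s_{\alpha_3}\}$, so the twist in $h_w$ couples the two "Galois-conjugate" wings of the lattice. Hence naive separating choices, such as putting all fundamental weights on each stratum, tend to produce sums that exit $\Hcal$ on strata where only one of $\alpha_1,\alpha_3$ is available as a lower neighbor. One must therefore carefully pair $\alpha_1$- and $\alpha_3$-descents at every branching of the Bruhat graph to preserve the $\sigma$-symmetric structure of $\Hcal$. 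This $\sigma$-asymmetry in the choice of $\EE_w$ at intermediate strata is presumably also the reason why this method is not expected to yield the full equality $\langle C_S\rangle = \langle C_{\zip}\rangle$ in the $(2,2)$ case, and only delivers the weaker half-space bound $\Hcal$ stated in the proposition.
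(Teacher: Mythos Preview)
First, note that the paper explicitly states this proposition \emph{without proof}; there is no argument in the paper to compare against.

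Your proposal contains a genuine gap in the inductive scheme. You claim that checking $C_{\Hasse}\subset\Hcal$ ``handles the base case $\ell(w)=1$'' because ``for those $w$ one has $C_w^{+,\EE}=C_{\Hasse,w}$ and one verifies\dots\ that every image lies in $\Hcal$.'' This conflates two unrelated cones: $C_{\Hasse}=C_{\Hasse,w_0}$ is the cone attached to the longest element, whereas the base case of Definition~\ref{def-intersumcone} involves the cones $C_{\Hasse,w}$ for $\ell(w)=1$, which are entirely different. Each such $C_{\Hasse,w}=h_w(X^*_{+,w}(T))$ is a half-space (since $|E_w|=1$), and it is \emph{not} contained in $\Hcal$. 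For instance, take $w=s_{\alpha_2}=[1324]$: a direct computation with $\sigma(\chi_1,\chi_2,\chi_3,\chi_4)=(-\chi_4,-\chi_3,-\chi_2,-\chi_1)$ and $w_{0,I}w_0=[3412]$ gives
\[
q(a_1-a_4)+(a_2-a_3)\big|_{h_w(\chi)} \;=\; -2q(\chi_1-\chi_4)-(q^2-1)(\chi_2-\chi_3),
\]
which takes arbitrarily large positive values on $X^*_{+,w}(T)=\{\chi_2\geq\chi_3\}$ (set $\chi_2=\chi_3$, $\chi_1-\chi_4\ll 0$). Hence $C_{\Hasse,[1324]}\not\subset\Hcal$, and your induction fails at the very first step.

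The method actually used in \S\ref{Sp6-proof}--\S\ref{sec-U31-proof} is quite different from what you describe: one does \emph{not} try to show $C_w^{+,\EE}\subset\Hcal$ uniformly. Instead, for each relevant $w$ one records a tailored system of linear inequalities $I_{w,j}(\lambda)\leq 0$ bounding $C_w^{+,\EE}$; these bounds vary from stratum to stratum and are propagated upward by exhibiting each $I_{w,j}$ as a nonnegative combination of the $I_{ws_\alpha,i}$ (equation~\eqref{Iwj}). Only at the final step $w=w_0$ does one arrange for the resulting bound to be the single inequality defining $\Hcal$. Implementing this here would require producing, stratum by stratum, the analogue of Figures~\ref{fig:GU22} or~\ref{fig:U31inert} together with the explicit coefficients $A^{(j)}_{w,\alpha,i}$; the restriction $q>2$ would enter as a positivity constraint on one of these coefficients, not in the schematic way you suggest. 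Your proposal does not attempt any of this.
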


We do not know whether this result also holds for $q=2$.

\section{Proofs}

\subsection{The case $G=\Sp(6)_{\FF_q}$}\label{Sp6-proof}
As explained in \S\ref{sec-mainSp6}, it suffices to show Lemma \ref{lem-564}. For this, we will consider a separating system $\EE=(\EE_w, \{\chi_\alpha \}_{\alpha\in \EE_w})_{w\in W}$. The element $w\in W$ that will be relevant for us were already mentioned in Figure \ref{fig:sepsyst}. For all other $w\in W$, we simply set $\EE_w=\emptyset$. The first step is to compute the Hasse cones $C_{\Hasse,w}$ for elements $w$ of length $1$. For each such element, we defined $C^{+}_w=C_{\Hasse,w}$ in Definition \ref{def-intersumcone} (to simplify the notation, we write $C_w^{+}$ instead of $C_w^{\EE,+}$). We record in the table below the equations for $C_{\Hasse,w}$ for the three elements of length one. They are an easy computation using \eqref{CHassew-def}.

\bigskip

\hspace{-0.7cm}
{\renewcommand{\arraystretch}{1.5}%
\begin{tabular}{ |c|c|c|l|c| } 
\hline
$w$  & $E_w $  &  $C_{\Hasse,w}$ & $C^{+}_w$  \\
\hline
$[132]$ & $e_2-e_3$ & $-q a_1+(q-1)a_2+a_3 \leq 0$ & same \\
\hline
$[124]$ & $2e_3$ & $q a_1-a_3 \leq 0$ & same \\
\hline
$[213]$ & $e_1-e_2$& $-a_1-(q-1)a_2+qa_3 \leq 0$ & same \\
\hline
\end{tabular}} 

\bigskip

For elements of length $\geq 2$, we will only consider certain elements $w$ (for the other ones, set $\EE_w=\emptyset$). In each case, we indicate the set $E_w$ and the subset $\EE_w\subset E_w$. For each $\alpha\in \EE_w$, we also indicate the corresponding lower neighbor $ws_{\alpha}$, and we define $\LL_w=\{ws_\alpha\}_{\alpha\in \EE_w}$. For each $w$ and each $\alpha\in \EE_w$, we give a character $\chi_\alpha$ satisfying Definition \ref{def-sep-syst}. More precisely, all elements $w$ that appear in the proof admit a full separating system of partial Hasse invariants (Definition \ref{def-Q-sep}). Hence, we do not need to worry about which $w'\in E_w$ is connected with an element of $\EE_w$ in the sense of Definition \ref{def-sep-syst}(c). Indeed, for all $\alpha\in \EE_w$, we will define characters $\{\chi_\alpha\}_{\alpha\in \EE_w}$ satisfying the stronger conditions:
\begin{equation}
    \begin{cases}
     \langle \chi_{\alpha},\alpha^\vee \rangle >0 & \\
\langle \chi_{\alpha},\beta^\vee \rangle =0 & \textrm{ for all } \beta\in E_{w}\setminus \{\alpha\}.
    \end{cases}
\end{equation}
We also indicate the image of $\chi_\alpha$ by the map $h_w$. Lastly, in the rightmost column, we give an upper bound for the cone $C^+_{w}$. We explain this upper bound below the table. Here is the table for the relevant elements of length $2$:

\bigskip

\hspace{-2cm}
{\renewcommand{\arraystretch}{1.2}%
\begin{tabular}{ |c|c|c|c|c|c|l| } 
\hline 
\multicolumn{7}{|l|}{Relevant elements of length $2$} \\
\hline
$w$ & $E_w$ & $\EE_w$ & $\LL_w$ &$\{\chi_\alpha\}_{\alpha\in \EE_w}$ & $\{h_w(\chi_\alpha)\}_{\alpha\in \EE_w}$   & Upper bound for $C^{+}_w$\\
\hline
\multirow{2}{2em}{$[135]$} & \multirow{2}{3em}{$e_2+e_3$ \\ $2e_3$} & \multirow{2}{3em}{$e_2+e_3$\\$2e_3$} & \multirow{2}{2em}{$[124]$\\$[132]$}  &\multirow{2}{3em}{$(0,1,0)$\\$(0,-1,1)$}& \multirow{2}{6em}{$(0,-q,-1)$\\$(-q,q+1,1)$} & $(q^2+q) a_1 + (q^2+1)a_2  - (q+1) a_3 \leq 0$ \\ 
&&&&&& $q^2 a_1 +a_2 - qa_3 \leq 0$  \\ 
\hline
\multirow{2}{2em}{$[142]$} &\multirow{2}{3em}{$e_2-e_3$ \\ $2e_2$}& \multirow{2}{3em}{$e_2-e_3$ \\ $2e_2$} & \multirow{2}{2em}{$[124]$ \\ $[132]$} &\multirow{2}{4em}{$(0,0,-1)$ \\ $(0,1,1)$}&\multirow{2}{7em}{$(q,1,0)$ \\ $(-q,-(q+1),1)$}& $-(q^2+q) a_1 + (q^2+1)a_2  + (q+1) a_3 \leq 0$ \\ 
&&&&&& $-q a_1 +q^2 a_2 + a_3 \leq 0$ \\ 
\hline
\multirow{2}{2em}{$[214]$} &\multirow{2}{3em}{$e_1-e_2$ \\ $2e_3$}& \multirow{2}{3em}{$e_1-e_2$ \\ $2e_3$} & \multirow{2}{2em}{$[124]$ \\ $[213]$} &\multirow{2}{4em}{$(1,0,0)$ \\ $(0,0,1)$}&\multirow{2}{7em}{$(0,-1,-q)$ \\ $(-q,0,1)$}& $(q+1)a_1 -(q^2+1)a_2 + (q^2+ q)a_3  \leq 0$ \\ 
&&&&&& $q^2 a_1 -q a_2 + a_3 \leq 0$ \\ 
\hline
\end{tabular}} 

\paragraph{Methodology:} To make the proof concise, we explain once and for all our methodology for checking the upper bound in the table above. Recall that $C^+_w$ is defined inductively as $C^{\EE}_{\Hasse,w}+ \bigcap_{\alpha\in \EE_w} C^{+}_{ws_\alpha}$ (Definition \ref{def-intersumcone}). Furthermore, the cone $C^{\EE}_{\Hasse,w}$ is generated by the weights $\{h_w(\chi_\alpha)\}_{\alpha\in \EE_w}$. Therefore, it suffices to check that
\begin{enumerate}[(a)]
    \item \label{pro1} each $h_{w}(\chi_\alpha)$ satisfies the corresponding upper bound,
    \item \label{pro2} the cone $\bigcap_{\alpha\in \EE_w} C^{+}_{ws_\alpha}$ also satisfies this upper bound.
\end{enumerate}
The verification of \eqref{pro1} is easy, so we leave it to the reader. We explain how we prove \eqref{pro2}. Inductively, any $\lambda=(a_1,a_2,a_3)\in C^+_w$ satisfies inequalities
\begin{equation}
I_{w,1}(\lambda)\leq 0, \quad I_{w,2}(\lambda)\leq 0, \quad  \dots\quad  I_{w,n_w}(\lambda)\leq 0   
\end{equation}
where $I_{w,i}(\lambda)$ is a certain linear expression in $\lambda$. By convention, we label the $I_{w,i}(\lambda)$ in the order that they appear from top to bottom in each table (for example, $I_{[135],1}(\lambda)$ is the top expression in the row corresponding to $w=[135]$). With this notation, for each $w$, the cone $\bigcap_{\alpha\in \EE_w} C^{+}_{ws_\alpha}$ satisfies (by induction) the inequalities
\begin{equation}
    I_{ws_\alpha,i}(\lambda)\leq 0 \quad \textrm{for all  $\alpha\in \EE_w$ and all $1\leq i\leq n_w$.}
\end{equation}
In order to show that $\bigcap_{\alpha\in \EE_w} C^{+}_{ws_\alpha}$ satisfies a certain inequality $I_{w,j}(\lambda)\leq 0$, it suffices to find non-negative coefficients $\{ A^{(j)}_{w,\alpha,i} \}_{\alpha\in \EE_w, i}$ such that
\begin{equation} \label{Iwj}
    I_{w,j}(\lambda) = \sum_{\alpha\in \EE_w}\sum_{i=1}^{n_w} A^{(j)}_{w,\alpha,i} I_{ws_\alpha,i}(\lambda).
\end{equation}
In each case, we will simply write down such non-negative coefficients $\{ A^{(j)}_{w,\alpha,i} \}_{\alpha\in \EE_w, i}$. Furthermore, we will only indicate the nonzero coefficients. With this convention, the upper bounds appearing in the above table can be checked by taking:
\begin{align*}
A^{(1)}_{[135],e_2+e_3,1}=\frac{2q^2}{q - 1}, \ A^{(1)}_{[135],2e_3,1}=\frac{q^2+1}{q - 1}, \ 
A^{(2)}_{[135],e_2+e_3,1}=\frac{q^2-q+1}{q - 1}, \  A^{(2)}_{[135],2e_3,1}=\frac{1}{q - 1} \\
A^{(1)}_{[142],e_2-e_3,1}=\frac{2}{q - 1}, \ A^{(1)}_{[142],2e_2,1}=\frac{q^2+1}{q - 1}, \ 
A^{(2)}_{[142],e_2-e_3,1}=\frac{q^2-q+1}{q - 1}, \  A^{(2)}_{[142],2e_2,1}=\frac{q^2}{q - 1}\\
A^{(1)}_{[214],e_1-e_2,1}=\frac{2q}{q - 1}, \ A^{(1)}_{[214],2e_3,1}=\frac{q^2+1}{q - 1}, \ 
A^{(2)}_{[214],e_1-e_2,1}=\frac{q^2-q+1}{q - 1}, \  A^{(2)}_{[214],2e_3,1}=\frac{q}{q - 1}.
\end{align*}

We now move on to elements of length $3$.
\bigskip

\hspace{-2cm}
{\renewcommand{\arraystretch}{1.2}%
\begin{tabular}{ |c|c|c|c|c|c|l| } 
\hline 
\multicolumn{7}{|l|}{Relevant elements of length $3$} \\
\hline
$w$ & $E_w$ & $\EE_w$ & $\LL_w$ &$\{\chi_\alpha\}_{\alpha\in \EE_w}$ & $\{h_w(\chi_\alpha)\}_{\alpha\in \EE_w}$   & Upper bound for $C^{+}_w$\\
\hline
\multirow{2}{2em}{$[145]$} & \multirow{2}{3em}{$2e_2$ \\ $2e_3$} & \multirow{2}{3em}{$2e_2$} & \multirow{2}{2em}{$[135]$}  &\multirow{2}{3em}{$(0,1,0)$}& \multirow{2}{4em}{$(0,-q,1)$} & $q^2 a_1 +a_2 - qa_3 \leq 0$ \\ 
&&&&&&   \\ 
\hline
\multirow{2}{2em}{$[153]$} &\multirow{2}{3em}{$e_2-e_3$ \\ $e_2+e_3$}& \multirow{2}{3em}{$e_2-e_3$ \\ $e_2+e_3$} & \multirow{2}{2em}{$[135]$ \\ $[142]$} &\multirow{2}{4em}{$(0,1,-1)$ \\ $(0,1,1)$}&\multirow{2}{7em}{$(q,1-q,1)$ \\ $(-q,1-q,-1)$}& $-q a_1 + (q+1)a_2  +  a_3 \leq 0$  \\ 
&&&&&& \\ 
\hline
\multirow{3}{2em}{$[236]$} &\multirow{3}{3em}{$e_1+e_3$ \\ $e_2+e_3$\\ $2e_3$}&\multirow{3}{3em}{$e_1+e_3$ \\ $e_2+e_3$}  & \multirow{3}{2em}{$[135]$ \\  $[214]$}&\multirow{3}{3em}{$(1,0,0)$\\$(0,1,0)$}& \multirow{3}{4em}{$(0,-1,-q)$\\$(0,-q,-1)$}& $ a_1\leq 0$ \\ 
&&&&&&   \\ 
&&&&&&   \\ 
\hline
\multirow{3}{2em}{$[315]$} &\multirow{3}{3em}{$e_1-e_2$ \\ $e_1+e_3$\\ $2e_3$}& \multirow{3}{3em}{$e_1+e_3$} & \multirow{3}{2em}{$[214]$} &\multirow{3}{3em}{$(1,1,0)$}& \multirow{3}{8em}{$(-1,-q,-(q+1))$} & $ (q+1)a_1 - (q^2+1)a_2 + (q^2+q)a_3\leq 0$ \\ 
&&&& && \\ 
&&&& &&   \\ 
\hline
\multirow{3}{2em}{$[412]$} &\multirow{3}{3em}{$e_1-e_2$\\$e_1-e_3$\\$2e_1$}& \multirow{3}{3em}{$e_1-e_2$\\$e_1-e_3$}  & \multirow{3}{2em}{$[142]$  \\ $[214]$} &\multirow{3}{4em}{$(0,-1,0)$\\$(0,0,-1)$}& \multirow{3}{3em}{$(1,q,0)$\\$(q,1,0)$}& $-qa_1 + a_2 + (q+1)a_3\leq 0$ \\ 
&&&& & & $a_1 - qa_2 + (q+1)a_3\leq 0$\\
&&&& & & \\
\hline

\end{tabular}} 

\paragraph{Proof :} Take
$A^{(1)}_{[145],2e_2,2}=1$,
$A^{(1)}_{[153],e_2-e_3,2}=\frac{2q}{q^3+2q+1}$,  $A^{(1)}_{[153],e_2+e_3,2}=\frac{q^2+q+1}{q^3+2q+1}$,
$A^{(1)}_{[236],e_1+e_3,1}=\frac{q^2}{q^2 + q + 1}$, $A^{(1)}_{[236],e_1+e_3,2}=\frac{(q^2 + 1)(q + 1)}{q^2 + q + 1}$,  $A^{(1)}_{[236],e_2+e_3,1}=1$,
$A^{(1)}_{[315],e_1+e_3,1}=1$, 
$A^{(1)}_{[412],e_1-e_2,1}=\frac{q^2+q+1}{q^3+q^2+q+1}$,  $A^{(1)}_{[412],e_1-e_3,1}=\frac{q^2}{q^3+q^2+q+1}$, 
$A^{(2)}_{[412],e_1-e_2,1}=\frac{1}{q^3+q^2+q+1}$,  $A^{(2)}_{[412],e_1-e_3,1}=\frac{q^2+q+1}{q^3+q^2+q+1}$.

\bigskip

\hspace{-2.5cm}
{\renewcommand{\arraystretch}{1.2}%
\begin{tabular}{ |c|c|c|c|c|c|l| } 
\hline 
\multicolumn{7}{|l|}{Relevant elements of length $4$} \\
\hline
$w$ & $E_w$ & $\EE_w$ & $\LL_w$ &$\{\chi_\alpha\}_{\alpha\in \EE_w}$ & $\{h_w(\chi_\alpha)\}_{\alpha\in \EE_w}$   & Upper bound for $C^{+}_w$\\
\hline
\multirow{2}{2em}{$[154]$} & \multirow{2}{3em}{$e_2-e_3$ \\ $2e_3$} & \multirow{2}{3em}{$e_2-e_3$ \\ $2e_3$} & \multirow{2}{2em}{$[145]$ \\ $[153]$}  &\multirow{2}{3em}{$(0,1,0)$\\$(0,1,1)$}& \multirow{2}{7em}{$(0,1-q,0)$\\$(-q,1-q,1)$} & $-(q^2-q)a_1 + (q^2+1)a_2 + (q-1)a_3\leq 0$ \\ 
&&&&&&$q^2a_1 + a_2 - qa_3 \leq 0$   \\ 
\hline
\multirow{3}{2em}{$[246]$} & \multirow{3}{3em}{$e_1+e_3$ \\ $2e_2$\\$2e_3$} & \multirow{3}{3em}{$2e_2$} & \multirow{3}{2em}{$[236]$}  &\multirow{3}{3em}{$(0,1,0)$}& \multirow{3}{4em}{$(0,-q,1)$} & $a_1 \leq 0$ \\ 
&&&&&&   \\ 
&&&&&&   \\ 
\hline
\multirow{3}{2em}{$[263]$} & \multirow{3}{3em}{$e_1+e_2$ \\ $e_2-e_3$\\$e_2+e_3$} & \multirow{3}{3em}{$e_1+e_2$} & \multirow{3}{2em}{$[153]$}  &\multirow{3}{3em}{$(1,0,0)$}& \multirow{3}{4em}{$(0,-1,-q)$} & $-qa_1+(q+1)a_2+a_3 \leq 0$ \\ 
&&&&&&   \\ 
&&&&&&   \\ 
\hline
\multirow{3}{2em}{$[326]$} & \multirow{3}{3em}{$e_1-e_2$ \\ $e_2+e_3$\\$2e_3$} & \multirow{3}{3em}{$e_2+e_3$} & \multirow{3}{2em}{$[315]$}  &\multirow{3}{3em}{$(1,1,0)$}& \multirow{3}{10em}{$(0,-(q+1),-(q+1))$} & $ (q+1)a_1 - (q^2+1)a_2 + (q^2+q)a_3\leq 0$\\ 
&&&&&&   \\ 
&&&&&&   \\ 
\hline
\multirow{3}{2em}{$[421]$} & \multirow{3}{3em}{$e_1-e_2$ \\ $2e_1$\\ $e_2-e_3$} & \multirow{3}{3em}{$e_2-e_3$} & \multirow{3}{2em}{$[412]$}  &\multirow{3}{4em}{$(0,0,-1)$}& \multirow{3}{7em}{$(q+1,0,0)$} & $-(q-1)a_2 +(q+1)a_3\leq 0$\\ 
&&&&&&   \\ 
&&&&&&   \\ 
\hline
\end{tabular}} 

\paragraph{Proof :} Take $A^{(1)}_{[154],e_2-e_3,1}=\frac{2}{q^2+q+1} , \ A^{(1)}_{[154],2e_3,1}=\frac{q^3+2q-1}{q^2+q+1}$, $A^{(2)}_{[154],e_2-e_3,1}=A^{(1)}_{[246],2e_2,1}=A^{(1)}_{[263],e_1+e_2,1}=A^{(1)}_{[326],e_2+e_3,1}=1$, $A^{(1)}_{[421],e_2-e_3,1}=\frac{1}{q+1}$ and $A^{(1)}_{[421],e_2-e_3,2}=\frac{q}{q+1}$.

\bigskip

\hspace{-2.6cm}
{\renewcommand{\arraystretch}{1.2}%
\begin{tabular}{ |c|c|c|c|c|c|l| } 
\hline 
\multicolumn{7}{|l|}{Relevant elements of length $5$} \\
\hline
$w$ & $E_w$ & $\EE_w$ & $\LL_w$ &$\{\chi_\alpha\}_{\alpha\in \EE_w}$ & $\{h_w(\chi_\alpha)\}_{\alpha\in \EE_w}$   & Upper bound for $C^{+}_w$\\
\hline
\multirow{3}{2em}{$[264]$} & \multirow{3}{3em}{$e_1+e_2$ \\ $e_2-e_3$\\$2e_3$} & \multirow{3}{3em}{$e_1+e_2$ \\ $e_2-e_3$} & \multirow{3}{2em}{$[154]$ \\ $[246]$}  &\multirow{3}{3em}{$(1,0,0)$\\$\!(-1,1,0)$}& \multirow{3}{7em}{$(0,-1,-q)$\\$(1,1-q,q)$} &  $-(q^2-q)a_1 + (q^2+1)a_2 + (q-1)a_3\leq 0$ \\ 
&&&&&&$q^2a_1 + a_2 - qa_3 \leq 0$   \\ 
&&&&&&   \\ 
\hline
\multirow{3}{2em}{$[362]$} & \multirow{3}{3em}{$e_1-e_3$ \\ $e_2-e_3$\\$e_2+e_3$} & \multirow{3}{3em}{$e_1-e_3$\\$e_2-e_3$} & \multirow{3}{2em}{$[263]$\\$[326]$}  &\multirow{3}{4em}{$(1,0,0)$\\$\! \!(-1,1,-1)$}& \multirow{3}{8em}{$(0,0,-(q+1))$\\$(q+1,1-q,q+1)$} &$-(q^4+q^2+q+1)a_1 + (2q^3+3q^2+2q+1)a_2$ \\ 
&&&&&& $ \qquad + (q^4+2q^3+q)a_3 \leq 0$     \\ 
&&&&&&   \\ 
\hline
\multirow{3}{2em}{$[531]$} & \multirow{3}{3em}{$e_1-e_2$ \\ $e_1+e_2$\\$e_2-e_3$} & \multirow{3}{3em}{$e_1+e_2$} & \multirow{3}{2em}{$[421]$}  &\multirow{3}{3em}{$(1,1,1)$}& \multirow{3}{7em}{$(-(q+1),1-q,$\\$-(q+1))$} & $ -(q-1)a_2 + (q+1)a_3\leq 0$ \\ 
&&&&&&   \\ 
&&&&&&   \\ 
\hline
\end{tabular}} 

\paragraph{Proof :} Take
$A^{(1)}_{[264],e_1+e_2,1}=1$, 
$A^{(2)}_{[264],e_1+e_2,1}=\frac{q^2-1}{q^3+2q-1}$,  $A^{(2)}_{[264],e_1+e_2,2}=\frac{2q^2-q+1}{q^3+2q-1}$,  $A^{(2)}_{[264],e_2-e_3,1}=q^2-q$,
$A^{(1)}_{[362],e_1-e_3,1}=q^3+q^2+2q$,  $A^{(1)}_{[362],e_2-e_3,1}=q^2-1$,
$A^{(1)}_{[531],e_1+e_2,1}=1$.

\bigskip

\hspace{-2.3cm}
{\renewcommand{\arraystretch}{1.2}%
\begin{tabular}{ |c|c|c|c|c|c|l| }
\hline 
\multicolumn{7}{|l|}{Relevant elements of length $6$} \\
\hline
$w$ & $E_w$ & $\EE_w$ & $\LL_w$ &$\{\chi_\alpha\}_{\alpha\in \EE_w}$ & $\{h_w(\chi_\alpha)\}_{\alpha\in \EE_w}$   & Upper bound for $C^{+}_w$\\
\hline
\multirow{3}{2em}{$[365]$} & \multirow{3}{3em}{$e_1+e_3$ \\ $e_2-e_3$\\$2e_3$} & \multirow{3}{3em}{$e_1+e_3$ \\ $2e_3$} & \multirow{3}{2em}{$[264]$ \\ $[362]$}  &\multirow{3}{4em}{$(1,0,0)$\\$(-1,1,1)$}& \multirow{3}{7em}{$(0,0,-(q+1))$\\$(1-q,1-q,q+1)$} &  $ -(q^2-q)a_1 + (q^2+1)a_2+ (q-1)a_3\leq 0$\\ 
&&&&&& $a_1+\theta(q)a_2\leq 0$ (see below)   \\ 
&&&&&&   \\ 
\hline
\multirow{3}{2em}{$[541]$} & \multirow{3}{3em}{$e_1-e_2$ \\ $e_2-e_3$\\$2e_2$} & \multirow{3}{3em}{$2e_2$} & \multirow{3}{2em}{$[531]$}  &\multirow{3}{4em}{$(1,1,1)$}& \multirow{3}{8em}{$(-(q+1),1-q, $\\ $1-q)$} &$-(q-1)a_2 + (q+1)a_3\leq 0$ \\ 
&&&&&&     \\ 
&&&&&&   \\ 
\hline

\end{tabular}} 

\bigskip
In the equation for the stratum $w=[365]$, the function $\theta(q)$ is defined as follows:
\begin{equation}\label{theta-fun}
    \theta(q)=\frac{q^5+2q^4+2q^3+4q^2+2q+1}{q^6+2q^5-q^4+q^3-q^2-q-1}.
\end{equation}

\paragraph{Proof :} $A^{(1)}_{[365],e_1+e_3,1}=1, \  A^{(2)}_{[365],e_1+e_3,2}=q^4+2q^3+q, \ A^{(2)}_{[365],2e_3,1}=1$ and $A^{(1)}_{[541],2e_2,1}=1$.

\bigskip

\hspace{-1.4cm}
{\renewcommand{\arraystretch}{1.2}%
\begin{tabular}{ |c|c|c|c|c|c|l| } 
\hline 
\multicolumn{7}{|l|}{Relevant elements of length $7$} \\
\hline
$w$ & $E_w$ & $\EE_w$ & $\LL_w$ &$\{\chi_\alpha\}_{\alpha\in \EE_w}$ & $\{h_w(\chi_\alpha)\}_{\alpha\in \EE_w}$   & Upper bound for $C^{+}_w$\\
\hline
\multirow{3}{2em}{$[465]$} & \multirow{3}{3em}{$2e_1$ \\ $e_2-e_3$\\$2e_3$} & \multirow{3}{2em}{$2e_1$} & \multirow{3}{2em}{$[365]$}  &\multirow{3}{3em}{$(1,0,0)$}& \multirow{3}{5em}{$(0,0,1-q)$} &  $ -(q^2-q)a_1 + (q^2+1)a_2 + (q-1)a_3\leq 0$\\ 
&&&&&& $a_1+\theta(q)a_2\leq 0$   \\ 
&&&&&&   \\ 
\hline
\multirow{3}{2em}{$[546]$} & \multirow{3}{3em}{$e_1-e_2$ \\ $2e_2$\\$2e_3$} & \multirow{3}{2em}{$2e_3$} & \multirow{3}{2em}{$[541]$}  &\multirow{3}{3em}{$(0,0,1)$}& \multirow{3}{5em}{$(1-q,0,0)$} &$-(q-1)a_2 + (q+1)a_3\leq 0$ \\ 
&&&&&&     \\ 
&&&&&&   \\ 
\hline
\end{tabular}} 

\paragraph{Proof :} Take $A^{(1)}_{[465],2e_1,1}=A^{(2)}_{[465],2e_1,2}=A^{(1)}_{[546],2e_3,1}=1$.

\bigskip

{\renewcommand{\arraystretch}{1.2}%
\noindent\begin{tabular}{ |c|c|c|c|c|c|l| } 
\hline 
\multicolumn{7}{|l|}{Relevant elements of length $8$} \\
\hline
$w$ & $E_w$ & $\EE_w$ & $\LL_w$ &$\{\chi_\alpha\}_{\alpha\in \EE_w}$ & $\{h_w(\chi_\alpha)\}_{\alpha\in \EE_w}$   & Upper bound for $C^{+}_w$\\
\hline
\multirow{3}{2em}{$[564]$} & \multirow{3}{3em}{$e_1-e_3$ \\ $e_2-e_3$\\$2e_3$} & \multirow{3}{3em}{$e_1-e_3$\\$e_2-e_3$} & \multirow{3}{2em}{$[465]$\\$[546]$}  &\multirow{3}{3em}{$(1,0,0)$\\$(0,1,0)$}& \multirow{3}{7em}{$(0,1,-q)$\\$(1,-q,0)$} &   $q^2a_1 + qa_2+a_3\leq 0$\\ 
&&&&&&   \\ 
&&&&&&   \\ 
\hline
\end{tabular}} 

\paragraph{Proof :} Define
\begin{equation}
    u=\frac{q^7 - 2q^6 - 9q^5 - 4q^4 - 7q^3 - 3q^2 - q + 1}{(q-1)(q^3 + 2q^2 + 1) (q^5 + 4q^4 + 2q^3 + 5q^2 + 4q + 2)}.
\end{equation}
For $q\geq 5$, one checks easily that $0\leq u\leq \frac{1}{q-1}$. We may take:
\begin{align*}
&A^{(1)}_{[564],e_1-e_3,1}=u, \ A^{(1)}_{[564],e_1-e_3,2}=q^2+q(q-1)u, \ A^{(1)}_{[564],e_2-e_3,1}= \frac{1-(q-1)u}{q+1}.
\end{align*}
For $q\geq 5$, these numbers are non-negative. This shows that $C^+_{[564]}$ is contained in the half-space $q^2a_1 + qa_2+a_3\leq 0$, which completes the proof of Lemma \ref{lem-564}.

\subsection{The case $G=\GL_{4,\FF_q}$ and $(r,s)=(3,1)$} \label{proof-GL31}
It suffices to show Proposition \ref{prop-GL31-van}. We use the same method as for the case $G=\Sp(6)$. Recall that the determinant $\GL_{4}\to \GG_{\textrm{m}}$ is an invertible section of the line bundle $\Vcal_I(\lambda_{\det})$ on $\GZip^\mu$, where $\lambda_{\det}=(1-q,1-q,1-q,1-q)$. All cones $\langle C^+_w \rangle$, $\langle C_{\zip}\rangle$, etc. contain $\ZZ \lambda_{\det}$. Moreover, there is a bijection between
\begin{equation}
\left\{ \textrm{saturated cones of $\ZZ^4$ containing $\ZZ \lambda_{\det}$}\right\} \longleftrightarrow \left\{ \textrm{saturated cones of $\ZZ^{3}$ }\right\} 
\end{equation}
defined by $C\mapsto \overline{C}$, with notation as in \eqref{Xbar}. We will implicitly make this simplification and consider the cones $\overline{C}\subset \ZZ^3$. This amounts to setting $a_4=0$ in all equations. We first explain the element of $W$ that appear in the proof.

\bigskip

\begin{figure}[H]
$\xymatrix@R=10pt{
&   *++[F]{[1243]} \ar@{-}[rd] &&&&\\
  *++[F]{[1234]}\ar@{-}[r]\ar@{-}[ru]\ar@{-}[rd] & *++[F]{[1324]}\ar@{-}[rd] &*++[F]{[2143]}\ar@{-}[r]&*++[F]{[4123]}\ar@{-}[r]&*++[F]{[4132]} \ar@{-}[r]&*++[F]{[4312]}\\
&   *++[F]{[2134]}\ar@{-}[r]\ar@{-}[ru]  &*++[F]{[3124]} \ar@{-}[ru]&&& 
}$
    \caption{The strata appearing in the proof}
    \label{fig:GU31}
\end{figure}

We start by giving the Hasse cones $\overline{C}_{\Hasse,w}$ for the three elements of $W$ of length $1$.

\bigskip

{\renewcommand{\arraystretch}{1.5}%
\noindent\begin{tabular}{ |c|c|c|l|c| } 
\hline
$w$  & $E_w $  &  $\overline{C}_{\Hasse,w}$ & $\overline{C}^{+}_w$  \\
\hline
$[1243]$ & $e_3-e_4$ & $-q a_1-q^2a_2+ (q^2+q+1)a_3 \leq 0$ & same \\
\hline
$[1324]$ & $e_2-e_3$ & $-q^2 a_1+(q^2+q+1)a_2-a_3 \leq 0$ & same \\
\hline
$[2134]$ & $e_1-e_2$& $(q^2+q+1)a_1-a_2-qa_3 \leq 0$ & same \\
\hline
\end{tabular}} 

\bigskip 

{\renewcommand{\arraystretch}{1.2}%
\noindent\begin{tabular}{ |c|c|c|c|c|c|l| } 
\hline 
\multicolumn{7}{|l|}{Relevant elements of length $2$} \\
\hline
$w$ & $E_w$ & $\EE_w$ & $\LL_w$ &$\{\chi_\alpha\}_{\alpha\in \EE_w}$ & $\{h_w(\chi_\alpha)\}_{\alpha\in \EE_w}$   & Upper bound for $\overline{C}^{+}_w$\\
\hline
\multirow{2}{3em}{$[2143]$} &\multirow{2}{3em}{$e_1-e_2$ \\ $e_3-e_4$}&\multirow{2}{3em}{$e_1-e_2$ \\$e_3-e_4$}  & \multirow{2}{3em}{$[1243]$\\$[2134]$}&\multirow{2}{3em}{$(1,0,0)$\\$(0,0,1)$}& \multirow{2}{8em}{$(-q,-(q+1),-q)$\\$(1,q+1,1)$}& $-qa_2+(q+1)a_3\leq 0$ \\ 
&&&&&& $(q+1)a_1-a_2\leq 0$  \\ 
\hline
\multirow{2}{3em}{$[3124]$} &\multirow{2}{3em}{$e_1-e_2$\\$e_1-e_3$}& \multirow{2}{3em}{$e_1-e_2$\\$e_1-e_3$}  & \multirow{2}{3em}{$[1324]$  \\ $[2134]$} &\multirow{2}{4em}{$(0,-1,0)$\\$(1,1,0)$}& \multirow{2}{8em}{$(1-q,0,0)$\\$(-1,-q,-(q+1))$}& $(q+1)a_2-a_3\leq 0$ \\ 
&&&& & &  \\
\hline

\end{tabular}}

\paragraph{Proof :} Take
$A^{(1)}_{[2143],e_1-e_2,1}=\frac{q^2+q+1}{q^3+q^2+q+1}$,  $A^{(1)}_{[2143],e_3-e_4,1}=\frac{q}{q^3+q^2+q+1}$, 
$A^{(2)}_{[2143],e_1-e_2,1}=\frac{q}{q^3+q^2+q+1}$,   $A^{(2)}_{[2143],e_3-e_4,1}=\frac{q^2+q+1}{q^3+q^2+q+1}$,
$A^{(1)}_{[3124],e_1-e_2,1}=\frac{q^2+q+1}{q^3+q^2+q+1}$,   $A^{(1)}_{[3124],e_1-e_3,1}=\frac{q^2}{q^3+q^2+q+1}$.

\bigskip 

{\renewcommand{\arraystretch}{1.2}%
\noindent\begin{tabular}{ |c|c|c|c|c|c|l| } 
\hline 
\multicolumn{7}{|l|}{Relevant elements of length $3$} \\
\hline
$w$ & $E_w$ & $\EE_w$ & $\LL_w$ &$\{\chi_\alpha\}_{\alpha\in \EE_w}$ & $\{h_w(\chi_\alpha)\}_{\alpha\in \EE_w}$   & Upper bound for $\overline{C}^{+}_w$\\
\hline
\multirow{3}{3em}{$[4123]$} & \multirow{3}{3em}{$e_1-e_2$\\$e_1-e_3$\\$e_1-e_4$} & \multirow{3}{3em}{$e_1-e_3$\\$e_1-e_4$} & \multirow{3}{3em}{$[2143]$\\$[3124]$}  &\multirow{3}{3em}{$(0,0,-1)$\\$(1,1,1)$}& \multirow{3}{7em}{$(0,1-q,0)$\\$(0,0,1-q)$} & $a_1\leq 0$ \\ &&&&&&  $a_2\leq 0$  \\ 
&&&&&&   $a_3\leq 0$ \\ 
\hline
\end{tabular}} 

\paragraph{Proof :} Take
$A^{(1)}_{[4123],e_1-e_3,1}=\frac{1}{q^3+2q^2+2q+1}$, 
$A^{(1)}_{[4123],e_1-e_3,2}=\frac{1}{q+1}$, 
$A^{(1)}_{[4123],e_1-e_4,1}=\frac{1}{q^2+q+1}$,  
$A^{(2)}_{[4123],e_1-e_3,1}=\frac{1}{q^2+q+1}$,  
$A^{(2)}_{[4123],e_1-e_4,1}=\frac{q+1}{q^2+q+1}$, 
$A^{(3)}_{[4123],e_1-e_3,1}=\frac{q+1}{q^2+q+1}$,  
$A^{(3)}_{[4123],e_1-e_4,1}=\frac{q}{q^2+q+1}$.

\bigskip

{\renewcommand{\arraystretch}{1.2}%
\noindent\begin{tabular}{ |c|c|c|c|c|c|l| } 
\hline 
\multicolumn{7}{|l|}{Relevant elements of length $4$} \\
\hline
$w$ & $E_w$ & $\EE_w$ & $\LL_w$ &$\{\chi_\alpha\}_{\alpha\in \EE_w}$ & $\{h_w(\chi_\alpha)\}_{\alpha\in \EE_w}$   & Upper bound for $\overline{C}^{+}_w$\\
\hline
\multirow{3}{3em}{$[4132]$} & \multirow{3}{3em}{$e_1-e_2$\\$e_1-e_3$\\$e_3-e_4$} & \multirow{3}{3em}{$e_3-e_4$} & \multirow{3}{3em}{$[4123]$}  &\multirow{3}{3em}{$(1,1,1)$}& \multirow{3}{5em}{$(0,1,-q)$} & $qa_2+a_3\leq 0$ \\ 
&&&&&& $a_1\leq 0$  \\ 
&&&&&&   \\ 
\hline
\end{tabular}} 

\paragraph{Proof :} Take $A^{(1)}_{[4132],e_3-e_4,2}=q, \ A^{(1)}_{[4132],e_3-e_4,3}=1$ and $A^{(2)}_{[4132],e_3-e_4,1}=1$.

\bigskip

{\renewcommand{\arraystretch}{1.2}%
\noindent\begin{tabular}{ |c|c|c|c|c|c|l| }
\hline 
\multicolumn{7}{|l|}{Relevant elements of length $5$} \\
\hline
$w$ & $E_w$ & $\EE_w$ & $\LL_w$ &$\{\chi_\alpha\}_{\alpha\in \EE_w}$ & $\{h_w(\chi_\alpha)\}_{\alpha\in \EE_w}$   & Upper bound for $\overline{C}^{+}_w$\\
\hline
\multirow{3}{3em}{$[4312]$} & \multirow{3}{3em}{$e_1-e_2$ \\ $e_2-e_3$\\$e_2-e_4$} & \multirow{3}{3em}{$e_2-e_3$} & \multirow{3}{3em}{$[4132]$}  &\multirow{3}{4em}{$(0,0,-1)$}& \multirow{3}{5em}{$(1,-q,0)$} & $q^2a_1+qa_2+a_3 \leq 0$ \\ 
&&&&&&  \\ 
&&&&&&  \\ 
\hline
\end{tabular}} 

\paragraph{Proof :} Take $A^{(1)}_{[4312],e_2-e_3,1}=1, \ A^{(1)}_{[4312],e_2-e_3,2}=q^2$.

\bigskip

\subsection{The case $G=\GL_{4,\FF_q}$ and $(r,s)=(2,2)$} \label{proof-GL22}
We retain the same conventions as explained in \S\ref{proof-GL31}. In particular, we consider the image of each cone by the map $\lambda\mapsto \overline{\lambda}$. Here are the elements of $W$ which appear in the proof.

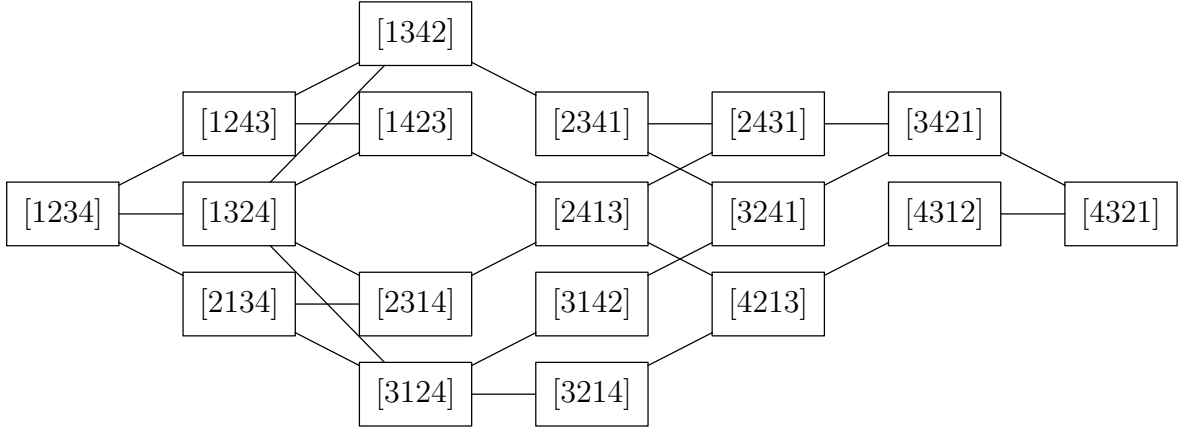
\begin{figure}[H]

$\xymatrix@R=10pt{
&& *++[F]{[1342]}\ar@{-}[rd] &&&& \\
&   *++[F]{[1243]} \ar@{-}[ru] \ar@{-}[r] &*++[F]{[1423]} \ar@{-}[rd]&*++[F]{[2341]}\ar@{-}[rd] \ar@{-}[r]&*++[F]{[2431]}\ar@{-}[r]&*++[F]{[3421]}\ar@{-}[rd]&\\
  *++[F]{[1234]}\ar@{-}[r]\ar@{-}[ru]\ar@{-}[rd] & *++[F]{[1324]}\ar@{-}[rd]\ar@{-}[rdd]\ar@{-}[ru]\ar@{-}[ruu] & &*++[F]{[2413]}\ar@{-}[ru]\ar@{-}[rd]&*++[F]{[3241]} \ar@{-}[ru]&*++[F]{[4312]}\ar@{-}[r]&*++[F]{[4321]}\\
&   *++[F]{[2134]}\ar@{-}[r] \ar@{-}[rd] &*++[F]{[2314]} \ar@{-}[ru]&*++[F]{[3142]}\ar@{-}[ru]&*++[F]{[4213]}\ar@{-}[ru]&& \\
&&*++[F]{[3124]}\ar@{-}[ru]\ar@{-}[r]&*++[F]{[3214]} \ar@{-}[ru]&&& \\
}$
    \caption{The strata appearing in the proof}
    \label{fig:GU22}
\end{figure}

We start with the Hasse cones $\overline{C}_{\Hasse,w}$ for the three elements of length $1$.

\bigskip

{\renewcommand{\arraystretch}{1.5}%
\noindent\begin{tabular}{ |c|c|c|l|c| } 
\hline
$w$  & $E_w $  &  $\overline{C}_{\Hasse,w}$ & $\overline{C}^+_w$  \\
\hline
$[1243]$ & $e_3-e_4$ & $-q a_1+qa_2-a_3 \leq 0$ & same \\
\hline
$[1324]$ & $e_2-e_3$ & $q a_1-a_2+a_3 \leq 0$ & same \\
\hline
$[2134]$ & $e_1-e_2$& $-a_1+a_2-qa_3 \leq 0$ & same \\
\hline
\end{tabular}} 

\bigskip

\hspace{-1.7cm}
{\renewcommand{\arraystretch}{1.2}%
\begin{tabular}{ |c|c|c|c|c|c|l| } 
\hline 
\multicolumn{7}{|l|}{Relevant elements of length $2$} \\
\hline
$w$ & $E_w$ & $\EE_w$ & $\LL_w$ &$\{\chi_\alpha\}_{\alpha\in \EE_w}$ & $\{h_w(\chi_\alpha)\}_{\alpha\in \EE_w}$   & Upper bound for $\overline{C}^{+}_w$\\
\hline
\multirow{2}{3em}{$[1342]$} & \multirow{2}{3em}{$e_2-e_4$ \\ $e_3-e_4$} & \multirow{2}{3em}{$e_2-e_4$ \\ $e_3-e_4$} & \multirow{2}{3em}{$[1243]$\\$[1324]$}  &\multirow{2}{3em}{$(0,1,0)$\\$(0,0,1)$}& \multirow{2}{8em}{$(-q,-q,-(q+1))$\\$(q+1,1,1)$} & $-qa_1+(q^2+q+1)a_2-a_3 \leq 0$ \\ 
&&&&&&   \\ 
\hline
\multirow{2}{3em}{$[1423]$} &\multirow{2}{3em}{$e_2-e_3$ \\ $e_2-e_4$}& \multirow{2}{3em}{$e_2-e_3$ \\ $e_2-e_4$} & \multirow{2}{3em}{$[1243]$ \\ $[1324]$} &\multirow{2}{4em}{$(0,0,-1)$ \\ $(0,1,1)$}&\multirow{2}{7em}{$(-q,1,0)$ \\ $(1,-q,1-q)$}& $q^2 a_1 + a_2  + q a_3 \leq 0$  \\ 
&&&&&&$q a_1 + q^2 a_2  +  a_3 \leq 0$ \\ 
\hline
\multirow{2}{3em}{$[2314]$} &\multirow{2}{3em}{$e_1-e_3$ \\ $e_2-e_3$}& \multirow{2}{3em}{$e_1-e_3$ \\ $e_2-e_3$} & \multirow{2}{3em}{$[1324]$\\$[2134]$} &\multirow{2}{3em}{$(1,0,0)$\\$(0,1,0)$}& \multirow{2}{8em}{$(0,-1,q)$\\$(-q,-q,-(q+1))$} & $ (q^2+q+1)a_1-qa_2-a_3\leq 0$ \\ 
&&&& && $(q^2+q+1)a_1-a_2-q^2a_3\leq 0$ \\  
\hline
\multirow{2}{3em}{$[3124]$} &\multirow{2}{3em}{$e_1-e_2$\\$e_1-e_3$}& \multirow{2}{3em}{$e_1-e_2$\\$e_1-e_3$}  & \multirow{2}{3em}{$[1324]$  \\ $[2134]$} &\multirow{2}{4em}{$(0,-1,0)$\\$(1,1,0)$}& \multirow{2}{8em}{$(q+1,q,q)$\\$(-(q+1),-q,-1)$}& $q^2a_1+a_2-(q^2+q+1)a_3\leq 0$ \\ 
&&&& & & $a_1+ q a_2-(q^2+q+1)a_3\leq 0$ \\
\hline

\end{tabular}} 

\paragraph{Proof :} Take
$A^{(1)}_{[1342],e_2-e_4,1}=\frac{q(q+1)}{q-1}$, $A^{(1)}_{[1342],e_3-e_4,1}=\frac{q^2+1}{q-1}$,
$A^{(1)}_{[1423],e_2-e_3,1}=\frac{q+1}{q-1}$,  $A^{(1)}_{[1423],e_2-e_4,1}=\frac{q^2+1}{q-1}$,
$A^{(2)}_{[1423],e_2-e_3,1}=\frac{q^2+1)}{q-1}$, $A^{(2)}_{[1423],e_2-e_4,1}=\frac{q(q+1)}{q-1}$,
$A^{(1)}_{[2314],e_1-e_3,1}=\frac{q^2+1}{q-1}$,  $A^{(1)}_{[2314],e_2-e_3,1}=\frac{q+1}{q-1}$,
$A^{(2)}_{[2314],e_1-e_3,1}=\frac{q(q+1)}{q-1}$,  $A^{(2)}_{[2314],e_2-e_3,1}=\frac{q^2+1}{q-1}$,
$A^{(1)}_{[3124],e_1-e_2,1}=\frac{q^2+1}{q-1}$,  $A^{(1)}_{[3124],e_1-e_3,1}=\frac{q(q+1)}{q-1}$,
$A^{(2)}_{[3124],e_1-e_2,1}=\frac{q+1}{q-1}$,  $A^{(2)}_{[3124],e_1-e_3,1}=\frac{q^2+1}{q-1}$.

\bigskip

\hspace{-2cm}
{\renewcommand{\arraystretch}{1.2}%
\begin{tabular}{ |c|c|c|c|c|c|l| } 
\hline 
\multicolumn{7}{|l|}{Relevant elements of length $3$} \\
\hline
$w$ & $E_w$ & $\EE_w$ & $\LL_w$ &$\{\chi_\alpha\}_{\alpha\in \EE_w}$ & $\{h_w(\chi_\alpha)\}_{\alpha\in \EE_w}$   & Upper bound for $\overline{C}^{+}_w$\\
\hline
\multirow{3}{3em}{$[2341]$} & \multirow{3}{3em}{$e_1-e_4$\\$e_2-e_4$\\$e_3-e_4$} & \multirow{3}{3em}{$e_1-e_4$} & \multirow{3}{3em}{$[1342]$}  &\multirow{3}{3em}{$(1,0,0)$}& \multirow{3}{7em}{$(0,-1,q)$} & $-qa_1 + (q^2+q+1)a_2 -a_3\leq 0$ \\ 
&&&&&&   \\ 
&&&&&&   \\ 
\hline
\multirow{3}{3em}{$[2413]$} & \multirow{3}{3em}{$e_1-e_3$ \\ $e_2-e_3$\\$e_2-e_4$} & \multirow{3}{3em}{$e_1-e_3$\\$e_2-e_4$} & \multirow{3}{3em}{$[1423]$\\$[2314]$}  &\multirow{3}{3em}{$(1,0,0)$\\$(1,1,1)$}& \multirow{3}{4em}{$(0,-1,q)$\\$(0,-q,1)$} & $(2q+1)a_1-a_2-qa_3 \leq 0$ \\ 
&&&&&& $a_1\leq 0$  \\ 
&&&&&& $qa_1+qa_2+a_3\leq 0$  \\ 
\hline
\multirow{3}{3em}{$[3142]$} & \multirow{3}{3em}{$e_1-e_2$\\$e_1-e_4$\\$e_3-e_4$} & \multirow{3}{3em}{$e_3-e_4$} & \multirow{3}{3em}{$[3124]$} &\multirow{3}{3em}{$(0,0,1)$}& \multirow{3}{8em}{$(q+1,1,1)$} & $a_1+a_2-(q+2)a_3 \leq 0$\\ &&&&&&   \\ 
&&&&&&   \\ 
\hline
\multirow{2}{3em}{$[3214]$} & \multirow{2}{3em}{$e_1-e_2$ \\ $e_2-e_3$} & \multirow{2}{3em}{$e_2-e_3$} & \multirow{2}{3em}{$[3124]$}  &\multirow{2}{3em}{$(1,1,0)$}& \multirow{2}{10em}{$(-q,-(q+1),-1)$} & $ a_1+qa_2-(q^2+q+1)a_3\leq 0$\\ 
&&&&&&   \\ 
\hline
\end{tabular}}

\paragraph{Proof :} Take $A^{(1)}_{[2341],e_1-e_4,1}=1$,  
$A^{(1)}_{[2413],e_2-e_4,1}=\frac{q}{q^2+q+1}$,  $A^{(1)}_{[2413],e_2-e_4,2}=\frac{q+1}{q^2+q+1}$,  
$A^{(2)}_{[2413],e_1-e_3,1}=\frac{1}{(q+1)^2}$,  
$A^{(2)}_{[2413],e_2-e_4,1}=\frac{q}{(q+1)(q^3+2q^2+2q+1)}$,  $A^{(2)}_{[2413],e_2-e_4,2}=\frac{1}{q^3+2q^2+2q+1}$,
$A^{(3)}_{[2413],e_1-e_3,1}=\frac{q}{q^2+q+1}$,  $A^{(3)}_{[2413],e_1-e_3,2}=\frac{q+1}{q^2+q+1}$, 
$A^{(1)}_{[3142],e_3-e_4,1}=\frac{1}{q^2+q+1}$,  $A^{(1)}_{[3142],e_3-e_4,2}=\frac{q+1}{q^2+q+1}$,
$A^{(1)}_{[3214],e_2-e_3,2}=1$.

\bigskip

\hspace{-1.5cm}
{\renewcommand{\arraystretch}{1.2}%
\begin{tabular}{ |c|c|c|c|c|c|l| } 
\hline 
\multicolumn{7}{|l|}{Relevant elements of length $4$} \\
\hline
$w$ & $E_w$ & $\EE_w$ & $\LL_w$ &$\{\chi_\alpha\}_{\alpha\in \EE_w}$ & $\{h_w(\chi_\alpha)\}_{\alpha\in \EE_w}$   & Upper bound for $\overline{C}^{+}_w$\\
\hline
\multirow{3}{3em}{$[2431]$} & \multirow{3}{3em}{$e_1-e_4$\\$e_2-e_3$\\$e_3-e_4$} & \multirow{3}{3em}{$e_3-e_4$} & \multirow{3}{3em}{$[2413]$}  &\multirow{3}{3em}{$(0,1,1)$}& \multirow{3}{7em}{$(1,1-q,-q)$} & $q^2a_1 + qa_2 +a_3\leq 0$ \\ 
&&&&&& $qa_1 + qa_2 +a_3\leq 0$  \\ 
&&&&&&   \\ 
\hline
\multirow{3}{3em}{$[3241]$} & \multirow{3}{3em}{$e_1-e_2$ \\ $e_2-e_4$\\$e_3-e_4$} & \multirow{3}{3em}{$e_1-e_2$\\$e_2-e_4$} & \multirow{3}{3em}{$[2341]$\\$[3142]$}  &\multirow{3}{3em}{$(1,0,0)$\\$(1,1,0)$}& \multirow{3}{8em}{$(0,0,q-1)$\\$(-q,-(q+1),-1)$} & $-qa_1+(q^2+q+1)a_2-a_3 \leq 0$ \\ 
&&&&&& $a_1+a_2-(q+2)a_3\leq 0$  \\ 
&&&&&&  \\ 
\hline
\multirow{3}{3em}{$[4213]$} & \multirow{3}{3em}{$e_1-e_2$\\$e_1-e_4$\\$e_2-e_3$} & \multirow{3}{3em}{$e_1-e_2$\\$e_1-e_4$} & \multirow{3}{3em}{$[2413]$\\$[3214]$} &\multirow{3}{4em}{\!\!$(0,-1,-1)$\\$(1,1,1)$}& \multirow{3}{8em}{$(1,q+1,q)$\\$(0,-q,1)$} & $(2q+1)a_1-a_2-qa_3 \leq 0$\\ &&&&&&$a_1+qa_2-(q^2+q+1)a_3\leq 0$   \\ 
&&&&&&   \\ 
\hline
\end{tabular}}

\paragraph{Proof :} 
Take $A^{(1)}_{[2431],e_3-e_4,2}=q^2-q$ and $A^{(1)}_{[2431],e_3-e_4,3}=A^{(2)}_{[2431],e_3-e_4,3}=A^{(1)}_{[3241],e_1-e_2,1}=A^{(2)}_{[3241],e_2-e_4,1}=A^{(1)}_{[4213],e_1-e_2,1}=A^{(2)}_{[4213],e_1-e_4,1}=1$.
\bigskip 

\hspace{-1cm}
{\renewcommand{\arraystretch}{1.2}%
\begin{tabular}{ |c|c|c|c|c|c|l| } 
\hline 
\multicolumn{7}{|l|}{Relevant elements of length $5$} \\
\hline
$w$ & $E_w$ & $\EE_w$ & $\LL_w$ &$\{\chi_\alpha\}_{\alpha\in \EE_w}$ & $\{h_w(\chi_\alpha)\}_{\alpha\in \EE_w}$   & Upper bound for $\overline{C}^{+}_w$\\
\hline
\multirow{3}{3em}{$[3421]$} & \multirow{3}{3em}{$e_1-e_3$\\$e_2-e_3$\\$e_3-e_4$} & \multirow{3}{3em}{$e_1-e_3$\\$e_2-e_3$} & \multirow{3}{3em}{$[2431]$\\$[3241]$}  &\multirow{3}{3em}{$(1,0,0)$\\$(0,1,0)$}& \multirow{3}{7em}{$(0,0,q-1)$\\$(1-q,1-q,1-q)$} & $a_2\leq 0$ \\ 
&&&&&& $a_1 + \epsilon(q)a_2\leq 0$ (see below) \\ 
&&&&&&   \\ 
\hline
\multirow{3}{3em}{$[4312]$} & \multirow{3}{3em}{$e_1-e_2$ \\ $e_2-e_3$\\$e_2-e_4$} & \multirow{3}{3em}{$e_2-e_4$} & \multirow{3}{3em}{$[4213]$}  &\multirow{3}{3em}{$(1,1,1)$}& \multirow{3}{6em}{$(0,1-q,0)$} & $a_1-a_3 \leq 0$ \\ 
&&&&&&  \\ 
&&&&&&  \\ 
\hline
\end{tabular}}

\bigskip

In the equation for $w=[3421]$, the function $\epsilon(q)$ is defined by $\epsilon(q)\colonequals\frac{q^2+2q+1}{q^3+2q^2+1}$.

\paragraph{Proof :} Take
$A^{(1)}_{[3421],e_1-e_3,2}=\frac{1}{(q+1)^2}$,  $A^{(1)}_{[3421],e_2-e_3,1}=\frac{1}{(q+1)^2}$, 
$A^{(2)}_{[3421],e_1-e_3,1}=\frac{q+2}{q^3+2q^2+1}$,  $A^{(2)}_{[3421],e_2-e_3,2}=\frac{1}{q^3+2q^2+1}$,
$A^{(1)}_{[4312],e_2-e_4,1}=\frac{q}{2q^2+q+1}$,  $A^{(1)}_{[4312],e_2-e_4,2}=\frac{1}{2q^2+q+1}$.

\bigskip

This shows that $C_{S,[3421]}$ satisfies the equations $a_2-a_4\leq 0$ and $a_1-a_4 + \epsilon(q)(a_2-a_4)\leq 0$. Similarly, $C_{S,[4312]}$ satisfies $a_1-a_3\leq 0$. This finishes the proof of Proposition \ref{propGL22}. Finally, we consider the longest element $w_0=[4321]$ and prove Theorem \ref{thmGL22-conj}.

\bigskip 

\hspace{-0.7cm}
{\renewcommand{\arraystretch}{1.2}%
\begin{tabular}{ |c|c|c|c|c|c|l| } 
\hline 
\multicolumn{7}{|l|}{Relevant elements of length $6$} \\
\hline
$w$ & $E_w$ & $\EE_w$ & $\LL_w$ &$\{\chi_\alpha\}_{\alpha\in \EE_w}$ & $\{h_w(\chi_\alpha)\}_{\alpha\in \EE_w}$   & Upper bound for $\overline{C}^{+}_w$\\
\hline
\multirow{3}{3em}{$[4321]$} & \multirow{3}{3em}{$e_1-e_2$\\$e_2-e_3$\\$e_3-e_4$} & \multirow{3}{3em}{$e_1-e_2$\\$e_3-e_4$} & \multirow{3}{3em}{$[3421]$\\$[4312]$}  &\multirow{3}{3em}{$(1,0,0)$\\$(1,1,1)$}& \multirow{3}{7em}{$(1,1,q+1)$\\$(1,-q,0)$} & $ qa_1+a_2-a_3\leq 0$ \\ 
&&&&&&  \\ 
&&&&&&   \\ 
\hline
\end{tabular}} 

\paragraph{Proof :} Take $A^{(1)}_{[4321],e_1-e_2,1}=\frac{q^2+q+1}{q^3+2q^2+1}, \ A^{(1)}_{[4321],e_1-e_2,2}=q-1, \ A^{(1)}_{[4321],e_3-e_4,1}=1$.

\bigskip

This finishes the proof of Theorem \ref{thmGL22-conj}.

\subsection{The case $G=\U(3)_{\FF_q}$ and $(r,s)=(2,1)$} \label{sec-U21-proof}

It suffices to prove Proposition \ref{propU21inert}. Here are the elements of $W$ appearing in the proof.

\bigskip

\begin{figure}[H]
$\xymatrix@R=10pt{
&   *++[F]{[213]} \ar@{-}[rd] &\\
  *++[F]{[123]}\ar@{-}[ru]\ar@{-}[rd] & &*++[F]{[231]}\\
&   *++[F]{[132]}\ar@{-}[ru]  &
}$
    \caption{The strata appearing in the proof}
    \label{fig:U21inert}
\end{figure}

We start with the Hasse cones $\overline{C}_{\Hasse,w}$ for the three elements of length $1$. 

\bigskip

{\renewcommand{\arraystretch}{1.5}%
\noindent\begin{tabular}{ |c|c|c|l|c| } 
\hline
$w$  & $E_w $  &  $C_{\Hasse,w}$ & $\overline{C}^{+}_w$  \\
\hline
$[132]$ & $e_2-e_3$ & $q a_1-a_2 \leq 0$ & same \\
\hline
$[213]$ & $e_1-e_2$ & $- a_1+a_2 \leq 0$ & same \\
\hline
\end{tabular}} 

\bigskip

{\renewcommand{\arraystretch}{1.2}%
\noindent\begin{tabular}{ |c|c|c|c|c|c|l| } 
\hline 
\multicolumn{7}{|l|}{Relevant elements of length $2$} \\
\hline
$w$ & $E_w$ & $\EE_w$ & $\LL_w$ &$\{\chi_\alpha\}_{\alpha\in \EE_w}$ & $\{h_w(\chi_\alpha)\}_{\alpha\in \EE_w}$   & Upper bound for $\overline{C}^{+}_w$\\
\hline
\multirow{2}{3em}{$[231]$} & \multirow{2}{3em}{$e_1-e_3$\\$e_2-e_3$} & \multirow{2}{3em}{$e_1-e_3$\\$e_2-e_3$} & \multirow{2}{3em}{$[132]$\\$[213]$}  &\multirow{2}{3em}{$(1,0)$\\$(0,1)$}& \multirow{2}{7em}{$(0,-(q+1))$\\$(1-q,1)$} & $a_1\leq 0$ \\ 
&&&&&&  \\ 
\hline
\end{tabular}}

\paragraph{Proof :} Take $A^{(1)}_{[231],e_1-e_3,1}=\frac{1}{q-1}, \ A^{(1)}_{[231],e_2-e_3,1}=\frac{1}{q-1}$.

\subsection{The case $G=\U(4)_{\FF_q}$ and $(r,s)=(3,1)$} \label{sec-U31-proof}
It suffices to prove Proposition \ref{prop-U31-inert-van}. Here are the elements of $W$ relevant for the proof:

\begin{figure}[H]
$\xymatrix@R=10pt{
&   *++[F]{[1243]} \ar@{-}[rd] &&&&\\
  *++[F]{[1234]}\ar@{-}[r]\ar@{-}[ru]\ar@{-}[rd] & *++[F]{[1324]}\ar@{-}[rd] &*++[F]{[1342]}\ar@{-}[r]&*++[F]{[2341]}\ar@{-}[r]&*++[F]{[3241]} \ar@{-}[r]&*++[F]{[3421]}\\
&   *++[F]{[2134]}\ar@{-}[r]\ar@{-}[ru]  &*++[F]{[2314]} \ar@{-}[ru]&&& 
}$
    \caption{The strata appearing in the proof}
    \label{fig:U31inert}
\end{figure}

We start with the Hasse cones $\overline{C}_{\Hasse,w}$ for the three elements of length $1$. 

\bigskip

{\renewcommand{\arraystretch}{1.5}%
\noindent\begin{tabular}{ |c|c|c|l|c| } 
\hline
$w$  & $E_w $  &  $C_{\Hasse,w}$ & $\overline{C}^{+}_w$  \\
\hline
$[1243]$ & $e_3-e_4$ & $q a_1-a_3 \leq 0$ & same \\
\hline
$[1324]$ & $e_2-e_3$ & $-q a_1+(q-1)a_2+a_3 \leq 0$ & same \\
\hline
$[2134]$ & $e_1-e_2$& $-a_1-(q-1)a_2+qa_3 \leq 0$ & same \\
\hline
\end{tabular}} 

\bigskip

{\renewcommand{\arraystretch}{1.2}%
\noindent\begin{tabular}{ |c|c|c|c|c|c|l| } 
\hline 
\multicolumn{7}{|l|}{Relevant elements of length $2$} \\
\hline
\multirow{2}{3em}{$[1342]$} &\multirow{2}{3em}{$e_2-e_4$ \\ $e_3-e_4$}&\multirow{2}{3em}{$e_2-e_4$ \\$e_3-e_4$}  & \multirow{2}{3em}{$[1243]$\\$[1324]$}&\multirow{2}{3em}{$(0,1,0)$\\$(0,0,1)$}& \multirow{2}{8em}{$(0,-q,-1)$\\$(1-q,1,1)$}& $q^2a_1+a_2-qa_3\leq 0$ \\ 
&&&&&&   \\ 
\hline
\multirow{2}{3em}{$[2314]$} &\multirow{2}{3em}{$e_1-e_3$\\$e_2-e_3$}& \multirow{2}{3em}{$e_1-e_3$\\$e_2-e_3$}  & \multirow{2}{3em}{$[1324]$  \\ $[2134]$} &\multirow{2}{4em}{$(1,0,0)$\\$(0,1,0)$}& \multirow{2}{8em}{$(0,-1,-q)$\\$(0,-q,-1)$}& $-(q-1)a_1-a_2+qa_3\leq 0$ \\ 
&&&& & & $-qa_1+(q-1)a_2+a_3\leq 0$ \\
\hline
\end{tabular}}

\paragraph{Proof :} Take
$A^{(1)}_{[1342],e_2-e_4,1}=\frac{q^2-q+1}{q-1}$,  $A^{(1)}_{[1342],e_3-e_4,1}=\frac{1}{q-1}$,
$A^{(1)}_{[2314],e_1-e_3,1}=\frac{q(q-2)}{q^2-1}$,  $A^{(1)}_{[2314],e_2-e_3,1}=\frac{q^2-q+1}{q^2-1}$,
$A^{(2)}_{[2314],e_1-e_3,1}=1$.

\bigskip

{\renewcommand{\arraystretch}{1.2}%
\noindent\begin{tabular}{ |c|c|c|c|c|c|l| } 
\hline 
\multicolumn{7}{|l|}{Relevant elements of length $3$} \\
\hline
$w$ & $E_w$ & $\EE_w$ & $\LL_w$ &$\{\chi_\alpha\}_{\alpha\in \EE_w}$ & $\{h_w(\chi_\alpha)\}_{\alpha\in \EE_w}$   & Upper bound for $\overline{C}^{+}_w$\\
\hline
\multirow{3}{3em}{$[2341]$} & \multirow{3}{3em}{$e_1-e_4$\\$e_2-e_3$\\$e_3-e_4$} & \multirow{3}{3em}{$e_1-e_4$\\$e_3-e_4$} & \multirow{3}{3em}{$[1342]$\\$[2314]$}  &\multirow{3}{3em}{$(1,0,0)$\\$(0,0,1)$}& \multirow{3}{7em}{$(0,-1,-q)$\\$(1-q,1,1)$} & $a_1\leq 0$ \\ &&&&&&  $a_1-a_2+qa_3\leq 0$  \\ 
&&&&&&   $qa_1+(q^2-q+1)a_2-a_3\leq 0$ \\ 
\hline
\end{tabular}}

\paragraph{Proof :} Take
$A^{(1)}_{[2341],e_1-e_4,1}=\frac{1}{q^2-q+1}$, 
$A^{(1)}_{[2341],e_3-e_4,1}=\frac{1}{q^2-q+1}$, 
$A^{(2)}_{[2341],e_1-e_4,1}=\frac{q}{q^2-q+1}$, 
$A^{(2)}_{[2341],e_3-e_4,1}=\frac{q^2+1}{q^2-q+1}$, 
$A^{(3)}_{[2341],e_1-e_4,1}=\frac{q^2}{q^2-q+1}$, 
$A^{(3)}_{[2341],e_3-e_4,2}=\frac{q^3-q^2+q-1}{q^2-q+1}$.

\bigskip

{\renewcommand{\arraystretch}{1.2}%
\noindent\begin{tabular}{ |c|c|c|c|c|c|l| } 
\hline 
\multicolumn{7}{|l|}{Relevant elements of length $4$} \\
\hline
$w$ & $E_w$ & $\EE_w$ & $\LL_w$ &$\{\chi_\alpha\}_{\alpha\in \EE_w}$ & $\{h_w(\chi_\alpha)\}_{\alpha\in \EE_w}$   & Upper bound for $\overline{C}^{+}_w$\\
\hline
\multirow{3}{3em}{$[3241]$} & \multirow{3}{3em}{$e_1-e_2$\\$e_2-e_4$\\$e_3-e_4$} & \multirow{3}{3em}{$e_1-e_2$} & \multirow{3}{3em}{$[2341]$}  &\multirow{3}{3em}{$(1,0,0)$}& \multirow{3}{7em}{$(0,0,-(q+1))$} & $a_1\leq 0$ \\ 
&&&&&& $a_1+(q-1)a_2\leq 0$  \\ 
&&&&&&   \\ 
\hline
\end{tabular}} 

\paragraph{Proof :} Take $A^{(1)}_{[3241],e_1-e_1,1}=1, A^{(2)}_{[3241],e_1-e_2,2}=\frac{1}{q^2+1}, \ 
A^{(2)}_{[3241],e_1-e_2,3}=\frac{q}{q^2+1}$.

\bigskip

{\renewcommand{\arraystretch}{1.2}%
\noindent\begin{tabular}{ |c|c|c|c|c|c|l| } 
\hline 
\multicolumn{7}{|l|}{Relevant elements of length $5$} \\
\hline
$w$ & $E_w$ & $\EE_w$ & $\LL_w$ &$\{\chi_\alpha\}_{\alpha\in \EE_w}$ & $\{h_w(\chi_\alpha)\}_{\alpha\in \EE_w}$   & Upper bound for $\overline{C}^{+}_w$\\
\hline
\multirow{3}{3em}{$[3421]$} & \multirow{3}{3em}{$e_1-e_3$ \\ $e_2-e_3$\\$e_3-e_4$} & \multirow{3}{3em}{$e_2-e_3$} & \multirow{3}{3em}{$[3241]$}  &\multirow{3}{4em}{$(0,1,0)$}& \multirow{3}{5em}{$(1,1-q,1)$} & $(q-1)a_1+a_2 \leq 0$ \\ 
&&&&&&  \\ 
&&&&&&  \\ 
\hline
\end{tabular}}

\paragraph{Proof :} Take $A^{(1)}_{[3421],e_2-e_3,1}=\frac{q(q-2)}{q-1}, A^{(1)}_{[3421],e_2-e_3,2}=\frac{1}{q-1}$. This terminates the proof of Proposition \ref{prop-U31-inert-van}.

\bibliographystyle{test}
\bibliography{biblio_overleaf}

\newcommand{\etalchar}[1]{$^{#1}$}
\begin{thebibliography}{ABD{\etalchar{+}}66}
\providecommand{\url}[1]{\texttt{#1}}
\providecommand{\urlprefix}{URL }
\providecommand{\eprint}[2][]{\url{#2}}

\bibitem[ABD{\etalchar{+}}66]{SGA3}
M.~Artin, J.~E. Bertin, M.~Demazure, P.~Gabriel, A.~Grothendieck, M.~Raynaud
  and J.-P. Serre, S{G}{A}3: Sch\'emas en groupes, vol. 1963/64, Institut des
  Hautes \'Etudes Scientifiques, Paris, 1965/1966.

\bibitem[AG05]{Andreatta-Goren-book}
F.~Andreatta and E.~Goren, Hilbert modular forms: mod $p$ and $p$-adic aspects,
  vol. 173 of Mem. Amer. Math Soc., Amer. Math. Soc., 2005.

\bibitem[And21]{Andreatta-modp-period-maps}
F.~Andreatta, On two mod $p$ period maps: Ekedahl--Ort and fine
  Deligne--Lusztig stratifications, preprint, arXiv: 2103.12361  (2021).

\bibitem[Del79]{Deligne-Shimura-varieties}
P.~Deligne, Vari\'et\'es de {S}himura: {I}nterpr\'etation modulaire, et
  techniques de construction de mod\`eles canoniques, in Automorphic forms,
  representations and {$L$}-functions, {P}art 2, edited by A.~Borel and
  W.~Casselman, vol.~33 of Proc. Symp. Pure Math., Amer. Math. Soc.,
  Providence, RI, 1979 pp. 247--289, {P}roc. {S}ympos. {P}ure {M}ath., {O}regon
  {S}tate {U}niv., {C}orvallis, {OR}., 1977.

\bibitem[DK17]{Diamond-Kassaei}
F.~Diamond and P.~Kassaei, Minimal weights of {H}ilbert modular forms in
  characteristic {$p$}, Compos. Math. 153 (2017), no.~9, 1769--1778.

\bibitem[DK20]{arxiv-Diamond-Kassaei-cone-minimal}
F.~Diamond and P.~Kassaei, The cone of minimal weights of {H}ilbert modular
  forms in characteristic {$p$}, 2020, preprint, arXiv:2004.13227.

\bibitem[EvdG09]{Ekedahl-Geer-EO}
T.~Ekedahl and G.~van~der Geer, Cycle classes of the {E}-{O} stratification on
  the moduli of abelian varieties, in Algebra, arithmetic and geometry, edited
  by Y.~Tschinkel and Y.~Zarhin, vol. 269 of Progress in Math., Birkh\"auser,
  Boston, MA, June 2009, Penn. State U., PA, 2009 pp. 567--636.

\bibitem[GK18]{Goldring-Koskivirta-global-sections-compositio}
W.~Goldring and J.-S. Koskivirta, Automorphic vector bundles with global
  sections on {$G$}-{Z}ip$^{\Zcal}$-schemes, Compositio Math. 154 (2018),
  2586--2605.

\bibitem[GK19]{Goldring-Koskivirta-Strata-Hasse}
W.~Goldring and J.-S. Koskivirta, Strata {H}asse invariants, {H}ecke algebras
  and {G}alois representations, Invent. Math. 217 (2019), no.~3, 887--984.

\bibitem[GK22]{Goldring-Koskivirta-GS-cone}
W.~Goldring and J.-S. Koskivirta, Griffiths-Schmid conditions for automorphic
  forms via characteristic $p$, 2022.

\bibitem[GN17]{Goldring-Nicole-mu-Hasse}
W.~Goldring and M.-H. Nicole, The $\mu$-ordinary {H}asse invariant of unitary
  {S}himura varieties, J. Reine Angew. Math. 728 (2017), 137--151.

\bibitem[Gol19]{Goldring-griffiths-bundle-group-theoretic}
W.~Goldring, The {G}riffiths bundle is generated by groups, Math. Ann. 375
  (2019), 1283--1305.

\bibitem[Gor01]{Goren-partial-hasse}
E.~Goren, Hasse invariants for {H}ilbert modular varieties, Israel J. Math. 122
  (2001), 157--174.

\bibitem[GS69]{Griffiths-Schmid-homogeneous-complex-manifolds}
P.~Griffiths and W.~Schmid, Locally homogeneous complex manifolds, Acta. Math.
  123 (1969), 253--302.

\bibitem[IK21a]{Imai-Koskivirta-vector-bundles}
N.~Imai and J.-S. Koskivirta, Automorphic vector bundles on the stack of
  $G$-zips, Forum Math. Sigma 9 (2021), Paper No. e37, 31 pp.

\bibitem[IK21b]{Imai-Koskivirta-partial-Hasse}
N.~Imai and J.-S. Koskivirta, Partial Hasse invariants for Shimura varieties of
  {H}odge-type, 2021, preprint, arXiv:2109.11117.

\bibitem[IK22]{Goldring-Imai-Koskivirta-weights}
N.~Imai and J.-S. Koskivirta, Weights of mod $p$ automorphic forms and partial
  Hasse invariants, 2022, preprint.

\bibitem[Jan03]{jantzen-representations}
J.~Jantzen, Representations of algebraic groups, vol. 107 of Math. Surveys and
  Monographs, American Mathematical Society, Providence, RI, 2nd edn., 2003.

\bibitem[Kis10]{Kisin-Hodge-Type-Shimura}
M.~Kisin, Integral models for {S}himura varieties of abelian type, J. Amer.
  Math. Soc. 23 (2010), no.~4, 967--1012.

\bibitem[Kos19]{Koskivirta-automforms-GZip}
J.-S. Koskivirta, Automorphic forms on the stack of {$G$}-zips, Results Math.
  74 (2019), no.~3, Paper No. 91, 52 pp.

\bibitem[Kot92]{Kottwitz-points-shimura-varieties}
R.~Kottwitz, Points on some {S}himura varieties over finite fields., J. Amer.
  Math. Soc. 5 (1992), no.~2, 373--444.

\bibitem[KW18]{Koskivirta-Wedhorn-Hasse}
J.-S. Koskivirta and T.~Wedhorn, Generalized {$\mu$}-ordinary {H}asse
  invariants, J. Algebra 502 (2018), 98--119.

\bibitem[LS18]{Lan-Stroh-stratifications-compactifications}
K.-W. Lan and B.~Stroh, Compactifications of subschemes of integral models of
  {S}himura varieties, Forum Math. Sigma 6 (2018), e18, 105pp.

\bibitem[Mil90]{Milne-ann-arbor}
J.~Milne, Canonical Models of (Mixed) {S}himura varieties and automorphic
  vector bundles, in Automorphic forms, {S}himura varieties, and
  {$L$}-functions, {V}ol.\ {I}, edited by L.~Clozel and J.~Milne, vol.~11 of
  Perspect. Math., Academic Press, Boston, MA, 1990 pp. 283--414, proc. Conf.
  held in {A}nn {A}rbor, {MI}, 1988.

\bibitem[MS11]{Madapusi-Hodge-Tor}
K.~S. Madapusi~Sampath, Toroidal compactifications of integral models of
  {S}himura varieties of {H}odge type, 2011, thesis (Ph.D.)--The University of
  Chicago.

\bibitem[MW04]{Moonen-Wedhorn-Discrete-Invariants}
B.~Moonen and T.~Wedhorn, Discrete invariants of varieties in positive
  characteristic, IMRN 72 (2004), 3855--3903.

\bibitem[PWZ11]{Pink-Wedhorn-Ziegler-zip-data}
R.~Pink, T.~Wedhorn and P.~Ziegler, Algebraic zip data, Doc. Math. 16 (2011),
  253--300.

\bibitem[PWZ15]{PinkWedhornZiegler-F-Zips-additional-structure}
R.~Pink, T.~Wedhorn and P.~Ziegler, ${F}$-zips with additional structure,
  Pacific J. Math. 274 (2015), no.~1, 183--236.

\bibitem[RR85]{Ramanan-Ramanathan-projective-normality}
S.~Ramanan and A.~Ramanathan, Projective normality of flag varieties and
  {S}chubert varieties, Invent. Math. 79 (1985), 217--224.

\bibitem[SYZ19]{Shen-Yu-Zhang-EKOR}
X.~Shen, C.-F. Yu and C.~Zhang, EKOR strata for Shimura varieties with
  parahoric level structure, 2019, preprint, arXiv:1910.07785.

\bibitem[Vas99]{Vasiu-Preabelian-integral-canonical-models}
A.~Vasiu, Integral canonical models of {S}himura varieties of preabelian type,
  Asian J. Math. 3 (1999), 401--518.

\bibitem[WZ]{Wedhorn-Ziegler-tautological}
T.~Wedhorn and P.~Ziegler, Tautological rings of {S}himura varieties and cycle
  classes of {E}kedahl-{O}ort strata, preprint, arXiv:1811.04843.

\bibitem[Zha18]{Zhang-EO-Hodge}
C.~Zhang, Ekedahl-{O}ort strata for good reductions of {S}himura varieties of
  {H}odge type, Canad. J. Math. 70 (2018), no.~2, 451--480.

\end{thebibliography}

\noindent
Wushi Goldring\\
Department of Mathematics, Stockholm University, SE-10691, Sweden\\
wushijig@gmail.com\\ 

\noindent
Jean-Stefan Koskivirta\\
Department of Mathematics, Faculty of Science, Saitama University, 
255 Shimo-Okubo, Sakura-ku, Saitama City, Saitama 338-8570, Japan \\
jeanstefan.koskivirta@gmail.com

\end{document}